\theoremstyle{plain}
\newtheorem{theorem}{Theorem}[section]
\newtheorem{proposition}[theorem]{Proposition}
\newtheorem{corollary}[theorem]{Corollary}
\newtheorem{lemma}[theorem]{Lemma}
\theoremstyle{definition}
\newtheorem{example}{Example}[section]
\theoremstyle{remark}
\newtheorem*{remark}{Remark}
\newtheorem*{notation}{Notation}
\newcommand{\refThm}[1]{Theorem~\ref{#1}}
\newcommand{\refProp}[1]{Proposition~\ref{#1}}
\newcommand{\refCor}[1]{Corollary~\ref{#1}}
\newcommand{\refLem}[1]{Lemma~\ref{#1}}
\newcommand{\refFig}[1]{Figure~\ref{#1}}
\newcommand{\refSec}[1]{Section~\ref{#1}}
\newcommand{\refEq}[1]{(\ref{#1})}
\newcommand{\Gr}{\mathbf{Gr}}
\newcommand{\OO}{\mathcal{O}}
\newcommand{\K}{\mathcal{K}}
\newcommand{\Disk}{\mathcal{D}}
\newcommand{\PDisk}{\mathcal{D}^\times}
\newcommand{\G}{\mathbf{G}}
\newcommand{\GLangDual}{\G^\vee}
\newcommand{\PP}{\mathbf{P}}
\newcommand{\B}{\mathbf{B}}
\newcommand{\T}{\mathbf{T}}
\newcommand{\A}{\mathbf{A}}
\newcommand{\Gm}{\mathbb{C}^\times}
\newcommand{\LoopGm}{\Gm_\hbar}
\newcommand{\GRing}[1]{\G\left( #1 \right)}
\newcommand{\GO}{\GRing{\OO}}
\newcommand{\GK}{\GRing{\K}}
\newcommand{\GOne}{\G_1}
\newcommand{\CocharLattice}[1]{X_*({#1})}
\newcommand{\DominantCocharLattice}[1]{\CocharLattice{#1}_+}
\newcommand{\KM}[1]{\widehat{#1}}
\newcommand{\UnitG}{e}
\newcommand{\ProjSpace}{\mathbb{P}}
\newcommand{\PLine}{\ProjSpace^1}
\newcommand{\OOO}[1]{\mathcal{O}(#1)}
\newcommand{\pt}{pt}
\newcommand{\ConstWt}[1]{ {#1} }
\DeclareMathOperator{\Spec}{Spec}
\DeclareMathOperator{\Ad}{Ad}
\DeclareMathOperator{\Lie}{Lie}
\DeclareMathOperator{\rk}{rk}
\DeclareMathOperator{\codim}{codim}
\DeclareMathOperator{\Tr}{Tr}
\DeclareMathOperator{\Frac}{Frac}
\DeclareMathOperator{\linspan}{span}
\DeclareMathOperator{\supp}{supp}
\DeclareMathOperator{\Attr}{Attr}
\DeclareMathOperator{\Hom}{Hom}
\DeclareMathOperator{\DerCat}{D}
\DeclareMathOperator{\Perv}{Perv}
\DeclareMathOperator{\Vect}{Vect}
\DeclareMathOperator{\Rep}{Rep}
\DeclareMathOperator{\IC}{IC}
\DeclareMathOperator{\IH}{IH}
\DeclareMathOperator{\Pic}{Pic}
\newcommand{\EqPic}[2]{\Pic_{#1} \left( #2 \right) }
\newcommand{\rootsub}[2]{\mathfrak{#1}_{#2}}
\newcommand{\grootsub}[1]{\rootsub{g}{#1}}
\newcommand{\gK}[1]{\mathfrak{g}((#1))}
\newcommand{\gO}[1]{\mathfrak{g}[[#1]]}
\newcommand{\gOne}[1]{{#1}^{-1}\mathfrak{g}[{#1}^{-1}]}
\newcommand{\hK}[1]{\mathfrak{h}((#1))}
\newcommand{\hOne}[1]{{#1}^{-1}\mathfrak{h}[{#1}^{-1}]}
\newcommand{\grootK}[2]{\grootsub{#1}((#2))}
\newcommand{\CorootScalarNoComa}[1]{K\left(#1\right)}
\newcommand{\CorootScalar}[2]{\CorootScalarNoComa{#1,#2}}
\newcommand{\SLF}[2]{\SL{#1} \left( #2 \right)}
\newcommand{\SL}[1]{\mathbf{SL}_{#1}}
\newcommand{\PSL}[1]{\mathbf{PSL}_{#1}}
\newcommand{\PSLtwoO}{\PSL{2}\left( \OO \right)}
\newcommand{\DuVal}[1]{\mathcal{A}_#1}
\newcommand{\ucolambda}{\underline{\lambda}}
\newcommand{\uconu}{\underline{\nu}}
\newcommand{\coalpha}{\alpha}
\newcommand{\colambda}{\lambda}
\newcommand{\comu}{\mu}
\newcommand{\conu}{\nu}
\newcommand{\coomega}{\omega}
\newcommand{\wtmu}{\mu^\vee }
\newcommand{\wtalpha}{\alpha^\vee }
\newcommand{\wtomega}{\omega^\vee }
\newcommand{\wtchi}{\chi^\vee }
\newcommand{\wtrho}{\rho^\vee }
\newcommand{\resZ}[2]{\iota^*_{#2}{#1}}
\newcommand{\resA}[2]{\left. {#1} \right\vert_{#2}}
\newcommand{\Tfixed}[1]{\left[ #1 \right]}
\newcommand{\Tcowt}[2]{{#1}^{#2}}
\newcommand{\GrTangent}[1]{T_{#1}}
\newcommand{\GrCellTangent}[2]{T_{#1}^{#2}}
\newcommand{\LL}{\mathcal{L}}
\newcommand{\LBundle}[1]{\LL_{#1}}
\newcommand{\Chern}[2]{c_1^{#1} \left( {#2} \right)}
\newcommand{\Chernres}[3]{\resZ{\Chern{#1}{#2}}{#3}}
\newcommand{\OpChern}[2]{\Chern{#1}{#2}\cup}
\newcommand{\ChernL}[2]{\Chern{#1}{\LBundle{#2}}}
\newcommand{\OpChernL}[2]{\OpChern{#1}{\LBundle{#2}}}
\newcommand{\EE}{\mathcal{E}}
\newcommand{\EBundle}[1]{\EE_{#1}}
\newcommand{\ChernE}[2]{\Chern{#1}{\EBundle{#2}}}
\newcommand{\OpChernE}[2]{\OpChern{#1}{\EBundle{#2}}}
\newcommand{\Lagrangian}[1]{\mathbf{L}^{#1}}
\DeclareMathOperator{\Homology}{H}
\newcommand{\CoHlgyk}[2]{\Homology^{#2} \left( #1 \right)}
\newcommand{\EqCoHlgyk}[3]{\Homology^{#3}_{#1} \left( #2 \right)}
\newcommand{\EqHlgyk}[3]{\Homology_{#3}^{#1} \left( #2 \right)}
\newcommand{\EqBMHlgyk}[3]{\EqHlgyk{#1,BM}{#2}{#3}}
\newcommand{\CoHlgy}[1]{\CoHlgyk{#1}{*}}
\newcommand{\EqCoHlgy}[2]{\EqCoHlgyk{#1}{#2}{*}}
\newcommand{\EqBMHlgy}[2]{\EqBMHlgyk{#1}{#2}{*}}
\newcommand{\EqCompCoHlgy}[2]{\EqCoHlgyk{#1,c}{#2}{*}}
\newcommand{\CohUnit}[1]{1_{#1}}
\newcommand{\orderC}[2]{{#1}_{#2}}
\newcommand{\lC}[1]{\orderC{<}{#1}}
\newcommand{\gC}[1]{\orderC{>}{#1}}
\newcommand{\leqC}[1]{\orderC{\leq}{#1}}
\newcommand{\geqC}[1]{\orderC{\geq}{#1}}
\newcommand{\C}{\mathfrak{C}}
\DeclareMathOperator{\Res}{Res}
\newcommand{\LagRes}[2]{\Res_{#2} #1 }
\DeclareMathOperator{\Stab}{Stab}
\newcommand{\StabC}[1]{\Stab_{#1}}
\newcommand{\StabCE}[2]{\StabC{#1}^{#2}}
\newcommand{\StabCEpt}[3]{\StabCE{#1}{#2}\left( #3 \right)}
\newcommand{\StabCEptpt}[4]{ \resZ{\StabCEpt{#1}{#2}{#3}}{#4}}
\DeclareMathOperator{\RedStab}{\overline{Stab}}
\newcommand{\RedStabC}[1]{\RedStab_{#1}}
\newcommand{\RedStabCpt}[2]{\RedStabC{#1}\left( #2 \right)}
\newcommand{\RedStabCptpt}[3]{ \resZ{\RedStabCpt{#1}{#2}}{#3}}
\newcommand{\DualPol}[1]{\overline{#1} }
\newcommand{\resPol}[2]{{#1}_{#2}}
\newcommand{\Euler}[2]{ e^{#1}\left( #2 \right) }
\newcommand{\sign}[4]{ \sigma^{{#1},{#2}}_{{#3}/{#4}} }
\newcommand{\signset}{\left\lbrace \pm 1 \right\rbrace}
\newcommand{\constsheaf}[2]{{\underline{#1}}_{#2}}
\DeclareMathOperator{\Om}{ \boldsymbol{\Omega} }
\newcommand{\OmegaOperator}[2]{\Om^{#1}_{#2} }
\DeclareMathOperator{\CartanPart}{ \mathbf{H} }
\newcommand{\Hterm}[1]{ \CartanPart^{#1} }
\DeclareMathOperator{\Sgm}{\Sigma}
\DeclareMathOperator{\Dlt}{\Delta}
\newcommand{\sigmai}[1]{\Sgm_{#1}}
\newcommand{\sigmap}[1]{\Sgm \left( #1 \right)}
\newcommand{\sigmapi}[2]{\Sgm_{#2} \left( #1 \right)}
\newcommand{\deltai}[1]{\Dlt_{#1}}
\newcommand{\deltap}[1]{\Dlt \left( #1 \right)}
\newcommand{\deltapi}[2]{\Dlt_{#2} \left( #1 \right)}
\newcommand{\reflect}[2]{s_{#1} #2 }
\title{
    Stable Envelopes for Slices\\
    of the Affine Grassmannian
}
\author{Ivan Danilenko}
\email{danilenko@berkeley.edu}
\address{Department of Mathematics, University of California, Berkeley \\ 1083 Evans Hall, Berkeley CA 94720-3840}
\date{}
\subjclass[2020]{55N25, 14J42, 14D21}
\begin{document}
\maketitle


\begin{abstract}
    The affine Grassmannian associated to a reductive group $\G$ is an affine analogue of the usual flag varieties. It is a rich source of Poisson varieties and their symplectic resolutions. These spaces are examples of conical symplectic resolutions dual to the Nakajima quiver varieties. We study the cohomological stable envelopes of D. Maulik and A. Okounkov\cite{MOk} in this family. We construct an explicit recursive relation for the stable envelopes in the $\G = \PSL{2}$ case and compute the first-order correction in the general case. This allows us to write an exact formula for multiplication by a divisor.
\end{abstract}
\section{Introduction}
\label{SecIntroduction}

\subsection{Overview}

In their influential paper \cites{MOk} D. Maulik and A. Okounkov studied equivariant cohomology $\EqCoHlgy{\T}{X^\vee}$ of the Nakajima quiver varieties $X^\vee$\cites{N2,N3,N4}. They showed that $\EqCoHlgy{\T}{X^\vee}$ are modules over a quantum group called Yangian. The key tools in their constructions were the $R$-matrices, which essentially came from a family of special bases, called stable envelopes. Their construction of stable envelopes works quite generally for a symplectic resolution with a torus action [see assumptions in Chapter 3 in \cites{MOk} or in \refSec{SecEnvelopes}].  

From the perspective of gauge theories, Nakajima quiver varieties are Higgs branches of moduli spaces of vacua in a 3d supersymmetric field theory. The 3d mirror symmetry is known to exchange the Coulomb ($X$) and the Higgs branch ($X^\vee$) of vacua \cites{IS,SW}. To understand the relations between the enumerative geometry of $X$ and $X^\vee$, we study the Coulomb side directly.

The key object to get Coulomb branches is the moduli space called the affine Grassmannian $\Gr$. It is associated with a complex connected reductive group $\G$, and one can think of it as an analogue of flag varieties for the corresponding Kac-Moody group. It is known to have deep connections with representation theory and Langlands duality \cites{G,MVi1,MVi2}. A big family of Coulomb branches \cites{BFN1,BFN2} is given by the transversal slices in affine Grassmannians. To give an idea of what these slices are, recall that the affine Grassmannian $\Gr$ has a cell structure similar to Schubert cells in the ordinary flag variety. A transversal slice $\Gr^\lambda_\mu$ describes how one orbit is attached to another. Similarly to Nakajima quiver varieties, they are naturally algebraic Poisson varieties and, in some cases, admit a smooth symplectic resolution $\Gr^{\underline{\lambda}}_\mu \to \Gr^\lambda_\mu$, which is a notion of independent mathematical interest \cites{K1, K2, BK}. The goal of this work is to study the equivariant cohomology of these resolutions.

We study the stable basis $\StabCEpt{\C}{\epsilon}{p}$ introduced in \cites{MOk}. It is also given by the images of $1_p\in\EqCoHlgy{\T}{p}$ under a map
\begin{equation*}
	\StabCE{\C}{\epsilon} 
	\colon 
	\EqCoHlgy{\T}{X^\A} 
	\to 
	\EqCoHlgy{\T}{X}.
\end{equation*}
Note that these are going in the "wrong" way, compared to the natural restriction maps. We are working in ordinary equivariant cohomology, see our convetions is \refSec{SecEnvelopes}.

The main choice to make is the choice of attracting directions in a torus, which is here denoted by $\C$ as a Weyl chamber. Informally, the classes $\StabCEpt{\C}{\epsilon}{p}$ are "corrected" versions of the (Poincar\'{e} dual) fundamental classes of the attracting varieties to $p$. The notion of attracting variety clearly depends on the choice of $\C$.

This basis is a rich source of enumero-geometric data; see \cites{MOk}. Even for purely computational convenience, they are useful because they are non-localized classes of low degree, and this allows one to use the degree bounds effectively. However, this comes with a price. As opposed to the fixed-point basis, the multiplication by a divisor is no longer diagonal. Fortunately, it is not that far from it.

For a $\T$-equivariant line bundle $\T$ we have
\begin{equation*}
	\OpChern{\T}{\LL}\StabCEpt{\C}{\epsilon}{p} 
	=
	\Chernres{\T}{\LL}{p} \cdot \StabCEpt{\C}{\epsilon}{p} 
	+
	\hbar \sum\limits_{q\lC{\C} p} c^{\LL}_{p,q}\StabCEpt{\C}{\epsilon}{q}
\end{equation*}
for some $c^{\LL}_{p,q} \in \mathbb{Q}$. Furthermore, these numbers $c^{\LL}_{p,q}$ can be easily recovered if one knows $\StabCEptpt{\C}{\epsilon}{p}{q}$ modulo $\hbar^2$.

We find the restrictions $\StabCEptpt{\C}{\epsilon}{p}{q}$ in two steps.

\begin{itemize}
\item
	Use the wall-crossing behavior of $\StabCEptpt{\C}{\epsilon}{p}{q}$ modulo $\hbar^2$ to reduce the computation to a similar computation on a wall in $\Lie \A$, which turns out to be a slice for $\G = \PSL{2}$, the case of $A_1$ type. This is done in \refThm{ThmTorusWallCrossing}
\item
	Find the restrictions for $A_1$-case via the action of Steinberg correspondences. This computation ends with \refThm{ThmRestrictionInTypeA1} and has a flavor similar to that of C. Su's thesis \cites{Su}.
\end{itemize}

For special line bundles $\EBundle{i}$ spanning $\Pic(X) \otimes_\mathbb{Z} \mathbb{Q}$ we get the following main formula 
\begin{equation*}
	\OpChernE{\T}{i}
	\StabCEpt{\C}{\epsilon}{p}
	=
	\StabCE{\C}{\epsilon}
	\left[
		\Hterm{i}
		\left(
			p
		\right)
		+
		\hbar
		\sum\limits_{j < i}
		\OmegaOperator{ji}{-\C,\epsilon}
		\left(
			p
		\right)
		-
		\hbar
		\sum\limits_{i < j}
		\OmegaOperator{ij}{-\C,\epsilon}
		\left(
			p
		\right)
	\right]
\end{equation*}
in \refThm{ThmReformulatedClassicalMultiplication}.

Since multiplications by $\ChernE{\T}{i}$ act with a simple spectrum, this formula uniquely determines the stable envelopes.

\subsection{Structure}

The paper is organized as follows. In \refSec{SecSlices} we recall the main facts about the slices in the affine Grassmannian, and in \refSec{SecEnvelopes} we prove the main results about the stable envelopes. For the reader's convenience, we keep the proofs of the technical facts until the Appendix \ref{SecAppendix}.

\subsection*{Acknowledgments}

The author is thankful to Mina Aganagic, Joel Kamnitzer, Henry Liu, Davesh Maulik, Andrei Negut, Andrei Okounkov, Andrey Smirnov, Changjian Su, and Shuai Wang for discussions related to this paper.
\section{Slices of the affine Grassmannian}
\label{SecSlices}

In this section, we recall some facts about slices of the affine Grassmannians.

\subsection{Representation-theoretic notation}

Let us present the notations we use for common representation-theoretic objects. One major difference from standard notation is that we use non-checked notation for coobjects (coweights, coroots, etc.) and checked for ordinary ones (weighs, roots, etc.). This is common in the literature on the affine Grassmannian since it simplifies notation. And the unexpected side effect is that weights in equivariant cohomology will have checks. We hope this will not cause confusion.

\begin{itemize}
\item
	$\G$ is a connected simple complex group unless stated otherwise,
\item
	$\A\subset \G$ is a maximal torus,
\item
	$\B\supset \A$ is a Borel subgroup,
\item
	$\mathfrak{g}$ is the Lie algebra of $\G$
\item
	$\CocharLattice{\A} = \Hom(\Gm, \A)$ is the cocharacter lattice, it's equal to the coweight lattice if $\G$ is of adjoint type,
\item
	$\DominantCocharLattice{\A} \subset \CocharLattice{\A}$ the submonoid of dominant cocharacters.
\item
	$W = N(\A)/\A$ is the Weyl group of $\G$,
\item
	$\mathfrak{g} = \mathfrak{h} \oplus \bigoplus\limits_{\wtalpha} \grootsub{\wtalpha}$ is the root decomposition of $\mathfrak{g}$.
\item
	$\wtrho$ is the half-sum of positive roots.
\item
	$\CorootScalar{\bullet}{\bullet}$ Weyl-invariant scalar product on cocharacter space $\CocharLattice{\A} \otimes_{\mathbb{Z}} \mathbb{R}$, normalized in such a way that the length squared of the shortest coroot is $2$ (equivalently, the length squared of the longest root is $2$).
\end{itemize}

We identify all weights with Lie algebra weights (in particular, we use additive notation for the weight of a tensor product of weight subspaces).

\subsection{Classical constructions}

\subsubsection{Affine Grassmannian}
Let $\OO = \mathbb{C}[[t]]$ and $\K = \mathbb{C}((t))$. We will refer to $\Disk = \Spec \OO$ and $\PDisk = \Spec \K$ as the formal disk and the formal punctured disk, respectively. The main references for this section are \cites{BD,MVi1,KWWY}.

Let $G$ be a connected reductive algebraic group over $\mathbb{C}$. The \textbf{affine Grassmannian} $\Gr_\G$ is the moduli space of

\begin{equation} \label{EqGrAsModuli}
	\left\lbrace
		\left( 
			\mathcal{P}, 
			\varphi 
		\right)
		\left\vert 
			\begin{matrix}
				\mathcal{P}
				\text{ is a }
				\G
				\text{-principle bundle over }
				\Disk,
				\\
				\varphi
				\colon 
				\mathcal{P}_0\vert_{\PDisk} 
				\xrightarrow{\sim} 
				\mathcal{P}\vert_{\PDisk}
				\text{ is a trivialization of }
				\mathcal{P}
				\\
				\text{ over the punctured disk }
				\PDisk
			\end{matrix}
		\right.
	\right\rbrace
\end{equation}
where $\mathcal{P}_0$ is the trivial principal $\G$-bundle over $\Disk$. It is representable by an ind-scheme (see \cite{BD} 4.5.1).

\medskip

One can give a less geometric definition of the $\mathbb{C}$-points of the affine Grassmannian
\begin{equation*}
	\Gr_\G(\mathbb{C}) 
	= 
	\GK / \GO
\end{equation*}
where we use notation $\GRing{R}$ for $R$-points of the scheme $\G$ given a $\mathbb{C}$-algebra $R$. We will use this point of view when we talk about the points in $\Gr_\G$.

The see that these are exactly the $\mathbb{C}$-points of $\Gr_\G$ (i.e. the pairs $\left( \mathcal{P}, \varphi \right)$ as in \refEq{EqGrAsModuli}), note that over $\Disk$ any $\G$-principle bundle is trivializable, so take one trivialization $\psi\colon \mathcal{P}_0 \xrightarrow{\sim} \mathcal{P}$. This gives a composition of isomorphisms
\begin{equation} \label{EqGrEquivalenceOfDefs}
	\mathcal{P}_0\vert_{\PDisk} 
	\xrightarrow{\psi\vert_{\PDisk} } 
	\mathcal{P}\vert_{\PDisk} 
	\xrightarrow{\varphi^{-1}} 
	\mathcal{P}_0\vert_{\PDisk}
\end{equation}
which is a section of $\mathcal{P}_0\vert_{\PDisk}$, i.e. an element of $\GK$. The change of trivialization $\psi$ pre-composes with a section of $\mathcal{P}_0$, in other words, pre-composes with an element of $\GO$. This gives a quotient by $\GO$ from the left.

\medskip

We restrict ourselves to the case of connected simple (possibly not simply-connected) $\G$ in what follows without losing much generality. Restating the results to allow for an arbitrary connected complex reductive group is straightforward.

From now on, we fix a connected simple complex group $\G$. Since it cannot cause confusion, we later omit $\G$ in the notation $\Gr_\G$ and just write $\Gr$.

\medskip

The group $\GK\rtimes \Gm$ naturally acts on $\Gr$. In the coset formulation, the $\GK$-action is given by left multiplication and $\Gm$-acts by scaling variable $t$ in $\K$ with weight $1$. In the moduli space description $\GK$ acts by changes of section $g \cdot \left( \mathcal{P}, \varphi \right) = \left( \mathcal{P}, \varphi g^{-1} \right)$ (these two actions are the same action if one takes into account identification \refEq{EqGrEquivalenceOfDefs}), $\Gm$ scales $\Disk$ such that the coordinate $t$ has weight $1$. We will later denote this $\Gm$ as $\LoopGm$ when we want to emphasize that we use this algebraic group and its natural action.

We will need actions by subgroups of this group, namely $\A \subset \G \subset \GO \subset \GK$. It will also be useful to consider extended torus $\T = \A \times \LoopGm$, where the $\LoopGm$-part comes from the second term in the product $\GK\rtimes \LoopGm$.

The canonical projection $\T\to\LoopGm$ gives a character of $\T$ which we call $\hbar$ (this explains the subscript $\hbar$ in the notation $\LoopGm$). Then the weight of coordinate $t$ on $\Disk$ is $\hbar$ by the construction of the $\LoopGm$-action.

\begin{remark}
We call the maximal torus of $\G$ by $\A$ to have notation similar to \cite{MOk}, i.e. $\T$ is the maximal torus acting on the variety and $\A$ is the subtorus of $\T$ preserving the symplectic form.
\end{remark}

\medskip

The partial flag variety $\G/\PP$ (where $\PP$ is parabolic) has a well-known decomposition by orbits of the $\B$-action called Schubert cells. The affine Grassmannian has a similar feature.

One can explicitly construct the fixed points of the $\A$-action. Given a cocharacter ${\colambda} \colon \Gm \to \A$, one can construct a map using natural inclusions
\begin{equation*}
	\PDisk 
	\hookrightarrow 
	\Gm 
	\xrightarrow{\colambda} 
	\A
	\hookrightarrow 
	\G
\end{equation*}
i.e. an element of $\GK$ which we denote by $t^{\colambda}$. Projecting it naturally to $\Gr$ we get an element $\Tfixed{\colambda} \in \Gr$.

Using these elements we define
\begin{equation*}
	\Gr^{\colambda} 
	= 
	\GO \cdot \Tfixed{\colambda}
\end{equation*}
as their orbits.

Let us present the main properties of these points and subspaces.

\begin{proposition} \label{PropGrCells}
	\leavevmode
	\begin{enumerate}
	\item \label{PartAFixed}
		$\Gr^\A = \bigsqcup\limits_{{\colambda} \in \CocharLattice{\A}} \lbrace \Tfixed{\colambda} \rbrace.$
	\item \label{PartTFixed}
		Moreover, $\Gr^\T = \Gr^\A.$
	\item \label{PartWInvariance}
		For any $w\in W$ one has $\Gr^{w{\colambda}} = \Gr^{\colambda}$.
	\item \label{PartStratification}
		$\Gr = \bigsqcup\limits_{{\colambda} \in \DominantCocharLattice{\A} } \Gr^{\colambda}.$
	\item \label{PartClosure}
		$\overline{\Gr^{\colambda}} = \bigsqcup\limits_{\genfrac{}{}{0pt}{}{{\comu} \leq {\colambda}}{{\comu} \in \DominantCocharLattice{\A} }} \Gr^{\comu}.$
	\item \label{PartGOOrbitDescription}
		As a $\G$-variety, $\Gr^{\colambda}$ is isomorphic to a the total space of a $\G$-equivariant vector bundle over the $\G$-orbit $\G\cdot \Tfixed{\colambda}$.
        \item \label{PartGOrbitDescription}
            As a $\G$-variety, $\G\cdot \Tfixed{\colambda}$ is a partial flag variety $\G/\PP^-_{\colambda}$ with parabolic $\PP^-_{\colambda}$ whose Lie algebra contains all roots $\wtalpha$ such that $\left\langle \wtalpha, \colambda \right\rangle \leq 0$.
	\item \label{PartFibersScaling}
		$\Disk$-scaling $\LoopGm$ in $\T$ acts on $\Gr^{\colambda}$ by scaling the fibers of the vector bundle (\ref{PartGOOrbitDescription}) with no zero weights.
	\item \label{PartSmoothness}
		$\Gr^{\colambda}$ is the smooth part of $\overline{\Gr^{\colambda}}$. In particular, $\overline{\Gr^{\colambda}}$ is smooth iff ${\colambda}$ is a minuscule coweight or zero.
	\end{enumerate}
\end{proposition}

\begin{proof}
    The main reference for most of the statements here is \cite{BD}. Part (\ref{PartStratification}) is 4.5.8 (see also 3.1.7 in \cite{CG} for a finite analog). Part (\ref{PartClosure}) is 4.5.12. Part (\ref{PartGOOrbitDescription}) is Lemma 9.1.5(iii). Part (\ref{PartGOrbitDescription}) is 9.1.3. Part (\ref{PartFibersScaling}) is Remark 9.1.6. Part (\ref{PartSmoothness}) follows from parts (\ref{PartGOOrbitDescription}) and (\ref{PartClosure}).
    
    For part (\ref{PartAFixed}) let us first show that for any $\A$-cocharacter $\colambda$ the point $\Tfixed{\colambda}$ is $\A$-fixed. $\A$ is commutative and $\A \subset \GO$, so for any $a \in \A$ we have
    \begin{equation*}
        a\Tfixed{\colambda} = a t^{\colambda} \GO = t^{\colambda} a \GO = t^{\colambda} \GO = \Tfixed{\colambda}.
    \end{equation*}
    This gives one inclusion in (\ref{PartAFixed}). To prove the other inclusion, let $p \in \Gr^{\A}$. By (\ref{PartStratification}) there is a dominant coweight $\colambda$, such that $p \in \Gr^{\colambda}$. Since $p$ is $\A$-fixed, it must be over an $\A$-fixed point in $\G/\PP^-_{\colambda}$. The $\A$-fixed points in $\G \cdot \Tfixed{\colambda} = \G/\PP^-_{\colambda}$ are of the form $\tilde{w}\Tfixed{\colambda}$ for some element $\tilde{w}$ in the normalizer $N(\A) \subset \G$ of the torus $\A$ in $\G$ (in other words, all fixed points on a flag variety are conjugated by the Weyl group $N(\A)/\A$, see Lemma 3.1.10 in \cite{CG}). So $p$ is in the fiber over $\tilde{w} \Tfixed{\colambda}$. The torus $\A$ acts on the fiber over $\tilde{w} \Tfixed{\colambda}$ with no zero weights since there are no zero weights in the tangent space of $\Gr^{\colambda}$ at $\Tfixed{\colambda}$ (see \refProp{PropGrCellTangentSpace}). Hence, $p =\tilde{w} \Tfixed{\colambda}$. Finally, note that 
    \begin{equation*}
        \tilde{w} \Tfixed{\colambda} = \tilde{w} t^{\colambda} \GO = (\tilde{w} \cdot t^{\colambda}) \tilde{w}^{-1} \GO = t^{w\colambda} \GO = \Tfixed{w \colambda},
    \end{equation*}
    where $w$ is the image of $\tilde{w}$ in the Weyl group $N(\A)/\A$. Here, we used 
    \begin{equation*}
        \tilde{w} \in N(\A) \subset \G \subset \GO.
    \end{equation*}
    This shows that $p$ is of the required form.
    
    For part (\ref{PartTFixed}) we need to show $\Gr^\A \subseteq \Gr^\T$, the other inclusion is obvious. In other words, all the points of $\Gr^{\A}$ are $\LoopGm$-fixed. By Remark 9.1.6 in \cite{BD} the $\G$-orbit of $\Tfixed{\colambda}$ is $\LoopGm$-fixed for any $\A$-cocharacter $\colambda$. In particular, $\Tfixed{\colambda}$ is $\LoopGm$-fixed.

    The part (\ref{PartWInvariance}) follows from the following computation. Fix an $\A$-cocharacter $\colambda$ and an element of the Weyl group $w \in W = N(\A)/\A$. We get
    \begin{equation*}
        \GO\cdot\Tfixed{w\colambda} = \GO\cdot\tilde{w} t^{\colambda} \tilde{w}^{-1} \GO = \GO t^{\colambda} \GO = \GO \cdot \Tfixed{\colambda},
    \end{equation*}
    where $\tilde{w}$ is any representative of $w$ in $N(\T)$. We again used $\tilde{w} \in \GO$.
\end{proof}

\medskip

\begin{corollary} \label{CorTCellSuructure}
	$\Gr$ admits a $\T$-invariant cell structure.
\end{corollary}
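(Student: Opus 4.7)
The plan is to refine the $\GO$-orbit decomposition $\Gr = \bigsqcup_{\colambda \in \DominantCocharLattice{\A}} \Gr^{\colambda}$ of Proposition \ref{PropGrCells}(2) by further decomposing each stratum into $\T$-invariant affine cells. First I would verify that each $\Gr^{\colambda}$ is itself $\T$-stable: the $\A \subset \G \subset \GO$ action preserves the $\GO$-orbit by construction, and $\LoopGm$ normalizes $\GO$ inside $\GK \rtimes \LoopGm$ while fixing the base point $\Tfixed{\colambda}$ (by Proposition \ref{PropGrFixedLocus}, since $\LoopGm \subset \T$), so
\begin{equation*}
	\sigma \cdot \Gr^{\colambda}
	=
	(\sigma \GO \sigma^{-1}) \cdot \sigma \cdot \Tfixed{\colambda}
	=
	\GO \cdot \Tfixed{\colambda}
	=
	\Gr^{\colambda}
\end{equation*}
for any $\sigma \in \LoopGm$.

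Next I would use the vector-bundle structure from Proposition \ref{PropGrCells}(4) to realize $\Gr^{\colambda}$ as the total space of a $\G$-equivariant vector bundle $\pi \colon \Gr^{\colambda} \to \G/\PP^-_{\colambda}$, and decompose the base via its standard Bruhat decomposition into Schubert cells $C_w \cong \AffSpace^{\ell(w)}$ indexed by minimal-length coset representatives $w \in W/W_{\PP^-_{\colambda}}$. Each $C_w$ is $\B$-invariant (hence $\A$-invariant) and contains a unique $\A$-fixed point $p_w$, which lifts to the $\T$-fixed point $\Tfixed{w \colambda}$ on the zero section of $\pi$.

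The technical heart of the argument is the $\T$-equivariant triviality $\pi^{-1}(C_w) \cong C_w \times F_w$, where $F_w := \pi^{-1}(p_w)$. Writing $C_w = U_w \cdot p_w$ for a unipotent subgroup $U_w \subset \G$ acting freely and transitively on $C_w$, the map $(u, f) \mapsto u \cdot f$ gives a trivialization of the vector bundle over $C_w$. This is $\A$-equivariant because $\A$ normalizes $U_w$ and acts linearly on $F_w$, and it is $\LoopGm$-equivariant because $\G \subset \GO$ consists of constant loops and hence commutes with the $\LoopGm$-action rescaling $t$; consequently $\LoopGm$ acts trivially on the $U_w$ factor and linearly on $F_w$ with nonzero weights by Proposition \ref{PropGrCells}(5). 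Combined with $U_w \cong \AffSpace^{\ell(w)}$ via the exponential map and $F_w \cong \AffSpace^{\rk \pi}$, this presents $\pi^{-1}(C_w)$ as a $\T$-invariant affine cell of dimension $\ell(w) + \rk \pi$.

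Taking the union of these cells over all $\colambda$ and $w$ produces the desired $\T$-invariant cell structure on $\Gr$. The step I expect to require the most care is the $\T$-equivariant triviality in the middle paragraph, particularly unpacking the linearity of the $\T$-actions on $F_w$ and on $U_w$ under the exponential identifications. A conceptually cleaner alternative would be to apply a Bialynicki-Birula-type decomposition directly to each smooth $\Gr^{\colambda}$ with respect to a generic cocharacter of $\A$, using the $\LoopGm$-contraction onto the zero section to compensate for the non-projectivity of $\Gr^{\colambda}$.
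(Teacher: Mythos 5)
Your proposal is correct and follows essentially the same route as the paper: decompose each $\GO$-orbit $\Gr^{\colambda}$ as a vector bundle over $\G/\PP^-_{\colambda}$, use the Schubert cells of the base, and observe that $\LoopGm$ merely scales the fibers. You supply more detail on the $\T$-equivariant trivialization over each Schubert cell than the paper does, but the underlying argument is the same.
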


\begin{proof}
	Each $\G/\PP^-_{\colambda}$ admits an $\A$-invariant cell structure, namely Schubert cells. $\A$-equivariant vector bundles over these cells give $\A$-invariant cells for each of $\Gr^{\colambda}$. Hence, we have an $\A$-invariant cell structure for $\Gr$.

	Moreover, this cell structure is invariant under the $\Disk$-scaling $\LoopGm$-action, because it just scales the fibers of the vector bundles. Finally, we get that this cell decomposition $\T$-invariant.
\end{proof}

\begin{corollary} \label{CorOrbitsInGrGr}
	Orbits of diagonal $\GK$-action on $\Gr\times\Gr$ are in bijection with dominant cocharacters.
\end{corollary}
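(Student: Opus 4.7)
The plan is to reduce the question to a statement about $\GO$-orbits on $\Gr$, which has already been established in \refProp{PropGrCells}. The key observation is that $\GK$ acts transitively on $\Gr$, since by definition $\Gr(\mathbb{C}) = \GK/\GO$. In particular, every orbit of the diagonal $\GK$-action on $\Gr \times \Gr$ contains a representative whose first coordinate is the base point $\Tfixed{0} = [\GO] \in \Gr$.

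Next, I would identify the stabilizer in $\GK$ of this base point with $\GO$. Hence choosing a representative of the above form sets up a bijection between $\GK$-orbits on $\Gr \times \Gr$ and $\GO$-orbits on $\Gr$. More precisely, the map sending a diagonal orbit to the $\GO$-orbit of the second coordinate of its normalized representative is well-defined and bijective: two points $(\Tfixed{0}, x)$ and $(\Tfixed{0}, y)$ lie in the same $\GK$-orbit if and only if there exists $g \in \GK$ with $g\cdot \Tfixed{0} = \Tfixed{0}$ and $g\cdot x = y$, i.e.\ if and only if $x$ and $y$ lie in the same $\GO$-orbit on $\Gr$.

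Finally, I would invoke \refProp{PropGrCells}(2), which decomposes $\Gr$ as a disjoint union
\begin{equation*}
    \Gr = \bigsqcup_{\colambda \in \DominantCocharLattice{\A}} \Gr^{\colambda}
\end{equation*}
of $\GO$-orbits indexed by dominant cocharacters. Composing this with the bijection from the previous step produces the desired bijection between $\GK$-orbits on $\Gr \times \Gr$ and $\DominantCocharLattice{\A}$. There is no real obstacle here; the only thing to be careful about is that the choice of representative in the first step is not unique, but the ambiguity is exactly absorbed by the $\GO$-action on the second coordinate, which is why one obtains $\GO$-orbits rather than $\GK$-orbits at the end.
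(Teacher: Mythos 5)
Your proof is correct and is essentially the paper's argument: the paper cites the general fact that diagonal $H$-orbits on $H/K \times H/K$ biject with $K\backslash H/K$, hence with $K$-orbits on $H/K$, and then applies \refProp{PropGrCells} with $H = \GK$, $K = \GO$; you simply prove that general fact explicitly via transitivity and the stabilizer of the base point before invoking the same proposition.
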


\begin{proof}
	First, recall that for any group $H$ and a subgroup $K$ one has a bijection
	\begin{equation*}
		\left\lbrace
			\begin{matrix}
				H\text{-diagonal orbits of}\\
				H/K \times H/K
			\end{matrix}
		\right\rbrace
		\simeq
		K\backslash H / K
		\simeq
		\left\lbrace
			\begin{matrix}
				K\text{-orbits of}\\
				H/K
			\end{matrix}
		\right\rbrace
	\end{equation*}
	Applying this for $H = \GK$ and $K = \GO$ and the fact that the rightmost set is in bijection with dominant cocharacters by \refProp{PropGrCells}, we get the statement.
\end{proof}

We write $L_1\xrightarrow{\colambda} L_2$ if $(L_1,L_2)\in \Gr \times \Gr$ is in any $\GK$-orbit indexed by a dominant coweight $\comu$, such that ${\comu} \leq {\colambda}$. We allow orbits with indices ${\comu} \leq {\colambda}$, not just the index $\colambda$, because they appear in the closure of $\GO$-orbits in $\Gr$, see \refProp{PropGrCells} (\ref{PartClosure}).

\begin{remark}
	Explicitly, one says that $L_1\xrightarrow{\colambda} L_2$ iff picking representatives $L_1 = g_1 \GO$, $L_2 = g_2 \GO$ with $g_1,g_2 \in \GK$ we have
	\begin{equation*}
		\GO g^{-1}_1 g_2 \GO 
		\subset 
		\GO \Tfixed{{\comu}} 
		\text{ for some } 
		{\comu} \leq {\colambda}
	\end{equation*}
	or, equivalently,
	\begin{equation*}
		g^{-1}_1 g_2 \GO 
		\in 
		\overline{ \GO \cdot \Tfixed{\colambda} }
	\end{equation*}
	Note that this independent on the choice of the representatives of $L_1$, $L_2$.
\end{remark}

\subsubsection{Transversal slices}
As one can see, the elements $g(t)$ of $\GO \subset \G$ can be characterized as the elements of $g(t)\in\GK$ such that $\lim\limits_{t\to 0} g(t) \in \G \subset \GK$.

We can use a similar way to define a subgroup transversal to $\GO$. Let
\begin{equation*}
	\GOne 
	= 
	\left\lbrace 
		g(t)\in\G 
		\bigg\vert \; 
		\lim_{t\to \infty} g(t) 
		= 
		\UnitG 
	\right\rbrace
\end{equation*}
This is a subgroup of $\GK$.

$\GO$ and $\GOne$ are transversal in the sense that
\begin{equation*}
	T_\UnitG \GK 
	= 
	T_\UnitG \GO 
	\oplus 
	T_\UnitG \GOne
\end{equation*}
because
\begin{align*}
	&T_\UnitG \GK = \gK{t} \\
	&T_\UnitG \GO = \gO{t} \\
	&T_\UnitG \GOne = \gOne{t}
\end{align*}

One uses $\GOne$ to analyze how the $\GO$-orbits in $\Gr$ are attached to each other. The ind-varieties
\begin{equation*}
	\Gr_{\comu} 
	= 
	\GOne \cdot \Tfixed{{\comu}}
\end{equation*}
is the slice transversal to $\Gr^{\comu}$ at $\Tfixed{{\comu}}$. In what follows, we use the convention that ${\comu}$ in the notation $\Gr_{\comu}$ is always dominant.

The main object of interest for us are the following varieties
\begin{equation*}
	\Gr^{\colambda}_{\comu} 
	= 
	\overline{\Gr^{\colambda}} 
	\cap 
	\Gr_{\comu}
\end{equation*}
These are non-empty iff ${\comu} \leq {\colambda}$.

\begin{proposition} \label{PropSlicesProperties}
	\leavevmode
	\begin{enumerate}
	\item \label{PartTInvariance}
		$\Gr^{\colambda}_{\comu}$ is a $\T$-invariant subscheme of $\Gr$;

	\item \label{PartFixedPoint}
		The only $\A$-fixed (and $\T$-fixed) point of $\Gr^{\colambda}_{\comu}$ is $\Tfixed{{\comu}}$;

	\item \label{PartContracting}
		The $\Disk$-scaling $\LoopGm$-action contracts $\Gr^{\colambda}_{\comu}$ to $\Tfixed{{\comu}}$.
	
	\item \label{PartVariety}
		$\Gr^{\colambda}_{\comu}$ is an affine variety of dimension $\langle 2\wtrho , {\colambda} - {\comu} \rangle$.
\end{enumerate}
\end{proposition}

\begin{proof}
    The conditions defining $\Gr^{\colambda}$ and $\Gr_{\comu}$ are $\T$-invariant, so (\ref{PartTInvariance}) holds. The $\LoopGm$-action on $\GOne$ contracts it to $e$ and $\Tfixed{\mu}$ is $\LoopGm$-fixed (by \refProp{PropGrCells} (\ref{PartTFixed})), so we get part (\ref{PartContracting}). By \refProp{PropGrCells} (\ref{PartAFixed}), an $\A$-fixed point of $\Gr^{\colambda}_{\comu}$ must be of the form $\Tfixed{\conu}$ for some $\A$-cocharacter $\conu$. However, $\Tfixed{\conu} \in \Gr^{\colambda}_{\comu}$ for $\conu \neq \comu$ contradicts part (\ref{PartContracting}), so the only $\A$-fixed point of $\Gr^{\colambda}_{\comu}$ is $\Tfixed{\comu}$. By \refProp{PropGrCells} (\ref{PartTFixed}) $\Tfixed{\comu}$ is also $\T$-fixed. This proves part (\ref{PartFixedPoint}). $\GOne$ is ind-affine, so $\Gr^{\colambda}_{\comu}$ is affine. The dimensions follow from $\dim \Gr^{\colambda} = \langle 2\wtrho , {\colambda} \rangle$ for a dominant $\colambda$ (see 4.5.8, formula (229) in \cite{BD}), and $\Gr^{\colambda}_{\comu}$ being the transversal slice in the sense of Lemma 2.9(3) in \cite{BF1}. This proves (\ref{PartVariety}).
\end{proof}

\subsection{Poisson structures and symplectic resolutions}

\subsubsection{Poisson structure}

The affine Grassmannian has a natural Poisson structure. Let us describe it using the Manin triples introduced by V. Drinfeld \cites{Dr1,Dr2}. First recall that $\GK$ is a Poisson ind-group. It is given by a Manin triple
\begin{equation*}
	\left(
		\gO{t},
		\gOne{t},
		\gK{t}
	\right),
\end{equation*}
if we equip $\gK{t}$ with the standard $\G$-invariant scalar product
\begin{equation} \label{EqPairing}
	\left(
		x t^n,
		y t^m
	\right)
	=
	\CorootScalar{x}{y}
	\delta_{n+m+1,0},
\end{equation}
where $\delta_{\bullet,\bullet}$ is the Kronecker delta. We find that the subalgebras $\gO{t}$, $\gOne{t}$ are isotropic and the pairing between them is nondegenerate. Thus, $\gK{t}$ is a Lie bialgebra and $\gO{t}$ is a Lie subbialgebra.

We have that $\GK$ is a Poisson ind-group and $\GO$ is its Poisson subgroup. By a theorem of Drinfeld \cite{Dr2} the quotient $\Gr = \GK/\GO$ is Poisson. The explicit construction shows that since the pairing \refEq{EqPairing} has weight $\hbar$, we have the following important property
\begin{proposition}
    The Poisson bivector on $\Gr$ is a $\T$-eigenvector with weight $-\hbar$.
\end{proposition}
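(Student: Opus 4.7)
The plan is to trace the $\T$-weight of the Poisson bivector $\pi$ through Drinfeld's construction, reducing the claim to a direct weight computation on the classical $r$-matrix tensor dual to the pairing \refEq{EqPairing}.

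First I would make the $r$-matrix explicit. Picking a basis $\{x_a\}$ of $\mathfrak{g}$ and its dual basis $\{x^a\}$ under $\CorootScalar{\bullet}{\bullet}$ (extended $\G$-invariantly to $\mathfrak{g}$), the element of $\gK{t} \,\hat{\otimes}\, \gK{t}$ dual to \refEq{EqPairing} with respect to the decomposition $\gK{t} = \gO{t} \oplus \gOne{t}$ is
\begin{equation*}
    r \;=\; \sum_{a}\sum_{n \geq 0} x_a t^n \otimes x^a t^{-n-1} \;\in\; \gO{t} \,\hat{\otimes}\, \gOne{t}.
\end{equation*}
The factor $\sum_a x_a \otimes x^a$ is $\mathrm{Ad}_\A$-invariant since it is the Casimir of a $\G$-invariant form on $\mathfrak{g}$; under $\LoopGm$, each monomial $t^n \otimes t^{-n-1}$ carries weight $n\hbar + (-n-1)\hbar = -\hbar$. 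Hence $r$ is homogeneous of $\T$-weight $-\hbar$.

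Next I would transfer this weight to the Poisson bivector on $\GK$, which in the Sklyanin form reads $\pi_g = (L_g)_* r - (R_g)_* r$. The $\A$-invariance of $r$ together with $\A$ acting on $\GK$ by left multiplication gives $\A$-equivariance of $\pi$; the $\LoopGm$-action on $\GK$ is the loop automorphism $g(t) \mapsto g(\sigma t)$, a group automorphism intertwining with both left and right translation. Thus $\pi$ on $\GK$ is $\T$-homogeneous of weight $-\hbar$. Finally, $\GO$ is $\T$-stable and a Poisson subgroup of $\GK$ (its Lie algebra $\gO{t}$ is isotropic in the Manin triple), so by Drinfeld's quotient theorem $\pi$ descends to a well-defined Poisson bivector on $\Gr = \GK/\GO$, and the $\T$-equivariance of the quotient map preserves the weight.

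The principal subtlety is the descent of the Poisson structure from $\GK$ to $\Gr$, but this is precisely the content of Drinfeld's theorem already cited. Once the Poisson structure on $\Gr$ is in place, the weight-tracking step is formal: the only source of $\LoopGm$-weight in the entire construction is the pairing \refEq{EqPairing}, or equivalently the $r$-matrix, contributing the single factor $-\hbar$ that propagates to $\pi$.
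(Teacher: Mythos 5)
Your proof is correct and follows essentially the same route as the paper: the paper's argument is precisely that the pairing \refEq{EqPairing} is $\A$-invariant and of degree $-1$ in $t$, so the bivector inherits weight $-\hbar$. You have simply made this explicit by writing out the dual $r$-matrix, its weight computation, and the equivariance of the Sklyanin bivector under the descent to $\Gr = \GK/\GO$, which is a more detailed but not substantively different argument.
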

\begin{proof}
    The pairing is invariant with respect to $\A$ and is of degree $-1$ in $t$, so the $\T$ torus acts with weight $-\hbar$.
\end{proof}

Slices $\Gr^{\colambda}_{\comu} \subset \Gr$ are Poisson subvarieties. Moreover, their smooth parts are symplectic leaves; however, not all symplectic leaves have this form: they might be shifted by the $\G$-action.

\subsubsection{Resolutions of slices}

We want to study the geometry of the symplectic resolutions of $\Gr^{\colambda}_{\comu}$. It turns out that these spaces do not always possess a symplectic resolution. We follow \cite{KWWY} to construct resolution in special cases.

For construction of resolutions, it is convenient to assume that the cocharacter lattice is as large as possible, that is, equal to the coweight lattice. This happens when $\G$ is of adjoint type (the other extreme is simply-connected). This assumption does not change the space we are allowed to consider: If $\widetilde{\G} \to \G$ is a finite cover, then there is a natural inclusion $\Gr_{\G} \hookrightarrow \Gr_{\widetilde{\G}}$ where the image is a collection of connected components of $\Gr_{\widetilde{\G}}$. Since the slices are connected, they belong only to one component and can be thought of as defined for $\widetilde{\G}$. Last but not least, the $\T$-equivariant structures are compatible if one takes into account that the torus for $\G$ is a finite cover over the torus for $\widetilde{\G}$.

From now on, we assume that $\G$ is a connected simple group of adjoint type (so the center is trivial) and make no distinction between cocharacters and coweights unless otherwise stated.

Consider a sequence of dominant weights $\ucolambda = \left( \colambda_1, \dots \colambda_l  \right)$. Define the (closed) convolution product:

\begin{equation*}
	\Gr^{ \ucolambda } =
	\left\lbrace 
		\left(
			L_1,\dots,L_l
		\right) 
		\in \Gr^{\times l}
		\left\vert
			\UnitG \cdot \GO
			\xrightarrow{\colambda_1}
			L_1 
			\xrightarrow{\colambda_2} 
			\dots 
			\xrightarrow{\colambda_{l-1}}
			L_{l-1} 
			\xrightarrow{\colambda_{l}}
			L_l
		\right.
	\right\rbrace.
\end{equation*}

We can add
\begin{equation} \label{EqTrick}
    L_0 = \UnitG \cdot \GO
\end{equation}
at the beginning of the tuple 
\begin{equation*}
    \left(
        L_0,L_1,\dots,L_l
    \right) 
\end{equation*}
to write the conditions uniformly 
\begin{equation*}
	L_0 
	\xrightarrow{\colambda_1} 
	L_1 
	\xrightarrow{\colambda_2} 
	\dots 
	\xrightarrow{\colambda_{l-1}} 
	L_{l-1} 
	\xrightarrow{\colambda_{l}} 
	L_l.
\end{equation*}
Then $\Gr^{ \ucolambda }$ is a subset of
\begin{equation*}
    \lbrace 
        L_0
    \rbrace
    \times
    \Gr^{\times l}
    \subset
    \Gr^{\times (l+1)}.
\end{equation*}
instead of $\Gr^{\times l}$. Changing the ambient space does not matter to us, so we use this trick for the rest of the paper.

\medskip

There's a map
\begin{align*}
	m'_{ \ucolambda } 
	\colon
	\Gr^{ \ucolambda } 
	&\to 
	\Gr,
	\\
	(L_0,\dots,L_l)
	&\mapsto 
	L_l.
\end{align*}

One can show that the image of $m'_{ \ucolambda }$ is inside $\overline{\Gr^{\colambda}} $, where ${\colambda} = \sum_i \colambda_i$, so we can actually make a map
\begin{equation*}
	m_{ \ucolambda } 
	\colon
	\Gr^{ \ucolambda }
	\to
	\overline{\Gr^{\colambda}}.
\end{equation*}

Then let us define
\begin{equation*}
	\Gr^{\ucolambda}_\mu 
	= 
	m_{ \ucolambda }^{-1}
	\left( 
		\Gr^{\colambda}_{\comu}
	\right).
\end{equation*}

These spaces and maps are useful for constructing symplectic resolutions of singular affine $\Gr^{\colambda}_{\comu}$. Let all $\colambda_i$ in $\ucolambda$ be fundamental coweights (which means this is a maximal sequence that splits ${\colambda} = \sum_i \colambda_i$ into non-zero dominant coweights). We will also need the corresponding irreducible highest weight representations $V(\colambda_k)$ of the Langlands dual group $\GLangDual $. Then there is the following result (see \cite{KWWY}, Theorem 2.9)

\begin{theorem} \label{ThmSymplecticResolutionExistence}
	The followings are equivalent
	\begin{enumerate}
	\item
		$\Gr^{\colambda}_{\comu}$ possesses a symplectic resolution;
	\item
		$\Gr^{ \ucolambda }_{\comu}$ is smooth and thus $m_{ \ucolambda }$ gives a symplectic resolution of singularities of $\Gr^{\colambda}_{\comu}$.
	\item
		There do not exist coweights $\conu_1, \dots, \conu_l$ such that $\sum_i \conu_i = {\comu}$, for all $k$, $\conu_k $ is a weight of $V(\colambda_k)$ and for some $k$ , $\conu_k$ is not an extremal weight of $V(\colambda_k)$ (that is, not in the Weyl orbit of $\colambda_k$).
	\end{enumerate}
\end{theorem}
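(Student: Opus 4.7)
The plan is to prove the three equivalences by handling the non-trivial directions separately, with the bulk of the work going into $(2) \Leftrightarrow (3)$. The implication $(2) \Rightarrow (1)$ is almost formal: $m_{\ucolambda}$ is proper because its source is an iterated $\overline{\Gr^{\colambda_i}}$-fibration; it is birational because on the open subset of $\Gr^{\ucolambda}_{\comu}$ where each intermediate arrow $L_{k-1} \xrightarrow{\colambda_k} L_k$ is a true $\GK$-transition of type $\colambda_k$ rather than of strictly smaller type, $m_{\ucolambda}$ is an isomorphism onto a dense open of $\Gr^{\colambda}_{\comu}$; and it is Poisson by the standard Drinfeld/Manin-triple construction of the bracket on convolution varieties. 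Smoothness of $\Gr^{\ucolambda}_{\comu}$ then packages all this into a genuine symplectic resolution.

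For $(1) \Rightarrow (2)$ I would appeal to the rigidity results of Namikawa and B.~Fu for symplectic resolutions of conical symplectic singularities: any two projective symplectic resolutions are diffeomorphic, so the existence of any symplectic resolution forces the natural candidate $m_{\ucolambda}$ also to be one, hence forces $\Gr^{\ucolambda}_{\comu}$ to be smooth. The conicity hypothesis is supplied by the $\LoopGm$-contraction of \refProp{PropSlicesProperties}(3), and normality of $\Gr^{\colambda}_{\comu}$ is likewise covered there.

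The substance is $(2) \Leftrightarrow (3)$, which I would attack via the $\T$-fixed geometry of $\Gr^{\ucolambda}_{\comu}$. Propagating \refProp{PropGrFixedLocus} up the convolution tower, the $\T$-fixed points of $\Gr^{\ucolambda}_{\comu}$ correspond bijectively to tuples $(\conu_1, \dots, \conu_l)$ with $\sum_k \conu_k = \comu$ and with each partial-sum transition $\Tfixed{\sum_{j<k}\conu_j} \xrightarrow{\colambda_k} \Tfixed{\sum_{j \leq k}\conu_j}$ admissible; by geometric Satake this admissibility at step $k$ is exactly the statement that $\conu_k$ is a weight of $V(\colambda_k)$. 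Since $\LoopGm$ contracts $\Gr^{\ucolambda}_{\comu}$ to its $\T$-fixed locus, smoothness of the total space is equivalent to smoothness at each of these fixed points. A local analysis of the convolution, combined with \refProp{PropGrCells}(6), then shows that such a fixed point is smooth iff, at every step $k$, the point $\Tfixed{\conu_k}$ lies in the smooth open stratum $\Gr^{\colambda_k}$ of $\overline{\Gr^{\colambda_k}}$, which is to say iff each $\conu_k$ is an extremal weight of $V(\colambda_k)$. Negating this description recovers condition (3) exactly.

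The principal obstacle is making the phrase \emph{local analysis of the convolution} rigorous, since convolution is a twisted rather than a plain product. I would address it by trivializing the relevant $\GO$-torsor in a $\T$-equivariant formal neighborhood of the fixed point (in type $A$, using the Mirkovic--Vybornov slice presentation), so that the formal neighborhood of $(\conu_1, \dots, \conu_l)$ inside $\Gr^{\ucolambda}_{\comu}$ decouples into $l$ independent local factors at the points $\Tfixed{\conu_k} \in \overline{\Gr^{\colambda_k}}$. Once this local splitting is in place, smoothness at the fixed point reduces to $l$ independent instances of \refProp{PropGrCells}(6), which completes the equivalence.
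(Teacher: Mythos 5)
First, a point of reference: the paper does not prove this statement at all --- it is quoted from \cite{KWWY} (Theorem 2.9) and used as a black box --- so there is no internal proof to compare against, and your attempt has to be judged on its own terms. On those terms, your treatment of $(2)\Leftrightarrow(3)$ is essentially the correct (and the standard) argument: fixed points of $\Gr^{\ucolambda}_{\comu}$ are tuples of weights of the $V(\colambda_k)$ by \refProp{PropSliceFixedLocus}, the singular locus of the convolution space is the complement of the open convolution stratum by \refProp{PropGrCells}, a nonempty closed $\T$-invariant subvariety must contain a $\T$-fixed point (this is \refLem{LemInvSubvarietyHasFixedPoints}; note $\LoopGm$ contracts to the \emph{central fiber}, not to the fixed locus, so the Borel fixed-point step is genuinely needed), and transversality of $\Gr_{\comu}$ to the orbit stratification transfers smoothness between $\Gr^{\ucolambda}$ and $\Gr^{\ucolambda}_{\comu}$. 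The local trivialization of the $\GO$-torsor that you flag as the main technical obstacle is indeed how one decouples the steps. In $(2)\Rightarrow(1)$ you still owe the nondegeneracy of the extended $2$-form on all of $\Gr^{\ucolambda}_{\comu}$ (e.g.\ via semismallness of $m_{\ucolambda}$); smoothness plus proper birational does not by itself ``package'' a symplectic resolution.

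The genuine gap is $(1)\Rightarrow(2)$. The rigidity statement you invoke --- any two projective symplectic resolutions are diffeomorphic / deformation equivalent --- is a statement about objects that already \emph{are} symplectic resolutions; it cannot force the particular candidate $m_{\ucolambda}\colon\Gr^{\ucolambda}_{\comu}\to\Gr^{\colambda}_{\comu}$ to be one when its source is exactly what you are trying to prove is smooth. As written the step is circular. To salvage a rigidity argument you would need Namikawa's theorem in the form ``a symplectic resolution exists iff some (equivalently every) $\mathbb{Q}$-factorial terminalization is smooth,'' \emph{together with} a proof that $\Gr^{\ucolambda}_{\comu}$ is a $\mathbb{Q}$-factorial terminalization of $\Gr^{\colambda}_{\comu}$ --- a substantive claim ($\mathbb{Q}$-factoriality and terminality of the possibly singular convolution space) that you do not address. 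The argument in \cite{KWWY} avoids this entirely: it proves the contrapositive $\neg(3)\Rightarrow\neg(1)$ by factoring a transverse slice inside $\Gr^{\colambda}_{\comu}$ at the stratum determined by a bad tuple $(\conu_1,\dots,\conu_l)$ into a product of slices $\Gr^{\colambda_k}_{\conu_k^+}$ for the fundamental coweights $\colambda_k$, showing directly that such a slice with $\conu_k$ non-extremal admits no symplectic resolution, and using that admitting a symplectic resolution is inherited by transverse slices of symplectic leaves. Without one of these two inputs, the implication $(1)\Rightarrow(2)$ remains unproved.
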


\begin{corollary}
	If all $\colambda_i$ are minuscule, then the three statements of \refThm{ThmSymplecticResolutionExistence} hold.
\end{corollary}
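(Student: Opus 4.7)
The plan is to apply \refThm{ThmSymplecticResolutionExistence} directly, reducing the problem to verifying condition (3), which turns out to hold for trivial reasons in the minuscule case.

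Recall that a dominant coweight $\colambda$ is called \emph{minuscule} precisely when every weight of the irreducible highest-weight representation $V(\colambda)$ of $\GLangDual$ lies in a single Weyl orbit, namely the orbit of $\colambda$ itself. Equivalently, every weight of $V(\colambda)$ is extremal. This is the only input from representation theory that the argument requires, beyond the theorem being applied.

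Now assume all $\colambda_i$ in $\ucolambda$ are minuscule. To verify condition (3) of \refThm{ThmSymplecticResolutionExistence}, suppose $\conu_1, \dots, \conu_l$ are any coweights with $\sum_i \conu_i = \comu$ and with each $\conu_k$ a weight of $V(\colambda_k)$. By the characterization of minuscule coweights recalled above, every weight of $V(\colambda_k)$ is automatically extremal, so each $\conu_k$ is extremal. Thus the hypothesis ``for some $k$, $\conu_k$ is not an extremal weight of $V(\colambda_k)$'' never occurs, and the nonexistence statement in (3) holds vacuously.

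By the equivalence in \refThm{ThmSymplecticResolutionExistence}, statements (1) and (2) also hold: $\Gr^{\ucolambda}_{\comu}$ is smooth and $m_{\ucolambda}\colon \Gr^{\ucolambda}_{\comu} \to \Gr^{\colambda}_{\comu}$ is a symplectic resolution. The argument contains no real obstacle, since it is entirely a bookkeeping exercise combining the definition of minuscule with the cited theorem; any genuine difficulty is already absorbed into the proof of \refThm{ThmSymplecticResolutionExistence} in \cite{KWWY}.
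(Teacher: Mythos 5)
Your argument is correct and is exactly the intended one: condition (3) of \refThm{ThmSymplecticResolutionExistence} holds vacuously because, by definition, a minuscule coweight $\colambda_k$ has all weights of $V(\colambda_k)$ in the single Weyl orbit $W\colambda_k$, so no non-extremal weight $\conu_k$ can ever occur. The paper treats this as immediate and gives no separate proof, so there is nothing further to compare.
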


\begin{corollary}
	If $G = \PSL{n}$, then all fundamental coweights are minuscule and for all dominant coweights ${\comu}$, ${\colambda}$ of $\PSL{n}$ satisfying ${\comu} \leq {\colambda}$ the three statements of \refThm{ThmSymplecticResolutionExistence} hold.
\end{corollary}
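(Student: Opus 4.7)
The plan is to reduce this corollary immediately to the previous one by verifying the minuscule claim for type $A_{n-1}$, and then invoking \refThm{ThmSymplecticResolutionExistence}.

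First I would recall that for $\G = \PSL{n}$, the Langlands dual group is $\GLangDual = \SL{n}$, whose fundamental representations $V(\colambda_k)$ are the exterior powers $\Lambda^k \mathbb{C}^n$ of the standard representation for $k = 1, \dots, n-1$. Each $\Lambda^k \mathbb{C}^n$ has a basis indexed by $k$-element subsets of $\{1,\dots,n\}$, and the symmetric group $W = S_n$ acts transitively on this basis. Hence all weights of $V(\colambda_k)$ lie in a single Weyl orbit, namely that of the highest weight $\colambda_k$. This is precisely the statement that every fundamental coweight of $\PSL{n}$ is minuscule.

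Now, given any dominant coweight ${\colambda}$, decompose it as ${\colambda} = \sum_{i} \colambda_i$ with each $\colambda_i$ a fundamental coweight (as needed in the setup preceding \refThm{ThmSymplecticResolutionExistence}). By the previous corollary, minusculeness of every $\colambda_i$ already guarantees that the three equivalent conditions of \refThm{ThmSymplecticResolutionExistence} hold for this sequence $\ucolambda$ and any dominant ${\comu} \leq {\colambda}$; indeed, since every weight of $V(\colambda_k)$ is extremal, the obstruction in condition (3) is vacuous.

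There is no genuine obstacle here: the only point that merits attention is the bookkeeping that the fundamental coweights of $\PSL{n}$ (rather than $\SL{n}$) correspond on the dual side to fundamental weights of $\SL{n}$, which is automatic once one notes that the adjoint-type assumption made before \refThm{ThmSymplecticResolutionExistence} identifies the cocharacter lattice of $\A \subset \PSL{n}$ with the full coweight lattice, dual to the weight lattice of $\SL{n}$.
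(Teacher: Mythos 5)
Your proposal is correct and matches the intended argument: the paper states this corollary without proof, and the expected justification is exactly what you give — the fundamental representations of $\GLangDual = \SL{n}$ are the exterior powers $\Lambda^k\mathbb{C}^n$, whose weights form a single Weyl orbit, so every fundamental coweight of $\PSL{n}$ is minuscule and the preceding corollary applies. The remark that condition (3) of \refThm{ThmSymplecticResolutionExistence} is then vacuous is also the right way to see it.
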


\begin{corollary}
	If $G$ is not $\PSL{n}$, then there exist dominant coweights ${\comu}$, ${\colambda}$ of $G$ satisfying ${\comu} \leq {\colambda}$ such that the three statements of \refThm{ThmSymplecticResolutionExistence} \textbf{are not} satisfied.
\end{corollary}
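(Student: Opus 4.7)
The plan is to exhibit, for $\G$ of adjoint type but not isomorphic to any $\PSL{n}$, an explicit pair of dominant coweights $({\colambda}, {\comu})$ with ${\comu} \leq {\colambda}$ for which condition (3) of \refThm{ThmSymplecticResolutionExistence} fails. Under the running assumptions $\G$ is simple, adjoint, and not of type $A$, so its Langlands dual $\GLangDual$ is simply connected and also not of type $A$. By the standard classification of minuscule fundamental representations of the simply-connected simple groups (the non-type-$A$ classical and exceptional types $B$--$G$ each admit at least one non-minuscule fundamental weight), there is a fundamental coweight $\colambda_\ast$ whose associated irreducible $\GLangDual$-representation $V(\colambda_\ast)$ contains a weight outside the Weyl orbit of $\colambda_\ast$; equivalently, $\colambda_\ast$ is not minuscule.

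I would then pass to a dominant non-extremal weight. Since the multiset of weights of $V(\colambda_\ast)$ is $W$-stable, the $W$-orbit of any non-extremal weight contains a dominant representative ${\comu}$. Standard highest-weight theory gives ${\comu} \leq \colambda_\ast$ in the dominance order, and non-extremality forces the inequality to be strict; in particular ${\comu}$ is dominant with ${\comu} \leq \colambda_\ast$ as required.

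Finally, take ${\colambda} = \colambda_\ast$ and $\ucolambda = (\colambda_\ast)$, the unique maximal decomposition of ${\colambda}$ into fundamental coweights, of length $l = 1$. The singleton tuple $(\conu_1) := ({\comu})$ witnesses the negation of (3): $\sum_i \conu_i = {\comu}$, $\conu_1 = {\comu}$ is a weight of $V(\colambda_1) = V(\colambda_\ast)$, and $\conu_1$ is not extremal by construction. By the equivalence of (1), (2), (3) in \refThm{ThmSymplecticResolutionExistence}, none of the three statements holds for this pair, which is precisely the assertion.

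The only substantive input is the classification result used to locate $\colambda_\ast$ outside type $A$; the remainder is a direct unpacking of the negated hypothesis of the theorem against the chosen tuple. The one subtlety worth spelling out is the compatibility of the dominance order on weights of $\GLangDual$ (where ${\comu} \leq \colambda_\ast$ is read off from the weight diagram of $V(\colambda_\ast)$) with the order on dominant coweights of $\G$ used in the theorem, which is tautological once one identifies coweights of $\G$ with weights of $\GLangDual$.
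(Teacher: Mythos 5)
Your proof is correct and follows the argument the paper evidently intends (it states this corollary without proof, immediately after noting that type $A$ is exactly the case where all fundamental coweights are minuscule): pick a non-minuscule fundamental coweight $\colambda_\ast$, which exists precisely outside type $A$, take a dominant non-extremal weight ${\comu}$ of $V(\colambda_\ast)$, and observe that the singleton tuple $(\conu_1)=({\comu})$ violates condition (3) of \refThm{ThmSymplecticResolutionExistence} for $\ucolambda=(\colambda_\ast)$. All the supporting facts you invoke (existence of a non-minuscule fundamental weight outside type $A$, $W$-stability of the weight multiset, ${\comu}\leq\colambda_\ast$ with strict inequality by non-extremality) are standard and correctly applied.
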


From now on, we assume that the pair of dominant coweights ${\comu} \leq {\colambda}$ is such that the statements of \refThm{ThmSymplecticResolutionExistence} hold.

There are important special cases of these symplectic resolutions.

\begin{example}
	Let $G=\PSL{2}$, then the cocharacter and coroot lattices are the character (or, equivalently, weight) and root lattices of $\SL{2}$, respectively. For any integer $n\geq 1$ by setting $\colambda = (n+1)\coomega$ and $\comu = (n-1)\coomega$ we get that $\Gr^{\colambda}_{\comu}$ is $\mathcal{A}_n$-singularity and the map $\Gr^{\left(\coomega, \dots, \coomega \right)}_{\comu} \to \Gr^{\colambda}_{\comu}$ is a symplectic resolution of singularity.
\end{example}

\begin{example}
	Let $\conu$ be a minuscule coweight of $\G$ and $\iota$ be the Cartan involution and set $\colambda = \conu + \iota\conu $, $\comu = 0$. Then we get a resolution $\Gr^{\left(\conu, \iota\conu \right)}_0 \to \Gr^{\conu+ \iota\conu}_0$, which is the Springer resolution. Here $\Gr^{\left(\conu, \iota\conu \right)}_0 \cong T^*\left( \G/\PP^-_\conu \right)$ (the cotangent bundle of a partial flag variety), where $\PP^-_\conu$ is the maximal parabolic corresponding to $\conu$ (i.e. the Lie algebra of $\PP^-_\conu$ has all roots $\wtalpha$ of $\G$ satisfying $\left\langle \conu, \wtalpha \right\rangle \leq 0$). $\Gr^{\conu+ \iota\conu}_0$ is the affinization.

\end{example}

\begin{example}
	Let $G = \PSL{n}$, $\coomega_1$ be the highest weight of the defining representation of $\SL{n}$ (so it is a $\PSL{n}$-coweight). Set also $\colambda = n \coomega_1$, $\underline{\colambda} = \left( \coomega_1, \dots, \coomega_1 \right)$, $l=n$, and ${\comu} = 0$. Then $\Gr^{\left(\coomega_1, \dots, \coomega_1 \right)}_0 \cong T^*\left( \G/\B \right)$ is the cotangent bundle of full flag variety, $\Gr^{n \coomega_1}_0$ is the nilpotent cone and $\Gr^{\left(\coomega_1, \dots, \coomega_1 \right)}_0 \to \Gr^{n \coomega_1}_0$ is the Springer resolution.
\end{example}

\subsection{Group action on resolutions}

Recall that $\GK\rtimes\LoopGm$ acts on $\Gr$, so it also acts diagonally on $\Gr^{\times l}$ for any $l$.

\begin{proposition} \label{PropResolution}
	\leavevmode
	\begin{enumerate}
	\item
		For any sequence $\ucolambda$ the subscheme $\Gr^{ \ucolambda } \subset \Gr^{\times l}$ is $\GO\rtimes\LoopGm$-invariant
	\item 
		The resolution of singularities $m_{ \ucolambda } \colon \Gr^{ \ucolambda }_{\comu} \to \Gr^{\colambda}_{\comu}$ is $\T$-equivariant.
	\end{enumerate}
\end{proposition}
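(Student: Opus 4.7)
The plan is to reduce both parts to orbit-invariance statements already built up in the excerpt. For part (1), I will use the moduli-theoretic description of $\Gr^{\ucolambda}$: it is cut out inside $\Gr^{\times l}$ by two kinds of conditions --- the basepoint condition $L_0 = \UnitG \cdot \GO$, and the pairwise relations $L_{i-1} \xrightarrow{\colambda_i} L_i$. By \refCor{CorOrbitsInGrGr}, each of the latter amounts to requiring $(L_{i-1}, L_i)$ to lie in a finite union of $\GK$-orbits on $\Gr\times\Gr$ (indexed by $\comu \leq \colambda_i$); these orbits are by construction stable under the diagonal $\GK$-action, hence in particular under the diagonal $\GO$-action. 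The basepoint condition is preserved because $\UnitG \cdot \GO$ is $\GO$-fixed in $\Gr$, so diagonal $\GO$-invariance of $\Gr^{\ucolambda}$ follows immediately.

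The $\LoopGm$-factor needs a separate, small check. Its action on $\Gr$ is induced by the $\mathbb{C}$-algebra automorphism of $\K$ sending $t\mapsto\hbar t$. I will verify that this automorphism preserves $\GO$ (clear, since $\mathbb{C}[[t]]$ is sent to itself) and that it sends each double coset $\GO \cdot \Tfixed{\colambda} \cdot \GO$ to itself, because $\Tfixed{\colambda}$ is sent to $\colambda(\hbar)\cdot\Tfixed{\colambda}$ and $\colambda(\hbar)\in\A\subset\GO$. Consequently the induced diagonal $\LoopGm$-action on $\Gr\times\Gr$ preserves every $\GK$-orbit label; combined with the fact that $\UnitG\cdot\GO$ is $\LoopGm$-fixed, this gives $\LoopGm$-invariance of $\Gr^{\ucolambda}$ and completes part (1).

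For part (2), once (1) is established the inclusion $\T = \A\times\LoopGm \subset \G\rtimes\LoopGm \subset \GO\rtimes\LoopGm$ gives diagonal $\T$-invariance of $\Gr^{\ucolambda}$ for free. The convolution map $m_{\ucolambda}$ is projection onto the last coordinate, which visibly intertwines the diagonal $\T$-action on the source with the standard $\T$-action on $\overline{\Gr^{\colambda}}$, so $m_{\ucolambda}$ is $\T$-equivariant. The last ingredient is the $\T$-invariance of the target slice $\Gr^{\colambda}_{\comu}$: the closure $\overline{\Gr^{\colambda}}$ is $\T$-stable by the cell-structure statement \refProp{PropGrCells}, and $\Gr_{\comu}$ is $\T$-stable by \refProp{PropSlicesProperties}(1); their intersection is therefore $\T$-invariant, and so $\Gr^{\ucolambda}_{\comu} = m_{\ucolambda}^{-1}(\Gr^{\colambda}_{\comu})$ is $\T$-invariant too.

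I expect the only mildly delicate point to be part (1)'s handling of the $\LoopGm$-action on the labeled $\GK$-orbit stratification, since $\LoopGm$ does not commute with $\GK$ but merely interacts with it through the semidirect product structure; the remaining steps are essentially formal consequences of results already assembled in the excerpt.
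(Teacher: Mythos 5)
Your proof is correct and follows essentially the same route as the paper: the $\GO$-invariance comes down to the fact that each incidence condition $L_{i-1}\xrightarrow{\colambda_i}L_i$ is a union of diagonal $\GK$-orbits (equivalently, is defined by double cosets $\GO h_1^{-1}h_2\GO$, which is exactly the computation the paper performs), and part (2) is then formal from the action being diagonal. You are in fact slightly more thorough than the paper on the $\LoopGm$-factor, which the paper dismisses with ``a similar computation holds''; your observation that $t\mapsto st$ sends $\Tfixed{\colambda}$ to $\colambda(s)\cdot\Tfixed{\colambda}$ with $\colambda(s)\in\A\subset\GO$ is precisely the needed check.
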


\begin{proof}
	\leavevmode
	\begin{enumerate}
	\item
		Let us first prove that $\Gr^{ \ucolambda }$ is $\GO$-invariant. For any $g\in\GO$ we have
		\begin{equation*}
			g\cdot h\GO 
			= 
			(gh)\GO 
			= 
			(\Ad_{g^{-1}}h)\GO
		\end{equation*}
		Then we have
		\begin{align*}
			& 
			h_1\GO \xrightarrow{\colambda} h_2\GO 
			\\
			\Leftrightarrow \quad  
			& 
			\GO h_1^{-1} h_2 \GO 
			\subset 
			\GO \Tfixed{{\comu}} 
			\text{ for some } 
			{\comu} \leq {\colambda} 
			\\
			\Leftrightarrow \quad 
			&
			\GO Ad_g^{-1}
			\left(
				h_1^{-1}
			\right) 
			Ad_g^{-1}
			\left(
				h_2
			\right) 
			\GO \subset \GO \Tfixed{{\comu}}
			\text{ for some } {\comu} \leq {\colambda} 
			\\
			\Leftrightarrow \quad 
			& 
			g \cdot h_1\GO 
			\xrightarrow{\colambda} 
			g \cdot h_2\GO
	\end{align*}
	A similar computation holds for $\Gm$.
	\item
		It follows from the action being diagonal and the definition of $m_{ \ucolambda }$.
	\end{enumerate}
\end{proof}

For the rest of the section, we study the properties of resolution $\Gr^{ \ucolambda }_{\comu}$ for a fixed tuple of minuscule cocharacter $\ucolambda$ and a fixed dominant cocharacter $\comu$. To simplify the notation, we set $X = \Gr^{ \ucolambda }_{\comu}$ until the end of the section.

\begin{proposition} \label{PropSliceFixedLocus}
	The fixed locus $X^\T$ is the following disjoint union of finitely many points
	\begin{equation*}
		X^\T 
		= 
		\left\lbrace 
			\left( 
				\Tfixed{\comu_0}, 
				\dots, 
				\Tfixed{\comu_l} 
			\right) 
			\in 
			\Gr^{\times l} 
			\left\vert
				\begin{matrix}
					\;\forall i \;\: 
					\comu_i - \comu_{i-1} 
					\text{ is a weight of } 
					V(\colambda_i)
					\\
					\comu_l = {\comu}, 
					\quad 
					\comu_0 = 0
				\end{matrix}
			\right.
		\right\rbrace.
	\end{equation*}
\end{proposition}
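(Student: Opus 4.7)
The plan is to identify the $\T$-fixed points of $X = \Gr^{\ucolambda}_\mu$ by combining three ingredients already established in the excerpt: the classification of $\T$-fixed points in $\Gr$ itself, the cell structure of $\overline{\Gr^{\colambda_i}}$, and the fact that the only $\T$-fixed point of $\Gr^{\colambda}_{\comu}$ is $\Tfixed{\comu}$.

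First, since $X \subset \Gr^{\ucolambda} \subset \Gr^{\times l}$, any $\T$-fixed point of $X$ is a tuple $(P_1, \ldots, P_l)$ of $\T$-fixed points of $\Gr$. By \refProp{PropGrFixedLocus}, each $P_i$ is of the form $\Tfixed{\comu_i}$ for some $\comu_i \in \CocharLattice{\A}$. So the problem reduces to determining which tuples $(\comu_1, \ldots, \comu_l)$ yield fixed points of $X$.

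Second, I would unpack the convolution condition. The tuple $(\Tfixed{\comu_1}, \ldots, \Tfixed{\comu_l})$ lies in $\Gr^{\ucolambda}$ iff $\Tfixed{\comu_{i-1}} \xrightarrow{\colambda_i} \Tfixed{\comu_i}$ for each $i$ (with $\comu_0 = 0$). Applying the $\GK$-action by the element $t^{-\comu_{i-1}} \in \GK$ obtained from the cocharacter $-\comu_{i-1}$ and using that the relation $\xrightarrow{\colambda_i}$ is defined by $\GK$-orbits (\refCor{CorOrbitsInGrGr}), this is equivalent to $\UnitG \cdot \GO \xrightarrow{\colambda_i} \Tfixed{\comu_i - \comu_{i-1}}$. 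In turn, using that the stabilizer of $\UnitG \cdot \GO$ is $\GO$ and the definition of the relation, this is equivalent to $\Tfixed{\comu_i - \comu_{i-1}} \in \overline{\Gr^{\colambda_i}}$. By \refProp{PropGrCells}(3), this membership is equivalent to the dominant conjugate of $\comu_i - \comu_{i-1}$ being $\leq \colambda_i$, which is the standard characterization of $\comu_i - \comu_{i-1}$ being a weight of $V(\colambda_i)$.

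Third, the condition that $(\Tfixed{\comu_1}, \ldots, \Tfixed{\comu_l}) \in X = m_{\ucolambda}^{-1}(\Gr^{\colambda}_{\comu})$ says that $m_{\ucolambda}(\Tfixed{\comu_1}, \ldots, \Tfixed{\comu_l}) = \Tfixed{\comu_l}$ lies in $\Gr^{\colambda}_{\comu}$. By \refProp{PropSlicesProperties}(2), the only $\T$-fixed point of $\Gr^{\colambda}_{\comu}$ is $\Tfixed{\comu}$, forcing $\comu_l = \comu$. Conversely, any tuple satisfying the listed conditions is clearly $\T$-fixed and belongs to $X$. Finiteness is immediate since each $V(\colambda_i)$ has finitely many weights.

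The main obstacle is really just the translation argument in the second step (carefully verifying that the $\GK$-translation converts the convolution relation to a statement about membership in $\overline{\Gr^{\colambda_i}}$, and then matching the semigroup ordering on dominant coweights to the weight set of $V(\colambda_i)$). Everything else is a direct assembly of results already cited.
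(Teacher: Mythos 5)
Your proposal is correct and follows essentially the same route as the paper: reduce to tuples of $\T$-fixed points of $\Gr$ via \refProp{PropGrFixedLocus}, translate the convolution condition into $\comu_i - \comu_{i-1}$ being a weight of $V(\colambda_i)$, and use the uniqueness of the fixed point of the slice to force $\comu_l = \comu$. The only difference is that you spell out the translation-by-$t^{-\comu_{i-1}}$ argument and the identification with the weight set of $V(\colambda_i)$, which the paper's proof states without elaboration.
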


\begin{proof}
	Since the $\T$-action is diagonal, we have that $\left(L_0,\dots,L_l\right)\in X^\T$ iff $\forall i$ $L_i$ has form $\Tfixed{\comu_i}$ for some coweight $\comu_i$. Next, conditions for such points to be in $X$ are
	\begin{enumerate}
	\item[a]
		A condition to be in $\Gr^{\ucolambda}$. That is, $\forall i$ $L_{i-1}\xrightarrow{\colambda_i} L_i $ which gives that $\comu_i-\comu_{i-1}$ is a weight of $V(\colambda_i)$ with the convention $\comu_0 = 0$ to make $L_0=\Tfixed{0}=\UnitG\cdot\GO$.
	\item[b]
A condition to be in $m_{\ucolambda }^{-1}\left( \Gr_{\comu} \right)$. That is, $L_l = m_{\ucolambda } (L_0,\dots, L_l) = \Tfixed{{\comu}}$, which is equivalent to $\comu_l = {\comu} $.
	\end{enumerate}
	This gives the description of $X^\T$ in the proposition.
\end{proof}

\begin{remark}
	If $\colambda_i$ is minuscule, the constraint of $\conu_i$ to be a weight of $V(\colambda_i)$ means that $\comu_i - \comu_{i-1}  \in W\colambda_i$.
\end{remark}

\begin{remark}
	If all $\colambda_i$ are minuscule, the fixed points are in bijection with a basis of the weight ${\comu}$ subspace of the $\GLangDual$-representation $V(\colambda_1)\otimes\dots\otimes V(\colambda_l)$. Explicitly, the map
    \begin{equation*}
        \left( 
            \Tfixed{\comu_0}, 
            \dots, 
            \Tfixed{\comu_l} 
        \right) 
        \mapsto 
        \left( 
		\comu_1 - \comu_0, 
		\dots, 
		\comu_l - \comu_{l-1} 
	\right)
    \end{equation*}
    provides a bijection with sequences of weights in $V(\colambda_1)\otimes\dots\otimes V(\colambda_l)$ that sum to $\comu$, i.e. exactly the sequences of weights in the weight subspace $V(\colambda_1)\otimes\dots\otimes V(\colambda_l)[\comu]$. Representations with minuscule highest weights have weight multiplicities one (see Exercise 25.24 in \cite{FH}), so there is only one basis vector up to scaling for every such sequence of weights.
\end{remark}

\begin{notation}
	For a fixed point $p\in X^{\T}$ there are two related sequences of coweights. Let 
	\begin{equation*}
		p =
		\left(
			\Tfixed{0},
			\Tfixed{\comu_1}, 
			\dots, 
			\Tfixed{\comu_{l-1}}, 
			\Tfixed{\comu}
		\right).
	\end{equation*}

	Then for all $i$, $0 < i < l$, we define
	\begin{equation*}
		\sigmapi{p}{i} = \comu_i,
	\end{equation*}
	and for coherent notation we set
	\begin{align*}
		\sigmapi{p}{0} = 0,\\
		\sigmapi{p}{l} = \comu.
	\end{align*}

	This gives a sequence
	\begin{equation*}
		\sigmap{p} =
		\left(
			\sigmapi{p}{0}, \dots, \sigmapi{p}{l}
		\right).
	\end{equation*}

	We also define for all $i$, $1\leq i \leq l$
	\begin{equation*}
		\deltapi{p}{i} = \sigmapi{p}{i} - \sigmapi{p}{i-1}.
	\end{equation*}

	One forms a second sequence from these coweights
	\begin{equation*}
		\deltap{p} =
		\left(
			\deltapi{p}{1}, \dots, \deltapi{p}{l}
		\right).
	\end{equation*}

	Informally speaking, $\deltap{p}$ is the sequence of increments, so $\sigmap{p}$ is the sequence of their partial sums. This explains the notation.
\end{notation}

\begin{figure}
	\centering
	\includegraphics{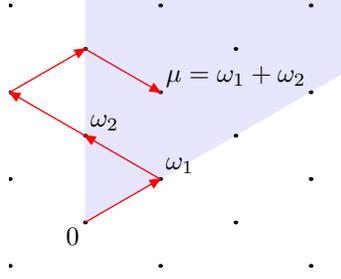}
	\caption{A typical path associated to a fixed point.} \label{FigPathExample}
\end{figure}

One useful way to think about the fixed points of $\Gr^{ \ucolambda }_{\comu}$ is that they are one-to-one with certain piecewise-linear paths in the coweight lattice. For a point $p \in X^{\T}$ let $P_p$ be the piecewise linear path connecting points 
\begin{equation*}
	0 
	= 
	\sigmapi{p}{0}, 
	\sigmapi{p}{1},
	\dots, 
	\sigmapi{p}{l-1}, 
	\sigmapi{p}{l} 
	= 
	\comu
\end{equation*} 
in the coweight lattice.

The paths $P_p$ are those that have $l$ segments, start at the origin, and end at ${\comu}$. The $i$th segment must be a weight of $V(\colambda_i)$ (as said above, this is equivalent to being a Weyl reflection of $\colambda_i$ if $\colambda_i$ is minuscule). See \refFig{FigPathExample} for an example of a typical path in the case $\G = \PSL{3}$ and $\Gr^{ \left(\coomega_1, \coomega_1, \coomega_1, \coomega_1, \coomega_2\right) }_{\coomega_1+\coomega_2}$.

\begin{figure}
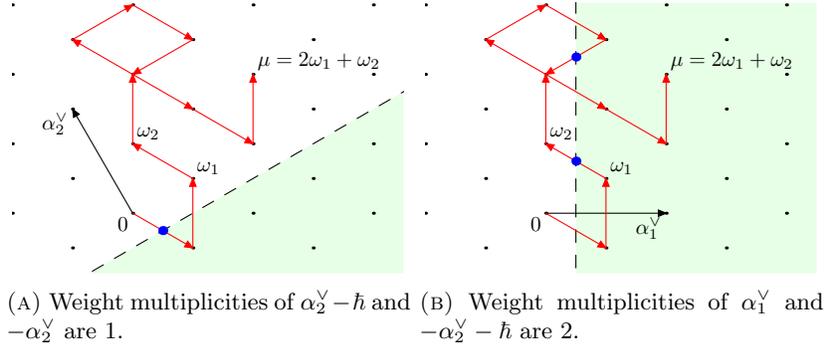

	\centering
	\begin{subfigure}{0.43\textwidth}
		\centering
		\includegraphics[scale = 0.8]{Figures/AffineGrassmannianPicture-25.mps}
		\caption{Weight multiplicities of $\wtalpha_2-\hbar$ and $-\wtalpha_2$ are $1$. }\label{FigLatticeMultiplicities1}
	\end{subfigure}
	~
	\begin{subfigure}{0.43\textwidth}
		\centering
		\includegraphics[scale = 0.8]{Figures/AffineGrassmannianPicture-26.mps}
		\caption{Weight multiplicities of $\wtalpha_1$ and $-\wtalpha_2-\hbar$ are $2$.}\label{FigLatticeMultiplicities2}
	\end{subfigure}
\caption{Weight multiplicities in $T_p X$ for $p \in X^{\T}$ represented by the red path $P_p$.}\label{FigLatticeMultiplicities}
\end{figure}

Using the paths $P_p$ is useful for a combinatorial description of the tangent weights at the fixed points. We postpone the proof to the Appendix.

\begin{theorem} \label{ThmLatticeMultiplicities}
The only weights in $T_p X$ are of the form $\wtalpha+n\hbar$ for a root $\wtalpha$ and $n\in\mathbb{Z}$. Moreover, the multiplicity of weight $\wtalpha+n\hbar$ is the number of crossings of $P_p$ with the hyperplane 
\begin{equation*}
\left\langle \bullet, \wtalpha \right\rangle - \left( n + \dfrac{1}{2} \right) = 0
\end{equation*}
in the direction of the halfspace containing $0$.
\end{theorem}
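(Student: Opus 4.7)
The plan is to exploit the iterated twisted-product structure of $\Gr^{\ucolambda}$: decompose $T_p \Gr^{\ucolambda}$ into one summand per factor, then cut transversally by the slice condition at $\Tfixed{\comu}$.

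To begin, write $p = (\Tfixed{\sigmapi{p}{1}}, \dots, \Tfixed{\sigmapi{p}{l}})$ and use the tower of fibrations obtained by successively forgetting the last factor. This gives a canonical splitting $T_p \Gr^{\ucolambda} = \bigoplus_{i=1}^l V_i$, where $V_i$ is the tangent to the $i$-th fiber at $L_i$, canonically the $t^{\sigmapi{p}{i-1}}$-translate of $T_{\Tfixed{\deltapi{p}{i}}} \overline{\Gr^{\colambda_i}}$. Since $X$ is smooth, \refThm{ThmSymplecticResolutionExistence} forces each $\deltapi{p}{i}$ to be extremal in $V(\colambda_i)$, so $\Tfixed{\deltapi{p}{i}}$ lies in the smooth open orbit $\Gr^{\colambda_i}$. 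From $\Gr^{\colambda_i} \cong \GO/\bigl(\GO \cap t^{\colambda_i} \GO t^{-\colambda_i}\bigr)$ one computes
\[
	T_{\Tfixed{\colambda_i}} \Gr^{\colambda_i}
	=
	\bigoplus_{\substack{\wtalpha \\ \langle \wtalpha, \colambda_i \rangle > 0}}
	\bigoplus_{m=0}^{\langle \wtalpha, \colambda_i \rangle - 1}
	\grootsub{g}{\wtalpha} \cdot t^m,
\]
with weights $\wtalpha + m\hbar$, $0 \leq m < \langle \wtalpha, \colambda_i \rangle$. A Weyl element sending $\colambda_i$ to $\deltapi{p}{i}$ transports the root, and left translation by $t^{\sigmapi{p}{i-1}}$ acts on tangent vectors via $\Ad_{t^{\sigmapi{p}{i-1}}}$, which shifts the $\hbar$-exponent of a weight-$\wtalpha$ contribution by $\langle \wtalpha, \sigmapi{p}{i-1} \rangle$. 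Collecting everything, the multiplicity of $\wtalpha + k\hbar$ in $T_p \Gr^{\ucolambda}$ is the number of $i$ with $\langle \wtalpha, \sigmapi{p}{i-1} \rangle \leq k < \langle \wtalpha, \sigmapi{p}{i} \rangle$, which is exactly the number of segments of $P_p$ crossing $\langle \bullet, \wtalpha \rangle = k + \tfrac12$ upward.

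Next, I would impose the slice constraint. Since $\Gr^{\ucolambda}_{\comu} = m_{\ucolambda}^{-1}(\Gr_{\comu})$ and the Manin-triple splitting $\gK{t} = \gO{t} \oplus \gOne{t}$ yields $T_{\Tfixed{\comu}} \Gr = T_{\Tfixed{\comu}} \Gr^{\comu} \oplus T_{\Tfixed{\comu}} \Gr_{\comu}$, smoothness of $X$ implies that
\[
	T_p \Gr^{\ucolambda}
	\xrightarrow{dm_{\ucolambda}}
	T_{\Tfixed{\comu}} \Gr
	\twoheadrightarrow
	T_{\Tfixed{\comu}} \Gr^{\comu}
\]
is surjective with kernel $T_p X$. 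The weights of $T_{\Tfixed{\comu}} \Gr^{\comu}$ are $\wtalpha + k\hbar$ with $\langle \wtalpha, \comu \rangle > 0$ and $0 \leq k < \langle \wtalpha, \comu \rangle$, each occurring once. Hence the multiplicity of $\wtalpha + k\hbar$ in $T_p X$ is the upward crossing count above minus $\mathbf{1}\bigl(0 \leq k < \langle \wtalpha, \comu \rangle\bigr)$.

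Finally, I would match with the theorem via a net-flux identity: since $P_p$ runs from $0$ to $\comu$,
\[
	\#(\text{upward crossings of }\langle\bullet,\wtalpha\rangle = k + \tfrac12)
	-
	\#(\text{downward crossings})
	=
	\mathbf{1}(\comu\text{ strictly above}) - \mathbf{1}(\comu\text{ strictly below}).
\]
When $k \geq 0$ the halfspace containing $0$ is $\{\langle\bullet,\wtalpha\rangle < k+\tfrac12\}$; the subtracted indicator equals the signed net, leaving the downward count, which matches crossings entering the halfspace containing $0$. When $k < 0$ the subtracted indicator vanishes and the halfspace containing $0$ is the opposite one, so the upward count is itself the crossings entering that halfspace. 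In both cases the stated multiplicity is recovered, and in particular every weight has the form $\wtalpha + n\hbar$. The main technical obstacle is the surjectivity in the third paragraph — verifying that $dm_{\ucolambda}$ actually hits all of $T_{\Tfixed{\comu}} \Gr^{\comu}$ at every fixed point — which ultimately reduces, via the extremality of each $\deltapi{p}{i}$ guaranteed by smoothness, to a direct weight-by-weight match between the two tangent spaces.
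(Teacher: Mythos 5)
Your route is genuinely different from the paper's. The paper embeds $T_pX$ into $T_p\Gr^{\times(l+1)}=\bigoplus_i \GrTangent{\sigmapi{p}{i}}$, linearizes the boundary and incidence conditions (\refProp{PropConditions}), and solves them weight by weight with a graphical calculus of chains; you instead read $T_p\Gr^{\ucolambda}$ off the iterated twisted-product structure and then cut once by the slice condition. Your first paragraph is correct: smoothness of $X$ does force each $\deltapi{p}{i}$ to be extremal via \refThm{ThmSymplecticResolutionExistence}, so each fiber point sits in the smooth orbit, and the $\Ad_{t^{\sigmapi{p}{i-1}}}$ shift (the paper's \refLem{LemWtShift}) turns the fiber contributions into exactly the upward crossings. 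Your final bookkeeping also agrees with \refProp{PropMultiplisities}, although the displayed net-flux identity is wrong as stated when both endpoints of $P_p$ lie on the same side of the level (the correct statement is $\#\mathrm{up}-\#\mathrm{down}=\mathbf{1}(\comu\text{ above})-\mathbf{1}(0\text{ above})$); the case analysis you actually carry out afterwards does not use the faulty form, so the conclusion stands.

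The genuine gap is the one you flag, and your proposed repair does not close it. You need the composite $T_p\Gr^{\ucolambda}\xrightarrow{dm'}T_{\Tfixed{\comu}}\Gr\twoheadrightarrow T_{\Tfixed{\comu}}\Gr^{\comu}$ to be surjective: since $T_pX$ is contained in the kernel and already has dimension $\dim\Gr^{\ucolambda}-\dim\Gr^{\comu}$, without surjectivity the kernel would be strictly larger and your crossing count would only bound the multiplicities from above. A ``direct weight-by-weight match between the two tangent spaces'' only verifies that the source multiplicity is at least the target multiplicity in each weight, which is necessary but does not prove that this particular linear map hits everything; proving that directly would force you back into the incidence-condition analysis you are trying to avoid. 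There is, however, a one-line fix: $m'$ is $\GO$-equivariant and sends $p$ to $\Tfixed{\comu}$, so the image of $dm'$ contains the image of the infinitesimal $\GO$-action at $\Tfixed{\comu}$, which is precisely $T_{\Tfixed{\comu}}\Gr^{\comu}$ by \refProp{PropGrCellTangentSpace}; surjectivity of the composite follows at once. With that inserted, your argument is complete and arguably cleaner than the paper's, in particular because the absence of Cartan-type weights $n\hbar$ falls out of the fiber description rather than requiring the separate vanishing $T_pX\cap\hK{t}=0$.
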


An illustration of the statement of \refThm{ThmLatticeMultiplicities} is shown in \refFig{FigLatticeMultiplicities}. The dashed lines and green half-planes correspond to pairs of weights, and the blue dots show the intersections under consideration. We used the standard identification of (short) roots $\wtalpha_1$ and $\wtalpha_2$ with vectors of length squared 2 via a properly normalized invariant inner product to visualize them.
\section{Stable Envelopes}
\label{SecEnvelopes}

In this section, we introduce the stable envelopes and derive how multiplication by a divisor acts on them.

From now on, the base ring for all cohomology is the field $\mathbb{Q}$, unless otherwise stated. All the results generalize straightforwardly to the case of any field of characteristic $0$.

\subsection{Preliminaries on Equivariant Cohomology}

Computations in equivariant cohomology heavily rely on the well-known Localization Theorem \cite{AB} (see Theorem III.1' in \cite{H} for a proof in the presented generality; see also Section 6.2 in \cite{GKM}).

\begin{theorem} \label{ThmAtiyahBott}
(Atiyah-Bott)
Let $X$ be a paracompact topological space with finite homological dimension. Assume also that a complex torus $\T$ acts continuously on $X$ with finite orbit types. Then the restriction map in the localized equivariant cohomology
\begin{equation*}
	\EqCoHlgy{\T}{X}_{loc} 
	\to 
	\EqCoHlgy{\T}{X^{\T}}_{loc}
\end{equation*}
is an isomorphism.
\end{theorem}

Here and later, we use the notation $\EqCoHlgy{\T}{X}_{loc} := \EqCoHlgy{\T}{X}\otimes_{\EqCoHlgy{\T}{\pt}} \Frac \EqCoHlgy{\T}{\pt}$.

\smallskip

Let $X$ from now on stand for a non-singular quasi-projective complex $\T$-variety. Then $X$ with analytic topology is paracompact and has finite homological dimension. Moreover, $X$ admits an equivariant projective embedding (see \cite{CG}, Theorem 5.1.25). The projective space of a finite-dimensional representation of $\T$ has finite orbit types, so $X$ consists of finite orbit types. So, the conditions of the Localization \refThm{ThmAtiyahBott} hold.

Let us discuss properness in some detail. If $X$ is a proper and irreducible complex $\T$-variety, then we have a fundamental class $\left[ X \right] \in \EqHlgyk{\T}{X}{2\dim X}$ and pairing with it gives a map in ordinary (equivariant) cohomology
\begin{align*}
        \pi_{X,*}
	\EqCoHlgy{\T}{X}
	&\to
	\EqCoHlgy{\T}{\pt}
	\\
	\gamma
	&\mapsto
	\int\limits_X
	\gamma 
\end{align*}
quite often called integration by analogy with de Rham cohomology (see 2.6 in \cite{CG} and the discussion of equivariant pushforward maps in §2 in \cite{AB}).

\smallskip

If $X$ is moreover non-singular, then locus $X^{\T}$ is known to be non-singular as well, see Proposition 1.3 in \cite{I}. The Localization Theorem allows one to compute this using only restrictions to the fixed locus by the localization formula
\begin{equation} \label{EqLocalizationFormula}
	\int\limits_X
	\gamma
	=
	\sum_{Z\subset X^{\T}}
	\int\limits_Z
	\dfrac
	{
		\resZ
		{
			\gamma
		}
		{
			Z
		}
	}
	{
		\Euler{\T}{N_{Z/X}}
	},
\end{equation}
where the sum is taken over all components $Z \subset X^{\T}$, $\Euler{\T}{\bullet}$ is the equivariant Euler class, $N_{Z/X}$ is the normal bundle to $Z$ in $X$. This follows from formula (2.19) in \cite{AB} for $\resZ{}{Z}\circ \iota_{Z,*}$ and functoriality of pushforward maps
\begin{equation} \label{EqPushforwardComposition}
    \pi_{X^\T,*} = (\pi_X \circ \iota_{X^\T})_*=\pi_{X,*} \circ \iota_{Z,*},
\end{equation}
see (2.12) in \cite{AB}.

If $X$ is not proper (but still irreducible), the fundamental class lies in the Borel-Moore (equivariant) homology $\left[ X \right] \in \EqBMHlgyk{\T}{X}{2\dim X}$ (see 2.6.12 in \cite{CG}). This pairs naturally with the equivariant cohomology with compact support $\EqCompCoHlgy{\T}{X}$ and gives a pushforward map
\begin{equation*}
	\EqCompCoHlgy{\T}{X}
	\to 
	\EqCoHlgy{\T}{\pt}.
\end{equation*}
However, in this paper, we want to work in ordinary cohomology.

$X^{\T}$ is a closed subvariety of $X$, so the inclusion map $\iota\colon X^{\T} \to X$ is proper and the pullback is well-defined in the cohomology with compact support. We get a map 
\begin{equation*}
    \iota^*
    \colon
    \EqCompCoHlgy{\T}{X}
    \to 
    \EqCoHlgy{\T}{X^{\T}}.
\end{equation*}
It fits into a commutative diagram
\begin{equation} \label{EqCDCohomology}
	\begin{tikzcd}
		\EqCompCoHlgy{\T}{X} \arrow[d,"\iota^*"] \arrow[r]
		&
		\EqCoHlgy{\T}{X} \arrow[d,"\iota^*"]
		\\
		\EqCompCoHlgy{\T}{X^{\T}} \arrow[r]
		&
		\EqCoHlgy{\T}{X^{\T}}
	\end{tikzcd}
\end{equation}
Vertical arrows come from a morphism of equivariant sheaves $\constsheaf{\mathbb{Q}}{X} \to \iota_* \constsheaf{\mathbb{Q}}{X^{\T}}$. Horizontal arrows come from the (derived version of) natural transformation between functors $\Gamma_c \to \Gamma$. The transformation comes from the definition of global sections with proper support $\Gamma_c(-)$ as a subset of all global sections $\Gamma(-)$. The diagram commutes since $\Gamma_c \to \Gamma$ is a natural transformation; see more on the 6-functor formalism for equivariant sheaves in \cite{BLu}.

Assume, in addition, that $X^{\T}$ is proper. Then the bottom arrow in \refEq{EqCDCohomology} becomes an isomorphism because the compact support condition becomes vacuous. After localization, the vertical arrows in \refEq{EqCDCohomology} become isomorphisms, so we get an isomorphism
\begin{equation*}
	\EqCompCoHlgy{\T}{X}_{loc}
	\xrightarrow{\sim}
	\EqCoHlgy{\T}{X}_{loc}.
\end{equation*}
This isomorphism and the pushforward from $\EqCompCoHlgy{\T}{X}$ give a map
\begin{equation*}
	\EqCoHlgy{\T}{X}_{loc}
	\to 
	\EqCoHlgy{\T}{\pt}_{loc}.
\end{equation*}
We still denote it by $\int\limits_X \gamma$. Using \refEq{EqPushforwardComposition}, we get that $\int\limits_X \gamma$ makes the triangle
\begin{equation*}
    \begin{tikzcd}
        \EqCoHlgy{\T}{X^{\T}}_{loc} \arrow[rr, "\iota_*"] \arrow[rd, swap, "\int\limits_{X^{\T}}"]&& \EqCoHlgy{\T}{X}_{loc} \arrow[ld, dashed, "\int\limits_X"]
	\\
	&\EqCoHlgy{\T}{\pt}_{loc}&
    \end{tikzcd}
\end{equation*}
commutative, since \refEq{EqPushforwardComposition} is commutativity of a similar diagram for non-localized versions with compact support. Moreover, since $\iota_*$ is an isomorphism after localization, the integration is the unique map that makes this diagram commutative. Writing $(\iota_*)^{-1}$ in terms of $\iota^*$ and Euler classes, we get that relation \refEq{EqLocalizationFormula} still holds. Some authors use formula \refEq{EqLocalizationFormula} to define $\int\limits_X \gamma$. As a side effect, a priori, the answer one gets by this definition is in $\EqCoHlgy{\T}{\pt}_{loc} = \Frac \EqCoHlgy{\T}{\pt}$ due to the division by Euler classes. However, we see that the image of non-localized compactly supported classes $\EqCompCoHlgy{\T}{X}$ is inside $\EqCoHlgy{\T}{\pt}$.

This defines the structure of a Frobenius algebra on $\EqCoHlgy{\T}{X}_{loc}$ over $\EqCoHlgy{\T}{pt}_{loc}$. In particular, we often use the natural pairing
\begin{equation*}
	\left\langle
		\gamma_1,
		\gamma_2
	\right\rangle_X
	=
	\int\limits_X
	\gamma_1
	\cup
	\gamma_2.
\end{equation*}

If $X$ is clear, we omit it in the notation.

We think of $\EqCoHlgy{\T}{\pt}$ as the ground ring. We usually write multiplication in $\EqCoHlgy{\T}{\pt}$ by $\cdot$ to simplify the notation.

The multiplication in $\EqCoHlgy{\T}{X}$ is still denoted by $\cup$. We also refer to it as \textbf{classical multiplication} as opposed to its deformation, quantum multiplication. 

\subsection{Stable Envelopes} \label{SubSectionStableEnvelopes}

\subsubsection{Assumptions}

Here we introduce stable envelopes and some of their properties following \cite{MOk}. The main reference for Section \ref{SubSectionStableEnvelopes} is \cite{MOk}, Chapters 3 and 4, and the reader can find proofs and details there.

Let us list the necessary assumptions for the existence of stable envelopes.

Let $X$ be a non-singular algebraic variety and $\omega \in \CoHlgyk{\Omega^2 X}{0}$ be a holomorphic symplectic form on $X$. Let a torus $\T$ act on $X$ in such a way that:

\begin{enumerate}
	\item \label{CondScaling}
		$\omega$ is an eigenvector of $\T$ of non-zero $\T$-weight $\hbar$. 
	\item
		There is a proper $\T$-equivariant map $\pi: X \to X_0$ to an affine $\T$-variety $X_0$.
    \item \label{CondPosGrading}
		There is a $\T$-cocharacter that gives $\CoHlgyk{X_0,\mathcal{O}_{X_0}}{0}$ a grading by non-negative integers with a one-dimensional zero grading.
	\item \label{CondFormality}
		$X$ is a formal $\T$-variety. \cite{GKM}
\end{enumerate}
We define $\A\subset \T$ as the subtorus preserving $\omega$ (equivalently, the kernel of $\hbar$). Condition (\ref{CondPosGrading}) gives that there is exactly one $\T$-fixed point in $X_0$, we call it $0 \in X_0$. It corresponds to the ideal that is the positive grading part of $\CoHlgyk{X_0,\mathcal{O}_{X_0}}{0}$ with respect to the grading in (\ref{CondPosGrading}).

\begin{example}
    The results in \refSec{SecSlices} show that conditions (\ref{CondScaling})-(\ref{CondPosGrading}) hold for 
    \begin{align*}
        X 
        &= 
        \Gr^{\ucolambda}_{\comu},
        \\
        X_0 
        &= 
        \Gr^{\colambda}_{\comu},
        \\\
        \pi 
        &= 
        m_{\ucolambda},
        \\
        0
        &= 
        \Tfixed{\comu}.
    \end{align*}
    with $\T$, $\A$, $\hbar$ that match objects of the same name in \refSec{SecSlices}.\\
    Bia\l{}ynicki-Birula decomposition (Theorem 2.4.3 in \cite{CG}) provides a decomposition into cells of even dimension, so Theorem 14.1(1) in \cites{GKM} implies formality (\ref{CondFormality}).
\end{example}

\subsubsection{Chambers}

We study walls in $\mathfrak{a}_\mathbb{R}$, the real vector space associated to the cocharacter lattice of $\A$,
\begin{equation*}
	\mathfrak{a}_\mathbb{R}
	=
	\CocharLattice{\A}
	\otimes_{\mathbb{Z}}
	\mathbb{R}.
\end{equation*}

The walls $H_{\wtalpha}$ are given by linear equations $\wtalpha = 0$ and split this space into disconnected domains called chambers:
\begin{equation*}
	\mathfrak{a}_\mathbb{R} 
	\setminus
	\bigcup_{\wtalpha} H_{\wtalpha}
	=
	\bigsqcup
	\C_i.
\end{equation*}

For generic resolutions of slices the configuration is exactly the Weyl chambers from representation theory. Even though in certain examples we might miss walls corresponding to some roots, we keep calling them Weyl chambers.

\subsubsection{Attracting manifolds}

Let $\colambda$ be an $\A$-cocharacter. Then the existence and value of the limit
\begin{equation*}
	\lim\limits_{t\to 0} 
	\colambda \left( t \right)
	\cdot x
	\in
	X^{\A}
\end{equation*}
does not change if we vary $\colambda$ without crossing the walls. I.e. it depends only on the choice of chamber $\C$ and we denote it by $\lim\nolimits_\C x$.

For any subvariety $Z\subset X$ let us denote
\begin{equation*}
	\Attr_\C \left( Z \right)
	=
	\left\lbrace
		x \in X
		\left\vert
			\lim\nolimits_\C x \in Z
		\right.
	\right\rbrace.
\end{equation*}
We call this set the attractor of $Z$.

More generally, attracting manifolds can be defined for a face of smaller dimension in the stratification of $\mathfrak{a}_\mathbb{R}$ by walls. We will use it later.

\subsubsection{Partial order}

Attractors define a partial order on components of $X^{\A}$.

We take the relation
\begin{equation*}
	\overline
	{
		\Attr_\C \left( Z \right)
	}
	\cap
	Z'
	\neq
	\emptyset
	\Longrightarrow
	Z \geqC{\C} Z'
\end{equation*}
and take its transitive closure.

Now we can define the full attractor as

\begin{equation*}
	\Attr^f_\C \left( Z \right)
	=
	\bigsqcup\limits_{Z'\leqC{\C}Z}
	\Attr_\C \left( Z \right).
\end{equation*}
This set is closed.

\subsubsection{Support and degree in \texorpdfstring{$\A$}{A}} \label{SubsubsecSupport}

We say that a class $\gamma \in \EqCoHlgy{\T}{X}$ is supported on a closed $\T$-invariant subset $Z$ if the pullback vanishes
\begin{equation*}
	i^* \gamma = 0
\end{equation*}
for the inclusion $i: X\setminus Z \hookrightarrow X$. Notation is the following
\begin{equation*}
	\supp \gamma \subset Z.
\end{equation*}

\medskip

Since $\A$ acts on $X^{\A}$ trivially,
\begin{equation*}
	\EqCoHlgy{\T}{X^{\A}} = \EqCoHlgy{\T/\A}{X^{\A}} \otimes_{\EqCoHlgy{\T/\A}{\pt}} \EqCoHlgy{\T}{\pt}.
\end{equation*}

We have an increasing filtration of $\EqCoHlgy{\T/\A}{\pt}$-modules of $\EqCoHlgy{\T}{\pt}$ induced by the degree filtration of $\EqCoHlgy{\A}{\pt}$. This induces a filtration of $\EqCoHlgy{\T}{X^{\A}}$. We call this degree $\deg_\A$, the degree in $\A$.

\subsubsection{Polarizations}

The definition of stable envelopes depends on a certain sign choice provided by polarizations, an element cohomology element $\epsilon_Z \in \EqCoHlgy{\A}{Z}$ for each component $Z \subset X^{\A}$ satisfying additional conditions.

Let $\C$ be a face in the stratification of $\mathfrak{a}_\mathbb{R}$ by walls (possibly not of the highest dimension, so it is not necessary a chamber). Given $Z\subset X^{\A}$, a component of the fixed locus, the restriction of the tangent bundle $T_X$ of $X$ to $Z$ splits into attracting, constant and repelling parts:
\begin{equation*}
	\left.
		T_{X}
	\right\vert_Z
	=
	N^{\C}_{Z/X} \oplus T_{Z} \oplus N^{-\C}_{Z/X}
\end{equation*}
(repelling part is exactly the attraction part with respect to $-C$, the opposite face).

If we consider the normal bundle to $Z$ in $X$, we get a splitting
\begin{equation*}
	\left.
		N_{Z/X}
	\right\vert_Z
	=
	N^{\C}_{Z/X} \oplus N^{-\C}_{Z/X}.
\end{equation*}
The symplectic form pairs $N^{\C}_{Z/X}$ and $N^{-\C}_{Z/X}$, which gives an isomorphism of $\T$-equivariant bundles
\begin{equation*}
	\left(
		N^{\C}_{Z/X}
	\right)^\vee
	\simeq 
	\hbar
	\otimes
	N^{-\C}_{Z/X}
\end{equation*}
Since the restriction of $\hbar$ to $\A$ is trivial,
\begin{equation*}
	\Euler{\A}{N_{Z/X}} 
	= 
	\Euler{\A}{N^{\C}_{Z/X}}
	\Euler{\A}{N^{-\C}_{Z/X}}
	=
	(-1)^{\codim Z/2}
	\left[
		\Euler{\A}{N^{-\C}_{Z/X}}
	\right]^2.
\end{equation*}

\smallskip

A polarization on $Z$ is a choice of $\epsilon_Z \in \EqCoHlgy{\A}{Z}$, such that
\begin{equation*}
	\epsilon_Z^2 
	= 
	(-1)^{\codim Z /2}	
	\Euler{\A}{N_{Z/X}}.
\end{equation*}
One can easily see that 
\begin{equation*}
    \epsilon_Z = \Euler{\A}{N^{-\C}_{Z/X}}
\end{equation*}
is a polarization. Moreover, any polarization is 
\begin{equation*}
    \pm\Euler{\A}{N^{-\C}_{Z/X}},
\end{equation*}
so a choice of polarization is just a choice of sign. This sign we denote later by
\begin{equation*}
	\sign{\C}{\epsilon}{Z}{X} =
	\dfrac
	{
		\resPol{\epsilon}{Z}
	}
	{
		\Euler{\A}{N^{-\C}_{Z/X}}
	}
	\in \signset
	.
\end{equation*}
The polarization on $Z$, dual to $\epsilon_Z$ is defined as
\begin{equation*}
	\DualPol{\epsilon_Z} = (-1)^{\codim Z/2} \epsilon_Z.
\end{equation*}

A polarization on $X^{\A}$ is a choice of polarization $\resPol{\epsilon}{Z}$ for each component $Z \subset X^{\A}$.

\subsubsection{Definition of stable envelopes}

Now we are ready to give the definition of stable envelopes.

Fix a chamber $\C$ and a polarization $\epsilon$ on $X^{\A}$.

\begin{theorem} \label{ThmStableEnvelopesDefinition}
	(Maulik-Okounkov)
	There exists a unique map of $\EqCoHlgy{\T}{pt}$-modules 
		\begin{equation*}
			\StabCE{\C}{\epsilon} 
			\colon 
			\EqCoHlgy{\T}{X^\A} 
			\to 
			\EqCoHlgy{\T}{X}
		\end{equation*}
	such that for any component $Z \subset X^\A$ and $\gamma \in \EqCoHlgy{\T/\A}{Z}$, the stable envelope $\Gamma = \StabCEpt{\C}{\epsilon}{\gamma}$ satisfies
	\begin{enumerate}
	\item 
		$\supp \Gamma \subset \Attr^f_\C \left( Z \right)$,
	\item 
		$\resZ{\Gamma}{Z} = \pm \Euler{\A}{N^{-\C}_{p}}\cup \gamma$, where the sign is fixed by the polarization: $\resA{\resZ{\Gamma}{Z}}{A} = \resPol{\epsilon}{Z} \cup \resA{\gamma}{A}$,
	\item 
		$\deg_\A \resZ{\Gamma}{Z'} < \frac{1}{2}\dim X$ for any $Z' \lC{\C} Z$.
	\end{enumerate}
\end{theorem}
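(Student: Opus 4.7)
The plan is to prove uniqueness and existence separately, each by induction on the partial order $\leqC{\C}$ on the components of $X^{\A}$.

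For uniqueness, suppose $\Gamma$ and $\Gamma'$ both satisfy conditions (1)--(3) for the same $\gamma \in \EqCoHlgy{\T/\A}{Z}$ and set $\Delta = \Gamma - \Gamma'$. Then $\Delta$ still has support in $\Attr^f_\C(Z)$, vanishes on restriction to $Z$, and satisfies the strict degree bound in (3) on every component $Z' \lC{\C} Z$. By the Atiyah--Bott localization theorem it suffices to show $\resZ{\Delta}{Z'} = 0$ for each component $Z' \subset X^{\A}$. Components outside $\Attr^f_\C(Z)$ are ruled out by the support condition. For $Z' \lC{\C} Z$, I would argue by downward induction along the order: once $\resZ{\Delta}{Z''} = 0$ for every $Z'' \gC{\C} Z'$ (including $Z$ itself, where vanishing follows from (2)), a deformation-to-normal-cone argument in the repelling directions of $Z'$ shows that $\resZ{\Delta}{Z'}$ lies in the ideal generated by $\Euler{\A}{N^{-\C}_{Z'/X}}$. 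Because $\omega$ pairs $N^{\C}_{Z'/X}$ with $N^{-\C}_{Z'/X}$, this Euler class has pure $\A$-degree $(\dim X - \dim Z')/2$; combined with the strict bound (3), the degree accounting then forces $\resZ{\Delta}{Z'} = 0$.

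For existence, I would build $\StabCEpt{\C}{\epsilon}{\gamma}$ by induction on the same partial order. The initial candidate is the pushforward of $\epsilon \cup \gamma$ along the closed inclusion $\overline{\Attr_\C(Z)} \hookrightarrow X$; properness of $\pi \colon X \to X_0$ guarantees that this pushforward lives in $\EqCoHlgy{\T}{X}$ rather than in a localized version, while the Lagrangian nature of $\Attr_\C(Z)$ (a direct consequence of the symplectic pairing between attracting and repelling normal bundles) ensures the class has the correct cohomological degree. This initial class automatically satisfies (1) and (2), but will generically violate (3) on smaller components $Z' \lC{\C} Z$. I would then correct iteratively by subtracting $\StabCE{\C}{\epsilon}$-images of auxiliary classes on each $Z'$, produced by the inductive hypothesis and with coefficients uniquely determined by the requirement that (3) hold for the corrected class.

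The main obstacle is precisely the integrality of the correction step: at each inductive stage one has to verify that the offending polar terms of the restriction to $Z'$ really do lie in the ideal generated by $\Euler{\A}{N^{-\C}_{Z'/X}}$, so that the auxiliary correction class is a genuine element of $\EqCoHlgy{\T/\A}{Z'}$ and not merely an element of its fraction field. This rests on the same Euler-class divisibility lemma used in the uniqueness argument, applied at each inductive step to keep the recursion inside non-localized cohomology. A secondary technical issue is the finiteness of the recursion, handled by the formality hypothesis on $X$ together with properness of $\pi$, which together bound the number of components contributing at each cohomological degree.
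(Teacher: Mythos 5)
This theorem is not proved in the paper at all: it is quoted from Maulik--Okounkov \cite{MOk}, so there is no in-paper argument to compare against. Your sketch does reconstruct the standard Maulik--Okounkov strategy --- uniqueness by locating a maximal component in the support of the difference and playing divisibility by $\Euler{\A}{N^{-\C}_{Z'/X}}$ against the degree bound, existence by starting from the attracting cycle and correcting inductively along $\lC{\C}$ --- so the outline is the right one.

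As a standalone proof, however, it is incomplete at exactly the points where the substance lies. First, $\overline{\Attr_\C\left(Z\right)}$ is in general singular, so ``the pushforward of $\epsilon\cup\gamma$ along the closed inclusion'' is not a priori a class in $\EqCoHlgy{\T}{X}$; one must work with the Borel--Moore fundamental class of the closure (or extend from the smooth open attractor across the lower strata), and verifying axiom (2) then uses that $\Attr_\C\left(Z\right)$ is smooth near $Z$ with normal bundle $N^{-\C}_{Z/X}$. Second, the divisibility statement --- that a class supported on $\Attr^f_\C\left(Z'\right)$ whose restrictions to all $Z''\gC{\C}Z'$ vanish restricts on $Z'$ to a multiple of $\Euler{\A}{N^{-\C}_{Z'/X}}$ --- is invoked twice but never established; it is the key lemma on which both halves of your argument rest, and ``deformation to the normal cone'' is a name for it rather than a proof. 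Third, your degree accounting only closes when the fixed components are isolated: $\Euler{\A}{N^{-\C}_{Z'/X}}$ has $\A$-degree $\frac12\codim Z'$, which is strictly smaller than $\frac12\dim X$ whenever $\dim Z'>0$, so condition (3) as stated does not force the multiplier to vanish; the bound actually needed (and the one in \cite{MOk}) is $\deg_\A\resZ{\Gamma}{Z'}<\frac12\codim Z'$, which agrees with the paper's formulation only in the discrete case relevant to the slices. Finally, termination of the recursion comes from the finiteness of the set of components below $Z$ in the partial order (properness of $\pi$ plus the chamber structure), not from formality, which is used instead to keep the construction in non-localized, free $\EqCoHlgy{\T}{\pt}$-modules.
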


In the spaces of our interest $X^{\A}$ is finite discrete, so $\EqCoHlgy{\T}{X^\A} $ has a basis $\CohUnit{p}$ of classes those only non-zero restriction is $1$ at $p\in X^{\A}$. To simplify notation we write
\begin{equation*}
	\StabCEpt{\C}{\epsilon}{p}
\end{equation*}
instead of
\begin{equation*}
	\StabCEpt{\C}{\epsilon}{\CohUnit{p}}.
\end{equation*}

The classes $\StabCEpt{\C}{\epsilon}{p}$ can be thought of as a refined version of (the Poincar\'{e}-dual class to) the fundamental cycle $\pm\left[ \overline{\Attr_\C (p)} \right]$. This explains the first two conditions in \refThm{ThmStableEnvelopesDefinition}. The last condition is required to ensure that these classes behave well under deformations of the symplectic variety.

\subsubsection{First properties}

Let us recall some results on stable envelopes.

From the definition of stable envelopes, the restriction matrix $\StabCEptpt{\C}{\epsilon}{p}{q}$ is triangular with respect to the order $\lC{\C}$. The values on the diagonal are prescribed. In the class of symplectic resolutions, one knows that the off-diagonal terms are "small" in the following sense.

\begin{theorem} \label{ThmEnvelopesOffDiagonal}
	Let $X$ be a symplectic resolution. Then for any components $Z,Z' \subset X^\A$, $Z'\lC{\C} Z$, and $\gamma \in \EqCoHlgy{\T/\A}{Z}$, we have
	\begin{equation*}
	\StabCEptpt{\C}{\epsilon}{\gamma}{Z'} \in \hbar \EqCoHlgy{\T}{Z'}.
	\end{equation*}
\end{theorem}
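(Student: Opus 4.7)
The plan is a direct degree-counting argument that combines the symplectic structure of $X$ with axiom (3) of \refThm{ThmStableEnvelopesDefinition}. In our setting the $\T$-fixed components are isolated points (see \refProp{PropSliceFixedLocus}), so write $Z = p$, $Z' = q$ with $q \lC{\C} p$. Since $\gamma \in \EqCoHlgy{\T/\A}{p} = \mathbb{Q}[\hbar]$ has the form $\hbar^e \cdot 1_p$ and $\StabCE{\C}{\epsilon}$ is $\EqCoHlgy{\T}{\pt}$-linear, it suffices to handle $e = 0$ and show that $\StabCEpt{\C}{\epsilon}{p}$ restricted to $q$ is divisible by $\hbar$ in $\EqCoHlgy{\T}{q}$.

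The first substantive step is to pin down the cohomological degree of $\Gamma := \StabCEpt{\C}{\epsilon}{p}$. Axiom (2) gives $\Gamma|_p = \pm \Euler{\A}{N^{-\C}_p}$, so $\Gamma$ is homogeneous of cohomological degree $2\dim_{\mathbb{C}} N^{-\C}_p$. The symplectic isomorphism $(N^{\C}_p)^\vee \cong \hbar \otimes N^{-\C}_p$ pairs the attracting and repelling normal directions, so $\dim_{\mathbb{C}} N^{\C}_p = \dim_{\mathbb{C}} N^{-\C}_p = \tfrac{1}{2}\dim_{\mathbb{C}} X$. Hence $\Gamma$ is homogeneous of cohomological degree $\dim_{\mathbb{C}} X$, and $\Gamma|_q \in \EqCoHlgy{\T}{q} = \mathbb{Q}[\hbar, a_1, \ldots, a_r]$ is a polynomial of total polynomial degree $d := \tfrac{1}{2}\dim_{\mathbb{C}} X$.

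Next, I would invoke axiom (3): $\deg_\A \Gamma|_q < d$. Since $\dim_{\mathbb{C}} X$ is even, this forces every monomial of $\Gamma|_q$ to have $a$-polynomial degree at most $d - 1$. Combined with the total polynomial degree being exactly $d$, every monomial must then carry at least one power of $\hbar$, so $\hbar \mid \Gamma|_q$ as claimed. The symplectic hypothesis enters precisely at the step $\dim_{\mathbb{C}} N^{\C}_p = \dim_{\mathbb{C}} N^{-\C}_p$; without it the cohomological degree of the diagonal restriction provided by axiom (2) would not equal $\dim_{\mathbb{C}} X$, and the comparison with axiom (3) would break. Beyond this alignment of the two gradings there is no substantive obstacle: the theorem is essentially a bookkeeping consequence of the defining axioms of the stable envelope in the symplectic-resolution setting.
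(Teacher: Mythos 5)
The paper does not actually prove this statement (it is recalled from Maulik--Okounkov), so your argument has to stand on its own. For the case you treat --- $X^\A$ a finite set of points --- your degree count is essentially correct and is the standard observation: once $\Gamma = \StabCEpt{\C}{\epsilon}{p}$ is known to be homogeneous of cohomological degree $\dim_{\mathbb{C}}X$, the restriction $\resZ{\Gamma}{q}$ is a homogeneous polynomial of total degree $\tfrac{1}{2}\dim_{\mathbb{C}}X$ in $\hbar$ and the $\A$-parameters, axiom (3) caps its $\A$-degree at $\tfrac{1}{2}\dim_{\mathbb{C}}X-1$, and every monomial must therefore carry a factor of $\hbar$. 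One local repair is needed: homogeneity does not follow from axiom (2) alone, which constrains only the diagonal restriction. You should either note that each homogeneous component of $\Gamma$ again satisfies axioms (1)--(3) (the diagonal condition then forces all components except the one in degree $\dim_{\mathbb{C}}X$ to satisfy the axioms with $\gamma=0$) and invoke the uniqueness in \refThm{ThmStableEnvelopesDefinition}, or appeal to the construction of $\StabCE{\C}{\epsilon}$ by a Lagrangian correspondence.

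The substantive gap is that you prove a strictly weaker statement than the one asserted. The theorem is for arbitrary components $Z,Z'\subset X^{\A}$, and the paper genuinely needs the positive-dimensional case: in the proof of \refThm{ThmTorusWallCrossing} the divisibility is applied to $\StabCE{\C'}{\epsilon'}$ for the subtorus $\A'$, whose fixed components are the positive-dimensional resolutions of $A_1$-slices. For positive-dimensional $Z'$ your bookkeeping fails: $\resZ{\Gamma}{Z'}$ decomposes as $\sum_j \alpha_j\otimes f_j$ with $\alpha_j\in\CoHlgyk{Z'}{2j}$ and $f_j$ of total polynomial degree $\tfrac{1}{2}\codim_{\mathbb{C}}Z - j$, and for $j>0$ this total degree can already lie below the bound supplied by axiom (3), so nothing forces an $\hbar$ into $f_j$. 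This is precisely where the symplectic-\emph{resolution} hypothesis (as opposed to the mere presence of a symplectic form, which is all your argument uses) enters: Maulik and Okounkov handle the general case by deforming $X$ over $H^{2}(X)$, where on a generic fiber the attracting sets are closed and miss the other fixed components, and then specializing. So your closing claim that the theorem is ``essentially a bookkeeping consequence of the defining axioms'' is accurate only when the fixed locus is finite; as a proof of the stated theorem, and of what this paper later relies on, the argument is incomplete.
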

\begin{proof}
See Theorem 3.7.5 in \cite{MOk}.
\end{proof}

Then if one restricts $\StabCEpt{\C}{\epsilon}{p}$ to the torus $\A$ (sets $\hbar = 0$), it has a non-zero coefficient only at $p$. Since the fixed point is a basis in localized cohomology $\EqCoHlgy{\T}{X}_{loc}$, stable envelopes also form a basis in localized cohomology.

Another nice property is that the basis, dual to $\left\lbrace \StabCEpt{\C}{\epsilon}{p} \right\rbrace$, is given by taking the stable envelopes for the opposite chamber and polarization, $-\C$ and $\DualPol{\epsilon}$.

\begin{theorem} \label{ThmStableEnvelopesDualBasis}
	For $p,q\in X^{\A}$, we have
	\begin{equation*}
	\left\langle
		\StabCEpt{-\C}{\DualPol{\epsilon}}{q},
		\StabCEpt{\C}{\epsilon}{p}
	\right\rangle
	=
	\delta_{q,p},
	\end{equation*}
	where $\delta_{p,q}$ is the Kronecker delta
	\begin{equation*}
		\delta_{p,q}
		=
		\begin{cases}
			1,
			&\text{ if }
			p = q,
			\\
			0,
			&\textit{ otherwise.}
		\end{cases}
	\end{equation*}
\end{theorem}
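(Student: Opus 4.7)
The plan is to evaluate the pairing by Atiyah--Bott localization and then invoke the three defining properties of \refThm{ThmStableEnvelopesDefinition} (support, polarization, degree) to reduce the result to $\delta_{q,p}$. The starting point is
\begin{equation*}
    \left\langle
        \StabCEpt{-\C}{\DualPol{\epsilon}}{q},
        \StabCEpt{\C}{\epsilon}{p}
    \right\rangle
    =
    \sum_{r \in X^{\T}}
    \dfrac
    {
        \StabCEptpt{-\C}{\DualPol{\epsilon}}{q}{r}
        \cdot
        \StabCEptpt{\C}{\epsilon}{p}{r}
    }
    {
        \Euler{\T}{T_r X}
    }.
\end{equation*}

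First I would apply the support axiom: $\StabCEptpt{\C}{\epsilon}{p}{r}$ vanishes unless $r \leqC{\C} p$, and dually $\StabCEptpt{-\C}{\DualPol{\epsilon}}{q}{r}$ vanishes unless $r \geqC{\C} q$. Only fixed points with $q \leqC{\C} r \leqC{\C} p$ contribute, so the pairing is automatically zero whenever $q \not\leqC{\C} p$. In the diagonal case $q = p$ only $r = p$ survives; the polarization axiom gives $\StabCEptpt{\C}{\epsilon}{p}{p} = \sign{\C}{\epsilon}{p}{X}\,\Euler{\T}{N^{-\C}_p}$ and $\StabCEptpt{-\C}{\DualPol{\epsilon}}{p}{p} = \sign{-\C}{\DualPol{\epsilon}}{p}{X}\,\Euler{\T}{N^{\C}_p}$, and since an isolated fixed point has no $\A$-invariant tangent directions, $\Euler{\T}{T_p X} = \Euler{\T}{N^{-\C}_p}\,\Euler{\T}{N^{\C}_p}$. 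The unique summand therefore reduces to the product of the two polarization signs. Unwinding $\DualPol{\epsilon} = (-1)^{\codim/2}\epsilon$ together with the $\A$-equivariant symplectic duality $\Euler{\A}{N^{\C}_p} = (-1)^{\dim X/2}\Euler{\A}{N^{-\C}_p}$ shows that this product equals $+1$.

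The remaining case $q \lC{\C} p$ is handled by a degree argument. Since $p \neq q$, at every contributing $r$ at least one of the two restrictions is strictly off-diagonal, so by the degree axiom the $\A$-degree of the numerator is strictly less than $\dim X$, while $\deg_\A \Euler{\T}{T_r X} = \dim X$ (no $\A$-invariant tangent directions at an isolated $\A$-fixed point). On the other hand, both stable envelopes sit in middle cohomological degree, so the pairing lies in $\EqCoHlgy{\T}{\pt}$ in cohomological degree zero, and is in particular a rational constant independent of the $\T$-weights. Rescaling all $\A$-weights by a parameter $s$ and letting $s\to\infty$, each summand of the localized expression decays like $s^{k-\dim X}$ with $k < \dim X$, while the left-hand side is unchanged, which forces the constant to be $0$.

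The main technical obstacle will be the sign bookkeeping in the diagonal case: the factor $(-1)^{\codim/2}$ in $\DualPol{\epsilon}$ must cancel exactly against the sign coming from the $\hbar$-duality between $N^{\C}$ and $N^{-\C}$. The rest is a standard combination of the localization formula, the support axiom, and a degree asymptotic.
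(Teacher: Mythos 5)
The paper does not actually prove \refThm{ThmStableEnvelopesDualBasis}; it is recalled from Maulik--Okounkov, and the only in-paper content is the remark immediately following it, which explains why the pairing lands in non-localized cohomology. Your argument is the standard proof of this duality: localization, the support axiom to cut the sum down to $q\leqC{\C} r\leqC{\C} p$, the polarization axiom plus the symplectic identification of $N^{\C}$ with $\left(N^{-\C}\right)^{\vee}\otimes\hbar$ to get $+1$ on the diagonal, and the degree axiom plus a scaling limit to kill the off-diagonal terms. The sign bookkeeping you describe is correct: with $\DualPol{\epsilon}=(-1)^{\dim X/2}\epsilon$ at an isolated fixed point and $\Euler{\A}{N^{\C}_p}=(-1)^{\dim X/2}\Euler{\A}{N^{-\C}_p}$, the product of the two polarization signs is $\sigma_p^2=1$.

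The one place where you assert rather than prove something essential is the claim that the pairing lies in $\EqCoHlgy{\T}{\pt}$ (so that ``cohomological degree zero'' implies ``rational constant''). A priori the localization formula only produces an element of $\Frac\EqCoHlgy{\T}{\pt}$, and a homogeneous rational function of degree zero (e.g.\ a ratio of two $\A$-weights) is not constant, so your $s\to\infty$ limit would prove nothing without integrality. What is needed is exactly the content of the paper's remark: the class $\StabCEpt{-\C}{\DualPol{\epsilon}}{q}\cup\StabCEpt{\C}{\epsilon}{p}$ is supported on $\Attr^f_{\C}(p)\cap\Attr^f_{-\C}(q)$, which by the opposite choice of chambers is contained in a fiber of the proper map $\pi\colon X\to X_0$ and hence is compact; therefore the pushforward is defined before localization and the pairing is an honest element of $\EqCoHlgyk{\T}{\pt}{0}=\mathbb{Q}$. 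Once you insert that sentence, your degree asymptotics close the argument. A second, smaller point to make explicit is that the partial order attached to $-\C$ is the reverse of the one attached to $\C$ (so that the two support conditions really combine to $q\leqC{\C} r\leqC{\C} p$); this is standard but is an input, not a tautology.
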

\begin{proof}
See Theorem 4.4.1 in \cite{MOk}.
\end{proof}

The main step in the proof of Theorem 4.4.1 in \cite{MOk} is that the pairing lies in non-localized cohomology $\EqCoHlgy{\T}{\pt}$. We will need a version of this argument. First, we state two auxiliary lemmas.

\begin{lemma} \label{LemProper}
    For $p,q\in X^{\A}$ the subvariety
    \begin{equation*}
        C = \Attr^f_{-\C} \left( q \right) \cap \Attr^f_\C \left( q \right) \subset X
    \end{equation*}
    is $\T$-invariant and proper.
\end{lemma}
\begin{proof}
    By construction, full attractors are $\T$-invariant. So $C$ is $\T$-invariant.
    
    Consider a point $x \in C$. If we show that $x \in \pi^{-1}(0)$ (recall that $0$ is the unique $\T$-fixed point of $X_0$), then we get the properness of $C$, because $C$ is closed and $\pi^{-1}(0)$ is proper by the properness of $\pi$. If $x \in X^{\T}$, then $\pi(x)$ is $\T$-fixed by equivariance of $\pi \colon X \to X_0$. A $\T$-fixed point is unique, it is $0 \in X_0$, so $\pi(x) = 0$.
 
    Now, let $x$ be outside of $X^{\T}$. The support conditions for the stable envelopes imply that 
        \begin{equation*}
		\lim_{t\to 0} \lambda(t) \cdot x 
		\text{ and }
		\lim_{t\to \infty} \lambda(t) \cdot x
	\end{equation*}
     exist for $\lambda \in \C$ (the second condition is equivalent to the limit condition with respect to $-\C$). Let us call these limits $p$  and $q$. We get $p,q \in X^{\T}$ and they are connected by a $\Gm$ (the orbit of $x$ under the action of $\lambda$) to make a $\PLine$ or a $\PLine$ with two points glued if $p$ = $q$. The map $m_{\ucolambda}$ sends this $\PLine$ orbit to a point $y\in X_0$ because $X_0$ is affine. Applying the $\T$-equivariance again, the pair of $\T$-fixed points are sent to $0 \in X_0$, so $y = 0$. In particular, $\pi(x) = y = 0$. This finishes the proof.
\end{proof}

\begin{lemma} \label{LemNonlocalized}
    Let $C \subset X$ be proper and $\T$-invariant. If a (non-localized) class $\Gamma \in \EqCoHlgy{\T}{X}$ is supported on $C$, then the class
    \begin{equation*}
        \int\limits_X \Gamma
    \end{equation*}
    is non-localized, i.e. lies in $\EqCoHlgy{\T}{\pt} \subset \EqCoHlgy{\T}{\pt}_{loc}$.
\end{lemma}

\begin{proof}
    Denote the inclusion maps
    \begin{equation*}
        i \colon C \to X,
        \quad
        j \colon X\setminus C \to X
    \end{equation*}
    to write the following exact triangle of $\T$-equivariant sheaves (see 3.4.5 in \cite{BLu})
    \begin{equation*}
        i_! i^! \constsheaf{\mathbb{Q}}{X} \to \constsheaf{\mathbb{Q}}{X} \to j_* \constsheaf{\mathbb{Q}}{X\setminus C}.
    \end{equation*}
    This gives a long exact sequence
    \begin{equation} \label{EqLongExactSequence}
        \dots \to \EqCoHlgy{\T}{i_! i^! \constsheaf{\mathbb{Q}}{X}} \to \EqCoHlgy{\T}{X} \to \EqCoHlgy{\T}{X\setminus C} \to \dots
    \end{equation}
    [the term $\EqCoHlgy{\T}{i_! i^! \constsheaf{\mathbb{Q}}{X}}$ is equal to the relative cohomology $\EqCoHlgy{\T}{X, X \setminus C}$ because of the 5-lemma and the long exact sequence of a pair, but we will not use it]
    
    If $\Gamma$ is supported on proper $C \subset X$, then by definition (see subsection \ref{SubsubsecSupport}) it means
    \begin{equation*}
        \Gamma\vert_{X \setminus C} = 0.
    \end{equation*}
    From \refEq{EqLongExactSequence} we get that $\Gamma$ is in the image of the map
    \begin{equation*}
        \EqCoHlgy{\T}{i_! i^! \constsheaf{\mathbb{Q}}{X}} \to \EqCoHlgy{\T}{X}.
    \end{equation*}
    To understand where the integral of $\Gamma$ lies, let's look at the composition
    \begin{equation} \label{EqComposition}
        \EqCoHlgy{\T}{i_! i^! \constsheaf{\mathbb{Q}}{X}} \to \EqCoHlgy{\T}{X} \to \EqCoHlgy{\T}{X}_{loc} \to \EqCoHlgy{\T}{\pt}_{loc},
    \end{equation}
    where the fist two are the natural maps, and the last one is the integration map. The image of $\Gamma$ under integration is in the image of composition \refEq{EqComposition}. So, if we prove that the image of this composition is in $\EqCoHlgy{\T}{\pt} \subset \EqCoHlgy{\T}{\pt}_{loc}$, then the statement of the lemma follows.

    Let's fit \refEq{EqComposition} as the bottom row into a commutative diagram
    \begin{equation} \label{EqDiagramNonlocalized}
        \begin{tikzcd}
            \EqCoHlgy{\T, c}{i_! i^! \constsheaf{\mathbb{Q}}{X}} \arrow[r] \arrow[dd] & \EqCoHlgy{\T,c }{X} \arrow[dd] \arrow[rr] \arrow[rd] && \EqCoHlgy{\T}{pt} \arrow[dd, hook]
            \\
            && \EqCoHlgy{\T,c }{X}_{loc} \arrow[rd] \arrow[d, "\sim"] &
            \\
            \EqCoHlgy{\T}{i_! i^! \constsheaf{\mathbb{Q}}{X}} \arrow[r] & \EqCoHlgy{\T}{X} \arrow[r]& \EqCoHlgy{\T}{X}_{loc} \arrow[r, swap, "\int_X"] & \EqCoHlgy{\T}{pt}_{loc}
        \end{tikzcd}
    \end{equation}
    The rightmost vertical arrow is the inclusion. The other three vertical arrows come from the natural transformation. The left square in \refEq{EqDiagramNonlocalized} commutes by naturality. The other two quadrilaterals commute by localization properties. The small triangle commutes by the definition of integration $\EqCoHlgy{\T}{X}_{loc} \to \EqCoHlgy{\T}{\pt}_{loc}$.

    Now notice that the leftmost vertical arrow in \refEq{EqDiagramNonlocalized} is an isomorphism because the sheaf $i_! i^! \constsheaf{\mathbb{Q}}{X}$ has compact support, so the support condition in sheaf cohomology becomes vacuous. Then we see that composition \refEq{EqComposition} indeed factors through the inclusion $\EqCoHlgy{\T}{\pt} \hookrightarrow \EqCoHlgy{\T}{\pt}_{loc}$ (follow the top row), and we are done.
\end{proof}

Now we are ready to state our version of the key argument of Theorem 4.4.1 in \cite{MOk}. 

\begin{proposition} \label{PropNonlocalized}
    For $p,q\in X^{\A}$ and any (non-localized) class $\gamma \in \EqCoHlgy{\T}{X}$, the class
    \begin{equation*}
        \left\langle
            \gamma
            \cup
            \StabCEpt{-\C}{\DualPol{\epsilon}}{q},
            \StabCEpt{\C}{\epsilon}{p}
        \right\rangle,
    \end{equation*}
    is non-localized, i.e. lies in $\EqCoHlgy{\T}{\pt} \subset \EqCoHlgy{\T}{\pt}_{loc}$.
\end{proposition}

\begin{proof}
    By definition, the pairing in the statement of this proposition is
    \begin{equation*}
        \int\limits_X \Gamma
    \end{equation*}
    for
    \begin{equation*}
            \Gamma
            =
            \gamma
            \cup
        \StabCEpt{-\C}{\DualPol{\epsilon}}{q}
        \cup
        \StabCEpt{\C}{\epsilon}{p}.
    \end{equation*}
    $\Gamma$ is supported on $\Attr^f_{-\C} \left( q \right) \cap \Attr^f_\C \left( q \right)$ as a cup product of classes supported on $\Attr^f_{-\C} \left( q \right)$ and $\Attr^f_\C \left( q \right)$. By \refLem{LemProper} $\Attr^f_{-\C} \left( q \right) \cap \Attr^f_\C \left( q \right)$ is proper. Then the pairing is non-localized by \refLem{LemNonlocalized}.
\end{proof}

\subsubsection{Steinberg correspondences}

Let us first introduce the notion of Lagrangian correspondence.

Recall that a subvariety $\Lagrangian{} \subset M$ of an algebraic symplectic variety $\left( M, \omega_M \right)$ is called Lagrangian if the restriction of the symplectic form $\omega_M$ to the smooth locus of $\Lagrangian{}$ vanishes and $\dim \Lagrangian{} = \frac{1}{2} \dim M$.

Now, let $\left(X,\omega_X\right)$ and $Y$ be two algebraic symplectic varieties. We equip $X \times Y$ with a symplectic form $p_X^*\omega_X-p_Y^*\omega_Y$ (here $p_X$ and $p_Y$ are natural projections of $X\times Y$ to $X$ and $Y$). Then we say that
\begin{equation*}
	\Lagrangian{} \subset X \times Y
\end{equation*}
is a Lagrangian correspondence if it's Lagrangian with respect to the chosen symplectic structure.

If both $X$ and $Y$ are $\T$-varieties and $\Lagrangian{}$ is $\T$-invariant and proper over $X$, then for any $\gamma \in \EqCoHlgy{\T}{\Lagrangian{}}$ this defines a map
\begin{equation*}
	\Theta_{\gamma}
	\colon
	\EqCoHlgy{\T}{Y}
	\xrightarrow{p_Y^*}
	\EqCoHlgy{\T}{\Lagrangian{}}
        \xrightarrow{\gamma\cup -}
	\EqCoHlgy{\T}{\Lagrangian{}}
	\xrightarrow{p_{X,*}}
	\EqCoHlgy{\T}{X}.
\end{equation*}


We say that a Lagrangian correspondence $\Lagrangian{} \subset X \times Y$ and is a Steinberg correspondence if there exist proper $\T$-equivariant maps
\begin{equation*}
	\begin{tikzcd}
		& Y\arrow[d]
		\\
		X\arrow[r]& V
	\end{tikzcd}
\end{equation*}
to a $\T$-equivariant affine variety $V$ such that
\begin{equation*}
	\Lagrangian{} \subset X \times_V Y.
\end{equation*}

Let $\left( M, \omega_M \right)$ be an algebraic symplectic variety with an $\A$-action that preserves $\omega_M$, and let $\Lagrangian{} \subset M$ be an $\A$-invariant Lagrangian subvariety. Choose a polarization $\epsilon$ on $M$. Maulik and Okounkov defined (Lemma 3.4.2 in \cite{MOk}) the Lagrangian residue of $\Lagrangian{}$ as a Borel-Moore class. We define $\LagRes{\Lagrangian{}}{M^{\A}} \in \EqCoHlgy{\T}{M^{\A}}$ as the image of the residue class in \cite{MOk} under the Poincar\'{e} duality isomorphism $\EqBMHlgy{\T}{M^{\A}} \xrightarrow{\sim} \EqCoHlgy{\T}{M^{\A}}$ [the locus $M^{\A}$ is known to be smooth, see Proposition 1.3 in \cite{I}; so Poincar\'{e} duality in the equivariant setting works for $M^{\A}$; see, e.g., \cite{B2} for Poincar\'{e} duality in equivariant (co)homology]. Then the definining formula in Lemma 3.4.2 of \cite{MOk} is equivalent to the following relation on $\LagRes{\Lagrangian{}}{M^{\A}}$:
\begin{equation*}
	\resZ
	{
		i_{\Lagrangian{},*}
            1_{\Lagrangian{}}
	}
	{
		M^{\A}
	}
	=
	\epsilon
	\cup
	\LagRes{\Lagrangian{}}{M^{\A}}
	+
	\dots
\end{equation*}
where dots stand for terms of smaller $\A$-degree, $\iota_{M^{\A}} \colon M^{\A} \hookrightarrow M$ and $i_{\Lagrangian{}} \colon \Lagrangian{} \hookrightarrow M$ are the inclusion maps and $1_{\Lagrangian{}} \in \EqCoHlgy{\T}{\Lagrangian{}}$ is the identity.

Now, let $M = X \times Y$ with algebraic symplectic varieties $X$ and $Y$. Then $M^{\A} = X^{\A} \times Y^{\A}$. Fix polarizations $\epsilon_X$ and $\epsilon_Y$ on $X^{\A}$ and $Y^{\A}$ respectively. Then we define the polarization on $X^{\A} \times Y^{\A}$ as $\epsilon_X \DualPol{\epsilon}_Y$.

Let $\Theta$ be the map given by a Steinberg correspondence $\Lagrangian{}$ and the class $1 \in \EqCoHlgy{\T}{\Lagrangian{}}$ on it, and $\Theta_\A$ the map by a Steinberg correspondence $\Lagrangian{\A} \subset X^{\A} \times Y^{\A}$ with a class $\LagRes{\Lagrangian{}}{X^{\A} \times Y^{\A}} \in  \EqCoHlgy{\T}{X^{\A} \times Y^{\A}}$ (this class depends on the choice $\epsilon_X$ and $\epsilon_Y$ as above). Then we have the following theorem.

\begin{theorem} \label{ThmCommutativityWithLagrangians}
	The diagram
	\begin{equation*}
		\begin{tikzcd}
			\EqCoHlgy{\A}{Y^{\A}} \arrow[d,"\Theta_\A"] \arrow[r,"\StabC{\C}"] & \EqCoHlgy{\A}{Y} \arrow[d,"\Theta"]
			\\
			\EqCoHlgy{\A}{X^{\A}} \arrow[r,"\StabC{\C}"] & \EqCoHlgy{\A}{X}
		\end{tikzcd}
	\end{equation*}
	commutes. The polarization above is $\epsilon_Y$, and the polarization below is $\epsilon_X$.
\end{theorem}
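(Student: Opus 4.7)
The plan is to invoke the uniqueness in \refThm{ThmStableEnvelopesDefinition}: given $\gamma \in \EqCoHlgy{\A}{Y^\A}$ and $\Gamma_Y := \StabCEpt{\C}{\epsilon_Y}{\gamma}$, I want to show that $\Theta(\Gamma_Y)\in\EqCoHlgy{\A}{X}$ satisfies the three axioms characterizing $\StabCEpt{\C}{\epsilon_X}{\Theta_\A(\gamma)}$, which forces the two classes to agree. This reduces the commutativity to three separate verifications: support, normalization at the fixed locus, and the off-diagonal $\A$-degree bound.

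For the support: $\Theta(\Gamma_Y) = p_{X,*}\!\left([\Lagrangian{}]\cup p_Y^*\Gamma_Y\right)$. Since $\supp\Gamma_Y \subset \Attr^f_\C(\supp\gamma)$ and $p_X$ is proper on $\Lagrangian{}$, the support of $\Theta(\Gamma_Y)$ sits inside $p_X\!\left(\Lagrangian{}\cap(X\times\Attr^f_\C(\supp\gamma))\right)$. The Steinberg hypothesis $\Lagrangian{}\subset X\times_V Y$ combined with the fact that the attracting/full attractor stratifications are respected by $\T$-equivariant proper maps to affine bases then forces this image to lie in the full attractor in $X$ of the union of components of $X^\A$ on which $\Theta_\A(\gamma)$ is nonzero.

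For the normalization: apply Atiyah-Bott localization to compute $\Theta(\Gamma_Y)|_{X^\A}$ as a sum over components $q' \subset Y^\A$ of $[\Lagrangian{}]|_{X^\A\times q'}\cup\Gamma_Y|_{q'}/\Euler{\A}{N_{q'/Y}}$. By the very definition of $\LagRes{\Lagrangian{}}{X^\A\times Y^\A}$, the top $\A$-degree piece of $[\Lagrangian{}]|_{X^\A\times Y^\A}$ equals $(\epsilon_X\cdot\DualPol{\epsilon}_Y)\cup\LagRes{\Lagrangian{}}{X^\A\times Y^\A}$. Combining this with axiom (2) for $\Gamma_Y$ (which pins down its leading term as $\epsilon_Y\cup\gamma$) and with the polarization identity $\Euler{\A}{N_{q'/Y}} = \pm\,\epsilon_Y\cdot\DualPol{\epsilon}_Y$, the $\DualPol{\epsilon}_Y$-factors cancel and the leading contribution simplifies to $\epsilon_X\cup\Theta_\A(\gamma)$, as required.

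The hard part will be the degree condition. For $p\in X^\A$ off the support of $\Theta_\A(\gamma)$, I need $\deg_\A \Theta(\Gamma_Y)|_p < \tfrac{1}{2}\dim X$. The localization formula writes $\Theta(\Gamma_Y)|_p$ as a sum of terms $[\Lagrangian{}]|_{p\times q'}\cup\Gamma_Y|_{q'}/\Euler{\A}{N_{q'/Y}}$; the Lagrangian condition $\dim\Lagrangian{} = \tfrac{1}{2}(\dim X + \dim Y)$ gives a sharp a priori bound on $\deg_\A [\Lagrangian{}]|_{p\times q'}$, and axiom (3) applied to $\Gamma_Y$ controls $\deg_\A \Gamma_Y|_{q'}$ relative to $\tfrac{1}{2}\dim Y$. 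The arithmetic of these bounds together with the denominator degree $\deg_\A \Euler{\A}{N_{q'/Y}} = \dim_\mathbb{R} N_{q'/Y}^{-\C}$ conspires to give exactly $< \tfrac{1}{2}\dim X$ in the remainder, once the top-order piece (which is absorbed by the Lagrangian residue and computes $\epsilon_X\cdot\Theta_\A(\gamma)|_p$) is separated from the strictly subleading contributions; the Steinberg factorization is what ensures all the ill-behaved, higher-degree, off-support terms cancel. With the three axioms verified, uniqueness identifies $\Theta(\Gamma_Y)$ with $\StabCEpt{\C}{\epsilon_X}{\Theta_\A(\gamma)}$, proving the commutativity.
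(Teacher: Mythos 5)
The paper does not prove this statement itself — it is recalled verbatim from Maulik--Okounkov \cite{MOk} (their Theorem 4.6.1) — and your proposal reconstructs essentially their argument: verify the three axioms of \refThm{ThmStableEnvelopesDefinition} for $\Theta(\Gamma_Y)$ and conclude by uniqueness, with the normalization pinned down by the definition of the Lagrangian residue and the degree bound by the half-dimensionality of $\Lagrangian{}$. The only step that deserves more care than you give it is the support claim: properness of $X\to V$ plus the valuative criterion shows that $\C$-limits of points of $p_X(\Lagrangian{}\cap(X\times\Attr^f_\C(\supp\gamma)))$ exist, but this set may meet components of $X^\A$ where the residue (hence $\Theta_\A(\gamma)$) vanishes, so the uniqueness induction must be run over this possibly larger closed union of full attractors, using the degree bound there rather than the vanishing of the support.
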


\begin{proof}
See Theorem 4.6.1 in \cite{MOk}.
\end{proof}

\subsubsection{Torus restriction}

Let $\C$ be a chamber, and let $\C'$ be a face of $\C$ of some dimension. Let $\A'\subset \A$ be a connected subtorus such that $\CocharLattice{\A'}\otimes_\mathbb{Z} \mathbb{R} = \mathfrak{a}'_\mathbb{R} = \linspan{\C'} \subset \mathfrak{a}_\mathbb{R}$. Then there is a projection of the cone $\C$ to $\mathfrak{a}_\mathbb{R}/\mathfrak{a}'_\mathbb{R}$, which we denote by $\C''$.

Moreover, fix the polarizations $\epsilon$ and $\epsilon'$ on $X^{\A}$ and $X^{\A'}$, respectively. This induces a polarization $\epsilon''$ of $X^{\A}$ as a subspace of $X^{\A'}$ by the following. Let $p\subset X^\A$ and $Z\subset X^{\A'}$, such that $p\subset Z$. If we present $\resPol{\epsilon}{p}$ as a product of $\A$-weights,
\begin{equation*}
	\resPol{\epsilon}{p} 
	= 
	\prod \wtalpha,
\end{equation*}
then we can write
\begin{equation*}
	\resZ
	{
		\resPol{\epsilon}{Z}'
	}
	{
		p
	}
	= 
	\resA
	{
		\pm \prod_{\resA{\wtalpha}{\A'}\neq 0} \wtalpha
	}
	{\A'}.
\end{equation*}
Then we set
\begin{equation*}
	\resPol{\epsilon}{p}''
	= 
	\resA
	{
		\pm \prod_{\resA{\wtalpha}{\A'} = 0} \wtalpha
	}
	{\A'/\A}
\end{equation*}
with the same sign as for $\resZ{\resPol{\epsilon}{Z}'}{p}$. One can rewrite this in terms of signs:
\begin{equation*}
\sign{\C}{\epsilon}{p}{X} = \sign{\C'}{\epsilon'}{Z}{X} \sign{\C''}{\epsilon''}{p}{Z}
\end{equation*}
using the fact that the polarization $\Euler{\A/\A'}{N^{-\C''}_{p/Z}}$ is induced in this sense by $\Euler{\A}{N^{-\C}_{p/X}}$ and $\Euler{\A'}{N^{-\C'}_{Z/X}}$.

With these choices, we have the following statement.

\begin{theorem} \label{ThmTorusRestriction}
	The diagram
	\begin{equation*}
		\begin{tikzcd}
		\EqCoHlgy{\T}{X^\A} \arrow[rr,"\StabCE{\C}{\epsilon}"] \arrow[rd,"\StabCE{\C''}{\epsilon''}", swap] && \EqCoHlgy{\T}{X}\\
		& \EqCoHlgy{\T}{X^{\A'}} \arrow[ru,"\StabCE{\C'}{\epsilon'}", swap] &
		\end{tikzcd}
	\end{equation*}
	commutes.
\end{theorem}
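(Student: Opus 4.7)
The plan is to invoke the uniqueness in Theorem \ref{ThmStableEnvelopesDefinition}: it suffices to check that the composition $\Psi := \StabCE{\C'}{\epsilon'} \circ \StabCE{\C''}{\epsilon''}$ satisfies the three defining axioms for $\StabCE{\C}{\epsilon}$. By $\EqCoHlgy{\T}{\pt}$-linearity it is enough to test on basis classes $\CohUnit{p}$ for $p \in X^\A$ lying in the component $Z \subset X^{\A'}$; set $\beta := \StabCE{\C''}{\epsilon''}(\CohUnit{p})$, which is supported in $Z$, and $\Gamma := \StabCE{\C'}{\epsilon'}(\beta)$.

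The support axiom is a two-step attraction statement: $\supp \beta \subset \Attr^f_{\C''}(p) \subset Z$ and $\supp \Gamma \subset \Attr^f_{\C'}(\supp \beta)$. Because $\C'$ is a face of $\C$ and $\C''$ is the projection of $\C$ to $\mathfrak{a}_\mathbb{R}/\mathfrak{a}'_\mathbb{R}$, a generic cocharacter in the interior of $\C$ first attracts into $X^{\A'}$ along $\C'$ and then within the resulting component along $\C''$, giving $\Attr^f_{\C'}\Attr^f_{\C''}(p) \subset \Attr^f_{\C}(p)$. For the diagonal axiom, the defining restrictions of the two envelopes produce
\begin{equation*}
\Gamma|_p = \sign{\C'}{\epsilon'}{Z}{X}\sign{\C''}{\epsilon''}{p}{Z} \cdot \Euler{\T}{N^{-\C'}_{Z/X}}\big|_p \cdot \Euler{\T}{N^{-\C''}_{p/Z}}.
\end{equation*}
The direct-sum decomposition $N^{-\C}_{p/X} = N^{-\C'}_{Z/X}\big|_p \oplus N^{-\C''}_{p/Z}$, which holds because $\A'$-nontrivial normal directions attract along $\C'$ while directions in $T_p Z$ attract along $\C''$, collapses the Euler product to $\Euler{\T}{N^{-\C}_{p/X}}$, and the product of signs matches $\sign{\C}{\epsilon}{p}{X}$ by the polarization compatibility recorded just before the theorem.

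The degree axiom I would handle by combining a cohomological degree computation with Theorem \ref{ThmEnvelopesOffDiagonal}. Since $\CohUnit{p}$ has cohomological degree $0$ and each stable envelope shifts cohomological degree by the codimension of the relevant fixed component, $\beta$ has cohomological degree $\dim Z$ and $\Gamma$ has cohomological degree $\dim X$. Consequently $\Gamma|_{p'} \in \EqCoHlgy{\T}{\pt}$ has total $\T$-weight polynomial degree $\frac{1}{2}\dim X$ for every $p' \in X^\A$, so the required strict bound $\deg_\A \Gamma|_{p'} < \frac{1}{2}\dim X$ reduces to exhibiting at least one factor from $\EqCoHlgy{\T/\A}{\pt}$. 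For $p' \in Z' \neq Z$ with $Z' \lC{\C'} Z$, Theorem \ref{ThmEnvelopesOffDiagonal} applied to $\StabCE{\C'}{\epsilon'}$ gives $\Gamma|_{Z'} \in \hbar \EqCoHlgy{\T}{Z'}$, hence $\hbar$ divides $\Gamma|_{p'}$. For $p' \in Z$ with $p' \neq p$, factor $\Gamma|_{p'} = (\text{sign}) \cdot \Euler{\T}{N^{-\C'}_{Z/X}}\big|_{p'} \cdot \beta|_{p'}$ and apply Theorem \ref{ThmEnvelopesOffDiagonal} to $\StabCE{\C''}{\epsilon''}$, placing $\beta|_{p'}$ in $\hbar \EqCoHlgy{\T}{\pt}$. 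The main subtlety is keeping the polarization signs coherent through the two-step construction and verifying that the hypotheses of Theorem \ref{ThmEnvelopesOffDiagonal} are available for the symplectic subvariety $Z \subset X^{\A'}$ as well as for $X$, so that the off-diagonal $\hbar$-divisibility can be invoked in both subcases.
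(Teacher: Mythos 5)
The paper does not actually prove \refThm{ThmTorusRestriction}: it is recalled from Maulik--Okounkov, so there is no in-paper argument to compare against. Your strategy --- verify that $\StabCE{\C'}{\epsilon'}\circ\StabCE{\C''}{\epsilon''}$ satisfies the three defining axioms of \refThm{ThmStableEnvelopesDefinition} and invoke uniqueness --- is the standard one, and your support and normalization steps are the usual ones: the splitting $N^{-\C}_{p/X}=N^{-\C'}_{Z/X}\vert_p\oplus N^{-\C''}_{p/Z}$ together with the sign identity $\sign{\C}{\epsilon}{p}{X}=\sign{\C'}{\epsilon'}{Z}{X}\sign{\C''}{\epsilon''}{p}{Z}$ recorded just before the theorem handles the diagonal. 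Where you genuinely diverge from the original argument is the degree axiom. Maulik--Okounkov prove it by a pure degree count, splitting $\deg_\A$ into the $\A'$-degree (controlled by the axioms of $\StabCE{\C'}{\epsilon'}$) plus the $\A/\A'$-degree (controlled by those of $\StabCE{\C''}{\epsilon''}$); this works for any $X$ satisfying the general hypotheses. You instead import \refThm{ThmEnvelopesOffDiagonal} to exhibit a factor of $\hbar$ in every off-diagonal restriction. That is legitimate in the setting of this paper --- every $X$ and every component $Z\subset X^{\A'}$ in play is a symplectic resolution, and the $\hbar$-divisibility is established in \cite{MOk} by deformation arguments independent of the triangle lemma, so there is no circularity --- but it buys less generality and leans on a deeper theorem than necessary, since the additive degree decomposition yields the strict bound directly from the homogeneity count you already set up. Two points to tighten: the inclusion $\Attr^f_{\C'}\bigl(\Attr^f_{\C''}(p)\bigr)\subset\Attr^f_{\C}(p)$ is where the real work of the support axiom lives (one must check that the composite order is refined by $\lC{\C}$ using closedness of full attractors, not just the one-sentence dynamical heuristic); and when applying \refThm{ThmEnvelopesOffDiagonal} to $\StabCE{\C'}{\epsilon'}(\beta)$ you should note that $\beta$ lies in $\EqCoHlgy{\T}{Z}=\EqCoHlgy{\T/\A'}{Z}\otimes_{\EqCoHlgy{\T/\A'}{\pt}}\EqCoHlgy{\T}{\pt}$, so the divisibility extends by $\EqCoHlgy{\T}{\pt}$-linearity.
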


\begin{proof}
See Lemma 3.6.1 in \cite{MOk}.
\end{proof}

\subsection{Stable Envelopes restrictions modulo \texorpdfstring{$\hbar^2$}{h²}: \texorpdfstring{$A_1$}{A1} type}

In this case, we have enough recursive relations similar to the ones which appear in \cite{Su}. They arise from action by certain Steinberg correspondences. These correspondences appeared in the context of the convolution affine Grassmannian of type $A_1$ at least in \cite{CK}. Even before that these correspondences were studied by H. Nakajima \cite{N4}, since the slices in type $A$ turn out to have a presentation as Nakajima quiver varieties\cite{MVy}.
	
\subsubsection{Steinberg correspondences in type \texorpdfstring{$A_1$}{A1}}

In type $A_1$ the connected group of adjoint type is $\G = \PSL{2}$, the fundamental coweight $\coomega$ is minuscule, the simple root is $\coalpha = 2\coomega$. Let us denote the slice as $X_0 = \Gr^{l\coomega}_{k\coomega}$. One can think that $k \geq 0$ to ensure that $k\coomega$ is dominant, but it will not affect the calculations. We assume that $l > 1$ and $|k|<l$, otherwise $X_0$ is just a point or an empty space. Then $X = \Gr^{\ucolambda}_{k\coomega}$ is the resolution of $X_0$, where $\ucolambda$ is an $l$-tuple
\begin{equation*}
	\ucolambda 
	= 
	\underbrace
	{
		\left(
		\coomega, \dots, \coomega
		\right)
	}
	_{l}.
\end{equation*}
	
First, let us introduce the following subvarieties of $X$. Recall that by definition a point in $X$ is an $(l+1)$-tuple $(L_0,L_1,\dots,L_l)$ of points in $\Gr$ (here we use the trick $L_0 = \UnitG \cdot \GO = \Tfixed{0}$ as in \refEq{EqTrick}). Then for every $i$, $1\leq i < l$ define
\begin{equation*}
	X^i
	=
	\left\lbrace
		(L_0,L_1,\dots,L_l)
		\in X
	\vert
		L_{i-1} = L_{i+1}
	\right\rbrace.
\end{equation*}
I.e. $X^i$ is the equalizer of the projections to the $(i-1)$th and $(i+1)$th factors.

\medskip
	
We will also need a slice $Y_0 = \Gr^{(l-2)\coomega}_{k\coomega}$. In case $l>2$, its resolution is $Y = \Gr^{\uconu}_{k\coomega}$, where $\uconu$ is an $(l-2)$-tuple
\begin{equation*}
	\uconu 
	= 
	\underbrace
	{
		\left(
		\coomega, \dots, \coomega
		\right)
	}
	_{l-2}.
\end{equation*}

In case $l=2$, we have $Y_0 = Y = \left\lbrace \UnitG \cdot \GO \right\rbrace$ is a point if $k=0$, and $Y_0$, $Y$ are empty spaces if $k\neq 0$.
	
Then there are the following natural $\T$-equivariant "forgetful" projections:
\begin{gather*}
	\pi_i 
	\colon 
	X^i
	\to
	Y
	\\
	(L_0,\dots,L_l)
	\mapsto
	(L_0,\dots,L_{i-2},L_{i-1}=L_{i+1},L_{i+2},\dots,L_l).
\end{gather*}

In English, we forget $L_i$ and identify $L_{i-1}$ and $L_{i+1}$. It is easy to check that for $l>2$ the resulting $(l-2)$-tuple satisfies the defining relation of $Y$ (or is the only point $\UnitG \cdot \GO$ in the case $l=2$).

\begin{proposition}
	$X^i\to Y$ is a $\PLine$-bundle.
\end{proposition}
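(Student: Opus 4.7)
The plan is to realize $\pi_i \colon X^i \to Y$ as the pullback of a globally defined $\PLine$-bundle over $\Gr$ along an evaluation map. First I would compute the fiber pointwise: for $\tilde y = (\tilde L_0, \dots, \tilde L_{l-2}) \in Y$, the preimage $\pi_i^{-1}(\tilde y)$ consists of $L_i \in \Gr$ satisfying $\tilde L_{i-1} \xrightarrow{\coomega} L_i$ and $L_i \xrightarrow{\coomega} \tilde L_{i-1}$. The key observation is that for $\G = \PSL{2}$ and $\coomega$ minuscule, the Hecke relation $\xrightarrow{\coomega}$ is symmetric: we have $\GO \Tfixed{\coomega} \GO = \GO \Tfixed{-\coomega} \GO$ because $-\coomega = w_0 \coomega$ and the nontrivial Weyl element $w_0$ lifts to $\G \subset \GO$. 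The two Hecke conditions therefore collapse into one, so the fiber equals $\{L \in \Gr : \tilde L_{i-1} \xrightarrow{\coomega} L\}$, which is a $\GK$-translate of $\Gr^{\coomega}$. By parts (4) and (6) of \refProp{PropGrCells}, the minuscule orbit $\Gr^{\coomega}$ is identified with the flag variety $\PSL{2}/\mathbf{B}^- \cong \PLine$.

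For local triviality I would introduce the twisted product $\GK \times^{\GO} \Gr^{\coomega}$, whose projection onto $\Gr = \GK/\GO$ is a locally trivial $\PLine$-bundle, with local trivializations coming from local sections of the $\GO$-torsor $\GK \to \Gr$. Then I would define a morphism from the fiber product $Y \times_{\Gr} (\GK \times^{\GO} \Gr^{\coomega})$, formed using $\tilde y \mapsto \tilde L_{i-1}$, into $X^i$ by inserting the extra Grassmannian coordinate at position $i$ and leaving the other entries of the tuple unchanged. The pointwise calculation above shows this map is a fiberwise bijection over $Y$, and hence an isomorphism of varieties. The local triviality of the fiber product as a $\PLine$-bundle then transfers to $\pi_i$.

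The main technical obstacle is justifying local triviality of $\GK \times^{\GO} \Gr^{\coomega} \to \Gr$ in the ind-scheme setting: rigorously one restricts to a finite-dimensional sub-Grassmannian $\overline{\Gr^{n\coomega}}$ containing the image of $Y \to \Gr$ (such an $n$ exists because $Y$ is a scheme of finite type), on which $\GK \to \Gr$ admits local sections over Schubert opens in the usual sense. Beyond that, compatibility with the slice condition $L_l \in \Gr_{k\coomega}$ is automatic because $L_l = \tilde L_{l-2}$ on both sides, and all Hecke relations in $X$ not involving position $i$ coincide with the corresponding relations in $Y$ after substituting $L_{i+1} = L_{i-1}$, so no further verification is required for those.
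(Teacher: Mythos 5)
Your proof is correct and follows essentially the same route as the paper: both realize $X^i$ inside the trivial fibration $Y\times\Gr\to Y$ cut out by the incidence condition $L_{i-1}\xrightarrow{\coomega}L_i$, identify the fiber with the minuscule orbit $\Gr^{\coomega}\cong\PLine$, and invoke local triviality. The paper simply asserts the last two points, whereas you supply the justifications it omits — the symmetry of the minuscule Hecke relation via $w_0\coomega=-\coomega$, and local triviality through the twisted product $\GK\times^{\GO}\Gr^{\coomega}$ — so your write-up is a more complete version of the same argument.
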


\begin{proof}
	Since $X^i$ adds one more element of $\Gr$ to a sequence from $Y$, it is natural to consider $X^i\to Y$ as a subbundle of the trivial fibration
	\begin{equation*}
		\begin{tikzcd}
			X^i \arrow[rr,hook]\arrow[rd] && Y \times \Gr \arrow[ld]
	\\
			&Y&
		\end{tikzcd}
	\end{equation*}
	defined by the equations
	\begin{equation*}
	L_{i-1} \xrightarrow{\coomega} L_i
	\end{equation*}
	where $L_0,\dots,L_{i-2},L_{i-1}=L_{i+1},L_{i+2},\dots,L_l$ are projections of a point in $Y$ and $L_i$ is an element in $\Gr$ (the one we want to reconstruct).
	The fiber is $\PLine$ and this fibration is locally trivial.
	
\end{proof}

Then we find that $X^i \times_Y X^i$ is a $\PLine \times \PLine $-bundle over $Y$. In particular, $X^i \times_Y X^i$ is smooth.

\medskip

Recall that $X$ and $Y$ have the symplectic forms $\omega_X$ and $\omega_Y$ as described in \refSec{SecSlices}.

\begin{proposition} \label{PropSymplecticRestriction}
	The natural pullbacks of the symplectic forms from $X$ and $Y$ to $X^i$ are equal:
	\begin{equation*}
		\resZ
		{
			\omega_X
		}
		{
			X^i
		}
		=
		\pi^*_i
		\omega_Y
	\end{equation*}
\end{proposition}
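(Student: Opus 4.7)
The plan is to exploit the $\PLine$-bundle structure of $\pi_i$: both $\omega_X|_{X^i}$ and $\pi_i^*\omega_Y$ are closed holomorphic 2-forms on $X^i$ that vanish along the fibers of $\pi_i$, so they both descend to closed 2-forms on $Y$, and it remains to match the descended forms with $\omega_Y$.

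First I would establish the relevant geometry. The dimension count $\dim X = \langle 2\wtrho, (l-k)\coomega\rangle$ and $\dim Y = \langle 2\wtrho, (l-2-k)\coomega\rangle$ shows that $X^i \subset X$ is a smooth divisor, with $\dim X^i = \dim Y + 1 = \dim X - 1$. The $\PLine$-fibers of $\pi_i$ parametrize the freedom to choose the intermediate point $L_i$ compatibly with $L_{i-1} = L_{i+1}$. Since $H^0(\PLine,\Omega^2)=0$, the restriction of $\omega_X$ to any fiber $F\simeq\PLine$ vanishes; hence the tangent line to $F$ lies in the kernel of $\omega_X|_{X^i}$, and by the codimension count this line \emph{is} the entire kernel. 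The form $\pi_i^*\omega_Y$ is of course trivially zero on fibers. A standard Leray-type argument for the $\PLine$-bundle $\pi_i$ (using $H^0(\PLine,\Omega^1) = H^0(\PLine,\Omega^2) = 0$) then shows that any closed holomorphic 2-form on $X^i$ that vanishes on all fibers of $\pi_i$ is the pullback of a unique closed 2-form on $Y$. Thus $\omega_X|_{X^i} = \pi_i^*\omega'$ for a uniquely determined closed 2-form $\omega'$ on $Y$.

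Next I would identify $\omega' = \omega_Y$. Both forms are $\T$-equivariant (since $\pi_i$ is $\T$-equivariant and $\omega_X,\omega_Y$ are), of the same $\T$-weight $\hbar$, and both are non-degenerate (for $\omega'$, this follows from $\ker\omega_X|_{X^i}$ being exactly the fiber tangent line). The case $l = 2$ is trivial as $Y$ is a point or empty. For $l > 2$, I would verify the equality pointwise at a convenient $\T$-fixed point $q \in Y^\T$, lifting to any $\T$-fixed point $p \in X^i$ with $\pi_i(p) = q$. Using the Manin-triple description \refEq{EqPairing}, both $\omega_X$ at $p$ and $\omega_Y$ at $q$ are obtained from the same invariant pairing on $\gK{t}$ applied to tangent vectors modeled by the convolution description. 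Tangent vectors to $Y$ at $q$ lift to tangent vectors to $X^i \subset X$ at $p$ transverse to the fiber direction, and the extra pairings coming from the two factors at positions $i$ and $i+1$ either vanish on such lifts or cancel each other, because on $X^i$ these two factors form a symmetric ``bubble'' $L_{i-1}\xrightarrow{\coomega}L_i\xrightarrow{\coomega}L_{i-1}$.

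The main obstacle is this last Manin-triple bookkeeping: one must carefully identify how a tangent vector to $Y$ at $q$ sits inside $T_p X$ via $dX^i \hookrightarrow TX$, and track the contributions of all $l$ factors to the pairing $\omega_X$ to see that the $i$-th and $(i{+}1)$-th contributions conspire to vanish on tangent vectors pulled up from $Y$. Everything preceding that reduction is formal and follows from the $\PLine$-bundle structure together with the Hodge-theoretic vanishing on $\PLine$.
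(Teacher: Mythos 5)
The paper states \refProp{PropSymplecticRestriction} without any proof at all, so there is nothing of the author's to compare your argument against; I can only assess it on its own terms. The first half of your proposal is essentially sound, but for a reason other than the one you give. That $\omega_X$ restricts to zero on a fiber $F\simeq\PLine$ is vacuous (any $2$-form restricts to zero on a curve) and does not by itself place $T_pF$ in the kernel of $\resZ{\omega_X}{X^i}$: since $T_pX^i$ and the symplectic orthogonal of the line $T_pF$ are both hyperplanes in $T_pX$, the inclusion "$T_pF\subset\ker$" is \emph{equivalent} to "$T_pF=\ker$", and both are exactly the nontrivial geometric input. What actually carries this half is the pushforward computation you mention in passing: for a $\PLine$-bundle one has $\Omega^2_{X^i/Y}=0$ and $\pi_{i*}\Omega^1_{X^i/Y}=0$, hence $H^0(X^i,\Omega^2_{X^i})\cong H^0(Y,\Omega^2_Y)$ and $\resZ{\omega_X}{X^i}=\pi_i^*\omega'$ for a unique closed $\omega'$; the statement about the kernel and the non-degeneracy of $\omega'$ then follow \emph{a posteriori} from the codimension count. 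Reordered this way, the descent step is fine.

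The genuine gap is the second half, which is where all the content of the proposition lives: identifying $\omega'$ with $\omega_Y$. You explicitly defer this ("the main obstacle is this last Manin-triple bookkeeping"), and two issues make the deferral serious. First, checking equality of two $2$-forms at a single $\T$-fixed point does not give equality on all of $Y$ unless you supply a rigidity statement (for instance, that a closed $2$-form of $\T$-weight $\hbar$ on $Y$ vanishing at a fixed point vanishes identically); no such statement is provided. Second, the point at which you propose to compute lies over $Y_0\subsetneq X_0$, i.e.\ in the exceptional locus of $m_{\ucolambda}\colon X\to X_0$, where $\omega_X$ is not directly given by the Poisson bivector on the open symplectic leaf but only by holomorphic continuation from it; so the promised cancellation of the contributions from positions $i$ and $i+1$ requires an explicit model of $\omega_X$ on the convolution space near $X^i$, which neither the paper nor your proposal supplies. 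Until that computation is carried out --- or replaced by an argument identifying $\omega'$ with $\omega_Y$ over the dense locus $\pi_i^{-1}(Y^\circ)$ where $Y\to Y_0$ is an isomorphism --- what you have is a plausible plan rather than a proof.
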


\begin{proof}
    Both $\omega_X$ and $\omega_Y$ are the sum of symplectic forms induced from Poisson structures on coordinate copies of $\Gr$ (recall that $X \subset \Gr^l$, $X \subset \Gr^{l-2}$). Then the only difference between $\resZ{\omega_X}{X^i}$ and $\pi^*_i \omega_Y$ is that the latter has an extra summand, the pullback of the symplectic form from a symplectic leaf in the $i$th coordinate $\Gr$. Let us call this extra 2-form $\omega_i$. We can see that $\omega_i$ vanishes, since $\omega_i$ is algebraic, and the $\PLine$-fibers are complex one-dimensional subvarieties. So any 2-form vanishes on them, in particular, $\omega_i$ vanishes. This gives the desired equality of forms.
\end{proof}

We have the following commutative diagram
\begin{equation*}
	\begin{tikzcd}
		Y \arrow[d]& X^i \arrow[l,"\pi_i",swap] \arrow[r,hook] \arrow[rd,dashed]& X \arrow[d]
		\\
		Y_0 \arrow[rr,hook] & & X_0
	\end{tikzcd}
\end{equation*}
The vertical arrows are natural maps from the definition of resolutions, the inclusion $Y_0 \hookrightarrow X_0$ is induced by the inclusion of closed cells $\overline{\Gr^{(l-2)\coomega}} \hookrightarrow \overline{\Gr^{l\coomega}}$.

This shows that, first of all, there is a natural inclusion $X^i \times_Y X^i \subset X^i \times_{X_0} X^i$, where the second fiber product is with respect to the dashed line in the diagram. This is because the dashed line factors through $\pi_i$. Secondly, we have a natural inclusion $X^i \times_{X_0} X^i \subset X \times_{X_0} X$ because the inclusion $X^i \hookrightarrow X$ commutes with the maps to $X_0$, as the diagram shows. Finally, because all morphisms are $\T$-equivariant, the inclusions are also $\T$-equivariant.

Let us denote $\Lagrangian{i}= X^i\times_{Y} X^i$. The statements above allow us to write $\Lagrangian{i} \subset X \times_{X^0} X$.

\begin{example}
	In the case of the resolution of $\DuVal{n}$-singularity, i.e. when $l=n+1$, $k=l-1$, we have $Y_0 = \Gr^{(l-2)\coomega}_{(l-2)\coomega} = \pt$, so $Y=\pt$. The subvariety $X^i \subset X$ is an irreducible component $E_i$ of the exceptional divisor $E$, $X^i \simeq \PLine$. Then $X^i \times_Y X^i$ is $\PLine \times \PLine$, and the inclusion $X^i \times_Y X^i \subset X \times X$ is just $E_i \times E_i \subset X\times X$. Since $E$ maps to one point in $X_0$, we even have $X^i \times_Y X^i \subset X \times_{X_0} X$.

\end{example}

\begin{proposition}
	$\Lagrangian{i} \subset X\times_{X_0} X $ is a Steinberg correspondence.
\end{proposition}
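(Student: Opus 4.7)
The proof will consist of verifying the three ingredients in the definition of a Steinberg correspondence: (a) $\Lagrangian{i}$ is a Lagrangian subvariety of $X \times X$; (b) it sits inside $X \times_V X$ for a proper $\T$-equivariant map to an affine $\T$-variety $V$; and (c) it is $\T$-invariant and proper over $X$. The candidate affine variety is $V = X_0 = \Gr^{l\coomega}_{k\coomega}$, which is affine by \refProp{PropSlicesProperties}(4), and the inclusion $\Lagrangian{i} \subset X \times_{X_0} X$ has already been established in the discussion preceding the proposition.

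For the $\T$-invariance and properness over $X$, I would observe that all the structure maps involved (the projections $\pi_i$, the inclusion $X^i\hookrightarrow X$, and the resolution maps to $X_0$) are $\T$-equivariant, so $\Lagrangian{i}$ inherits $\T$-invariance. For properness, the first projection $\Lagrangian{i} \to X$ factors as
\begin{equation*}
    X^i \times_Y X^i \xrightarrow{p_2} X^i \hookrightarrow X,
\end{equation*}
where the first arrow is the base change of the $\PLine$-bundle $\pi_i\colon X^i \to Y$ (hence proper) and the second is a closed immersion.

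For the Lagrangian property, I would handle dimension and isotropy separately. Dimension is routine: $X^i$ is a $\PLine$-bundle over $Y$, so $\dim X^i = \dim Y + 1$, hence $\dim \Lagrangian{i} = \dim Y + 2$. On the other hand, by \refProp{PropSlicesProperties}(4) and the identity $\langle 2\wtrho, 2\coomega\rangle = 2$ for $\PSL{2}$, $\dim X = \dim Y + 2$, so $\dim \Lagrangian{i} = \frac{1}{2}\dim(X\times X)$. For the isotropy, the symplectic form on $X \times X$ is $p_1^*\omega_X - p_2^*\omega_X$, where here $p_1, p_2$ denote the two projections of $X\times X$ onto $X$; restricting to $\Lagrangian{i}$ and using \refProp{PropSymplecticRestriction} gives
\begin{equation*}
    \resZ{p_1^*\omega_X}{\Lagrangian{i}}
    = (\pi_i \circ p_1)^*\omega_Y
    = (\pi_i \circ p_2)^*\omega_Y
    = \resZ{p_2^*\omega_X}{\Lagrangian{i}},
\end{equation*}
where the middle equality uses the defining property $\pi_i \circ p_1 = \pi_i \circ p_2$ of the fiber product $X^i \times_Y X^i$.

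The only nontrivial ingredient is the isotropy step, which relies crucially on \refProp{PropSymplecticRestriction} telling us that $\omega_X$ restricted to $X^i$ is pulled back from $Y$; once that is granted, the computation above is immediate from the universal property of the fiber product. Everything else is bookkeeping on dimensions and properness, so I expect no further obstacle.
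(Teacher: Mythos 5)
Your proof is correct and follows essentially the same route as the paper: the inclusion $\Lagrangian{i} \subset X \times_{X_0} X$ with the proper map $X \to X_0$ is taken from the preceding discussion, and the key step is the isotropy computation using \refProp{PropSymplecticRestriction} together with $\pi_i \circ p_1 = \pi_i \circ p_2$ on the fiber product. You additionally spell out the dimension count and the properness of $\Lagrangian{i}$ over $X$, which the paper leaves implicit; both checks are accurate.
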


\begin{proof}
	First, we have $\Lagrangian{i} \subset X \times_{X^0} X$ for a proper morphism $X \to X_0$ as shown before, so $\Lagrangian{i}$ is proper over affine $X_0$. So, we only need to check if $\Lagrangian{i}$ is Lagrangian.

	Let $p_1, p_2 \colon \Lagrangian{i} \to X$ be the projections to the first and second factors in $X \times X$. Then we need to check
	\begin{equation*}
		p_1^* \omega_X - p_2^* \omega_X = 0.
	\end{equation*}
	
	We also have the natural inclusion $\Lagrangian{i} \subset X^i \times X^i$ and the corresponding projections $\tilde{p}_1,\tilde{p}_2 \colon \Lagrangian{i} \to X^i$. These are related to $p_k$'s by
	\begin{equation*}
		p_k = \iota_{X^i} \tilde{p}_k.
	\end{equation*}
	
	By definition $\Lagrangian{i} = X^i \times_{Y} X^i$, so
	\begin{equation*}
		\pi^i \tilde{p}_1 = \pi^i \tilde{p}_2.
	\end{equation*}
	
	Then from \refProp{PropSymplecticRestriction} we can write
	\begin{equation*}
		p_1^* \omega_X - p_2^* \omega_X
		=
		\left(
			\tilde{p}_1^* - \tilde{p}_2^*
		\right)
		\left(
			\resZ{\omega_X}{X^i}
		\right)
		=
		\left(
			\tilde{p}_1^* - \tilde{p}_2^*
		\right)
		\left(
			\pi^*_i
			\omega_Y
		\right)
		= 0,
	\end{equation*}
	which finishes the proof.
\end{proof}

Let us discuss what elements $\left(X^i\right)^{\A}$ are and what fibers of $\left(X^i\right)^{\A} \to Y^{\A}$ are.

\begin{proposition} \label{PropA1CorrespondenceFixedPoints}
	\leavevmode
	\begin{enumerate}
	\item \label{PartEqualityOfSigmas}
		A point $p\in X^{\A}$ is in $\left(X^i\right)^{\A}$ if and only if
		\begin{equation*}
			\sigmapi{p}{i-1} = \sigmapi{p}{i+1}.
		\end{equation*}
	\item \label{PartRelationOfSigmas}
		Every point $q \in Y^{\A}$ has exactly two preimages $q_+, q_- \in  \left(X^i\right)^{\A}$ uniquely determined by the following conditions:
		\begin{align*}
			\sigmapi{q_\pm}{j} 
			&= \sigmapi{q}{j} \textit{ for all }j<i,		
			\\
			\sigmapi{q_\pm}{j} 
			&= \sigmapi{q}{j-2} \textit{ for all }j>i,
			\\
			\sigmapi{q_\pm}{i} 
			&= \sigmapi{q}{i-1} \pm \coomega.
		\end{align*}
	\end{enumerate}
\end{proposition}

\begin{proof}
    Consider $p \in X^{\A}$. By \refProp{PropSliceFixedLocus} and the notation introduced after it,
    \begin{equation*}
        p
        =
        \left(
            \Tfixed{\sigmapi{p}{0}}, 
            \dots, 
            \Tfixed{\sigmapi{p}{l}}
        \right).
    \end{equation*}
    The definition on $X^i$ says $p \in X^i$ if and only if $\Tfixed{\sigmapi{p}{i-1}}= \Tfixed{\sigmapi{p}{i+1}}$, which proves the statement (\ref{PartEqualityOfSigmas}).\\
    By \refProp{PropSliceFixedLocus} a point $q \in Y^{\A}$ has the form
     \begin{equation*}
        q
        =
        \left(
            \Tfixed{\sigmapi{q}{0}}, 
            \dots, 
            \Tfixed{\sigmapi{q}{l-2}}
        \right).
    \end{equation*}
    By the definition of the map $\pi_i \colon X^i \to Y$, any preimage $q'$ of $q$ has the form
    \begin{equation*}
        q'
        =
        \left(
            \Tfixed{\sigmapi{q}{0}}, 
            \dots, 
            \Tfixed{\sigmapi{q}{i-1}},
            L_i
            \Tfixed{\sigmapi{q}{i-1}},
            \dots
            \Tfixed{\sigmapi{q}{l-2}}
        \right).
    \end{equation*}
    By \refProp{PropSliceFixedLocus}, $q' \in Y^{\A}$ if and only if $L_i = \Tfixed{\conu}$ for some coweight $\conu$. Finally, note that $q' \in X^i \subset X$ implies $\Tfixed{\sigmapi{q}{i-1}} \xrightarrow{\omega} L_i $ from the definition of $X$. The only values of $\conu$ that satisfy this are $\conu = \sigmapi{q}{i-1} \pm \omega$. This restricts the possibilities for the $\A$-fixed preimages of $q$ to the pair of points in part (\ref{PartRelationOfSigmas}). Now we need to show that the preimage includes these two points. From \refProp{PropSliceFixedLocus} the points $q_{\pm}$ are in $X$. Moreover, their $i-1$th and $i+1$th components coincide, so they are in $X^i$. By the definition of $\pi_i \colon X^i \to Y$, $\pi_i(q_{\pm})=q$. This proves part (\ref{PartRelationOfSigmas}).
\end{proof}

The following statements are useful for localization computations.

\begin{proposition}
	Let $q_+, q_- \in \left(X^i\right)^{\A}$ and $q\in Y^{\A}$ be as in \refProp{PropA1CorrespondenceFixedPoints}. Then we have the following formulas
	\begin{enumerate}
	\item
		\begin{equation*}
			\resZ
			{
				\Euler{\T}{N_{X^i/X}}
			}
			{
				q_\pm
			}
			=
			\mp
			\left(
				\wtalpha 
				+
				\left\langle
					\wtalpha
					,
					\sigmapi{i}{q_\pm}
				\right\rangle
				\hbar
			\right)
		\end{equation*}
	\item
		\begin{equation*}
			\dfrac
			{
				\Euler{\T}{N_{q_\pm/X^i}}
			}
			{
				\Euler{\T}{N_{q/Y}}
			}
			=
			\pm
			\left(
				\wtalpha 
				+
				\left\langle
					\wtalpha
					,
					\sigmapi{i-1}{q}
				\right\rangle
				\hbar
			\right)
		\end{equation*}
	\end{enumerate}

\end{proposition}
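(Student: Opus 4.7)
The plan is to prove part 2 first by identifying the fiber $F := \pi_i^{-1}(q)$ with $\Gr^{\coomega}$ via left translation in $\GK$ and carefully tracking the resulting $\LoopGm$-weight shift, and then to deduce part 1 from the symplectic duality between $T F$ and $N_{X^i/X}$ inside $T X|_{X^i}$.

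For part 2, since $X^i \to Y$ is a $\PLine$-bundle and $q_\pm$ lies in the fiber over $q$, the tangent space decomposes $\T$-equivariantly as $T_{q_\pm} X^i \cong T_{q_\pm} F \oplus T_q Y$, so the ratio $\Euler{\T}{N_{q_\pm/X^i}} / \Euler{\T}{N_{q/Y}}$ equals the single tangent weight of $F$ at $q_\pm$. The fiber $F$ consists of $L \in \Gr$ with $[\sigmapi{i-1}{q}] \xrightarrow{\coomega} L \xrightarrow{\coomega} [\sigmapi{i-1}{q}]$, and since $\coomega$ is minuscule, left translation by $g := \sigmapi{i-1}{q}(t) \in \A((t)) \subset \GK$ gives an isomorphism $\Gr^{\coomega} \xrightarrow{\sim} F$ matching $[\pm\coomega]$ to $q_\pm$. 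Using the right-translation identification $T_{[h]} \Gr \cong \gK{t}/\mathrm{Ad}_h \gO{t}$, the tangent $T_{[\pm\coomega]} \Gr^{\coomega}$ is spanned by the root space $\grootsub{\pm\wtalpha} \subset \gK{t}$, with $\A$-weight $\pm\wtalpha$ and $\LoopGm$-weight $0$. Pushing forward by $dL_g = \mathrm{Ad}_g$ multiplies $\grootsub{\pm\wtalpha}$ by $g^{\pm\wtalpha} = t^{\pm \langle\wtalpha, \sigmapi{i-1}{q}\rangle}$, shifting the $\LoopGm$-weight by $\pm \langle\wtalpha, \sigmapi{i-1}{q}\rangle \hbar$ and giving part 2.

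For part 1, since $X^i \subset X$ has codimension $1$ it is coisotropic, and Proposition \ref{PropSymplecticRestriction} identifies the null foliation of $\omega_X|_{X^i} = \pi_i^* \omega_Y$ with the $\pi_i$-fibers, i.e., with $T F$. Hence $T F = (T X^i)^{\perp \omega_X}$ in $T X|_{X^i}$, and $\omega_X$ descends to a nondegenerate $\T$-equivariant pairing $T F \otimes N_{X^i/X} \to \mathbb{C}$. Since $\omega_X$ has $\T$-weight $\hbar$, the weights of paired vectors sum to $-\hbar$. Combined with part 2 and the identity $\sigmapi{i}{q_\pm} = \sigmapi{i-1}{q} \pm \coomega$ from Proposition \ref{PropA1CorrespondenceFixedPoints} (using $\langle\wtalpha, \coomega\rangle = 1$ in type $A_1$), this yields
\begin{equation*}
    \resZ{\Euler{\T}{N_{X^i/X}}}{q_\pm} = -\hbar \mp (\wtalpha + \langle\wtalpha, \sigmapi{i-1}{q}\rangle\hbar) = \mp (\wtalpha + \langle\wtalpha, \sigmapi{i}{q_\pm}\rangle\hbar),
\end{equation*}
which is part 1.

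The main subtlety is the $\LoopGm$-twist in part 2: since left translation by $g \in \A((t))$ commutes with $\A$ but not with $\LoopGm$ (which scales $t$), the tangent direction to the fiber picks up the factor $t^{\langle\wtalpha, \sigmapi{i-1}{q}\rangle}$ via $\mathrm{Ad}_g$. The explicit model $T_{[h]} \Gr \cong \gK{t}/\mathrm{Ad}_h \gO{t}$ makes this transparent.
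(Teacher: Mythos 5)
Your proof is correct, but it takes a genuinely different route from the paper's. The paper proves both parts by a direct weight-by-weight comparison of tangent spaces in its explicit linear model $T_pX\subset\bigoplus_j \gK{t}/\Ad_{t^{\sigmapi{p}{j}}}\left(\gO{t}\right)$: for part 1 the equation $L_{i-1}=L_{i+1}$ imposes exactly one new linear condition, located in the weight space $\mp\left(\wtalpha+\left\langle\wtalpha,\sigmapi{q_\pm}{i}\right\rangle\hbar\right)$, and for part 2 one observes that $T_{q_\pm}X^i$ has exactly one more tangent weight than $T_qY$. You instead establish part 2 by exhibiting the fiber of $\pi_i$ as the translate $t^{\sigmapi{q}{i-1}}\cdot\Gr^{\coomega}$ and tracking the loop-weight shift through $\Ad_{t^{\sigmapi{q}{i-1}}}$ (which is precisely \refLem{LemWtShift} applied to the single tangent weight $\pm\wtalpha$ of $\Gr^{\coomega}$), and you then deduce part 1 from part 2 via the weight-$(-\hbar)$ symplectic pairing between $N_{X^i/X}$ and the null direction of the coisotropic hypersurface $X^i$, the latter being identified with the fiber direction by \refProp{PropSymplecticRestriction}. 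Both arguments are sound; yours buys a sign-safe, conceptual derivation of part 1 (the normal weight is forced by the fiber weight and the weight of $\omega$, and the identity $\sigmapi{q_\pm}{i}=\sigmapi{q}{i-1}\pm\coomega$ converts one normalization into the other), at the cost of invoking \refProp{PropSymplecticRestriction} and the coisotropic/null-foliation identification, which the paper's purely combinatorial computation does not need.
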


\begin{proof}
	\leavevmode
	\begin{enumerate}
	\item
		Let us check what effect on the tangent space the condition $L_{i-1} = L_{i+1}$ has. It gives an extra relation that the restrictions to the $(i-1)$th and $(i+1)$th components must be equal. However, due to adjacency conditions with the $i$ th component, all but one weight vector must already be equal in $X$. The only new condition is on the part of weight
		\begin{equation*}
		\mp
			\left(
				\wtalpha 
				+
				\left\langle
					\wtalpha
					,
					\sigmapi{i}{q_\pm}
				\right\rangle
				\hbar
			\right).
		\end{equation*}
		Instead of two independent components, we now have one (note that it cannot be that components from both sides must be $0$). This gives the relation of the Euler classes.
  
	\item
	
		By comparison of tangent spaces of $X^i$ and $Y$ (or even at the weight multiplicities) we see that $X^i$ has one more tangent vector of weight 
		\begin{equation*}
			\pm
			\left(
				\wtalpha 
				+
				\left\langle
					\wtalpha
					,
					\sigmapi{i-1}{q}
				\right\rangle
				\hbar
			\right).
		\end{equation*}
		This proves the statement.
	
	\end{enumerate}
\end{proof}

\medskip

If $p \in \left(X^i\right)^{\A}$ we denote by $\reflect{i}{p}$ the other point in $\left(X^i\right)^{\A}$ which has the same image under $X^i \to Y$. In notation of \refProp{PropA1CorrespondenceFixedPoints}
\begin{align*}
	\reflect{i}{q_+} &= q_-,
	\\
	\reflect{i}{q_-} &= q_+.
\end{align*}

If $p\in \left( X \backslash X^i\right)^{\A}$, then we define
\begin{equation*}
    \reflect{i}{p} = p.
\end{equation*}
	
One can easily see that maps $\reflect{i}{}$ are involutions on $X^{\A}$.

The motivation for this notation is that this operation swaps $\deltapi{p}{i}$ and $\deltapi{p}{i+1}$ as an adjacent transposition. This leads to the following statement.

\begin{proposition}
	Any two points in $X^{\A}$ can be related by a sequence of $\reflect{i}{}$'s.
\end{proposition}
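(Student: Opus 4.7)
The plan is to reduce this to the classical fact that adjacent transpositions generate the symmetric group, once we translate fixed points into combinatorial data. First, since $\coomega$ is minuscule for $\G = \PSL{2}$, the Weyl orbit $W\coomega$ consists of exactly two elements $\pm\coomega$, so by \refProp{PropSliceFixedLocus} each $p \in X^{\A}$ is completely encoded by the sequence
\begin{equation*}
    \deltap{p} = (\deltapi{p}{1}, \dots, \deltapi{p}{l}) \in \{+\coomega, -\coomega\}^l,
\end{equation*}
subject only to the constraint $\sum_{i=1}^l \deltapi{p}{i} = k\coomega$. If we let $a$ and $b$ denote the number of $+\coomega$ and $-\coomega$ entries, this forces $a = (l+k)/2$ and $b = (l-k)/2$, so the multiset of entries is the same for every fixed point.

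Next, I would verify that the operation $\reflect{i}{}$ acts on $\deltap{p}$ precisely as the adjacent transposition of positions $i$ and $i+1$. Indeed, by \refProp{PropA1CorrespondenceFixedPoints}, the condition $p \in (X^i)^{\A}$ is $\sigmapi{p}{i-1} = \sigmapi{p}{i+1}$, i.e.\ $\deltapi{p}{i} + \deltapi{p}{i+1} = 0$; in that case the description of $q_{\pm}$ shows that $\reflect{i}{p}$ has the same $\sigmapi{\cdot}{j}$ for $j \neq i$ and flips $\sigmapi{\cdot}{i}$ by $\pm 2\coomega$, which is exactly the swap of $\deltapi{p}{i}$ and $\deltapi{p}{i+1}$. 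In the complementary case $\deltapi{p}{i} = \deltapi{p}{i+1}$ the swap acts trivially on $\deltap{p}$, matching the convention $\reflect{i}{p} = p$.

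Finally, I would invoke the elementary fact that the symmetric group $S_l$ is generated by the adjacent transpositions $(i,i+1)$ for $1 \leq i < l$, and that $S_l$ acts transitively on the set of sequences in $\{+\coomega, -\coomega\}^l$ with a prescribed multiset of entries. Combined with the previous two paragraphs, this shows that given any two fixed points $p, p' \in X^{\A}$, there is a permutation $\pi \in S_l$ carrying $\deltap{p}$ to $\deltap{p'}$, and writing $\pi$ as a product of adjacent transpositions gives the desired sequence of $\reflect{i}{}$'s relating $p$ to $p'$. There is no real obstacle here; the only point that needs care is checking that the convention $\reflect{i}{p} = p$ for $p \notin (X^i)^{\A}$ is consistent with the swap interpretation, which is why I would do that verification explicitly.
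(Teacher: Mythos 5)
Your proof is correct and follows essentially the same route as the paper: identify fixed points with sequences $\deltap{p}\in\{\pm\coomega\}^l$ of fixed sum, observe that $\reflect{i}{}$ realizes the adjacent transposition of positions $i$ and $i+1$, and conclude via generation of $S_l$ by adjacent transpositions. Your version is somewhat more explicit in checking the action of $\reflect{i}{}$ on $\deltap{p}$ and the consistency of the convention $\reflect{i}{p}=p$ off $(X^i)^{\A}$, but there is no substantive difference in approach.
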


\begin{proof}
	Since for any $p \in X^{\A}$ we have
	\begin{align*}
	\sum \deltapi{p}{i} 
	&=
	\sigmapi{p}{l} = k\coomega
	\\
	\deltapi{p}{i} 
	&=
	\pm \coomega,
	\end{align*}
	then any two points in $X^{\A}$ differ only by permuting positions of $+\coomega$ and $-\coomega$ in $\deltap{p}$. Any permutation can be written as a product of adjacent transpositions, which gives a relation by a sequence of $\reflect{i}{}$'s.
\end{proof}

\medskip

The correspondence $\Lagrangian{i}$ is $\T$-invariant, so it defines a map which we denote by $\Theta^i \colon \EqCoHlgy{\T}{X} \to \EqCoHlgy{\T}{X}$.

A polarization $\epsilon$ on $X^{\A}$ induces a polarization on $X^{\A}\times X^{\A}$ as in \refThm{ThmCommutativityWithLagrangians}.

\begin{proposition} \label{PropLagrangianResidues}
	The Lagrangian residue $\LagRes{\Lagrangian{i}}{X^{\A}\times X^{\A}}$ is supported on $\left(\Lagrangian{i}\right)^{\A}$. The $\A$-equivariant restrictions to $(p,q) \in \left(\Lagrangian{i}\right)^{\A}$ is the following
	\begin{equation*}
		\resZ
		{
			\LagRes{\Lagrangian{i}}{X^{\A}\times X^{\A}}
		}
		{
			(p,q)
		}
		=
		\begin{cases}
			-1,
			&\text{ if } p=q,
			\\
			\dfrac
			{
				\resPol{\epsilon}{p}
			}
			{
				\resPol{\epsilon}{q}
			},
			&\text{ if } p\neq q.
		\end{cases}
	\end{equation*}

\end{proposition}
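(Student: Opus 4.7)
The plan is to compute $\resZ{[\Lagrangian{i}]}{(p,q)}$ at every $\T$-fixed point $(p,q)\in X^\A\times X^\A$ by equivariant localization and extract $\LagRes{\Lagrangian{i}}{X^\A\times X^\A}$ as the $\A$-leading coefficient of the quotient $\resZ{[\Lagrangian{i}]}{(p,q)}/\resZ{\epsilon\DualPol{\epsilon}}{(p,q)}$, with $\resZ{\epsilon\DualPol{\epsilon}}{(p,q)}=(-1)^{\dim X/2}\resPol{\epsilon}{p}\resPol{\epsilon}{q}$. If $(p,q)\notin\Lagrangian{i}$, localization yields $\resZ{[\Lagrangian{i}]}{(p,q)}=0$ and hence the residue vanishes, proving the support statement. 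By \refProp{PropA1CorrespondenceFixedPoints} the remaining fixed points form $(\Lagrangian{i})^\A=(X^i)^\A\times_{Y^\A}(X^i)^\A$; over each $y\in Y^\A$ the fiber consists of the four isolated points $(y_\pm,y_\pm)$ and $(y_+,y_-),(y_-,y_+)$.

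At each such $(p,q)$ decompose $T_pX=T_yY\oplus K_p\oplus N_p$, where $K_p=\ker d\pi_i|_p\subset T_pX^i$ is the vertical tangent direction of weight $\kappa_p$ and $N_p=\resZ{N_{X^i/X}}{p}$ has weight $\nu_p$. The preceding proposition gives $\kappa_{y_\pm}=\pm\kappa$ with $\kappa=\wtalpha+\langle\wtalpha,\sigmapi{y}{i-1}\rangle\hbar$, and since the symplectic form on $X$ matches weights $\alpha$ of $T_pX$ with $-\hbar-\alpha$, we obtain $\nu_{y_\pm}=-\hbar\mp\kappa$ and $\nu_+\nu_-=\hbar^2-\kappa^2$. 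The fiber-product description
\begin{equation*}
T_{(p,q)}\Lagrangian{i}=\{(v,w)\in T_pX^i\oplus T_qX^i:d\pi_iv=d\pi_iw\}
\end{equation*}
has the same $\T$-weights as $T_yY\oplus K_p\oplus K_q$, so the isolated fixed-point localization formula $\resZ{[\Lagrangian{i}]}{(p,q)}=\Euler{\T}{T_{(p,q)}(X\times X)}/\Euler{\T}{T_{(p,q)}\Lagrangian{i}}$ yields
\begin{equation*}
\resZ{[\Lagrangian{i}]}{(p,p)}=\Euler{\T}{T_yY}\,\nu_p^2,\qquad \resZ{[\Lagrangian{i}]}{(y_+,y_-)}=\Euler{\T}{T_yY}\,\nu_+\nu_-.
\end{equation*}

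Since $\resA{\nu_p}{\A}=\mp\wtalpha$, the leading $\A$-term is $\wtalpha^2\Euler{\A}{T_yY}$ in the diagonal case and $-\wtalpha^2\Euler{\A}{T_yY}=\Euler{\A}{T_pX}$ in the off-diagonal case, where we used $\Euler{\A}{T_pX}=\resA{\kappa_p\nu_p}{\A}\Euler{\A}{T_yY}=-\wtalpha^2\Euler{\A}{T_yY}$. The polarization identity $(\resPol{\epsilon}{p})^2=(-1)^{\dim X/2}\Euler{\A}{T_pX}$ makes $\resZ{\epsilon\DualPol{\epsilon}}{(p,p)}=(-1)^{\dim X/2}(\resPol{\epsilon}{p})^2=\Euler{\A}{T_pX}$, so
\begin{equation*}
\resZ{\LagRes{\Lagrangian{i}}{X^\A\times X^\A}}{(p,p)}=\frac{\wtalpha^2\Euler{\A}{T_yY}}{\Euler{\A}{T_pX}}=-1,
\end{equation*}
while for the off-diagonal case
\begin{equation*}
\resZ{\LagRes{\Lagrangian{i}}{X^\A\times X^\A}}{(p,q)}=\frac{\Euler{\A}{T_pX}}{(-1)^{\dim X/2}\resPol{\epsilon}{p}\resPol{\epsilon}{q}}=\frac{(-1)^{\dim X/2}(\resPol{\epsilon}{p})^2}{(-1)^{\dim X/2}\resPol{\epsilon}{p}\resPol{\epsilon}{q}}=\frac{\resPol{\epsilon}{p}}{\resPol{\epsilon}{q}},
\end{equation*}
as claimed.

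The main technical point is the symplectic-pairing identity $\nu_+\nu_-=\hbar^2-\kappa^2$, which makes the off-diagonal leading $\A$-term land precisely on $\Euler{\A}{T_pX}=(-1)^{\dim X/2}(\resPol{\epsilon}{p})^2$ and collapses the polarization quotient to $\resPol{\epsilon}{p}/\resPol{\epsilon}{q}$; the rest is routine fixed-point bookkeeping.
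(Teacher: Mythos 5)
Your proof is correct and follows essentially the same route as the paper: restrict the fundamental class $\left[\Lagrangian{i}\right]$ to the isolated fixed points, pass to the $\A$-leading term, and divide by the polarization product $\resPol{\epsilon}{p}\resPol{\DualPol{\epsilon}}{q}$. The only difference is cosmetic — you derive the normal Euler class explicitly from the $\PLine$-bundle structure and the symplectic pairing $\nu_p=-\hbar-\kappa_p$, where the paper simply asserts the value $\pm(-1)^{\dim Y/2}\wtalpha^{\dim X}$; your sign bookkeeping agrees with the paper's (the off-diagonal ratio $\resPol{\epsilon}{q}/\resPol{\epsilon}{p}$ versus $\resPol{\epsilon}{p}/\resPol{\epsilon}{q}$ is immaterial since both lie in $\signset$ in type $A_1$).
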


\begin{proof}
	By dimension count
	\begin{equation*}
		\resZ
		{
			\LagRes{\Lagrangian{i}}{X^{\A}\times X^{\A}}
		}
		{
			(p,q)
		}
		\in
		\EqCoHlgyk{\A}{\pt}{0}
		=
		\mathbb{Q}
	\end{equation*}
	
	Restricting the definition of the Lagrangian residues to the subtorus $\A$, we get
	\begin{equation*}
		\resA
		{
			\resZ{\Lagrangian{i}}{(p,q)}
		}
		{
			\A
		}
		=
		\resPol{\epsilon}{p}
		\resPol{\DualPol{\epsilon}}{q}
		\resZ
		{
			\left[
				\LagRes{\Lagrangian{i}}{X^{\A}\times X^{\A}}
			\right]
		}
		{
			(p,q)
		}
	\end{equation*}	
	So
	\begin{equation*}
		\resZ
		{
			\LagRes{\Lagrangian{i}}{X^{\A}\times X^{\A}}
		}
		{
			(p,q)
		}
		=
		\dfrac
		{
			\Euler{\A}{N_{(p,q)/\Lagrangian{i}}}
		}
		{
			\resPol{\epsilon}{p}
			\resPol{\DualPol{\epsilon}}{q}
		}
	\end{equation*}
	One can compute
	\begin{equation*}
		\Euler{\A}{N_{(p,q)/\Lagrangian{i}}}
		=
		\begin{cases}
			(-1)^{\dim Y/2} (\wtalpha)^{\dim X},
			&\text{ if } p=q,
			\\
			-(-1)^{\dim Y/2} (\wtalpha)^{\dim X},
			&\text{ if } p\neq q.
		\end{cases}
	\end{equation*}
	and
	\begin{equation*}
		\resPol{\epsilon}{p}
		\resPol{\DualPol{\epsilon}}{q}
		=
		\resPol{\epsilon}{q}
		\resPol{\DualPol{\epsilon}}{q}
		\dfrac
		{
			\resPol{\epsilon}{p}
		}
		{
			\resPol{\epsilon}{q}
		}
		=
		(-1)^{\dim X/2}(\wtalpha)^{\dim X}
		\dfrac
		{
			\resPol{\epsilon}{p}
		}
		{
			\resPol{\epsilon}{q}
		}
	\end{equation*}
	
	This gives the statement of the proposition.
\end{proof}

We denote by
\begin{equation*}
    \Res\Theta^i \colon \EqCoHlgy{\T}{X^\A} \to \EqCoHlgy{\T}{X^\A}
\end{equation*}
the map given by the Lagrangian residues $\LagRes{\Lagrangian{i}}{X^{\A}\times X^{\A}}$.

\subsubsection{Recursion}

The main ingredient for the recursion is the interaction of $\Theta^i$ with the stable envelopes and the basis of fixed points.

\begin{proposition} \label{PropThetaStab}
	The correspondence $\Theta^i$ has the following (left) action on $\StabCEpt{\C}{\epsilon}{p}$, $p\in X^{\A}$:
	\begin{equation*}
	\Theta^i \circ \StabCEpt{\C}{\epsilon}{p}
	=
	- \StabCEpt{\C}{\epsilon}{p}
	+
	\dfrac
	{
		\resPol{\epsilon}{p}
	}
	{
		\resPol{\epsilon}{\reflect{i}{p}}
	}
	\StabCEpt{\C}{\epsilon}{\reflect{i}{p}}.
	\end{equation*}
	Moreover,
	\begin{equation*}
	\dfrac
	{
		\resPol{\epsilon}{p}
	}
	{
		\resPol{\epsilon}{\reflect{i}{p}}
	}
	\in
	\signset.
	\end{equation*}
\end{proposition}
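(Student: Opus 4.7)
The strategy is to invoke \refThm{ThmCommutativityWithLagrangians} applied to the Steinberg correspondence $\Lagrangian{i}$, which gives the commutative square
$\Theta^i \circ \StabCE{\C}{\epsilon} = \StabCE{\C}{\epsilon} \circ \Res\Theta^i$.
This reduces the problem to computing the action of $\Res\Theta^i$ on the fixed point basis $\{1_p\}$ of $\EqCoHlgy{\T}{X^\A}$, after which one applies $\StabCE{\C}{\epsilon}$ termwise.

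The first step is to describe the support of the kernel. Since $\Lagrangian{i} = X^i \times_Y X^i$, a pair $(q,p) \in X^\A \times X^\A$ lies in $(\Lagrangian{i})^\A$ if and only if both $p, q \in (X^i)^\A$ and $\pi_i(q) = \pi_i(p)$. By \refProp{PropA1CorrespondenceFixedPoints}, for $p \in (X^i)^\A$ the only such $q$'s are $q = p$ and $q = \reflect{i}{p}$, while for $p \notin (X^i)^\A$ there are none at all. Since $X^\A$ is finite discrete, the operator attached to a kernel on $X^\A \times X^\A$ is just summation along the fibers, with no Euler class denominators. Plugging in the values from \refProp{PropLagrangianResidues} yields
\begin{equation*}
    (\Res\Theta^i)(1_p) = -1_p + \dfrac{\resPol{\epsilon}{p}}{\resPol{\epsilon}{\reflect{i}{p}}}\, 1_{\reflect{i}{p}}
\end{equation*}
for $p \in (X^i)^\A$, and $(\Res\Theta^i)(1_p) = 0$ otherwise. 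With the convention $\reflect{i}{p} = p$ for $p \notin X^i$, the second case reads $-1_p + 1_p = 0$, so the formula is uniform. Applying $\StabCE{\C}{\epsilon}$ gives the desired identity.

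For the sign claim, the plan is to use \refThm{ThmLatticeMultiplicities}: in type $A_1$ there is a single positive root $\wtalpha$, so every $\T$-weight of $T_p X$ has the form $\wtalpha + n\hbar$ for some $n \in \mathbb{Z}$, and hence every $\A$-weight of $N^{-\C}_p$ restricts to $\pm \wtalpha$. Thus $\resPol{\epsilon}{p}$ and $\resPol{\epsilon}{\reflect{i}{p}}$ are both signed powers of $\wtalpha$ of the common degree $\dim X/2$, and their ratio lies in $\signset$.

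The only delicate point I anticipate is sign-tracking: the minus sign in front of $\StabCEpt{\C}{\epsilon}{p}$ is precisely the $-1$ appearing on the diagonal of \refProp{PropLagrangianResidues}, which itself arises from the interplay between the polarization $\epsilon$ and its dual $\DualPol{\epsilon}$ in the definition of Lagrangian residues for $X^\A \times X^\A$. Once that sign is correctly propagated, the rest of the argument is packaged cleanly by \refThm{ThmCommutativityWithLagrangians} and the two-to-one fiber structure of $X^i \to Y$, with no further computation required.
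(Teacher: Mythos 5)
Your proposal is correct and follows essentially the same route as the paper: reduce via \refThm{ThmCommutativityWithLagrangians} to the action of $\Res\Theta^i$ on the fixed-point basis, read off the matrix entries from \refProp{PropLagrangianResidues} together with the two-to-one fiber structure of \refProp{PropA1CorrespondenceFixedPoints}, and note that in type $A_1$ every polarization is $\pm(\wtalpha)^{\dim X/2}$ so the ratio is a sign. No gaps.
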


\begin{proof}
	By \refThm{ThmCommutativityWithLagrangians} we get
	\begin{equation*}
	\Theta^i \circ \StabCEpt{\C}{\epsilon}{p}
	=
	\StabCEpt{\C}{\epsilon}{\Res \Theta^i \: \CohUnit{p}}.
	\end{equation*}
	
	If $p\in \left(X \backslash X^i\right)^{\A}$, both $\Res \Theta^i \: \CohUnit{p} = 0$ and
	\begin{equation*}
	- \StabCEpt{\C}{\epsilon}{p}
	+
	\dfrac
	{
		\resPol{\epsilon}{p}
	}
	{
		\resPol{\epsilon}{\reflect{i}{p}}
	}
	\StabCEpt{\C}{\epsilon}{\reflect{i}{p}}
	= 0,
	\end{equation*}
	since $\reflect{i}{p} = 0$ by definition. This proves the proposition for all $p\in \left(X \backslash X^i\right)^{\A}$.
	
	Now assume $p\in \left(X^i\right)^{\A}$. From \refProp{PropLagrangianResidues} and \refProp{PropA1CorrespondenceFixedPoints}, we have
	\begin{align*}
		\StabCEpt{\C}{\epsilon}{\Res \Theta^i \: \CohUnit{p}}
		&=
		\StabCEpt{\C}{\epsilon}
		{
			-\CohUnit{p}
			+
			\dfrac
			{
				\resPol{\epsilon}{p}
			}
			{
				\resPol{\epsilon}{\reflect{i}{p}}
			}
			\CohUnit{\reflect{i}{p}}
		}
		\\ 
		&=
		- \StabCEpt{\C}{\epsilon}{p}
		+
		\dfrac
		{
			\resPol{\epsilon}{p}
		}
		{
			\resPol{\epsilon}{\reflect{i}{p}}
		}
		\StabCEpt{\C}{\epsilon}{\reflect{i}{p}}.
	\end{align*}
	This proves the formula for $p\in \left(X^i\right)^{\A}$.

	The remark about $\dfrac{\resPol{\epsilon}{p}}{\resPol{\epsilon}{ \reflect{i}{p}} } \in \signset$ follows from the fact that $\resPol{\epsilon}{q} = \pm \left( \wtalpha \right)^{\dim X/2}$ in type $A_1$ for any $q\in X^\A$.

\end{proof}

\medskip

There is a similar fact for the restrictions $\resZ{}{q}$.

\begin{proposition} \label{PropRestrictionTheta}
	The correspondence $\Theta^i$ has the following (right) action on $\resZ{}{q}$, $q\in X^{\A}$:
	\begin{equation*}
	\resZ{}{q}\circ \Theta^i
	=
	\dfrac
	{
		\wtalpha
		+
		\left\langle
			\wtalpha,
			\sigmapi{q}{i}
		\right\rangle
		\hbar
	}
	{
		\wtalpha
		+
		\left\langle
			\wtalpha,
			\sigmapi{q}{i-1}
		\right\rangle
		\hbar
	}
	\left[
		-
		\resZ{}{q}
		+
		\resZ{}{\reflect{i}{q}}
	\right]
	\end{equation*}

\end{proposition}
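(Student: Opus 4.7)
The plan is to compute $\resZ{\Theta^i\gamma}{q}$ directly from the definition $\Theta^i = p_{1,*}\circ p_2^*$, where $p_1,p_2\colon \Lagrangian{i}\to X$ are the two projections from $\Lagrangian{i}=X^i\times_Y X^i$, by applying equivariant localization to the proper pushforward $p_{1,*}$ at the isolated fixed point $q$.

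First I would dispose of the trivial case $q\notin (X^i)^{\A}$. Here $\reflect{i}{q}=q$ by definition, so the right-hand side vanishes identically; on the left, $p_1$ factors through the inclusion $X^i\hookrightarrow X$, so $\Theta^i\gamma$ is supported on $X^i$ and restricts to zero at $q\notin X^i$.

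Assume now $q\in (X^i)^{\A}$ and set $q'=\pi_i(q)\in Y^{\A}$. By \refProp{PropA1CorrespondenceFixedPoints} the $\A$-fixed points of $\pi_i^{-1}(q')$ are exactly $q$ and $\reflect{i}{q}$, so the $\A$-fixed points of $\Lagrangian{i}$ lying over $q$ via $p_1$ are precisely $(q,q)$ and $(q,\reflect{i}{q})$, both isolated. Equivariant localization then yields
\begin{equation*}
    \resZ{\Theta^i\gamma}{q}
    =
    \Euler{\T}{T_q X}
    \left[
        \frac{\resZ{\gamma}{q}}{\Euler{\T}{T_{(q,q)}\Lagrangian{i}}}
        +
        \frac{\resZ{\gamma}{\reflect{i}{q}}}{\Euler{\T}{T_{(q,\reflect{i}{q})}\Lagrangian{i}}}
    \right].
\end{equation*}
Since $\pi_i\colon X^i\to Y$ is a smooth $\PLine$-bundle, for any $\A$-fixed $(r_1,r_2)\in\Lagrangian{i}$ lying over $q'$ the short exact sequence $0\to T_{(r_1,r_2)}\Lagrangian{i}\to T_{r_1}X^i\oplus T_{r_2}X^i\to T_{q'}Y\to 0$ gives
\begin{equation*}
    \Euler{\T}{T_{(r_1,r_2)}\Lagrangian{i}}
    =
    \frac{\Euler{\T}{T_{r_1}X^i}\cdot\Euler{\T}{T_{r_2}X^i}}{\Euler{\T}{T_{q'}Y}}.
\end{equation*}

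Finally, I would combine this with $\Euler{\T}{T_q X}/\Euler{\T}{T_q X^i}=\Euler{\T}{N_{X^i/X}|_q}$, obtaining
\begin{equation*}
    \resZ{\Theta^i\gamma}{q}
    =
    \Euler{\T}{N_{X^i/X}|_q}
    \left[
        \frac{\Euler{\T}{T_{q'}Y}}{\Euler{\T}{T_q X^i}}\,\resZ{\gamma}{q}
        +
        \frac{\Euler{\T}{T_{q'}Y}}{\Euler{\T}{T_{\reflect{i}{q}}X^i}}\,\resZ{\gamma}{\reflect{i}{q}}
    \right],
\end{equation*}
and substitute the closed-form expressions from the preceding proposition: $\Euler{\T}{N_{X^i/X}|_q}=\mp(\wtalpha+\langle\wtalpha,\sigmapi{q}{i}\rangle\hbar)$ and $\Euler{\T}{T_q X^i}/\Euler{\T}{T_{q'}Y}=\pm(\wtalpha+\langle\wtalpha,\sigmapi{q}{i-1}\rangle\hbar)$ with opposite signs for $q$ and $\reflect{i}{q}$. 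The opposite signs in the two summands produce the combination $-\resZ{}{q}+\resZ{}{\reflect{i}{q}}$, while the remaining factors collapse to the claimed prefactor $\frac{\wtalpha+\langle\wtalpha,\sigmapi{q}{i}\rangle\hbar}{\wtalpha+\langle\wtalpha,\sigmapi{q}{i-1}\rangle\hbar}$. The main obstacle is purely bookkeeping of signs and the compatibility of the fiber-product tangent exact sequence with the Euler class formulas; once \refProp{PropA1CorrespondenceFixedPoints} and the preceding proposition are in hand, the rest is mechanical.
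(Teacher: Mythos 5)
Your proposal is correct and is exactly the ``straightforward localization computation'' that the paper's one-line proof alludes to: localize the proper pushforward $p_{1,*}$ at the two isolated fixed points $(q,q)$ and $(q,\reflect{i}{q})$ of $\Lagrangian{i}$ over $q$, use the fiber-product tangent sequence to express the Euler classes, and substitute the formulas for $\Euler{\T}{N_{X^i/X}}$ and $\Euler{\T}{T_{q_\pm}X^i}/\Euler{\T}{T_{q'}Y}$; the opposite $\pm$ signs at $q_+$ and $q_-$ do produce the combination $-\resZ{}{q}+\resZ{}{\reflect{i}{q}}$ with the stated prefactor.
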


\begin{proof}
	This is a straightforward localization computation.
\end{proof}

In the recursive relations it's more convenient to work with \textbf{reduced} stable envelopes $\RedStab$, i.e. stable envelopes with a different normalization:
\begin{equation*}
	\RedStabCpt{\C}{p} 
	=
	\dfrac{1}{\resPol{\epsilon}{p}}
	\StabCEpt{\C}{\epsilon}{p}.
\end{equation*}

These classes no longer depend on the polarization. The only restriction which is not divisible by $\hbar$ is the diagonal element
\begin{equation*}
	\RedStabCptpt{\C}{p}{p} 
	=
	1
	\mod \hbar .
\end{equation*}

As a side effect, these classes lie only in localized cohomology, but we will not use these classes when we need non-localized statements. All recursions can be rewritten in terms of the original $\Stab$, simply by inserting the appropriate polarizations.

\medskip

The recursion relation appear from application of both \refProp{PropThetaStab} and \refProp{PropRestrictionTheta} to the following expression
\begin{equation*}
	\resZ{}{q}
	\circ
	\Theta^i
	\circ
	\StabCEpt{\C}{\epsilon}{p}.
\end{equation*}

This gives the following relations similar to the ones in \cite{Su}.

\begin{proposition}	\label{PropRecursion}
    Let $p,q \in X^{\A}$, $\C$ be a Weyl chamber. Then
        \begin{align*}
    	\RedStabCptpt{\C}{\reflect{i}{p}}{q}
            =
            &-
            \dfrac
            {
                \left\langle
                    \wtalpha
                    ,
                    \deltapi{q}{i}
                \right\rangle
                \hbar
            }
            {
                \wtalpha
                +
                \left\langle
                    \wtalpha
                    ,
                    \sigmapi{q}{i-1}
                \right\rangle
                \hbar
            }
            \RedStabCptpt{\C}{p}{q}
                \\
            &+
            \dfrac
            {
                \wtalpha
                +
                \left\langle
                    \wtalpha
                    ,
                    \sigmapi{q}{i}
                \right\rangle
                \hbar
            }
            {
                \wtalpha
                +
                \left\langle
                    \wtalpha
                    ,
                    \sigmapi{q}{i-1}
                \right\rangle
                \hbar
            }
            \RedStabCptpt{\C}{p}{\reflect{i}{q}}
        \end{align*}
    and
    \begin{align*}
        \RedStabCptpt{\C}{p}{\reflect{i}{q}}
        =
        & \quad
        \dfrac
        {
            \left\langle
                \wtalpha
                ,
                \deltapi{q}{i}
            \right\rangle
            \hbar
        }
        {
            \wtalpha
            +
            \left\langle
                \wtalpha
                ,
                \sigmapi{q}{i}
            \right\rangle
            \hbar
        }
        \RedStabCptpt{\C}{p}{q}
        \\
        &+
        \dfrac
        {
            \wtalpha
            +
            \left\langle
                \wtalpha
                ,
                \sigmapi{q}{i-1}
            \right\rangle
            \hbar
        }
        {
            \wtalpha
            +
            \left\langle
                \wtalpha
                ,
                \sigmapi{q}{i}
            \right\rangle
            \hbar
        }
        \RedStabCptpt{\C}{\reflect{i}{p}}{q}
    \end{align*}

\end{proposition}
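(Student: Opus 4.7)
The plan is to evaluate $\resZ{\bigl(\Theta^i \StabCEpt{\C}{\epsilon}{p}\bigr)}{q}$ in two different ways and equate the results. First, applying \refProp{PropThetaStab} to $\Theta^i \StabCEpt{\C}{\epsilon}{p}$ and then restricting to $q$ gives
\[
    \resZ{\bigl(\Theta^i \StabCEpt{\C}{\epsilon}{p}\bigr)}{q}
    =
    -\StabCEptpt{\C}{\epsilon}{p}{q}
    +
    \frac{\resPol{\epsilon}{p}}{\resPol{\epsilon}{\reflect{i}{p}}}
    \StabCEptpt{\C}{\epsilon}{\reflect{i}{p}}{q}.
\]
Second, \refProp{PropRestrictionTheta} expresses $\resZ{}{q}\circ\Theta^i$ as a scalar multiple of $-\resZ{}{q} + \resZ{}{\reflect{i}{q}}$, so applying this operator identity to $\StabCEpt{\C}{\epsilon}{p}$ gives
\[
    \resZ{\bigl(\Theta^i \StabCEpt{\C}{\epsilon}{p}\bigr)}{q}
    =
    \frac{\wtalpha + \langle\wtalpha,\sigmapi{q}{i}\rangle\hbar}{\wtalpha + \langle\wtalpha,\sigmapi{q}{i-1}\rangle\hbar}
    \left(
        -\StabCEptpt{\C}{\epsilon}{p}{q} + \StabCEptpt{\C}{\epsilon}{p}{\reflect{i}{q}}
    \right).
\]

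Next I would equate these two expressions and divide through by $\resPol{\epsilon}{p}$. By the definition $\RedStabCptpt{\C}{r}{q} = \StabCEptpt{\C}{\epsilon}{r}{q}/\resPol{\epsilon}{r}$, the mismatched polarization factor $\resPol{\epsilon}{p}/\resPol{\epsilon}{\reflect{i}{p}}$ from \refProp{PropThetaStab} combines with $1/\resPol{\epsilon}{p}$ to produce exactly $\RedStabCptpt{\C}{\reflect{i}{p}}{q}$, while the other three terms become $\RedStabCptpt{\C}{p}{q}$ and $\RedStabCptpt{\C}{p}{\reflect{i}{q}}$. The resulting single linear relation reads
\[
    -\RedStabCptpt{\C}{p}{q} + \RedStabCptpt{\C}{\reflect{i}{p}}{q}
    =
    \frac{\wtalpha + \langle\wtalpha,\sigmapi{q}{i}\rangle\hbar}{\wtalpha + \langle\wtalpha,\sigmapi{q}{i-1}\rangle\hbar}
    \left(
        -\RedStabCptpt{\C}{p}{q} + \RedStabCptpt{\C}{p}{\reflect{i}{q}}
    \right).
\]

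The two statements of the proposition are now obtained by isolating different unknowns in this single relation. Solving for $\RedStabCptpt{\C}{\reflect{i}{p}}{q}$ and simplifying the coefficient of $\RedStabCptpt{\C}{p}{q}$ via the identity $\deltapi{q}{i} = \sigmapi{q}{i} - \sigmapi{q}{i-1}$ collapses it to $-\langle\wtalpha, \deltapi{q}{i}\rangle\hbar/(\wtalpha + \langle\wtalpha,\sigmapi{q}{i-1}\rangle\hbar)$, which is precisely the first identity. Multiplying through by the inverse prefactor and instead isolating $\RedStabCptpt{\C}{p}{\reflect{i}{q}}$ yields the second identity.

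No substantive obstacle arises beyond careful bookkeeping with the polarization ratio. One should remark that the same derivation covers the degenerate cases $p\in(X\setminus X^i)^\A$ or $q\in(X\setminus X^i)^\A$, in which $\reflect{i}{(\cdot)}$ acts trivially on the corresponding point and both \refProp{PropThetaStab} and \refProp{PropRestrictionTheta} degenerate consistently. The observation in \refProp{PropThetaStab} that $\resPol{\epsilon}{p}/\resPol{\epsilon}{\reflect{i}{p}}\in\signset$ keeps the polarization-division step unambiguous.
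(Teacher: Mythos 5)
Your proposal is correct and follows exactly the paper's own argument: compute $\resZ{}{q}\circ\Theta^i\circ\StabCEpt{\C}{\epsilon}{p}$ once via \refProp{PropThetaStab} and once via \refProp{PropRestrictionTheta}, equate, pass to the reduced envelopes, and solve the resulting linear relation for each unknown in turn. The algebraic simplification via $\deltapi{q}{i}=\sigmapi{q}{i}-\sigmapi{q}{i-1}$ and the handling of the polarization ratio are both carried out correctly.
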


\begin{proof}
	From \refProp{PropRestrictionTheta} and \refProp{PropThetaStab} we have two ways to express $\resZ{}{q}\circ\Theta^i\circ\StabCEpt{\C}{\epsilon}{p}$:
	\begin{align*}
		\resZ{}{q}
		\circ
		\Theta^i
		\circ
		\StabCEpt{\C}{\epsilon}{p}
		&=
		\dfrac
		{
			\wtalpha
			+
			\left\langle
				\wtalpha,
				\sigmapi{q}{i}
			\right\rangle
			\hbar
		}
		{
			\wtalpha
			+
			\left\langle
				\wtalpha,
				\sigmapi{q}{i-1}
			\right\rangle
			\hbar
		}
		\left[
			-
			\StabCEptpt{\C}{\epsilon}{p}{q}
			+
			\StabCEptpt{\C}{\epsilon}{p}{\reflect{i}{q}}
		\right]
		\\		
	\resZ{}{q}
		\circ
		\Theta^i
		\circ
		\StabCEpt{\C}{\epsilon}{p}
		&=
		- \StabCEptpt{\C}{\epsilon}{p}{q}
		+
		\dfrac
		{
			\resPol{\epsilon}{p}
		}
		{
			\resPol{\epsilon}{\reflect{i}{p}}
		}
		\StabCEptpt{\C}{\epsilon}{\reflect{i}{p}}{q}.
	\end{align*}
	
	By algebraic manipulations with these expressions we get the relations in the proposition.
\end{proof}

\begin{remark}
	One can use this relation to recursively compute the stable envelopes, starting either from $p$ for which all restrictions are known (i.e. the minimal $p$ with respect to $\C$), or starting from $q$ restrictions to which all are known (i.e. the maximal $q$ with respect to $\C$).
\end{remark}

\begin{remark}
	These relations modulo $\hbar$ just give
	\begin{equation*}
		\RedStabCptpt{\C}{p}{\reflect{i}{q}}
		=
		\RedStabCptpt{\C}{\reflect{i}{p}}{q}
		\mod
		\hbar.
	\end{equation*}
	So if we know that only the diagonal restriction is $1 \mod \hbar$ and all other are zero (say, from $\RedStabCpt{\C}{p}$ for the minimal $p$ with respect to $\C$), then all other stable envelopes have this property. We know this from the definition of stable envelopes combined with the property of off-diagonal terms of a symplectic resolution. Here, we derive it just from commutativity with Steinberg correspondences and knowing the stable envelope of the minimal point.
\end{remark}

\subsubsection{Formulas for stable envelopes restrictions: type \texorpdfstring{$A_1$}{A1}}

Since for our main application we are interested in stable envelopes modulo $\hbar^2$, let us present the following version of \ref{PropRecursionModulo}.

\begin{proposition} \label{PropRecursionModulo}
	Let $p,q \in X^{\A}$, $\C$ be a Weyl chamber, $1\leq i < l$. Then

	\begin{equation*}
		\RedStabCptpt{\C}{\reflect{i}{p}}{q}
		=
		\RedStabCptpt{\C}{p}{\reflect{i}{q}}
		+
		\left\langle
			\wtalpha
			,
			\deltapi{q}{i}
		\right\rangle
		\left[
			\delta_{p,\reflect{i}{q}}
			-
			\delta_{p,q}
		\right]
		\dfrac
		{
			\hbar
		}
		{
			\wtalpha
		}
		\mod \hbar^2.
	\end{equation*}

\end{proposition}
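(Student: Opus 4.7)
The plan is to derive this identity as a direct modulo-$\hbar^2$ truncation of the first recursive relation in \refProp{PropRecursion}, using \refThm{ThmEnvelopesOffDiagonal} to collapse the off-diagonal terms. Concretely, I start with
\begin{equation*}
    \RedStabCptpt{\C}{\reflect{i}{p}}{q} = -\dfrac{\left\langle \wtalpha, \deltapi{q}{i} \right\rangle \hbar}{\wtalpha + \left\langle \wtalpha, \sigmapi{q}{i-1} \right\rangle \hbar} \RedStabCptpt{\C}{p}{q} + \dfrac{\wtalpha + \left\langle \wtalpha, \sigmapi{q}{i} \right\rangle \hbar}{\wtalpha + \left\langle \wtalpha, \sigmapi{q}{i-1} \right\rangle \hbar} \RedStabCptpt{\C}{p}{\reflect{i}{q}}
\end{equation*}
and expand the two scalar coefficients as geometric series in $\hbar/\wtalpha$.

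For the second coefficient, using $\langle \wtalpha, \sigmapi{q}{i}\rangle = \langle \wtalpha, \sigmapi{q}{i-1}\rangle + \langle \wtalpha, \deltapi{q}{i}\rangle$, I rewrite it as $1 + \frac{\langle \wtalpha, \deltapi{q}{i}\rangle \hbar}{\wtalpha + \langle \wtalpha, \sigmapi{q}{i-1}\rangle \hbar}$, which is $1 + \frac{\langle \wtalpha, \deltapi{q}{i}\rangle \hbar}{\wtalpha} \mod \hbar^2$. The first coefficient is already of order $\hbar$, so it equals $-\frac{\langle \wtalpha, \deltapi{q}{i}\rangle \hbar}{\wtalpha} \mod \hbar^2$.

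The only remaining input is that \refThm{ThmEnvelopesOffDiagonal} guarantees $\RedStabCptpt{\C}{p}{q'}$ is divisible by $\hbar$ whenever $p \neq q'$, while $\RedStabCptpt{\C}{p}{p} \equiv 1 \mod \hbar$ by the diagonal normalization of reduced stable envelopes. Therefore $\hbar \cdot \RedStabCptpt{\C}{p}{q} \equiv \delta_{p,q}\hbar \mod \hbar^2$ and likewise for $\reflect{i}{q}$ in place of $q$. Substituting these into the expansion collapses the coefficients of $\hbar$ to Kronecker deltas and produces exactly
\begin{equation*}
    \RedStabCptpt{\C}{\reflect{i}{p}}{q} \equiv \RedStabCptpt{\C}{p}{\reflect{i}{q}} + \left\langle \wtalpha, \deltapi{q}{i} \right\rangle \left[ \delta_{p,\reflect{i}{q}} - \delta_{p,q} \right] \dfrac{\hbar}{\wtalpha} \mod \hbar^2.
\end{equation*}
There is no real obstacle here: the argument is a routine truncation once the two pieces are in place. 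The only thing to watch is that since reduced stable envelopes a priori live in localized cohomology, the identity should be read in $\EqCoHlgy{\T}{\pt}_{loc}/\hbar^2$, which is the natural target and causes no trouble.
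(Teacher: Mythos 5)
Your proof is correct and follows essentially the same route as the paper: the paper's proof is exactly the "straightforward computation" of truncating the first relation of \refProp{PropRecursion} modulo $\hbar^2$ using $\RedStabCptpt{\C}{p}{q} = \delta_{p,q} \mod \hbar$, which you have simply written out in full (including the correct expansion of both coefficients and the remark about the localized target).
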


\begin{proof}
	This is a straightforward computation using that
	\begin{equation*}
	\RedStabCptpt{\C}{p}{q} = \delta_{p,q} \mod \hbar.
	\end{equation*}
	as discussed above.
\end{proof}

\medskip

Let us also introduce the following numerical quantity for $p\in X^{\A}$

\begin{equation*}
	\left\vert
		p	
	\right\vert
	_{\C}
	=
	\dfrac{1}{2}
	\sum\limits_i
	\left\langle
		\wtalpha
		,
		\sigmapi{p}{i}
	\right\rangle,
\end{equation*}
where $\wtalpha_\C$ is the positive root with respect to $\C$. Since all $\sigmapi{p}{i}$ are coweights, the pairings are integers, so $\left\vert p \right\vert_{\C} \in  \mathbb{Z}$.

It is easy to check that
\begin{equation*}
	q \lC{\C} p
	\Longrightarrow
	\left\vert
		q	
	\right\vert
	_{\C}
	<
	\left\vert
		p	
	\right\vert
	_{\C}.
\end{equation*}
This implies that the minimal $\left\vert p \right\vert_{\C}$ is for the minimal point of $X^{\A}$ with respect to $\C$, $p^{\C}_{min}$. I.e. $p^{\C}_{min}$ is the unique point of the following form:
\begin{equation*}
	\deltap{p^{\C}_{min}} 
	=
	\left(
		-\coomega_\C,
		\dots,
		-\coomega_\C,
		\coomega_\C,
		\dots
		\coomega_\C
	\right),
\end{equation*}
where $\omega_\C$ is the minimal strictly dominant coweight with respect to $\C$.
	
\begin{remark}
	One can identify points $p\in X^{\A}$ with partitions in a square $\frac{l-k}{2}\times \frac{l+k}{2}$. Then $\left \vert p \right\vert_{\C}$ is the partition size for one chamber $\C = \C_+$, and the complement size for the opposite chamber $\C = \C_-$ (both up to adding a constant). Moreover, the order $\lC{\C_+}$ is the inclusion of partitions, $\lC{\C_-}$ is the inclusion of complements.
\end{remark}

\medskip

Now we are ready to prove the main result in type $A_1$.

\begin{theorem} \label{ThmRestrictionInTypeA1}
	Let $p,q \in X^{\A}$ be two fixed points. Let $\coalpha_\C$ be the positive coroot with respect to $\C$ and $\wtalpha_\C$ the corresponding root. Then

	\begin{enumerate}
	\item
		If $p=q$, then
		\begin{equation*}
			\StabCEptpt{\C}{\epsilon}{p}{p} 
			= 
			\resPol{\epsilon}{p}
			\left[
				1
				+
				\left(
					\left\vert
						p	
					\right\vert
					_{\C}
					+ C
				\right)
				\dfrac
				{
					\hbar
				}
				{
					\wtalpha_\C
				}
			\right]
			\mod \hbar^2,
		\end{equation*}	
		where the constant $C\in \mathbb{Z}$ does not depend on $p$.
	\item
		If there are $i, j$, $0 \leq i < j \leq l$ such that
		\begin{align}
			\deltapi{q}{i} 
			&=
			\deltapi{p}{i} - \coalpha_\C,
			\notag
			\\
			\deltapi{q}{j} 
			&=
			\deltapi{p}{j} + \coalpha_\C,
			\label{EqQIsAdjacentToPTypeA}
			\\
			\deltapi{q}{k}
			&=
			\deltapi{p}{k}
			\text{ for all }
			k,\; k \neq i,\; k \neq j,
			\notag
		\end{align}
		then
		\begin{equation*}
			\StabCEptpt{\C}{\epsilon}{p}{q} 
			= 
			\hbar 
			\dfrac
			{
				\resPol{\epsilon}{p}
			}
			{
				\wtalpha_\C
			}
			\mod \hbar^2.
		\end{equation*}		
	\item
		Otherwise
		\begin{equation*}
			\StabCEptpt{\C}{\epsilon}{p}{q} = 0 \mod \hbar^2,
		\end{equation*}
	\end{enumerate}
\end{theorem}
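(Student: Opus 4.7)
The plan is to prove all three cases of the theorem simultaneously by induction along the partial order $\leqC{\C}$, using \refProp{PropRecursionModulo} as the recursive engine. Without loss of generality I fix $\C = \C_+$, so $\wtalpha_\C$ is the positive root and $\coomega_\C = \coomega$ the positive minuscule coweight, and work with the reduced stable envelopes $\RedStabCptpt{\C}{p}{q}$, since the polarization factor $\resPol{\epsilon}{p}$ reinserts cleanly at the end. Let $f_{p,q}$ denote the candidate function matching the three cases of the theorem; the goal is $\RedStabCptpt{\C}{p}{q} \equiv f_{p,q} \pmod{\hbar^2}$.

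The base case is $p = p^\C_{\min}$, the minimal path $(-\coomega, \ldots, -\coomega, \coomega, \ldots, \coomega)$. No other fixed point lies below $p^\C_{\min}$, so the full attractor $\Attr^f_\C(p^\C_{\min})$ contains no other $\T$-fixed point, and the support axiom in \refThm{ThmStableEnvelopesDefinition} forces $\StabCEptpt{\C}{\epsilon}{p^\C_{\min}}{q} = 0$ for $q \neq p^\C_{\min}$. This matches $f_{p^\C_{\min}, q} = 0$, since the minimal path admits no theorem-sense inversion $i_0 < j_0$ with $\deltapi{p^\C_{\min}}{i_0} = \coomega$, $\deltapi{p^\C_{\min}}{j_0} = -\coomega$. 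On the diagonal, the stable envelope is the unique class supported on $\overline{\Attr_\C(p^\C_{\min})}$ with the prescribed leading $\A$-term; direct expansion of its diagonal restriction (using \refThm{ThmLatticeMultiplicities} for the tangent weights) gives $\resPol{\epsilon}{p^\C_{\min}}\,[1 + m\, \hbar/\wtalpha_\C]$ modulo $\hbar^2$ for an explicit integer $m$. Define $C := m - |p^\C_{\min}|_\C$; then case~(1) of the theorem holds at $p^\C_{\min}$ by construction.

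For the inductive step, assume the three formulas hold at $p$, and pick $j$ with $\deltapi{p}{j} = -\coomega$ and $\deltapi{p}{j+1} = \coomega$, so that $|\reflect{j}{p}|_\C = |p|_\C + 1$. The recursion of \refProp{PropRecursionModulo} reads
\begin{equation*}
    \RedStabCptpt{\C}{\reflect{j}{p}}{q}
    \equiv
    \RedStabCptpt{\C}{p}{\reflect{j}{q}}
    +
    \left\langle \wtalpha_\C, \deltapi{q}{j} \right\rangle
    \left[ \delta_{p,\reflect{j}{q}} - \delta_{p,q} \right]
    \dfrac{\hbar}{\wtalpha_\C}
    \pmod{\hbar^2},
\end{equation*}
and a case split on $q$ finishes the job. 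When $q = \reflect{j}{p}$, the delta term bumps the diagonal by $\hbar/\wtalpha_\C$, producing the predicted $|p|_\C + C + 1 = |\reflect{j}{p}|_\C + C$. When $q = p$, the main term vanishes (since $\reflect{j}{q} = \reflect{j}{p}$ lies strictly above $p$, so by induction $\RedStabCptpt{\C}{p}{\reflect{j}{p}} = 0$), and the delta alone produces $\hbar/\wtalpha_\C$, matching the theorem-sense inversion at $(j, j+1)$. When $q$ differs from $\reflect{j}{p}$ by a single theorem-sense inversion disjoint from $\{j, j+1\}$, the shift $q \mapsto \reflect{j}{q}$ transports this to the same inversion for $p$, the delta term vanishes, and the recursion reproduces $\hbar/\wtalpha_\C$. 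The remaining sub-cases, where $q$ relates to $\reflect{j}{p}$ by an inversion touching $\{j, j+1\}$ or by a higher-order configuration, are dispatched by a finite enumeration in which the delta correction exactly compensates the main term to yield $f_{\reflect{j}{p}, q}$. Since every $\A$-fixed point is reachable from $p^\C_{\min}$ by a chain of $\reflect{i}$'s (by the transitivity proposition just above \refProp{PropThetaStab}), this closes the induction.

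The principal obstacle is the combinatorial bookkeeping in the sub-cases involving positions $j$ and $j+1$: one must check that the theorem-sense orientation $i_0 < j_0$ of an inversion between $(\reflect{j}{p}, q)$ transports correctly under the toggle $\reflect{j}$ to an inversion between $(p, \reflect{j}{q})$, and that the delta coefficient $\langle \wtalpha_\C, \deltapi{q}{j} \rangle = \pm 1$ aligns with whether the toggle creates or destroys an inversion. Once these sub-configurations are enumerated, the remaining cases are mechanical verifications of vanishing modulo $\hbar^2$.
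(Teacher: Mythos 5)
Your proposal is correct and follows essentially the same route as the paper: induction on the statistic $\left\vert p \right\vert_{\C}$ starting from the minimal point $p^{\C}_{min}$ (where off-diagonal restrictions vanish by support and the diagonal is read off from the tangent weights), with the inductive step driven by \refProp{PropRecursionModulo} and the same three-way case split on $q$, including the check that the toggle $\reflect{j}{}$ transports the inversion condition \refEq{EqQIsAdjacentToPTypeA} without reversing its orientation except in the excluded case $q = \reflect{j}{p}$. The only difference is cosmetic: you phrase the step as ascending from $p$ to $\reflect{j}{p}$ rather than descending from a point of weight $N+1$ to one of weight $N$.
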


\begin{proof}
	We work with $\RedStab$, the statements for $\Stab$ are recovered by a simple multiplication on polarization $\epsilon$.
	
	The proof is by induction on $\left\vert p \right\vert_{\C}$.
	
	\textbf{Base case}
	
	The minimal value of $\left\vert p \right\vert_\C$ is $\left\vert p^\C_{min} \right\vert_\C$ as mentioned above. Furthermore, $p^\C_{min}$ is the unique point with this weight.
	
	Since $p_{min}$ is the minimal point with respect to $\C$, for all $q \neq p_{min}$
	\begin{equation*}
		\RedStabCptpt{\C}{p^\C_{min}}{q} = 0.
	\end{equation*}
	This agrees with the statement of the theorem, because there is no $q$ satisfying \refEq{EqQIsAdjacentToPTypeA} for $i<j$: one can only subtract $\coalpha_\C$ from $\deltapi{p^\C_{min}}{i} = \coomega_\C$ which are in the second half of the sequence $\deltap{p^\C_{min}}$, and $\coalpha_\C$ can be added to $\deltapi{p^\C_{min}}{j} = -\coomega_\C$ which are in the first half of $\deltap{p^\C_{min}}$.
	
	The check of the diagonal is trivial: the weights of $N^{-\C}_{p/X}$ are of form 
	\begin{equation*}
		-(\wtalpha_\C +n\hbar) 
		= 
		-\wtalpha_\C 
		\left[
			1 + n \frac{\hbar}{\wtalpha_\C}
		\right],
		\quad
		n \in \mathbb{Z}
	\end{equation*}
	So
	\begin{equation*}
		\Euler{\T}{N^{-\C}_{p/X}} 
		=
		(-\wtalpha_\C)^{\dim X/2}
		\left[
			1 + N \frac{\hbar}{\wtalpha_\C}
		\right]
		\mod \hbar^2,
		\quad
		N \in \mathbb{Z}.
	\end{equation*}
	This implies
	\begin{equation*}
		\RedStabCptpt{\C}{p^\C_{min}}{p^\C_{min}}
		= 
		1 + N \frac{\hbar}{\wtalpha_\C}.
	\end{equation*}
	After a shift of $N$ by an integer constant $\left\vert p_{min} \right\vert_\C$ we get the statement of the theorem.
	
	Thus, the base case is proved.
	
	\medskip	
	
	\textbf{Inductive step}
	
	Assume that the statement is true for all $p \in X^{\A}$ such that $\left\vert p \right\vert_\C = N$, where $N\geq \left\vert p_{min} \right\vert_\C$. Let us check the statement for a point $p\in X^{\A}$ with $\left\vert p \right\vert_\C = N+1$.
	
	Since $\left\vert p \right\vert_\C = N+1 > \left\vert p^\C_{min} \right\vert_\C$, we have $p \neq p^\C_{min}$. This means that in $\deltap{p}$ we can find a $\coomega_\C$ followed by $-\coomega_\C$:
	\begin{align*}
		\deltapi{p}{m} 
		&=
		\coomega_\C,
		\\
		\deltapi{p}{m+1} 
		&=
		-\coomega_\C
	\end{align*}
	for some $m$. Then $\reflect{m}{p} \lC{\C} p$.
	
	Moreover,
	\begin{equation*}
		\left\vert
			\reflect{m}{p}
		\right\vert_\C
		=
		\left\vert
			p
		\right\vert_\C
		-1
		=
		N,
	\end{equation*}
	because 
	\begin{equation*}
		\sigmap{\reflect{m}{p}}
		=
		\left(
			\sigmapi{p}{0},
			\dots,
			\sigmapi{p}{m-1},
			\sigmapi{p}{m} - \coalpha_\C,
			\sigmapi{p}{m+1},
			\dots,
			\sigmapi{p}{l}
		\right).
	\end{equation*}
	
	We have 3 cases of $q$:
	
	\begin{enumerate}
	\item $q=p$.
	
		By \refProp{PropRecursionModulo} we have
	
		\begin{align*}
			\RedStabCptpt{\C}{p}{p}
			&=
			\RedStabCptpt{\C}{\reflect{m}{p}}{\reflect{m}{p}}
			+
			\left\langle
				\wtalpha
				,
				\deltapi{p}{m}
			\right\rangle
			\dfrac
			{
				\hbar
			}
			{
				\wtalpha
			}
			\mod \hbar^2
			\\
			&=
			\RedStabCptpt{\C}{\reflect{m}{p}}{\reflect{m}{p}}
			+
			\left\langle
				\wtalpha_\C
				,
				\deltapi{p}{m}
			\right\rangle
			\dfrac
			{
				\hbar
			}
			{
				\wtalpha_\C
			}
			\mod \hbar^2
			\\
			&=
			\RedStabCptpt{\C}{\reflect{m}{p}}{\reflect{m}{p}}
			+
			\dfrac
			{
				\hbar
			}
			{
				\wtalpha_\C
			}
			\mod \hbar^2		
		\end{align*}
	
		We know
		\begin{align*}
			\RedStabCptpt{\C}{\reflect{m}{p}}{\reflect{m}{p}}
			&=
			1
			+
			\left(
				\left\vert
					\reflect{m}{p}	
				\right\vert
				_{\C}
				+ C
			\right)
			\dfrac
			{
				\hbar
			}
			{
				\wtalpha_\C
			}
			\mod \hbar^2
			\\
			&=
			1
			+
			\left(
				\left\vert
					p	
				\right\vert
				_{\C}
				-1
				+ C
			\right)
			\dfrac
			{
				\hbar
			}
			{
				\wtalpha_\C
			}
			\mod \hbar^2
		\end{align*}
		by the induction assumption.
	
		This proves this case.
	
	\item $q = \reflect{m}{p}$.
	
		In this case, \refProp{PropRecursionModulo} says
	
		\begin{equation*}
			\RedStabCptpt{\C}{p}{\reflect{m}{p}}
			=
			\RedStabCptpt{\C}{\reflect{m}{p}}{p}
			-
			\left\langle
				\wtalpha
				,
				\deltapi{\reflect{m}{p}}{m}
			\right\rangle
			\dfrac
			{
				\hbar
			}
			{
					\wtalpha
			}
			\mod \hbar^2.
		\end{equation*}
	
		We have $\RedStabCptpt{\C}{\reflect{m}{p}}{p} = 0$ since $\reflect{m}{p} \lC{\C} p$. Also 
		\begin{equation*}
			\deltapi{\reflect{m}{p}}{m}
			=
			\deltapi{p}{m+1}
			=
			-\coomega_\C,
		\end{equation*}
		so
		\begin{equation*}
			\left\langle
				\wtalpha 
				,
				\deltapi{\reflect{m}{p}}{m}
			\right\rangle
			=
			-1.
		\end{equation*}
	
		Finally,
	
		\begin{equation*}
			\RedStabCptpt{\C}{p}{\reflect{m}{p}}
			=
			\dfrac
			{
				\hbar
			}
			{
				\wtalpha
			}
			\mod \hbar^2.
		\end{equation*}
	
		Note that such a pair of $q,p$ satisfies the property \refEq{EqQIsAdjacentToPTypeA} for $i = m$ and $j = m+1$. After restoring $\epsilon$ from the normalization, we get the statement of the theorem.
	
	\item Both $q \neq p$ and $q \neq \reflect{m}{p}$.
	
		\refProp{PropA1CorrespondenceFixedPoints} gives in this case
	
		\begin{equation*}
			\RedStabCptpt{\C}{p}{q}
			=
			\RedStabCptpt{\C}{\reflect{m}{p}}{\reflect{m}{q}}
			\mod \hbar^2.
		\end{equation*}
	
		Note that if $\reflect{m}{p}$ and $\reflect{m}{q}$ satisfy the relations \refEq{EqQIsAdjacentToPTypeA}, then $p$ and $q$ also satisfy these, possibly for different $i$, $j$. The reason is that $\reflect{m}{}$ only permutes the components of $\deltap{\reflect{m}{p}}$ and $\deltap{\reflect{m}{q}}$ in the same way. The only thing we have to check is that the order of $\deltapi{\reflect{m}{p}}{i}$ and $\deltapi{\reflect{m}{p}}{j}$ is not reversed. Since $\reflect{m}{}$ permutes $\deltapi{\reflect{m}{p}}{m}$ and $\deltapi{\reflect{m}{p}}{m+1}$, this occurs only if $i=m$, $j=m+1$. But then $q = \reflect{m}{p}$, which is prohibited in this case.
	
		Applying the induction assumption
		\begin{equation*}
		\RedStabCptpt{\C}{p}{q}
			=
			\dfrac
			{
				\hbar
			}
			{
				\wtalpha_\C
			}
			\mod \hbar^2
		\end{equation*}
		if $p$ and $q$ are related by \refEq{EqQIsAdjacentToPTypeA}, and
		\begin{equation*}
			\RedStabCptpt{\C}{p}{q}
			=
			0
			\mod \hbar^2
		\end{equation*}
		otherwise.
	
	\end{enumerate}
	
	Combining the results of these three cases, we prove the inductive step.

\end{proof}

\begin{remark}
	The constant $C$ in the formula for the diagonal can be found at any point $p$ comparing with the result on weight multiplicities obtained in the previous section. For example, at the minimal or maximal point.
	
	In what follows we are not interested in the diagonal restrictions, so we do not write the computation of $C$.
\end{remark}

\subsection{Stable Envelopes restrictions modulo \texorpdfstring{$\hbar^2$}{h²}: general case}

\subsubsection{Wall-crossing for stable envelopes}

The first fact we need to reduce the general case to type $A_1$ is the wall-crossing phenomena for restrictions of stable envelopes modulo $\hbar^2$. It is a straightforward consequence of the torus restriction \refThm{ThmTorusRestriction} and a close relative of the $R$-matrices in \cite{MOk}.

\begin{theorem} \label{ThmTorusWallCrossing}
	Let $X$, $\A$, $\epsilon$ be as in the definition of stable envelopes and $X^\A$ discrete. Pick two Weyl chambers $\C_+$ and $\C_-$ sharing a face $\C'$ (of any codimension). This gives an associated subtorus $\A'\subset \A$, i.e. such connected subgroup that $\mathfrak{a}'_\mathbb{R} = \CocharLattice{\A'}\otimes_\mathbb{Z} \mathbb{R} = \linspan \C' \subset \mathfrak{a}_\mathbb{R}$. For any $p,q\in X^\A$, $p \neq q$ we have the following

	\begin{enumerate}
	\item
	
		If $p$ and $q$ are in different components of $X^{\A'}$ then
	
		\begin{equation*}
			\StabCEptpt{\C_+}{\epsilon}{p}{q} = \StabCEptpt{\C_-}{\epsilon}{p}{q} \mod \hbar^2
		\end{equation*}
	
	\item
		If $p$ and $q$ are in the same component of $Z\subset X^{\A'}$ then
		\begin{multline} \label{EqTorusWallContributuion}
			\StabCEptpt{\C_+}{\epsilon}{p}{q} - \StabCEptpt{\C_-}{\epsilon}{p}{q} 
			= 
                \\
			\resPol{\epsilon}{q}' 
			\left[ 
				\StabCEptpt{\C_+''}{\epsilon''}{p}{q/Z}
				-
				\StabCEptpt{\C_-''}{\epsilon''}{p}{q/Z}
			\right]
			\mod \hbar^2
		\end{multline}
		where $\StabCEptpt{\C_-''}{\epsilon''}{p}{q/Z}$ and $\StabCEptpt{\C_+''}{\epsilon''}{p}{q/Z}$ are the stable envelopes on $X^{\A'}$ as in the torus restriction \refThm{ThmTorusRestriction}. The polarization $\epsilon''$ on $X^{\A}$ as a subspace of $X^{\A'}$ is induced by $\epsilon'$ on $X^{\A'}$ and $\epsilon$ on $X^{\A}$ as in \refThm{ThmTorusRestriction}. On the RHS we use the notation $\resZ{}{q/Z}$ to emphasize that we restrict to $q$ from $Z$, not from $X$.
	
	\end{enumerate}
\end{theorem}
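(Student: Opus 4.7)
The plan is to apply the Torus Restriction Theorem \refThm{ThmTorusRestriction} to both $\C_+$ and $\C_-$ and subtract: since the outer factor $\StabCE{\C'}{\epsilon'}$ is common to both factorizations, this yields
\begin{equation*}
    \StabCEpt{\C_+}{\epsilon}{p} - \StabCEpt{\C_-}{\epsilon}{p} = \StabCE{\C'}{\epsilon'}(\beta), \qquad \beta := \StabCEpt{\C_+''}{\epsilon''}{p} - \StabCEpt{\C_-''}{\epsilon''}{p} \in \EqCoHlgy{\T}{X^{\A'}},
\end{equation*}
encoding the full wall-crossing on $X$ in the single ``inner'' class $\beta$ on $X^{\A'}$.

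The next step is to show $\beta \in \hbar\EqCoHlgy{\T}{X^{\A'}}$. On the component $Z_p \subset X^{\A'}$ containing $p$, condition 2 of \refThm{ThmStableEnvelopesDefinition} fixes the $\hbar\to 0$ restriction of $\StabCE{\C_\pm''}{\epsilon''}(1_p)|_{Z_p}$ to $\epsilon''|_{Z_p}\cdot 1_p$ independently of the chamber, while on every other component \refThm{ThmEnvelopesOffDiagonal} places each individual restriction in $\hbar$. Writing $\beta = \hbar\tilde\beta$, one has $\StabCE{\C'}{\epsilon'}(\beta)|_q = \hbar\cdot \StabCE{\C'}{\epsilon'}(\tilde\beta)|_q$, so only $\StabCE{\C'}{\epsilon'}(\tilde\beta)|_q \bmod \hbar$ is needed to work modulo $\hbar^2$. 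Applying the same dichotomy inside $X$ --- \refThm{ThmEnvelopesOffDiagonal} kills contributions from components other than $Z_q$, while condition 2 multiplies the $Z_q$-contribution by $\resPol{\epsilon}{q}'$ --- gives the master formula
\begin{equation*}
    \StabCEptpt{\C_+}{\epsilon}{p}{q} - \StabCEptpt{\C_-}{\epsilon}{p}{q} \equiv \resPol{\epsilon}{q}'\cdot \beta|_{Z_q}|_q \mod \hbar^2,
\end{equation*}
which subsumes both cases of the theorem.

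In case 2, $p, q \in Z := Z_p = Z_q$, so $\beta|_{Z_q}|_q$ is by definition $\StabCEptpt{\C_+''}{\epsilon''}{p}{q/Z} - \StabCEptpt{\C_-''}{\epsilon''}{p}{q/Z}$, recovering \refEq{EqTorusWallContributuion}. In case 1, I would finish with a direct support argument: each $\StabCE{\C_\pm''}{\epsilon''}(1_p)$ is supported on the full attractor $\Attr^f_{\C_\pm''}(\{p\}) \subset X^{\A'}$, and because the $\A/\A'$-flow on $X^{\A'}$ is continuous and each connected component of $X^{\A'}$ is a closed invariant subset, the entire attractor lies inside $Z_p$. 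Consequently $\beta$ is supported on $Z_p$ alone, $\beta|_{Z_q} = 0$ for $Z_q \neq Z_p$, and the wall-crossing vanishes modulo $\hbar^2$. The main care required is in the second paragraph, where the chamber-dependent Euler-class leading terms of $\StabCE{\C_\pm''}{\epsilon''}(1_p)|_{Z_p}$ must be reconciled via the fixed polarization $\epsilon''$ in order to conclude the mod-$\hbar$ matching, and where the diagonal-on-components behavior of $\StabCE{\C'}{\epsilon'}$ modulo $\hbar$ must be justified uniformly across the (potentially many) components of $X^{\A'}$.
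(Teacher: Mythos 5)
Your proposal is correct and follows essentially the same route as the paper: factor through $\StabCE{\C'}{\epsilon'}$ via \refThm{ThmTorusRestriction}, observe that the inner difference is supported on $Z_p$ and divisible by $\hbar$, and then use \refThm{ThmEnvelopesOffDiagonal} together with the diagonal normalization (condition 2 of \refThm{ThmStableEnvelopesDefinition}) to extract the $\resPol{\epsilon}{q}'$ factor on $Z_q$ and kill everything else modulo $\hbar^2$. Packaging both cases into a single master formula before specializing is a minor reorganization, not a different argument.
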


\begin{proof}
	The torus restriction \refThm{ThmTorusRestriction} gives us that the following diagram is commutative
	\begin{equation*}
		\begin{tikzcd}
		\EqCoHlgy{\T}{X^\A} \arrow[rr,"\StabCE{\C_+}{\epsilon}"] \arrow[rd,"\StabCE{\C_+''}{\epsilon''}", swap] && \EqCoHlgy{\T}{X}\\
		& \EqCoHlgy{\T}{X^{\A'}} \arrow[ru,"\StabCE{\C'}{\epsilon'}", swap] &	
		\end{tikzcd}.
	\end{equation*}
	The induced polarizations $\epsilon'$, $\epsilon''$ and the projected chamber $\C''_+$ are the same as in \refThm{ThmTorusRestriction}.
	
	Then we have
	\begin{equation*}
		\StabCEptpt{\C_+}{\epsilon}{p}{q} 
		=
		\resZ
		{
			\resZ
			{
				\StabCE{\C'}{\epsilon'} 
				\circ
				\StabCEpt{\C''_+}{\epsilon''}{p}
			}
			{
				Z
			}
		}
		{
			q/Z
		}.
	\end{equation*}
	
	The same holds if we replace $\C_+$ by $\C_-$:
	\begin{equation*}
		\StabCEptpt{\C_-}{\epsilon}{p}{q} 
		=
		\resZ
		{
			\resZ
			{
				\StabCE{\C'}{\epsilon'} 
				\circ
				\StabCEpt{\C''_-}{\epsilon''}{p}
			}
			{
				Z
			}
		}
		{
			q/Z
		}.
	\end{equation*}
	This gives 
	\begin{equation*}
		\StabCEptpt{\C_+}{\epsilon}{p}{q} - \StabCEptpt{\C_-}{\epsilon}{p}{q}
		=
		\resZ
		{
			\resZ
			{
				\StabCE{\C'}{\epsilon'} 
				\left[
					\StabCEpt{\C''_+}{\epsilon''}{p}
					-
					\StabCEpt{\C''_-}{\epsilon''}{p}
				\right]
			}
			{
				Z
			}
		}
		{
			q/Z
		}.
	\end{equation*}
        If the fixed point if $p$ is in the component $Z_p \subset X^{\A'}$, then the class 
        \begin{equation*}    
            \StabCEpt{\C''_+}{\epsilon''}{p} - \StabCEpt{\C''_-}{\epsilon''}{p}
        \end{equation*}
        is supported on $Z_p$. This is obtained from the definition of stable envelopes and that $z_p$ contains a refined attractor to $p$ via any evolution by $\A/\A'$.
	
	Denote the component of $X^{\A'}$ that contains $q$ as $Z_q$.
	\smallskip
	
	Let us first consider the case where $p$ and $q$ are in different components of $X^{\A'}$, i.e. $Z_p \neq Z_q$. Then
	\begin{itemize}
	\item
	
		The	class
		\begin{equation*}
			\StabCEpt{\C''_+}{\epsilon''}{p} - \StabCEpt{\C''_-}{\epsilon''}{p}
		\end{equation*}
		is divisible by $\hbar$ since the diagonal contributions cancel and the off-diagonal contributions are divisible by $\hbar$ because $X$ is a symplectic resolution.
	\item
	
		For any class $\gamma \in \EqCoHlgy{\T}{X^{\A'}}$ supported on $Z_p$ the restriction
		\begin{equation*}
			\StabCEptpt{\C}{\epsilon}{\gamma}{Z_q}
		\end{equation*}
		is divisible by $\hbar$ because $X$ is a symplectic resolution.
	\end{itemize}
	
	Then by $\EqCoHlgy{\T}{\pt}$-linearity of the stable envelopes and restrictions we get that 
\begin{equation*}
	\StabCEptpt{\C_+}{\epsilon}{p}{q} 
	- 
	\StabCEptpt{\C_-}{\epsilon}{p}{q}
\end{equation*}
is divisible by $\hbar^2$ which gives the first statement of the theorem.
	
	\smallskip
	
	Now, let $p$ and $q$ be in the same component $Z = Z_p = Z_q$. Then by one of the defining properties of the stable envelopes we get
	\begin{equation*}
		\StabCEpt{\C_+}{\epsilon}{p} - \StabCEpt{\C_-}{\epsilon}{p}
		=
		\sign{\C'}{\epsilon'}{q}{Z}
		\Euler{\T}{N^{-\C'}_{Z/X}}
		\left[
			\StabCEpt{\C''_+}{\epsilon''}{p}
			-
			\StabCEpt{\C''_-}{\epsilon''}{p}
		\right].
	\end{equation*}
	 
	Restricting to $q$, we get
	 
	\begin{align*}
		\StabCEptpt{\C_+}{\epsilon}{p}{q} - \StabCEptpt{\C_-}{\epsilon}{p}{q}
		&=
		\sign{\C'}{\epsilon'}{q}{Z}
		\Euler{\T}{N^{-\C'}_{q/X}}
		\left[
			\StabCEptpt{\C''_+}{\epsilon''}{p}{q/Z}
			-
			\StabCEptpt{\C''_-}{\epsilon''}{p}{q/Z}
		\right]
		\\
		&=
		\resPol{\epsilon}{q}'
		\left[
			\StabCEptpt{\C''_+}{\epsilon''}{p}{q/Z}
			-
			\StabCEptpt{\C''_-}{\epsilon''}{p}{q/Z}
		\right]
		\mod \hbar^2.
	\end{align*} 
	
\end{proof}

\begin{remark}
	In the case where $\C'$ is codimension one (a "wall") we have $\C''_+ = -\C''_-$ and there is only one non-zero contribution on the RHS of \refEq{EqTorusWallContributuion}.
\end{remark}

\begin{remark}
	Informally, \refThm{ThmTorusWallCrossing} says that if one fixes $p,q \in X^{\A}$ ($p\neq q$) and a polarization $\epsilon$ variation of the chamber $\C$ does not change $\StabCEptpt{\C}{\epsilon}{p}{q}$ modulo $\hbar^2$, unless one crosses a wall $\C'$ where $p$ and $q$ are in the same component of $X^{\linspan \C'}$.
\end{remark}

\subsubsection{Formulas for stable envelopes restrictions: general case}

Return to case $X = \Gr^{ \ucolambda }_{\comu}$. Now we can prove the following generalization of \refThm{ThmRestrictionInTypeA1}.

\begin{theorem} \label{ThmRestrictionGeneral}
	Given fixed points $p,q\in X^\A$, $p\neq q$, a Weyl chamber $\C$ and a polarization $\epsilon$ we have the following restriction formula
	\begin{enumerate}
	\item
		If there are $i, j$, $0 \leq i < j \leq l$ and a coroot $\coalpha$ positive with respect to $\C$, such that	
		\begin{align}
			\deltapi{q}{i} 
			&=
			\deltapi{p}{i} - \coalpha,
			\notag
			\\
			\deltapi{q}{j} 
			&=
			\deltapi{p}{j} + \coalpha,
			\label{EqQIsAdjacentToP}
			\\
			\deltapi{q}{k}
			&=
			\deltapi{p}{k}
			\text{ for all }
			k,\; k \neq i,\; k \neq j,
			\notag
		\end{align}	
		then	
		\begin{equation*}
			\StabCEptpt{\C}{\epsilon}{p}{q} 
			=
			\omega_{p,q}
			\dfrac{\hbar}{\wtalpha}
			\resPol{\epsilon}{p}
			\mod \hbar^2,
		\end{equation*}
		where the coefficient $\omega_{p,q}$ can be found from any polarization $\epsilon'$ on $X^{\ker \wtalpha}$:
		\begin{equation*}
			\omega_{p,q}
			=
			\dfrac
			{
				\resPol{\epsilon}{q}'
			}
			{
				\resPol{\epsilon}{p}'
			}.
		\end{equation*}
		This coefficient does not depend on the choice of $\epsilon'$. In general, $\omega_{p,q}$ is not in $\mathbb{Q}$.
	\item
		Otherwise
		\begin{equation*}
			\StabCEptpt{\C}{\epsilon}{p}{q} = 0 \mod \hbar^2.
		\end{equation*}
	
	\end{enumerate}

\end{theorem}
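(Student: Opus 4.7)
The plan is to reduce to the $A_1$ case (\refThm{ThmRestrictionInTypeA1}) via the wall-crossing \refThm{ThmTorusWallCrossing}. I would fix distinct $p, q \in X^\A$ and choose an auxiliary Weyl chamber $\C_0$ such that $q \not\leq_{\C_0} p$; by the support property of stable envelopes, $\StabCEptpt{\C_0}{\epsilon}{p}{q} = 0$ exactly. Then I would connect $\C_0$ to $\C$ by a path through adjacent chambers, crossing codimension-one root walls $\ker \wtalpha_k$ one at a time. By \refThm{ThmTorusWallCrossing}, modulo $\hbar^2$ the jump $\StabCEptpt{\C_{k+1}}{\epsilon}{p}{q} - \StabCEptpt{\C_k}{\epsilon}{p}{q}$ vanishes unless $p$ and $q$ lie in a common component $Z_k$ of $X^{\ker \wtalpha_k \cap \A}$, in which case it equals $\resPol{\epsilon}{q}'$ times the difference of $A_1$-type stable envelope restrictions on $Z_k$.

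The combinatorial heart of the proof will be identifying such components. Using the path description of $X^\A$ and the tangent-weight count of \refThm{ThmLatticeMultiplicities}, the components of $X^{\ker \wtalpha \cap \A}$ containing multiple $\A$-fixed points should be identified, up to spectator directions, with resolved $A_1$-type slices for the rank-one subgroup of $\G$ associated to the root $\wtalpha$. The $\A$-fixed points inside such a component correspond to the ways of redistributing the $\wtalpha$-projections of the increments $\deltapi{p}{i}$ across the positions, so $p$ and $q$ lie in a common component iff $\deltap{q}$ differs from $\deltap{p}$ by moving a positive multiple of $\coalpha$ from one position to another, which is precisely the adjacency condition \refEq{EqQIsAdjacentToP}. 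In particular, if $p$ and $q$ are not related by \refEq{EqQIsAdjacentToP} for any coroot, no wall contributes modulo $\hbar^2$, yielding case (2).

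For case (1), with $\wtalpha$ the positive root determined by $\coalpha$ in $\C$, I would pick $\C_0$ with $\wtalpha < 0$ there; then crossings of walls other than $\ker \wtalpha$ contribute zero, and repeated crossings of $\ker \wtalpha$ in opposite directions cancel, leaving a single net contribution from $\ker \wtalpha$. On the $A_1$-component $Z$, \refThm{ThmRestrictionInTypeA1} yields $\StabCEptpt{\C''_+}{\epsilon''}{p}{q/Z} \equiv (\hbar/\wtalpha) \resPol{\epsilon''}{p}$ and $\StabCEptpt{\C''_-}{\epsilon''}{p}{q/Z} \equiv 0$ modulo $\hbar^2$, since the ordering $i < j$ in \refEq{EqQIsAdjacentToP} selects exactly one of the two $A_1$-chambers. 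Assembling via the wall-crossing formula and the polarization factorization $\resPol{\epsilon}{p} = \resPol{\epsilon}{p}' \cdot \resPol{\epsilon''}{p}$ from \refThm{ThmTorusRestriction} gives $\StabCEptpt{\C}{\epsilon}{p}{q} \equiv (\resPol{\epsilon}{q}'/\resPol{\epsilon}{p}')(\hbar/\wtalpha) \resPol{\epsilon}{p} \mod \hbar^2$; the independence of $\omega_{p,q}$ from the choice of $\epsilon'$ follows because any change of $\epsilon'$ flips the signs of $\resPol{\epsilon}{p}'$ and $\resPol{\epsilon}{q}'$ simultaneously, as $p$ and $q$ lie in the same component of $X^{\A'}$.

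The main obstacle will be the combinatorial identification of the components of $X^{\ker \wtalpha \cap \A}$ as resolved $A_1$-slices together with the careful bookkeeping of induced polarizations and signs across each wall-crossing. Once that identification is in place, the $A_1$ formula of \refThm{ThmRestrictionInTypeA1} and the telescoping of wall contributions combine to yield the general formula.
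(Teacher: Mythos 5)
Your proposal follows essentially the same route as the paper's proof: start from a chamber in which the restriction vanishes identically (the paper takes $-\C$), cross root walls one at a time, and reduce the single nontrivial jump to the $A_1$ computation of \refThm{ThmRestrictionInTypeA1} on a component of $X^{\ker \wtalpha}$ identified with a resolved $\PSL{2}$-slice, assembling the induced polarizations exactly as you describe. One correction to your combinatorial step: $p$ and $q$ lying in a common component of $X^{\ker \wtalpha}$ is \emph{not} equivalent to the adjacency condition \refEq{EqQIsAdjacentToP} — the actual criterion (the paper's \refThm{ThmWallsForFixedPoints}) only requires $\sigmapi{p}{i}-\sigmapi{q}{i}\in\mathbb{Z}\,\coalpha$ for all $i$, which permits the increment sequences to differ at more than two positions; such a pair still shares a component, but then the $A_1$ restriction vanishes modulo $\hbar^2$ by case (3) of \refThm{ThmRestrictionInTypeA1}, and that is how case (2) of the theorem must actually be argued (together with \refCor{CorWallIntersection}, which guarantees at most one root wall can contribute). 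With that adjustment your argument coincides with the paper's.
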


\begin{proof}

	Let $q \lC{\C} p$, otherwise the restriction of the stable envelope $\StabCEptpt{\C}{\epsilon}{p}{q}$ is zero by definition.

	Then it implies that $\StabCEptpt{-\C}{\epsilon}{p}{q} = 0$. We will use this to find $\StabCEptpt{\C}{\epsilon}{p}{q}$ by wall-crossing.

	One can connect $-\C$ to $\C$ by a sequence of adjacent chambers
	\begin{equation*}
		-\C 
		= 
		\C_0,
		\C_1, 
		\dots, 
		\C_{k-1}, 
		\C_k 
		= 
		\C.
	\end{equation*}
	Moreover, if $H_1, \dots, H_k$ are the hyperplanes that split the space into chambers, then the sequence can be chosen so that each $H_i$ is crossed exactly once (for example, make a choice by following a straight line connecting a generic point in $-C$ with a generic point in $\C$).

    In our case, all hyperplanes have form $H_{\alpha} = \ker \wtalpha$ for a root $\wtalpha$ by \refProp{PropAWallsPositions}.

    Using \refThm{ThmTorusWallCrossing} for the chain of chambers
    \begin{equation*}
        -\C = \C_0, \C_1, \dots, \C_{k-1}, \C_k = \C
    \end{equation*}
    we see that the only change module $\hbar^2$ appears when we cross the wall $H_{\alpha} = \ker \wtalpha$ such that $p$ and $q$ are in the same component of $X^{\ker \wtalpha}$. By \refCor{CorWallIntersection} we know that there is at most one such wall. To have exactly one wall, $p$ and $q$ most satisfy the conditions from \refThm{ThmWallsForFixedPoints}. Since otherwise
	\begin{align*}
		\StabCEptpt{\C}{\epsilon}{p}{q} 
		&=
		\StabCEptpt{-\C}{\epsilon}{p}{q}
		\mod \hbar^2
		\\
		&= 0 
		\mod \hbar^2
	\end{align*}
	we assume that conditions in \refThm{ThmWallsForFixedPoints} are satisfied for one root $\wtalpha$. Then by \refThm{ThmTorusWallCrossing}
	\begin{align*}
		\StabCEptpt{\C}{\epsilon}{p}{q} 
		&=
		\StabCEptpt{\C}{\epsilon}{p}{q} - \StabCEptpt{-\C}{\epsilon}{p}{q}
		\\
		&=
		\resPol{\epsilon}{q}' 
		\left[ 
			\StabCEptpt{\C''}{\epsilon''}{p}{q/Z}
			-
			\StabCEptpt{-\C''}{\epsilon''}{p}{q/Z}
		\right]
		\mod \hbar^2
		\\
		&=
		\resPol{\epsilon}{q}'
		\StabCEptpt{\C''}{\epsilon''}{p}{q/Z}
		\mod \hbar^2
	\end{align*}
	where $\C''$ is the projection of $\C$ to $\mathfrak{a}_\mathbb{R}/\ker \wtalpha$, $Z \subset X^{\ker \wtalpha}$ is the component that contains both $p$ and $q$, $\epsilon'$ is any polarization on $Z$, $\epsilon''$ is induced by $\epsilon$ and $\epsilon'$.

	By identification of $Z$ with the resolution of type $A_1$ slice in \refThm{ThmWallSubspaces} and the result for $A_1$-type in \refThm{ThmRestrictionInTypeA1} we get that
	\begin{equation*}
		\StabCEptpt{\C''}{\epsilon''}{p}{q/Z}
		= 
		\hbar 
		\dfrac
		{
			\resPol{\epsilon}{p}''
		}
		{
			\wtalpha
		}
		\mod \hbar^2
	\end{equation*}
	if $p$ and $q$ satisfy conditions \refEq{EqQIsAdjacentToP} and is $0$ modulo $\hbar^2$ otherwise.
	
	This gives
	\begin{equation*}
		\StabCEptpt{\C}{\epsilon}{p}{q}
		=
		\hbar
		\resPol{\epsilon}{q}' 
		\dfrac
		{
			\resPol{\epsilon}{p}''
		}
		{
			\wtalpha
		}
		\mod \hbar^2.
	\end{equation*}
	Recall that by definition of the induced polarization we have
	\begin{equation*}
		\resPol{\epsilon}{p}
		=
		\resPol{\epsilon}{p}'
		\resPol{\epsilon}{p}''.
	\end{equation*}
	Then
	\begin{equation*}
		\StabCEptpt{\C}{\epsilon}{p}{q}
		=
		\dfrac
		{
			\resPol{\epsilon}{q}'
		}
		{
			\resPol{\epsilon}{p}'
		}
		\dfrac
		{
			\hbar
		}
		{
			\wtalpha
		}
		\resPol{\epsilon}{p}
		\mod \hbar^2.
	\end{equation*}
	The factor
	\begin{equation*}
		\dfrac
		{
			\resPol{\epsilon}{q}'
		}
		{
			\resPol{\epsilon}{p}'
		}
	\end{equation*}
	does not depend on a choice of polarization $\epsilon'$ since all polarizations differ only by $\pm 1$ on $Z$ and the extra sign would cancel.
	
	This proves the statement.
\end{proof}

\begin{remark}
	To see that $\omega_{p,q}$ is not in $\mathbb{Q}$ it is enough to consider the first type $A_2$ case, $T^*\ProjSpace^2$, which is one of the examples with the Picard rank one.
\end{remark}

\subsection{Classical Multiplication}

\subsubsection{Stable envelopes and classical multiplication}

Let us first discuss in general the relation between classical multiplication of a stable envelope by a divisor and restrictions of stable envelopes.

\begin{proposition} Let $X$, $\T$, $\A$, $\hbar$, $\epsilon$, $\C$ be as in the definition of the stable envelopes. Assume moreover that $X$ is a symplectic resolution with $X^\A$ finite. \label{PropClassicalMultiplicationDivisor}

\begin{enumerate}
	\item
	
	For any $\LL \in \EqPic{\T}{X}$ and $p,q \in X^{\A}$, $q \lC{\C} p$ there are such $c^{\LL}_{p,q} \in \mathbb{Q}$ that
	\begin{equation} \label{EqClassicalMultiplicationDivisor1}
		\OpChern{\T}{\LL}\StabCEpt{\C}{\epsilon}{p} 
		=
		\Chernres{\T}{\LL}{p} \cdot \StabCEpt{\C}{\epsilon}{p} 
		+
		\hbar \sum\limits_{q\lC{\C} p} c^{\LL}_{p,q}\StabCEpt{\C}{\epsilon}{q}.
	\end{equation}
	

	\item
	The number $c^{\LL}_{p,q} \in \mathbb{Q}$ is uniquely reconstructed from the restriction of the stable envelope $\StabCEptpt{\C}{\epsilon}{p}{q}$ modulo $\hbar^2$:
	\begin{equation*}
		c^{\LL}_{p,q} = 
		\resA
		{
			\dfrac
			{
				\StabCEptpt{\C}{\epsilon}{p}{q}
			}
			{
				\hbar
			}
		}
		{
			\A
		}
		\dfrac
		{
			\Chernres{\A}{\LL}{q}-\Chernres{\A}{\LL}{p}
		}
		{
			\resPol{\epsilon}{q}
		}.
	\end{equation*}

\end{enumerate}

\end{proposition}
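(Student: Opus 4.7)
The plan is to derive part (1) by expanding $\OpChern{\T}{\LL}\StabCEpt{\C}{\epsilon}{p}$ in the stable basis, using triangularity from the support condition together with a cohomological degree count to control the coefficients; part (2) then follows by restricting the resulting equation to a fixed point and isolating the $q$-coefficient modulo $\hbar^2$.

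For part (1), I would write $\OpChern{\T}{\LL}\StabCEpt{\C}{\epsilon}{p} = \sum_q d_q \StabCEpt{\C}{\epsilon}{q}$ with $d_q \in \EqCoHlgy{\T}{pt}_{loc}$ uniquely determined. The dual-basis formula
\begin{equation*}
d_q = \left\langle \StabCEpt{-\C}{\DualPol{\epsilon}}{q}, \OpChern{\T}{\LL}\StabCEpt{\C}{\epsilon}{p} \right\rangle
\end{equation*}
combined with the compact-support observation after \refThm{ThmStableEnvelopesDualBasis} places $d_q$ in non-localized $\EqCoHlgy{\T}{pt}$; a degree count ($\StabCEpt{\C}{\epsilon}{q}$ is pure of degree $\dim_\mathbb{C} X$, so $d_q$ has degree $2$) identifies $d_q$ with a $\mathbb{Q}$-linear combination of $\T$-weights. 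Restricting to a fixed point $s$ and using $\StabCEptpt{\C}{\epsilon}{q}{s} = 0$ unless $s \leqC{\C} q$ gives a triangular system: the case $s = p$ yields $d_p = \Chernres{\T}{\LL}{p}$, while for $s \lC{\C} p$ one obtains
\begin{equation*}
(\Chernres{\T}{\LL}{s} - \Chernres{\T}{\LL}{p}) \StabCEptpt{\C}{\epsilon}{p}{s} = d_s \StabCEptpt{\C}{\epsilon}{s}{s} + \sum_{s \lC{\C} r \lC{\C} p} d_r \StabCEptpt{\C}{\epsilon}{r}{s}.
\end{equation*}
By \refThm{ThmEnvelopesOffDiagonal}, both $\StabCEptpt{\C}{\epsilon}{p}{s}$ and every off-diagonal $\StabCEptpt{\C}{\epsilon}{r}{s}$ are divisible by $\hbar$, whereas $\StabCEptpt{\C}{\epsilon}{s}{s} \equiv \resPol{\epsilon}{s} \neq 0 \mod \hbar$. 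A downward induction on the interval $[s,p]$ starting from $s$ just below $p$ (where the sum is empty) then forces $d_s \equiv 0 \mod \hbar$ at every step. Combined with the degree-$2$ count this gives $d_s = \hbar c^{\LL}_{p,s}$ with $c^{\LL}_{p,s} \in \mathbb{Q}$.

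For part (2), I would specialize the displayed equation to $s = q$ and reduce modulo $\hbar^2$. By \refThm{ThmEnvelopesOffDiagonal} the residual sum $\hbar \sum_{q \lC{\C} r \lC{\C} p} c^{\LL}_{p,r} \StabCEptpt{\C}{\epsilon}{r}{q}$ is divisible by $\hbar^2$ and drops out, leaving
\begin{equation*}
(\Chernres{\T}{\LL}{q} - \Chernres{\T}{\LL}{p}) \StabCEptpt{\C}{\epsilon}{p}{q} \equiv \hbar c^{\LL}_{p,q} \resPol{\epsilon}{q} \mod \hbar^2.
\end{equation*}
Dividing by $\hbar$ (legal since the LHS is itself divisible by $\hbar$) and restricting to $\A$ produces the announced formula. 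The most delicate step is the rationality $c^{\LL}_{p,q} \in \mathbb{Q}$: the degree argument alone only lands $d_q$ in the span of all $\T$-weights, and excluding the $\A$-weight contributions genuinely requires the symplectic-resolution input of \refThm{ThmEnvelopesOffDiagonal}. Everything else is formal triangular linear algebra combined with a single fixed-point restriction.
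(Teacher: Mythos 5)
Your proposal is correct and follows essentially the same route as the paper: expansion in the dual stable basis $\left\lbrace \StabCEpt{-\C}{\DualPol{\epsilon}}{q} \right\rbrace$, the compact-support and degree-count arguments to place the coefficients in $\hbar\,\mathbb{Q}$, and restriction of the resulting identity to $q$ modulo $\hbar^2$ for part (2). The only (harmless) divergence is in how the $\hbar$-divisibility of the off-diagonal coefficients is established: the paper reads it off term-by-term from the Atiyah--Bott localization of the pairing $\left\langle \StabCEpt{-\C}{\DualPol{\epsilon}}{q}, \OpChern{\T}{\LL}\StabCEpt{\C}{\epsilon}{p} \right\rangle$ via \refThm{ThmEnvelopesOffDiagonal}, whereas you run a triangular downward induction on fixed-point restrictions of the expansion identity --- both hinge on exactly the same symplectic-resolution input.
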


\begin{proof}
	\leavevmode
	\begin{enumerate}
	\item
		Since $\Big\lbrace \StabCEpt{-\C}{\DualPol{\epsilon}}{q} \Big\rbrace$ is the dual basis to $\Big\lbrace \StabCEpt{\C}{\epsilon}{q} \Big\rbrace$, we have
		\begin{equation*}
			\OpChern{\T}{\LL}\StabCEpt{\C}{\epsilon}{p} 
			= 
			\sum\limits_{q\in X^\A} \left\langle \StabCEpt{-\C}{\DualPol{\epsilon}}{q} , \OpChern{\T}{\LL}\StabCEpt{\C}{\epsilon}{p} \right\rangle \StabCEpt{\C}{\epsilon}{q}
		\end{equation*}

		By localization, we can compute the coefficients
		\begin{multline} \label{EqLocalizationClassicalMultiplication}
			\left\langle \StabCEpt{-\C}{\DualPol{\epsilon}}{q} , \OpChern{\T}{\LL} \StabCEpt{\C}{\epsilon}{p} \right\rangle 
			= 
                \\
			\sum\limits_{x\in X^{\A}} 
			\frac{
				\StabCEptpt{-\C}{\DualPol{\epsilon}}{q}{x} \cdot \Chernres{\T}{\LL}{x} \cdot \StabCEptpt{\C}{\epsilon}{p}{x}
			}
			{
				\Euler{\T}{N_{x/X} }
			}
		\end{multline}

		First of all, by the properties of stable envelopes, the only nonzero terms are the ones where $q \leqC{\C} x \leqC{\C} p$. In particular, this coefficient is zero, unless $q\leqC{\C} p$.

		If $q=p$ there is only one term in \refEq{EqLocalizationClassicalMultiplication}:
		\begin{multline*}
			\left\langle \StabCEpt{-\C}{\DualPol{\epsilon}}{p} , \OpChern{\T}{\LL}\StabCEpt{\C}{\epsilon}{p} \right\rangle 
			= 
                \\
			\frac{
				\StabCEptpt{-\C}{\DualPol{\epsilon}}{p}{p} \cdot \Chernres{\T}{\LL}{p} \cdot \StabCEptpt{\C}{\epsilon}{p}{p}
			}
			{
				\Euler{\T}{N_{p/X} }
			}
			=
			\Chernres{\T}{\LL}{p}
		\end{multline*}

		If $q\neq p$, we know that for every term in \refEq{EqLocalizationClassicalMultiplication}, at least one of the stable envelope restrictions is divisible by $\hbar$ since $X$ is a symplectic resolution, see \refThm{ThmEnvelopesOffDiagonal}.

		On the other hand, by dimension count the coefficients we have
		\begin{equation*}
			\left\langle 
				\StabCEpt{-\C}{\DualPol{\epsilon}}{q},
				\OpChern{\T}{\LL}
				\StabCEpt{\C}{\epsilon}{p} 
			\right\rangle 
			\in 
			\EqCoHlgyk{\T}{\pt}{2}
		\end{equation*}
		
		The pairing lies in non-localized cohomology $\EqCoHlgy{\T}{\pt}$ by \refProp{PropNonlocalized}.

		Thus 
		\begin{equation*}
			\left\langle \StabCEpt{-\C}{\DualPol{\epsilon}}{q} , \OpChern{\T}{\LL}\StabCEpt{\C}{\epsilon}{p} \right\rangle 
			\in \hbar \EqCoHlgyk{\T}{\pt}{0} = \hbar \mathbb{Q}.
		\end{equation*}
	
		Denoting
		\begin{equation*}
			c^{\LL}_{p,q} = \dfrac{1}{\hbar} \left\langle \StabCEpt{-\C}{\DualPol{\epsilon}}{q} , \OpChern{\T}{\LL}\StabCEpt{\C}{\epsilon}{p} \right\rangle
		\end{equation*}
		we get the formula \refEq{EqClassicalMultiplicationDivisor1}.

	\item

		Let us restrict the formula \refEq{EqClassicalMultiplicationDivisor1} to a point $q$.
		\begin{equation*}
			\resZ
			{
				\left[ \OpChern{\T}{\LL}\StabCEpt{\C}{\epsilon}{p} \right]
			}
			{
				q
			} 
			=
			\Chernres{\T}{\LL}{p} \cdot \StabCEptpt{\C}{\epsilon}{p}{q} 
			+
			\hbar \sum\limits_{p'\lC{\C} p} c^{\LL}_{p,p'}
			\StabCEptpt{\C}{\epsilon}{p'}{q}.
		\end{equation*}
	
		The LHS equals 	
		\begin{equation*}
			\Chernres{\T}{\LL}{q} \cdot \StabCEptpt{\C}{\epsilon}{p}{q}.
		\end{equation*}
	
		So we have	
		\begin{equation*}
			\left[ \Chernres{\T}{\LL}{q} - \Chernres{\T}{\LL}{p} \right] \cdot \StabCEptpt{\C}{\epsilon}{p}{q} 
			=
			\hbar \sum\limits_{p'\lC{\C} p} c^{\LL}_{p,p'}
			\StabCEptpt{\C}{\epsilon}{p'}{q}.
		\end{equation*}
	
		The terms $\StabCEptpt{\C}{\epsilon}{p'}{q}$ are divisible by $\hbar$ unless $p' = q$. Then we have modulo $\hbar^2$
		\begin{equation*}
			\left[ \Chernres{\T}{\LL}{q} - \Chernres{\T}{\LL}{p} \right] \cdot \StabCEptpt{\C}{\epsilon}{p}{q} 
			=
			\hbar c^{\LL}_{p,q}
			\StabCEptpt{\C}{\epsilon}{q}{q}
			\mod \hbar^2.
		\end{equation*}
	
		This implies
		\begin{align*}
			c^{\LL}_{p,q} 
			&= 
			\resA
			{
				\dfrac
				{
					\left[ \Chernres{\T}{\LL}{q} - \Chernres{\T}{\LL}{p} \right] \cdot \StabCEptpt{\C}{\epsilon}{p}{q}
				}
				{
					\hbar \: \StabCEptpt{\C}{\epsilon}{q}{q}
				}
			}
			{
				\A
			}
			\\
			&=
			\resA
			{
				\dfrac
				{
					\left[ \Chernres{\A}{\LL}{q} - \Chernres{\A}{\LL}{p} \right] \cdot \StabCEptpt{\C}{\epsilon}{p}{q}
				}
				{
					\hbar \resPol{\epsilon}{q}
				}
			}
			{
				\A
			}
		\end{align*}
		as desired.
	
	\end{enumerate}

\end{proof}

\subsubsection{Line bundles}

Let us return to the case $X = \Gr^{ \ucolambda }_{\comu}$. The slices have a natural collection of line bundles
\begin{equation*}
    \LBundle{0}, \dots, \LBundle{l}
\end{equation*}
coming from pulling back the $\OOO{1}$ of the affine Grassmannian $\Gr$. We keep the detailed discussion for the Appendix; let us just list the properties of line bundles $\LBundle{i}$ we use here.

\begin{proposition} \label{PropSecondCohomologyGeneration}
    The second cohomology $\EqCoHlgyk{\T}{X}{2}$ is generated as a vector space by $\ChernL{\T}{i}$, $0 < i < l$, and constants $\EqCoHlgyk{\T}{\pt}{2}$.
\end{proposition}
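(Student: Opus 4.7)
The plan is to reduce the equivariant statement to ordinary cohomology via the cellular structure on $X$, identify $H^2(X;\mathbb{Q})$ with the rational Picard group, and compute the latter using the iterated-bundle structure of the convolution variety $\Gr^{\ucolambda}$.

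First, by \refCor{CorTCellSuructure} extended along the convolution tower (or, equivalently, by Bialynicki--Birula with respect to a generic one-parameter subgroup of $\T$, using that $X^\T$ is finite by \refProp{PropSliceFixedLocus}), $X$ carries a $\T$-invariant affine paving. Hence $\EqCoHlgy{\T}{X}$ is a free $\EqCoHlgy{\T}{\pt}$-module concentrated in even degrees, isomorphic to $H^*(X;\mathbb{Q}) \otimes_{\mathbb{Q}} \EqCoHlgy{\T}{\pt}$ as graded $\EqCoHlgy{\T}{\pt}$-modules. In degree two this gives
\begin{equation*}
    \EqCoHlgyk{\T}{X}{2} \;\cong\; H^2(X;\mathbb{Q}) \;\oplus\; \EqCoHlgyk{\T}{\pt}{2},
\end{equation*}
and the projection onto the first summand sends $\ChernL{\T}{i}$ to the ordinary Chern class $c_1(\LBundle{i})$. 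Therefore the claim reduces to showing that $\{c_1(\LBundle{i})\}_{0 < i < l}$ spans $H^2(X;\mathbb{Q})$.

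Next, the affine paving gives $H^1(X;\mathbb{Z}) = 0$, so the first Chern class map identifies $\PicX{X} \otimes_{\mathbb{Z}} \mathbb{Q}$ with $H^2(X;\mathbb{Q})$. The ambient $\Gr^{\ucolambda}$ is an iterated tower of $\overline{\Gr^{\coomega_k}}$-fibrations; under the smoothness hypothesis on the resolution, the fibers relevant to $X$ are smooth, and by \refProp{PropGrCells}(6) each such $\overline{\Gr^{\coomega_k}}$ is a cominuscule partial flag variety of Picard rank one, generated by the pullback of $\OOO{1}$ from $\Gr$. Applying Leray--Hirsch step by step along the tower and passing to the closed subvariety $X$, the group $\PicX{X} \otimes \mathbb{Q}$ is spanned by the $l+1$ restrictions $\LBundle{0}, \dots, \LBundle{l}$. (More carefully: every codimension-one Bialynicki--Birula cell on $X$ corresponds to swapping two adjacent entries $\deltapi{p}{i}, \deltapi{p}{i+1}$ of $\deltap{p}$ for some $i$ with $0 < i \leq l$, and the class of its closure pairs non-trivially with $\LBundle{i}$ via the corresponding fiber curve in the $i$-th tower step.)

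Finally, we eliminate $\LBundle{0}$ and $\LBundle{l}$. The bundle $\LBundle{0}$ is obviously trivial because the zeroth factor is the constant point $\UnitG\cdot\GO$. The bundle $\LBundle{l}$ equals $m_{\ucolambda}^*\bigl(\OOO{1}\vert_{\Gr^{\colambda}_{\comu}}\bigr)$; by \refProp{PropSlicesProperties}(3) the slice $\Gr^{\colambda}_{\comu}$ is $\LoopGm$-contractible to the single point $\Tfixed{\comu}$, so every non-equivariant line bundle on it is trivial, and thus $\LBundle{l}\vert_X$ is trivial as well. Hence $c_1(\LBundle{0}) = c_1(\LBundle{l}) = 0$ in $H^2(X;\mathbb{Q})$ and the spanning set reduces to $\{c_1(\LBundle{i})\}_{0 < i < l}$, as desired. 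The main obstacle is the Leray--Hirsch/cell-compatibility step -- verifying rigorously that $\PicX{X}\otimes\mathbb{Q}$ is genuinely exhausted by the restrictions $\LBundle{i}\vert_X$ without acquiring new divisor classes upon passage from $\Gr^{\ucolambda}$ to the slice; this can be handled by induction on the tower length $l$ using the explicit Picard structure at each step, or by exhibiting a dual basis of fiber curves.
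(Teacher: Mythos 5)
Your overall architecture matches the paper's: reduce to ordinary $H^2$ by equivariant formality, identify $H^2(X;\mathbb{Q})$ with $\PicX{X}\otimes_{\mathbb{Z}}\mathbb{Q}$, generate the Picard group of the convolution variety $\Gr^{\ucolambda}$ by the tautological bundles via the iterated tower of minuscule flag-variety fibrations, and then discard $\LBundle{0}$ and $\LBundle{l}$ as trivial. However, there is a genuine gap at exactly the step you flag as ``the main obstacle'': passing from $\Gr^{\ucolambda}$ to the closed subvariety $X=\Gr^{\ucolambda}_{\comu}$. Knowing that $\ChernL{}{1},\dots,\ChernL{}{l}$ span $\CoHlgyk{\Gr^{\ucolambda},\mathbb{Q}}{2}$ only tells you that their restrictions span the \emph{image} of $i^*\colon \CoHlgyk{\Gr^{\ucolambda},\mathbb{Q}}{2}\to\CoHlgyk{X,\mathbb{Q}}{2}$; for a closed subvariety this restriction has no reason a priori to be surjective, and without surjectivity the proposition does not follow. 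The paper supplies precisely this surjectivity as a nontrivial input from the geometric Satake correspondence: under the compatibility \refEq{EqSatakeCompatibility}, $i^*$ is identified with the projection $V_{\colambda_1}\otimes\dots\otimes V_{\colambda_l}\to V_{\colambda_1}\otimes\dots\otimes V_{\colambda_l}[\comu]$ onto a weight subspace, which is manifestly surjective. Your proposed substitute (matching codimension-one Bia\l{}ynicki--Birula cells of $X$ against fiber curves dual to the $\LBundle{i}$) is a plausible alternative route, but it is only sketched: you would need to actually classify the codimension-one attracting cells of $X$ (equivalently, the fixed points with exactly one repelling tangent weight), produce the dual curve classes inside $X$, and verify nondegeneracy of the pairing, none of which is carried out. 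As written, the argument does not close.

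Two smaller points. First, your justification of $\PicX{X}\otimes\mathbb{Q}\cong H^2(X;\mathbb{Q})$ via $H^1(X;\mathbb{Z})=0$ is not the relevant vanishing: injectivity and surjectivity of $c_1$ are controlled by the coherent cohomology groups $\CoHlgyk{X,\OO}{1}$ and $\CoHlgyk{X,\OO}{2}$ in the exponential sequence, which the paper kills by Grauert--Riemenschneider for a symplectic resolution (one could instead argue that the affine paving makes the cycle class map surjective onto $H^2$, but that is a different argument from the one you state). Second, your identification of the $\LoopGm$-contractibility of $\Gr^{\colambda}_{\comu}$ as the reason $\LBundle{l}$ is trivial agrees in substance with the paper's appeal to the triviality of $\Pic$ of the slice, so that part is fine.
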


So, to study classical multiplication by elements of $\EqCoHlgyk{\T}{X}{2}$ it is enough to study classical multiplication by $\ChernL{\T}{i}$.

\medskip

We know the action of $\ChernL{\T}{i}$ in the fixed-point basis. It is diagonal with eigenvalues given by restrictions to the fixed points.

\begin{proposition} \label{PropLWeights}
    The weight of a line bundle $\LBundle{i}$ at a fixed point $p \in X^{\A}$ is 
    \begin{equation*}
	\CorootScalar{\sigmapi{p}{i}}{\bullet}
	+
	\dfrac{\hbar}{2}
        \CorootScalar{\sigmapi{p}{i}}{\sigmapi{p}{i}}.
    \end{equation*}
\end{proposition}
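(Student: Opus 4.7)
The plan is to reduce to a weight computation on the affine Grassmannian $\Gr$ itself and then to evaluate the $\T$-weight of its Picard-group generator $\OOO{1}$ at an arbitrary fixed point $\Tfixed{\colambda}$.

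First I would identify $\LBundle{i} = \pi_i^* \OOO{1}_\Gr$, where $\pi_i \colon X \to \Gr$, $(L_0,\dots,L_l) \mapsto L_i$, is the $\T$-equivariant $i$-th projection; the construction of the $\LBundle{j}$'s is part of what is deferred to the appendix, but this pullback description is intrinsic to it. Since $\pi_i(p) = \Tfixed{\sigmapi{p}{i}}$, the $\T$-weight of $\LBundle{i}$ at $p$ equals the $\T$-weight of $\OOO{1}_\Gr$ at $\Tfixed{\sigmapi{p}{i}}$, so it suffices to prove that for every $\colambda \in \CocharLattice{\A}$ the $\T$-weight of $\OOO{1}_\Gr$ at $\Tfixed{\colambda}$ is $\CorootScalar{\colambda}{\bullet} + \tfrac{\hbar}{2}\CorootScalar{\colambda}{\colambda}$.

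Next I would analyze the $\A$- and $\LoopGm$-pieces separately. The assignment $\colambda \mapsto (\A\text{-weight of }\OOO{1}_\Gr \text{ at }\Tfixed{\colambda})$ is an additive, Weyl-invariant homomorphism from $\CocharLattice{\A}$ to the character lattice of $\A$: additivity comes from compatibility of $\OOO{1}$ with the convolution $\Gr \widetilde{\times} \Gr \to \Gr$, which on $\T$-fixed points adds cocharacters, and Weyl-invariance from $\G$-equivariance of $\OOO{1}$. Such a map is uniquely determined by its value on a single coroot, and that value can be read off by restricting to an $\SL{2}$-triple where $\overline{\Gr^{\coomega}} \cong \PLine$ and $\OOO{1}_\Gr$ restricts to the standard $\OOO{1}_{\PLine}$; matching with the normalization of \refEq{EqPairing} identifies the homomorphism with $\colambda \mapsto \CorootScalar{\colambda}{\bullet}$. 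The $\LoopGm$-piece is quadratic in $\colambda$ and comes from the basic central extension of $\GK$ whose $2$-cocycle is the pairing of \refEq{EqPairing}: translating a $\T$-equivariant trivialization of $\OOO{1}$ at $\UnitG\GO$ to $\Tfixed{\colambda}$ by the loop $t^\colambda$ picks up the $\LoopGm$-weight $\tfrac{1}{2}\CorootScalar{\colambda}{\colambda}\,\hbar$. Both pieces can alternatively be verified by hand for $\G = \PSL{2}$ on $\overline{\Gr^{n\coomega}}$, where $\OOO{1}_\Gr$ and the $\T$-action admit an explicit equivariant description near $\Tfixed{n\coomega}$, and then promoted to general $\G$ via $\SL{2}$-triples associated to coroots.

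The main obstacle is fixing the normalization of $\OOO{1}_\Gr$ so that both the linear and the quadratic contributions come out with the pairing $\CorootScalar{\bullet}{\bullet}$ of the preliminaries and, in particular, so that the factor in front of the quadratic term is exactly $\tfrac{1}{2}$. I would expect the appendix to do exactly this by carrying out the $\PSL{2}$-case explicitly and then invoking additivity under convolution and Weyl-invariance to extend to arbitrary $\G$ and arbitrary cocharacters.
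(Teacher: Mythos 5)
Your reduction to computing the $\T$-weight of $\OOO{1}$ at an arbitrary fixed point $\Tfixed{\colambda}$ via $\LBundle{i}=\pi_i^*\OOO{1}$ and $\pi_i(p)=\Tfixed{\sigmapi{p}{i}}$ is exactly the paper's first step. From there, however, you take a genuinely different route. The paper realizes $\OOO{d_V}$ concretely as the determinant line bundle $\LL_V$ of the virtual bundle $\mathcal{V}-\mathcal{V}_0$ attached to a representation $V$ of $\G$, sums the $\T$-weights of the fibers of $\mathcal{V}/\mathcal{V}\cap\mathcal{V}_0$ and $\mathcal{V}_0/\mathcal{V}\cap\mathcal{V}_0$ explicitly, recognizes the result as $\Tr_V(\colambda\,\bullet)+\tfrac{\hbar}{2}\Tr_V(\colambda\colambda)+\tfrac{\hbar}{2}\Tr_V(\colambda)$ with the last trace vanishing by simplicity, and then divides by the Dynkin index $d_V$ to pass from $\left(\bullet,\bullet\right)_V$ to $\CorootScalar{\bullet}{\bullet}$. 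You instead characterize the answer by structural constraints: Weyl-equivariance and additivity of the $\A$-part under convolution, plus the identification of the $\LoopGm$-part with the quadratic form coming from the cocycle of the basic central extension. Your approach buys conceptual transparency (the formula is forced by symmetry once normalized on one coroot), and it avoids introducing $\LL_V$ and the Dynkin index at all; the paper's approach buys self-containedness, since the explicit fiber computation produces the coefficient $\tfrac{1}{2}$ directly rather than importing it.

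The soft spot in your write-up is precisely that coefficient. The sentence asserting that translating a trivialization by $t^\colambda$ ``picks up the $\LoopGm$-weight $\tfrac{1}{2}\CorootScalar{\colambda}{\colambda}\hbar$'' is the content of the proposition, not a derivation of it: to extract it from the central extension you must (i) know that $m^*\OOO{1}\cong\OOO{1}\widetilde{\boxtimes}\OOO{1}$ on the convolution Grassmannian, so that the $\hbar$-part satisfies $f(\colambda+\comu)=f(\colambda)+f(\comu)+B(\colambda,\comu)$ for a symmetric bilinear $B$, (ii) identify $B$ with $\CorootScalar{\bullet}{\bullet}$ in the normalization of \refEq{EqPairing}, and (iii) kill the residual $W$-equivariant linear term. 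None of these is carried out, and step (ii) is a computation of essentially the same weight as the one the paper does via determinants. Your fallback — explicit verification for $\PSL{2}$ promoted by $\SL{2}$-triples — does pin down both the linear and quadratic parts once (i) is granted, so the outline is viable, but as written the decisive input is assumed rather than proved. Note also that the appendix does not proceed as you predicted, and that the $\T$-equivariant structure on $\OOO{1}$ is only canonical after declaring it to be the $d_V$-th root of $\LL_V$ (equivalently, normalizing the fiber weight at $\Tfixed{0}$ to be zero), a choice your argument should state explicitly since any other choice shifts all the weights by a constant character.
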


We see that from the restriction $\ChernL{\T}{i}$ at a fixed point $p$ we can uniquely reconstruct $\sigmapi{p}{i}$. So, if we know the restrictions of all $\ChernL{\T}{i}$, $0 < i < l$ at a given fixed point, we know the fixed point. This gives the following useful statement.

\begin{corollary} \label{CorSimpleSpectrum}
    The operators of classical multiplication $\OpChernL{\T}{i} -$, $0 < i < l $, act with a simple spectrum on $\EqCoHlgy{\T}{X}_{loc}$. The fixed-point classes give an eigenbasis of this action.
\end{corollary}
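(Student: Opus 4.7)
The plan is to use the Atiyah-Bott localization theorem to diagonalize the multiplication operators, then show that the joint eigenvalues separate the fixed points.

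First, by the Atiyah-Bott localization theorem, the restriction map $\EqCoHlgy{\T}{X}_{loc} \to \EqCoHlgy{\T}{X^\A}_{loc}$ is an isomorphism. Since $X^\A$ is finite (by \refProp{PropSliceFixedLocus}), the classes $\CohUnit{p}$, $p \in X^\A$, form a basis of $\EqCoHlgy{\T}{X}_{loc}$ as a free $\EqCoHlgy{\T}{\pt}_{loc}$-module. In this basis, multiplication by any equivariant cohomology class acts diagonally: in particular,
\begin{equation*}
\OpChernL{\T}{i} \CohUnit{p} = \ChernLres{\T}{i}{p} \cdot \CohUnit{p}.
\end{equation*}
Thus the fixed point classes already form a simultaneous eigenbasis for all the operators $\OpChernL{\T}{i} -$, $0 < i < l$; it only remains to show the combined spectrum is simple, i.e.\ the tuple of eigenvalues $\left( \ChernLres{\T}{i}{p} \right)_{0 < i < l}$ determines $p$.

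Suppose $p, q \in X^\A$ have $\ChernLres{\T}{i}{p} = \ChernLres{\T}{i}{q}$ for every $i$ with $0 < i < l$. By \refProp{PropLWeights}, these restrictions are
\begin{equation*}
\CorootScalarNoComa{\sigmapi{p}{i},\bullet} + \tfrac{\hbar}{2} \CorootScalarNoComa{\sigmapi{p}{i},\sigmapi{p}{i}}, \quad \text{resp.} \quad \CorootScalarNoComa{\sigmapi{q}{i},\bullet} + \tfrac{\hbar}{2} \CorootScalarNoComa{\sigmapi{q}{i},\sigmapi{q}{i}}.
\end{equation*}
Equating the $\A$-weight parts (the coefficients of elements of $\CocharLattice{\A}^* \otimes \mathbb{Q}$, independent of $\hbar$) gives $\CorootScalarNoComa{\sigmapi{p}{i},\bullet} = \CorootScalarNoComa{\sigmapi{q}{i},\bullet}$ as linear functionals on $\mathfrak{a}$; by non-degeneracy of the scalar product, $\sigmapi{p}{i} = \sigmapi{q}{i}$ for all $0 < i < l$. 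Combined with the boundary identities $\sigmapi{p}{0} = \sigmapi{q}{0} = 0$ and $\sigmapi{p}{l} = \sigmapi{q}{l} = \comu$ from the notation following \refProp{PropSliceFixedLocus}, we conclude $\sigmap{p} = \sigmap{q}$, hence $p = q$.

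The only step requiring care is verifying that the joint tuple separates fixed points, which is just the injectivity argument above; there is no serious obstacle. I would simply present the two ingredients (diagonality from Atiyah-Bott, separation from \refProp{PropLWeights}) and conclude.
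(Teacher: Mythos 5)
Your proof is correct and follows essentially the same route as the paper: the paper likewise combines the diagonality of classical multiplication in the (localized) fixed point basis with the observation that, by \refProp{PropLWeights}, the restriction of $\ChernL{\T}{i}$ at a fixed point determines $\sigmapi{p}{i}$, so the full tuple of restrictions determines the sequence $\sigmap{p}$ and hence the point. Your explicit separation of the $\A$-linear part and appeal to non-degeneracy of $\CorootScalar{\bullet}{\bullet}$ just spells out the reconstruction step the paper leaves implicit.
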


\subsubsection{Main result on classical multiplication}

To formulate the main result on classical multiplication, it is convenient to introduce auxiliary operators.

First we define two families of diagonal operators
\begin{align*}
	\Hterm{i}
	\colon
	\EqCoHlgy{\A}{X^{\A}}
	&\to
	\EqCoHlgy{\A}{X^{\A}}
	\\
	\notag
	\CohUnit{p}
	&\mapsto
	\left[
		\CorootScalar{\deltapi{p}{i}}{\bullet}
		+
		\dfrac{\hbar}{2}
		\CorootScalar{\deltapi{p}{i}}{\comu}
	\right]
	\CohUnit{p}
	\\
	\notag
	\\
	\OmegaOperator{ij}{0}
	\colon
	\EqCoHlgy{\A}{X^{\A}}
	&\to
	\EqCoHlgy{\A}{X^{\A}}\\
	\notag
	\CohUnit{p}
	&\mapsto
	\CorootScalar
	{\deltapi{p}{i}}
	{\deltapi{p}{j}}
	\CohUnit{p}
\end{align*}

Then we introduce a family of strictly triangular operators. Let $i,j$ be such that $0\leq i, j \leq l$, $\wtalpha$ be a root and $\epsilon$ be a polarization. We define
\begin{equation*}
	\OmegaOperator{ij}{-\alpha,\epsilon}
	\colon
	\EqCoHlgy{\A}{X^{\A}}
	\to
	\EqCoHlgy{\A}{X^{\A}}
\end{equation*}
by the following property: for any $p \in X^{\A}$
\begin{equation*}
	\OmegaOperator{ij}{-\alpha,\epsilon} \left( \CohUnit{p} \right)
	=
	\sigma^{\epsilon}_{p,q}
	\dfrac{\CorootScalar{\coalpha}{\coalpha}}{2}
	\CohUnit{q}
\end{equation*}
if there exists $q \in X^{\A}$ satisfying conditions \refEq{EqQIsAdjacentToP} for these $i,j$ and $\coalpha$. Signs $\sigma^{\epsilon}_{p,q} \in \signset$ are the following product of signs
\begin{equation*}
	\sigma^{\epsilon}_{p,q}
	=
	\sign{\tilde{\C}}{\epsilon}{q}{X}
	\sign{\tilde{\C}}{\epsilon}{p}{X}
	=
	\dfrac
	{
		\resPol{\epsilon}{p}
	}
	{
		\Euler{\A}{N^{-\widetilde{\C}}_{p/X}}
	}
	\dfrac
	{
		\Euler{\A}{N^{-\widetilde{\C}}_{q/X}}
	}
	{
		\resPol{\epsilon}{q}
	},
\end{equation*}
and where $\widetilde{\C}$ is any Weyl chamber adjacent to the wall $\wtalpha = 0$. The sign does not depend on this choice which we will show later.

If there's no such $q$, we set
\begin{equation*}
	\OmegaOperator{ij}{-\alpha,\epsilon}
	\left( \CohUnit{p} \right)
	=
	0.
\end{equation*}

\begin{remark}
	By the conditions on $\deltap{p}$' s, the existence for such $q\in X^{\A}$ is equivalent to the following two conditions on $p$
	\begin{align*}
	\left\langle
		\deltapi{p}{i},
		\wtalpha
	\right\rangle
	&= 1,
	\\
	\left\langle
		\deltapi{p}{j},
		\wtalpha
	\right\rangle
	&= -1.
	\end{align*}
\end{remark}

Now for any Weyl chamber we define the following operator
\begin{equation*}
	\OmegaOperator{ij}{\C,\epsilon}
	= 
	\dfrac{1}{2}
	\OmegaOperator{ij}{0}
	+
	\sum\limits_{\alpha \gC{\C} 0}
	\OmegaOperator{ij}{\alpha,\epsilon}.
\end{equation*}

\begin{remark}
	The notation $\OmegaOperator{ij}{\C,\epsilon}$ suggests that these operators are related to Casimir operators in representation theory. We make this statement more precise in \cite{Da}. The minus sign for $-\alpha$ in the definition of $\OmegaOperator{ij}{-\alpha,\epsilon}$ is introduced to make this comparison easier.
\end{remark}

\begin{remark}
	If we choose polarization $\resPol{\epsilon}{p} = \Euler{\A}{N^{-\C}_{p/X}}$, then the signs in $\OmegaOperator{ij}{\alpha,\epsilon}$ with a simple $\alpha$ (with respect to $\C$) are all $+1$.
\end{remark}

Similar to stable envelopes, for a simpler notation we write
\begin{equation*}
	\Hterm{i}
	\left( p \right)
	\text{ and }
	\OmegaOperator{ij}{\C,\epsilon}
	\left( p \right)
\end{equation*}
instead of
\begin{equation*}
	\Hterm{i}
	\left( \CohUnit{p} \right)
	\text{ and }
	\OmegaOperator{ij}{\C,\epsilon}
	\left( \CohUnit{p} \right).
\end{equation*}

Now we are ready to state the main result on classical multiplication in the stable basis.

\begin{theorem} \label{ThmMainClassicalMultiplication}
	The classical multiplication is given by the following formula	
	\begin{equation*}
		\OpChernL{\T}{k}
		\StabCEpt{\C}{\epsilon}{p}
		=
		\StabCE{\C}{\epsilon}
		\left[
			\sum\limits_{i\leq k}
			\Hterm{i}
			\left(
				p
			\right)
			-
			\hbar
			\sum\limits_{i\leq k<j}
			\OmegaOperator{ij}{-\C,\epsilon}
			\left(
				p
			\right)
		\right].
	\end{equation*}
	
\end{theorem}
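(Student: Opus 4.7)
The plan is to compare both sides using the decomposition provided by \refProp{PropClassicalMultiplicationDivisor},
\begin{equation*}
    \OpChernL{\T}{k}\StabCEpt{\C}{\epsilon}{p} = \ChernLres{\T}{k}{p}\cdot\StabCEpt{\C}{\epsilon}{p} + \hbar\sum_{q\lC{\C}p} c^{\LBundle{k}}_{p,q}\StabCEpt{\C}{\epsilon}{q}.
\end{equation*}
Since the RHS of the theorem is also a linear combination of stable envelopes, it will suffice to match coefficients of each $\StabCEpt{\C}{\epsilon}{q}$ separately. Concretely, I need to verify two statements: that $\ChernLres{\T}{k}{p}$ equals the diagonal value of $\sum_{i\leq k}\Hterm{i}(p)-\tfrac{\hbar}{2}\sum_{i\leq k<j}\OmegaOperator{ij}{0}(p)$ (the diagonal component of the RHS, since $\OmegaOperator{ij}{-\C,\epsilon}$ has diagonal part $\tfrac{1}{2}\OmegaOperator{ij}{0}$); and that each off-diagonal $c^{\LBundle{k}}_{p,q}$ equals $-\sigma^\epsilon_{p,q}\tfrac{\CorootScalar{\coalpha}{\coalpha}}{2}$ for the unique triple $(i,j,\coalpha)$, $i\leq k<j$, linking $p$ and $q$ via \refEq{EqQIsAdjacentToP} (and vanishes otherwise).

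For the diagonal identity, I would apply \refProp{PropLWeights} to write $\ChernLres{\T}{k}{p}=\CorootScalar{\sigmapi{p}{k}}{\bullet}+\tfrac{\hbar}{2}\CorootScalar{\sigmapi{p}{k}}{\sigmapi{p}{k}}$, use $\sigmapi{p}{k}=\sum_{i\leq k}\deltapi{p}{i}$ for the linear-in-weight part, and then expand
\begin{equation*}
    \CorootScalar{\sigmapi{p}{k}}{\sigmapi{p}{k}} = \CorootScalar{\sigmapi{p}{k}}{\comu} - \sum_{i\leq k<j}\CorootScalar{\deltapi{p}{i}}{\deltapi{p}{j}},
\end{equation*}
which follows by bilinearity from $\comu=\sigmapi{p}{k}+\sum_{j>k}\deltapi{p}{j}$. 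The two pieces on the right match exactly the $\tfrac{\hbar}{2}\CorootScalar{\deltapi{p}{i}}{\comu}$ contribution of $\Hterm{i}$ and the $-\tfrac{\hbar}{2}\CorootScalar{\deltapi{p}{i}}{\deltapi{p}{j}}$ contribution of $-\tfrac{1}{2}\OmegaOperator{ij}{0}$.

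For the off-diagonal identity, I would combine \refProp{PropClassicalMultiplicationDivisor} with \refThm{ThmRestrictionGeneral}. The restriction $\StabCEptpt{\C}{\epsilon}{p}{q}$ vanishes modulo $\hbar^2$ unless there exist $i<j$ and a positive coroot $\coalpha$ satisfying \refEq{EqQIsAdjacentToP}, in which case $\StabCEptpt{\C}{\epsilon}{p}{q}\equiv\omega_{p,q}\hbar\resPol{\epsilon}{p}/\wtalpha$. The $\A$-restriction of line bundle weights gives $\ChernLres{\A}{k}{q}-\ChernLres{\A}{k}{p}=\CorootScalar{\sigmapi{q}{k}-\sigmapi{p}{k}}{\bullet}\vert_\A$, which is nonzero precisely when $i\leq k<j$ and in that range equals $-\CorootScalar{\coalpha}{\bullet}\vert_\A=-\tfrac{\CorootScalar{\coalpha}{\coalpha}}{2}\wtalpha$, by the standard identification of $\CorootScalar{\coalpha}{\bullet}$ with a multiple of the root $\wtalpha$. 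Substituting into the formula for $c^{\LBundle{k}}_{p,q}$ in \refProp{PropClassicalMultiplicationDivisor} and cancelling $\wtalpha$ produces
\begin{equation*}
    c^{\LBundle{k}}_{p,q} = -\omega_{p,q}\frac{\resPol{\epsilon}{p}}{\resPol{\epsilon}{q}}\cdot\frac{\CorootScalar{\coalpha}{\coalpha}}{2}.
\end{equation*}

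The hard part will be the sign identity $\omega_{p,q}\resPol{\epsilon}{p}/\resPol{\epsilon}{q}=\sigma^\epsilon_{p,q}$. Substituting $\omega_{p,q}=\resPol{\epsilon}{q}'/\resPol{\epsilon}{p}'$ from \refThm{ThmRestrictionGeneral} and the induced-polarization multiplicativity $\resPol{\epsilon}{r}=\resPol{\epsilon}{r}'\cdot\resPol{\epsilon}{r}''$ collapses the LHS to $\resPol{\epsilon}{p}''/\resPol{\epsilon}{q}''$, a ratio of polarizations on the $A_1$-type wall component $Z\subset X^{\ker\wtalpha}$ containing $p$ and $q$. Pick $\widetilde{\C}$ adjacent to the wall $\wtalpha=0$; the sign multiplicativity of \refThm{ThmTorusRestriction} applied at $r=p$ and $r=q$ simultaneously makes the $Z$-factor $\sign{\widetilde{\C}'}{\epsilon'}{Z}{X}$ square to $1$, so $\sigma^\epsilon_{p,q}=\sign{\widetilde{\C}''}{\epsilon''}{p}{Z}\cdot\sign{\widetilde{\C}''}{\epsilon''}{q}{Z}$. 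The symplectic pairing on $T_rZ$ together with the weight count of \refThm{ThmLatticeMultiplicities} forces the $\A/\A'$-weights of $T_rZ$ to be $\tfrac{1}{2}\dim Z$ copies each of $\pm\wtalpha$ at every fixed point $r\in Z^\A$, hence $\Euler{\A/\A'}{N^{-\widetilde{\C}''}_{r/Z}}$ is independent of $r$. Plugging this into $\sign{\widetilde{\C}''}{\epsilon''}{r}{Z}=\resPol{\epsilon}{r}''/\Euler{\A/\A'}{N^{-\widetilde{\C}''}_{r/Z}}$ yields $\resPol{\epsilon}{p}''/\resPol{\epsilon}{q}''=\sigma^\epsilon_{p,q}$, closing the argument.
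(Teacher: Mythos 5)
Your proposal is correct and follows essentially the same route as the paper's proof: decompose via \refProp{PropClassicalMultiplicationDivisor}, match the diagonal using \refProp{PropLWeights} and the bilinear expansion of $\CorootScalar{\sigmapi{p}{k}}{\sigmapi{p}{k}}$, and match the off-diagonal coefficients using \refThm{ThmRestrictionGeneral}. The only difference is cosmetic: the paper evaluates $\omega_{p,q}$ directly by choosing $\epsilon'=\Euler{\A}{N^{-\C'}_{X^{\ker\wtalpha}/X}}$, whereas you route the same sign bookkeeping through the induced polarization $\epsilon''$ and the multiplicativity of signs, arriving at the identical identity $\omega_{p,q}\resPol{\epsilon}{p}/\resPol{\epsilon}{q}=\sigma^{\epsilon}_{p,q}$.
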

\begin{proof}
	From the first part of \refProp{PropClassicalMultiplicationDivisor} we get that
	\begin{equation*}
	\OpChern{\T}{\LBundle{k}}\StabCEpt{\C}{\epsilon}{p} 
		=
		\Chernres{\T}{\LBundle{k}}{p} 
		\cdot
		\StabCEpt{\C}{\epsilon}{p} 
		+
		\hbar 
		\sum\limits_{q\lC{\C} p}
		c^{k}_{p,q}\StabCEpt{\C}{\epsilon}{q}.
	\end{equation*}
	
	Let us first rewrite the first term. By \refCor{PropLWeights} we know the restriction of the first Chern classes
	\begin{align*}
		\Chernres{\T}{\LBundle{k}}{p} 
		&=
		\CorootScalar{\sigmapi{p}{k}}{\bullet}
		+
		\dfrac{\hbar}{2}
		\CorootScalar{\sigmapi{p}{k}}{\sigmapi{p}{k}}
		\\
		&=
		\CorootScalar{\sigmapi{p}{k}}{\bullet}
		+
		\dfrac{\hbar}{2}
		\CorootScalar{\sigmapi{p}{k}}{\comu}
		-
		\dfrac{\hbar}{2}
		\CorootScalar{\sigmapi{p}{k}}{\comu-\sigmapi{p}{k}}
		\\
		&=
		\sum\limits_{i\leq k}
		\left[
			\CorootScalar{\deltapi{p}{i}}{\bullet}
			+
			\dfrac{\hbar}{2}
			\CorootScalar{\deltapi{p}{i}}{\comu}
		\right]
		-
		\dfrac{\hbar}{2}
		\sum\limits_{i \leq k < j}
		\CorootScalar{\deltapi{p}{i}}{\deltapi{p}{j}},
	\end{align*}
	where we used $\sum \deltapi{p}{i} = \comu$. Using the definitions of $\Hterm{i}$ and $\OmegaOperator{ij}{0}$, we can write
	\begin{equation} \label{EqDiagonalClassicalMultiplication}
		\Chernres{\T}{\LBundle{k}}{p} 
		\cdot
		\StabCEpt{\C}{\epsilon}{p}
		=
		\StabCE{\C}{\epsilon}
		\left[
			\sum\limits_{i\leq k}
			\Hterm{i}
			\left(
				p
			\right)
			-
			\hbar
			\sum\limits_{i\leq k<j}
			\dfrac{1}{2}
			\OmegaOperator{ij}{0}
			\left(
				p
			\right)
		\right]
	\end{equation}
	
	Now, let us compute the off-diagonal terms. The coefficients $c^{k}_{p,q}$ can be found from the second part of \refProp{PropClassicalMultiplicationDivisor}.
	\begin{equation*}
		c^{k}_{p,q}
		= 
		\resA
		{
			\dfrac
			{
				\StabCEptpt{\C}{\epsilon}{p}{q}
			}
			{
				\hbar
			}
		}
		{
			\A
		}
		\dfrac
		{
			\Chernres{\A}{\LL}{q}-\Chernres{\A}{\LL}{p}
		}
		{
			\resPol{\epsilon}{q}
		}.
	\end{equation*}
	By \refThm{ThmRestrictionGeneral} these are zero unless $p$ and $q$ are related by \refEq{EqQIsAdjacentToP} for some $i<j$ and a coroot $\coalpha$ positive with respect to $\C$. For such $p,q$
	\begin{equation*}
		c^{k}_{p,q}
		=
		\omega_{p,q}
		\dfrac
		{
			\resPol{\epsilon}{p}
		}
		{
			\wtalpha
		}
		\dfrac
		{
			\CorootScalar{\sigmapi{q}{k}}{\bullet}
			-
			\CorootScalar{\sigmapi{p}{k}}{\bullet}
		}
		{
			\resPol{\epsilon}{q}
		}
	\end{equation*}
	where the multiplier $\omega_{p,q}$ can be computed by choosing the polarization on $X^{\ker \wtalpha}$ to be
        \begin{equation*}
            \Euler{\A}{N^{-\C'}_{X^{\ker \wtalpha}/X}}
        \end{equation*}
        for some Weyl chamber in $\ker \wtalpha$ (the multiplier does not depend on this choice). This gives
	\begin{equation*}
		\omega_{p,q}
		=
		\dfrac
			{
				\Euler{\A}{N^{-\C'}_{q/X}}
			}
			{
				\Euler{\A}{N^{-\C'}_{p/X}}
			}.
	\end{equation*}
	Let $\widetilde{\C}$ be any Weyl chamber adjacent to the "wall" $\C'$. Then $\Euler{\A}{N^{-\C}_{q/X}}$ differs from $\Euler{\A}{N^{-\C'}_{q/X}}$ by a factor which is a power of $\pm\wtalpha$ with the sign chosen to make the weight negative with respect to $\widetilde{\C}$. We see that this affects the numerator and denominator by the same factor, so the multiplier can be computed by
	\begin{equation*}
		\omega_{p,q}
		=
		\dfrac
			{
				\Euler{\A}{N^{-\widetilde{\C}}_{q/X}}
			}
			{
				\Euler{\A}{N^{-\widetilde{\C}}_{p/X}}
			}.
	\end{equation*}
	Since the $\omega_{p,q}$ did not depend on the choice of $\C'$, it does not depend on the choice of $\widetilde{\C}$ as long as it is adjacent to the hyperplane $\wtalpha = 0$.
	
	From the relation \refEq{EqQIsAdjacentToP} we know that
	\begin{equation*}
		\sigmapi{p}{k} 
		=
		\begin{cases}
			\sigmapi{q}{k} + \coalpha, 
			&\text{ if }
			i \leq k < j,
			\\
			\sigmapi{q}{k},
			&\text{ otherwise.}
		\end{cases}
	\end{equation*}
	This gives
	\begin{equation*}
		\CorootScalar{\sigmapi{q}{k}}{\bullet}
		-
		\CorootScalar{\sigmapi{p}{k}}{\bullet}
		=
		\begin{cases}
			-
			\dfrac
			{
				\CorootScalar{\coalpha}{\coalpha}
			}
			{
				2
			}
			\wtalpha,
			&\text{ if }
			i \leq k < j,
			\\
			0,
			&\text{ otherwise.}
		\end{cases}
	\end{equation*}
	
	Combining this together, we get that $c^{k}_{p,q}$ is not zero if $p$ and $q$ are related by \refEq{EqQIsAdjacentToP} for $i<j$ and $i\leq k < j$. Then
	\begin{equation*}
		c^{k}_{p,q}
		=
		-
		\sigma^{\epsilon}_{p,q}
		\dfrac
		{
			\CorootScalar{\coalpha}{\coalpha}
		}
		{
			2
		}
	\end{equation*}
	with the signs
	\begin{equation*}
		\sigma^{\epsilon}_{p,q}
		=
		\dfrac
		{
			\resPol{\epsilon}{p}
		}
		{
			\resPol{\epsilon}{q}
		}
		\dfrac
		{
			\Euler{\A}{N^{-\widetilde{\C}}_{q/X}}
		}
		{
			\Euler{\A}{N^{-\widetilde{\C}}_{p/X}}
		}
	\end{equation*}
	equal to the ones introduced in the definition of $\OmegaOperator{ij}{-\alpha,\epsilon}$.
	
	Comparing with the definition of $\OmegaOperator{ij}{\alpha,\epsilon}$, we find that the off-diagonal contribution is
	\begin{equation} \label{EqOffDiagonalClassicalMultiplication}
		\StabCE{\C}{\epsilon}
		\left[
			-
			\hbar
			\sum\limits_{i\leq k<j}
				\sum\limits_{\alpha \gC{\C} 0}
				\OmegaOperator{ij}{-\alpha,\epsilon}
				\left(
					p
				\right)
		\right].
	\end{equation}
	
	Adding \refEq{EqDiagonalClassicalMultiplication} and \refEq{EqOffDiagonalClassicalMultiplication} finishes the proof of the theorem.
\end{proof}

The formula becomes cleaner if we use line bundles $\EBundle{i}$ from \refEq{EqEBundleDefinition} instead of $\LBundle{i}$. There is a straightforward reformulation of \refThm{ThmMainClassicalMultiplication}.

\begin{theorem} \label{ThmReformulatedClassicalMultiplication}
	The classical multiplication is given by the following formula
	\begin{equation*}
		\OpChernE{\T}{i}
		\StabCEpt{\C}{\epsilon}{p}
		=
		\StabCE{\C}{\epsilon}
		\left[
			\Hterm{i}
			\left(
				p
			\right)
			+
			\hbar
			\sum\limits_{j < i}
			\OmegaOperator{ji}{-\C,\epsilon}
			\left(
				p
			\right)
			-
			\hbar
			\sum\limits_{i < j}
			\OmegaOperator{ij}{-\C,\epsilon}
			\left(
				p
			\right)
		\right].
	\end{equation*}
	
\end{theorem}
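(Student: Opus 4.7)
The plan is to deduce this theorem directly from \refThm{ThmMainClassicalMultiplication} by a single difference computation, exploiting the relation between the $\EBundle{i}$ and the $\LBundle{i}$ from \refEq{EqEBundleDefinition}. The expected identity on Chern classes is $\Chern{\T}{\EBundle{i}} = \ChernL{\T}{i} - \ChernL{\T}{i-1}$ with the convention that $\LBundle{0}$ is trivial; this should be read off from the appendix definition of $\EBundle{i}$, and is the only non-routine input to the argument. Granted it, the remainder is bookkeeping on index sets.

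First, by additivity of the first Chern class and the $\EqCoHlgy{\T}{\pt}$-linearity of $\StabCE{\C}{\epsilon}$, I would write
\begin{equation*}
	\OpChernE{\T}{i}\StabCEpt{\C}{\epsilon}{p}
	=
	\OpChernL{\T}{i}\StabCEpt{\C}{\epsilon}{p}
	-
	\OpChernL{\T}{i-1}\StabCEpt{\C}{\epsilon}{p}
\end{equation*}
and substitute the formula of \refThm{ThmMainClassicalMultiplication} for each term on the right. Inside $\StabCE{\C}{\epsilon}[\cdots]$ the diagonal part telescopes immediately: $\sum_{k\leq i}\Hterm{k}(p) - \sum_{k\leq i-1}\Hterm{k}(p) = \Hterm{i}(p)$, matching the diagonal of the claimed formula.

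For the off-diagonal part, I would compare the two index sets appearing in the $\Om$-sums, namely $S_i = \{(k,j) : k\leq i < j\}$ and $S_{i-1} = \{(k,j) : k\leq i-1 < j\}$. Their symmetric difference splits cleanly as $S_i\setminus S_{i-1} = \{(i,j) : i < j\}$ and $S_{i-1}\setminus S_i = \{(k,i) : k < i\}$, so
\begin{equation*}
	-\hbar\sum_{(k,j)\in S_i}\OmegaOperator{kj}{-\C,\epsilon}(p)
	+
	\hbar\sum_{(k,j)\in S_{i-1}}\OmegaOperator{kj}{-\C,\epsilon}(p)
	=
	-\hbar\sum_{i<j}\OmegaOperator{ij}{-\C,\epsilon}(p)
	+
	\hbar\sum_{k<i}\OmegaOperator{ki}{-\C,\epsilon}(p),
\end{equation*}
which is exactly the off-diagonal contribution stated, after renaming $k \mapsto j$ in the second sum to match the $\OmegaOperator{ji}{-\C,\epsilon}$ notation with $j<i$.

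I do not expect any genuine obstacle here: there are no subtle sign issues because the $\sigma^\epsilon_{p,q}$ are already baked into the $\OmegaOperator{ij}{-\C,\epsilon}$ used by \refThm{ThmMainClassicalMultiplication}, and the same operators reappear on the right-hand side of the statement above. The only point requiring care is keeping the two indices of $\Om$ in the convention (first index $\leq$ second), which is why the sum $\hbar \sum_{k<i}\OmegaOperator{ki}{-\C,\epsilon}(p)$ gets rewritten as $\hbar \sum_{j<i}\OmegaOperator{ji}{-\C,\epsilon}(p)$ in the final formula.
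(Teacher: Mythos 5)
Your proposal is correct and is exactly the route the paper intends: the paper states \refThm{ThmReformulatedClassicalMultiplication} as a ``straightforward reformulation'' of \refThm{ThmMainClassicalMultiplication} via $\EBundle{i} = \LBundle{i}/\LBundle{i-1}$, and your telescoping of the $\Hterm{}$-sums together with the symmetric-difference bookkeeping of the index sets $S_i$, $S_{i-1}$ is precisely the omitted computation, carried out correctly.
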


\begin{proof}
    Recall the definition of $\EBundle{i}$ in \refEq{EqEBundleDefinition}:
    \begin{equation*}
	\EBundle{i} = \LBundle{i}/\LBundle{i-1} \text{ for } 1 \leq i \leq l.
    \end{equation*}
    Then
    \begin{equation*}
		\ChernE{\T}{i} = \ChernL{\T}{i} - \ChernL{\T}{i-1}
    \end{equation*}
    and \refThm{ThmMainClassicalMultiplication} gives the desired formula. 
\end{proof}

Let us finish with the following remark.
\begin{remark}
    We know that multiplication by $\ChernL{\T}{i}$ has a simple spectrum from \refCor{CorSimpleSpectrum}, so the exact form of the multiplication uniquely determines the transition matrix to the eigenbasis [up to a rescaling of the eigenbasis]. The fixed-point basis is the eigenbasis, and the normalization is fixed by the polarization, so the restriction of the stable envelopes to the fixed points is uniquely reconstructed from the formulas in \refThm{ThmMainClassicalMultiplication} or \ref{ThmReformulatedClassicalMultiplication}.
\end{remark}


\appendix
\section{\texorpdfstring{$\T$}{\textbf{T}}-equivariant geometry}
\label{SecAppendix}

To perform computations in $\T$-equivariant cohomology, it is important to know the weights of the tangent spaces to the $\T$-fixed locus and $\T$-equivariant line bundles.

\medskip 

In this section, we fix dominant cocharacters ${\comu}\leq{\colambda}$ and a split of ${\colambda}$ into a sequence of fundamental coweights $\ucolambda = \left( \colambda_1, \dots, \colambda_l \right)$. We write $X = \Gr^{ \ucolambda }_{\comu}$ to simplify the notation.

\subsection{Tangent weights at \texorpdfstring{$\T$}{T}-fixed points}

Weights in the tangent spaces to fixed points are important data in equivariant computations. In this section we describe this data for the resolution of slices of the affine Grassmannian and related spaces.

\subsubsection{Tangent spaces of the affine Grassmannian}

First, consider the tangent space at the $\T$-fixed point $\Tfixed{\colambda} \in \Gr$ and denote it by $\GrTangent{\colambda}$. The following proposition describes $\GrTangent{\colambda}$ explicitly as a $\T$-representation.

Then we can generalize it to the following
\begin{proposition} \label{PropGrTangentSpace}
	There is an isomorphism of $\T$-representations
	\begin{equation*}
		\GrTangent{\colambda} 
		\cong 
		\gK{t}
		/
		\Ad_{\Tcowt{t}{\colambda}}
		\left(
			\gO{t} 
		\right) 
		\cong 
		\Ad_{\Tcowt{t}{\colambda}} 
		\left( 
			\gOne{t} 
		\right).
	\end{equation*}
\end{proposition}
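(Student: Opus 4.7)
The plan is to realize $\GrTangent{\colambda}$ via the orbit map from $\GK$ and then track the $\T$-action through the identification. Consider the map $o_\colambda \colon \GK \to \Gr$ given by $g \mapsto g \cdot \Tfixed{\colambda}$. This is the composition of right multiplication by $\Tcowt{t}{\colambda}$ (an automorphism of $\GK$ as an ind-scheme) with the quotient $\GK \to \GK/\GO = \Gr$, hence is formally smooth. The fiber over $\Tfixed{\colambda}$ is the stabilizer
\begin{equation*}
	\left\lbrace
		g\in \GK
		\;\bigg\vert\;
		\Tcowt{t}{-\colambda} g \Tcowt{t}{\colambda} \in \GO
	\right\rbrace
	=
	\Ad_{\Tcowt{t}{\colambda}} \left( \GO \right).
\end{equation*}
Differentiating at the identity yields a surjection $\gK{t} \twoheadrightarrow \GrTangent{\colambda}$ with kernel $\Ad_{\Tcowt{t}{\colambda}}\left( \gO{t} \right)$, giving the first isomorphism of vector spaces.

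The next step is to verify $\T$-equivariance, where $\gK{t}$ carries its natural action ($\A$ by $\Ad$, $\LoopGm$ by $\xi(t)\mapsto \xi(\hbar t)$) and $\Ad_{\Tcowt{t}{\colambda}}(\gO{t})$ is $\T$-stable because it is spanned by $\T$-weight vectors. For $a\in\A$ the computation
\begin{equation*}
	a\cdot \left( \exp(\xi) \cdot \Tfixed{\colambda} \right)
	=
	\exp(\Ad_a \xi) \cdot \Tfixed{\colambda}
\end{equation*}
is immediate from $a$ commuting with $\Tcowt{t}{\colambda}$ and lying in $\GO$. For $\hbar \in \LoopGm$ the key computation is
\begin{equation*}
	\hbar \cdot \left( \exp(\xi(t)) \, \Tcowt{t}{\colambda} \GO \right)
	=
	\exp(\xi(\hbar t)) \left( \hbar t\right)^{\colambda} \GO
	=
	\exp(\xi(\hbar t)) \, \Tcowt{t}{\colambda}\GO,
\end{equation*}
where the factor $\hbar^{\colambda}$ produced by $\left(\hbar t\right)^\colambda = \hbar^\colambda \Tcowt{t}{\colambda}$ is absorbed into $\GO$ since $\hbar^\colambda \in \A \subset \GO$ commutes with $\Tcowt{t}{\colambda}$. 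Thus the induced $\LoopGm$-action on $\gK{t}/\Ad_{\Tcowt{t}{\colambda}}(\gO{t})$ is precisely the natural loop rotation.

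For the second isomorphism, I would apply $\Ad_{\Tcowt{t}{\colambda}}$ to the transversal decomposition $\gK{t} = \gO{t} \oplus \gOne{t}$ to obtain
\begin{equation*}
	\gK{t}
	=
	\Ad_{\Tcowt{t}{\colambda}}\left( \gO{t} \right)
	\oplus
	\Ad_{\Tcowt{t}{\colambda}}\left( \gOne{t} \right).
\end{equation*}
The operator $\Ad_{\Tcowt{t}{\colambda}}$ acts on the $\grootsub{\wtalpha}$-component of $\gK{t}$ by multiplication by $t^{\left\langle \wtalpha, \colambda \right\rangle}$ and trivially on the Cartan part, so both summands remain sums of $\T$-weight spaces of $\gK{t}$. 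The decomposition is therefore $\T$-equivariant, and projection onto the second summand provides the $\T$-equivariant isomorphism $\gK{t}/\Ad_{\Tcowt{t}{\colambda}}(\gO{t}) \cong \Ad_{\Tcowt{t}{\colambda}}(\gOne{t})$. The only mildly subtle point is the $\LoopGm$-book-keeping in the equivariance check; the ind-scheme structure of $\GK$ poses no essential difficulty since the tangent space computation is a standard consequence of the presentation $\Gr = \GK/\GO$.
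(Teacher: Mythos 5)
Your proof is correct and follows essentially the same route as the paper: the orbit map $g \mapsto g\cdot\Tfixed{\colambda}$, the identification of the kernel of its differential with $\Ad_{\Tcowt{t}{\colambda}}\left(\gO{t}\right)$, and the same equivariance checks (including the absorption of $\hbar^{\colambda}\in\A\subset\GO$ for the loop rotation). You additionally spell out the second isomorphism via the $\T$-stable decomposition $\gK{t}=\Ad_{\Tcowt{t}{\colambda}}\left(\gO{t}\right)\oplus\Ad_{\Tcowt{t}{\colambda}}\left(\gOne{t}\right)$, which the paper leaves implicit after its earlier transversality discussion.
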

\begin{proof}

	The left action gives a natural map: 
	\begin{equation*}
		p_{\colambda} 
		\colon 
		\GK 
		\to 
		\Gr,
		\quad 
		g 
		\mapsto 
		g 
		\cdot 
		\Tfixed{\colambda}
	\end{equation*}
	Let us equip $\GK$ with the $\T$-action, where $\A$ acts by conjugation and loop-rotating $\LoopGm$ acts naturally (they commute in this case, so we can say that it gives a $\T$-action).

	The map $p_{\colambda}$ is
	\begin{itemize}
	\item
		$\A$-equivariant because $\A$ commutes with $\Tcowt{t}{\colambda}$ and $\A\subset\GO$.
	\item
		$\LoopGm$-equivariant since for any $s\in\LoopGm$ we have
		\begin{equation*}
			s\cdot \Tcowt{t}{\colambda} 
			= 
			\Tcowt{t}{\colambda} {\colambda} (s)
		\end{equation*}
		and by definition ${\colambda} (s) \in \A \subset \GO$, so $(s\cdot g)\cdot \Tfixed{\colambda} = s\cdot \left(g\cdot \Tfixed{\colambda} \right) $.
	\end{itemize}
	Thus the map $p_{\colambda}$ is $\T$-equivariant.

	The derivative at the identity is a surjection of vector spaces 
	\begin{equation*}
		dp_{\colambda} 
		\colon 
		\gK{t} 
		\twoheadrightarrow 
		\GrTangent{\colambda}
	\end{equation*}
	and since $p_{\colambda}$ is $\T$-equivariant and the identity of $\GK$ is a $\T$-fixed point, this is a map of $\T$-representations.

	The kernel of $p_{\colambda}$ is easy to find. A vector $\xi\in\gK{t}$ is in the kernel iff $\Ad_{\Tcowt{t}{-{\colambda}}}\xi \in \gO{t}$ since being in $gO{t}$ is equivalent to being killed by $\GO$ and $\Ad_{\Tcowt{t}{-{\colambda}}}$ comes from commuting with $\Tcowt{t}{\colambda}$. Finally,
	\begin{equation*}
		\ker dp_{\colambda} = \Ad_{\Tcowt{t}{\colambda}}\left( \gO{t} \right)
	\end{equation*}
	which gives the isomorphism in the proposition.
\end{proof}

We will need the following lemma to explicitly describe the weight subspaces of $\GrTangent{\colambda}$.

\begin{lemma} \label{LemWtShift}
	Let $V$ be a $\A(\K)\rtimes\LoopGm$-representation and denote by $V_{\wtmu}$ the $\T$-weight subspace with a $\T$-weight $\wtmu$. Then the action map $a_{\Tcowt{t}{\colambda}}$ induces isomorphisms
	\begin{equation*}
	a_{\Tcowt{t}{\colambda}} 
	\colon 
	V_{\wtmu} 
	\to 
	V_{\wtmu + \langle \wtmu, {\colambda} \rangle \hbar }
	\end{equation*}
\end{lemma}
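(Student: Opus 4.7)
The plan is to reduce the claim to a direct computation inside $\A(\K) \rtimes \LoopGm$. Since $V$ is a representation of this whole group, the $\T$-weight shift produced by $a_{\Tcowt{t}{\colambda}}$ is governed by the $\T$-conjugation action on $\Tcowt{t}{\colambda} \in \A(\K)$: the point is that if $\tau \cdot g \cdot \tau^{-1} = g \cdot h$ for some $\tau \in \T$, $g \in \A(\K)\rtimes\LoopGm$, and $h \in \A$, then for $v \in V_{\wtmu}$ we have $\tau \cdot (g \cdot v) = g \cdot h \cdot \tau \cdot v$, which shifts the weight of $g \cdot v$ by the character through which $h$ acts on $V_{\wtmu}$.

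First I would compute $(\chi, s) \cdot \Tcowt{t}{\colambda} \cdot (\chi, s)^{-1}$ for a generic $(\chi, s) \in \T = \A \times \LoopGm$. Conjugation by $\chi \in \A$ is trivial since $\A(\K)$ is abelian. Conjugation by $s \in \LoopGm$ exploits the fact that $\LoopGm$ acts on $\A(\K)$ by rescaling $t$, combined with $\colambda$ being a group homomorphism: $s \cdot \colambda(t) = \colambda(st) = \colambda(s) \cdot \colambda(t)$. Assembling these,
\[
(\chi, s) \cdot \Tcowt{t}{\colambda} \cdot (\chi, s)^{-1} = \Tcowt{t}{\colambda} \cdot \colambda(s),
\]
and the constant loop $\colambda(s) \in \A$ acts on $V_{\wtmu}$ by the scalar $s^{\langle \wtmu, \colambda \rangle}$.

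Second, I would finish by assembling the shift. For $v \in V_{\wtmu}$,
\[
(\chi, s) \cdot (\Tcowt{t}{\colambda} \cdot v) = \Tcowt{t}{\colambda} \cdot \colambda(s) \cdot (\chi, s) \cdot v = s^{\langle \wtmu, \colambda \rangle} \cdot \wtmu(\chi, s) \cdot (\Tcowt{t}{\colambda} \cdot v),
\]
so $\Tcowt{t}{\colambda} \cdot v \in V_{\wtmu + \langle \wtmu, \colambda \rangle \hbar}$ as claimed. Invertibility is automatic: $a_{\Tcowt{t}{\colambda}}$ has two-sided inverse $a_{\Tcowt{t}{-\colambda}}$, and the same computation applied to $-\colambda$ shows the inverse maps $V_{\wtmu + \langle \wtmu, \colambda \rangle \hbar}$ back to $V_{\wtmu}$. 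I do not anticipate any serious obstacle here; the only point requiring care is the semidirect-product structure, specifically that $\LoopGm$ acts non-trivially on $\A(\K)$ by rescaling $t$, which is precisely what manufactures the $\langle \wtmu, \colambda \rangle \hbar$ shift.
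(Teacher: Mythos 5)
Your proof is correct and is exactly the ``straightforward computation'' the paper's one-line proof alludes to: you compute the $\T$-conjugate of $\Tcowt{t}{\colambda}$ using the semidirect-product structure, read off the extra factor $s^{\langle \wtmu, \colambda\rangle}$ from the constant loop $\colambda(s)\in\A$, and conclude invertibility from $a_{\Tcowt{t}{-\colambda}}$. No gaps; this matches the paper's argument, just with the details written out.
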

\begin{proof}
	A straightforward computation of the weight of $\Ad_{\Tcowt{t}{\colambda}} V_{\wtmu}$ gives
    \begin{equation*}
        \wtmu + \langle \wtmu, {\colambda} \rangle \hbar, 
    \end{equation*}
    so we get the required map. Since $\Ad_{\Tcowt{t}{\colambda}}$ is invertible, this is an isomorphism.
\end{proof}

\begin{corollary} \label{CorWtSubspaces}
	For the tangent space $\GrTangent{\colambda}\subset\gK{t}$ the following is true
	\begin{enumerate}
	\item \label{CorWtSubspaces1}
		$\GrTangent{\colambda}$ is the direct sum of all $\T$-weight subspaces of $\gK{t}$ satisfying condition
		\begin{equation*}
			k - 
			\langle 
				\wtomega, {\colambda}
			\rangle 
			< 0
		\end{equation*} 
		on the $\T$-weight $\wtomega + k\hbar$, where $\wtomega$ is the $\A$-part and $k\hbar$ is the $\LoopGm$-part of this weight.
	\item \label{CorWtSubspaces2}
		The only non-zero weight subspaces of $\GrTangent{\colambda}$ are
		\begin{itemize}
		\item
			$(\rk \mathfrak{g})$-dimensional subspaces of weight $-k\hbar$ for $k\in \mathbb{Z}_{\geq 0}$;
		\item 
			$1$-dimensional subspaces of weight $\wtalpha+\langle \wtalpha, {\colambda}  \rangle \hbar - k\hbar$ for $k\in \mathbb{Z}_{\geq 0}$ and each root $\wtalpha$ of $\mathfrak{g}$.
		\end{itemize} 
	\end{enumerate}
\end{corollary}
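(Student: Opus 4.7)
The plan is to combine Proposition~\ref{PropGrTangentSpace}, which identifies $\GrTangent{\colambda}$ as a $\T$-representation with $\Ad_{\Tcowt{t}{\colambda}}\left( \gOne{t} \right)$, with the weight-shift formula of Lemma~\ref{LemWtShift}. I would first read off the $\T$-weight decomposition of $\gOne{t} = t^{-1}\mathfrak{g}[t^{-1}]$ directly from the root decomposition of $\mathfrak{g}$: for each integer $n < 0$ the subspace $t^n \mathfrak{h}$ is a $\T$-weight space of weight $n\hbar$ and dimension $\rk \mathfrak{g}$, while for each root $\wtalpha$ of $\mathfrak{g}$ and each $n < 0$ the line $t^n \grootsub{\wtalpha}$ is a one-dimensional $\T$-weight space of weight $\wtalpha + n\hbar$. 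These exhaust the $\T$-weight subspaces of $\gOne{t}$.

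Next, Lemma~\ref{LemWtShift} applied to the $\A(\K)\rtimes \LoopGm$-representation $V = \gK{t}$ says that $\Ad_{\Tcowt{t}{\colambda}}$ carries $V_{\wtmu}$ isomorphically onto $V_{\wtmu + \langle \wtmu, \colambda\rangle \hbar}$, where the pairing depends only on the $\A$-component of $\wtmu$. So the Cartan summands $t^n\mathfrak{h}$, whose $\A$-weight is zero, are left unshifted and contribute $\rk \mathfrak{g}$-dimensional weight spaces of weight $-k\hbar$ with $k = -n \geq 1$. Each root line $t^n \grootsub{\wtalpha}$ is carried to a line of weight $\wtalpha + \left( n + \langle \wtalpha, \colambda\rangle \right)\hbar$, which after the reindexing $k = -n \geq 1$ becomes $\wtalpha + \langle \wtalpha, \colambda\rangle \hbar - k\hbar$. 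This produces exactly the list in part~\ref{CorWtSubspaces2}.

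For part~\ref{CorWtSubspaces1} I would simply reformulate the image intrinsically: a general $\T$-weight written as $\wtomega + k\hbar$ (with $\wtomega$ its $\A$-part and $k\hbar$ its $\LoopGm$-part) appears in $\GrTangent{\colambda}$ iff its preimage under $\Ad_{\Tcowt{t}{\colambda}}$ lies in $\gOne{t}$, i.e., iff the $\LoopGm$-index of the preimage, namely $n = k - \langle \wtomega, \colambda\rangle$, is strictly negative. This is exactly the inequality $k - \langle \wtomega, \colambda \rangle < 0$, and equality of weight multiplicities on both sides is automatic because $\Ad_{\Tcowt{t}{\colambda}}$ is an isomorphism.

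The argument is essentially a bookkeeping exercise once the preceding proposition and lemma are in hand, and I do not expect a real obstacle. The only genuine care required is to keep the $\A$-part and the $\LoopGm$-part of a $\T$-weight separate throughout, since the pairing $\langle \bullet, \colambda\rangle$ only sees the $\A$-part; conflating the two would give nonsensical shifts.
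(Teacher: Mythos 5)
Your argument is correct and is essentially the paper's proof: both identify $\GrTangent{\colambda}$ with $\Ad_{\Tcowt{t}{\colambda}}\left(\gOne{t}\right)$ via \refProp{PropGrTangentSpace}, read off the weight decomposition in the base case $\colambda=0$ (where $\GrTangent{0}=\gOne{t}$), and transport it with \refLem{LemWtShift}; you merely make explicit the bookkeeping the paper leaves implicit. One small remark: your (correct) reindexing yields $k\geq 1$ in part~\ref{CorWtSubspaces2}, whereas the statement writes $k\in\mathbb{Z}_{\geq 0}$ --- this is an off-by-one in the statement itself, since part~\ref{CorWtSubspaces1} and the later use of the corollary (weights $\wtalpha+m\hbar$ with $m$ strictly below $\langle\wtalpha,\colambda\rangle$) both agree with your $k\geq 1$.
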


\begin{proof}
	To prove part \ref{CorWtSubspaces1} first note that this is true when ${\colambda} = 0$ and then $\GrTangent{\colambda} = \GrTangent{0} = \gOne{t}$.

	For a general cocharacter ${\colambda}$ use $\GrTangent{\colambda} = \Ad_{\Tcowt{t}{\colambda}} \GrTangent{0}$. The image given by the isomorphisms described in \refLem{LemWtShift} is exactly the direct sum needed.

	The part \ref{CorWtSubspaces2} is a simple corollary if one takes into account the well-known dimensions of the $\T$-weight subspaces of $\gK{t}$. 
\end{proof}

\subsubsection{Tangent spaces of the \texorpdfstring{$\GO$}{G(O)}-orbits of the affine Grassmannian}

We will also need the following fact about closed cells $\overline{ \Gr^{\colambda} }$

\begin{proposition} \label{PropGrCellTangentSpace}
	Given a dominant cocharacter ${\colambda}$ and any element $w\in W$ of the Weyl group, the closed cell $\overline{ \Gr^{\colambda} }$ is smooth at the fixed point $\Tfixed{w{\colambda}}$. Let us denote the tangent space at this point as $\GrCellTangent{w{\colambda}}{\colambda}$. Then the action by $\GO$ gives an isomorphism
	\begin{equation*}
		\GrCellTangent{w{\colambda}}{\colambda} 
		\cong
		\gO{t} 
		\cap 
		\Ad_{\Tcowt{t}{w{\colambda}}}
		\left( \gOne{t} \right)
	\end{equation*}
	of $\T$-representations.
\end{proposition}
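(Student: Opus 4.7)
The plan is to adapt the derivative computation of \refProp{PropGrTangentSpace}, replacing the full ind-group $\GK$ by its subgroup $\GO$ acting on the smooth orbit $\Gr^{\colambda} \subset \overline{\Gr^{\colambda}}$, and then use a weight-space decomposition to convert a quotient into a subspace.

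First I would establish smoothness. By \refProp{PropGrCells}(1) we have $\Gr^{w\colambda} = \Gr^{\colambda}$, and the fixed point $\Tfixed{w\colambda}$ lies in this orbit because any lift of $w$ to $N(\A) \subset \G \subset \GO$ sends $\Tfixed{\colambda}$ to $\Tfixed{w\colambda}$. By \refProp{PropGrCells}(6), $\Gr^{\colambda}$ is the smooth locus of $\overline{\Gr^{\colambda}}$, so $\overline{\Gr^{\colambda}}$ is smooth at $\Tfixed{w\colambda}$ and $\GrCellTangent{w\colambda}{\colambda}$ coincides with the tangent space of the orbit there.

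Next I would run the orbit-map argument. Consider
\[
	q \colon \GO \to \Gr^{\colambda}, \qquad g \mapsto g \cdot \Tfixed{w\colambda},
\]
where $\T$ acts on $\GO$ by $\A$-conjugation and loop-rotation. Literally the same verification as in \refProp{PropGrTangentSpace} shows that $q$ is $\T$-equivariant and surjective, so its derivative at the identity is a $\T$-equivariant surjection $dq \colon \gO{t} \twoheadrightarrow \GrCellTangent{w\colambda}{\colambda}$. The characterization of the kernel there restricts to $\ker dq = \gO{t} \cap \Ad_{\Tcowt{t}{w\colambda}}(\gO{t})$, giving
\[
	\GrCellTangent{w\colambda}{\colambda}
	\cong
	\gO{t} \big/ \bigl( \gO{t} \cap \Ad_{\Tcowt{t}{w\colambda}}(\gO{t}) \bigr)
\]
as $\T$-representations.

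Finally I would identify this quotient with $\gO{t} \cap \Ad_{\Tcowt{t}{w\colambda}}(\gOne{t})$. Applying $\Ad_{\Tcowt{t}{w\colambda}}$ to $\gK{t} = \gO{t} \oplus \gOne{t}$ gives a $\T$-stable decomposition $\gK{t} = \Ad_{\Tcowt{t}{w\colambda}}(\gO{t}) \oplus \Ad_{\Tcowt{t}{w\colambda}}(\gOne{t})$. Intersecting with $\gO{t}$, I would prove the splitting
\[
	\gO{t}
	=
	\bigl( \gO{t} \cap \Ad_{\Tcowt{t}{w\colambda}}(\gO{t}) \bigr)
	\oplus
	\bigl( \gO{t} \cap \Ad_{\Tcowt{t}{w\colambda}}(\gOne{t}) \bigr)
\]
by a weight-by-weight check: in the $\wtalpha$-direction, the two summands are spanned by $t^n \grootsub{\wtalpha}$ for $n \geq \max(0, \langle \wtalpha, w\colambda \rangle)$ and for $0 \leq n < \langle \wtalpha, w\colambda \rangle$ respectively (with the second empty when $\langle \wtalpha, w\colambda \rangle \leq 0$), and these clearly partition $n \geq 0$; in the Cartan direction only the first summand contributes. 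Composing $dq$ with the projection onto the second summand yields the claimed $\T$-equivariant isomorphism. The only real obstacle is the bookkeeping in this last step — once the weight-space splitting is in hand, everything else is an immediate transcription of \refProp{PropGrTangentSpace}.
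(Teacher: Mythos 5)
Your proof is correct and follows essentially the same route as the paper: both identify the tangent space of the (smooth) orbit $\Gr^{\colambda}$ at $\Tfixed{w\colambda}$ with the image of the infinitesimal $\GO$-action and then reduce to \refProp{PropGrTangentSpace}. The only difference is presentational — the paper directly intersects $\gO{t}$ with $\GrTangent{w\colambda}\cong\Ad_{\Tcowt{t}{w\colambda}}(\gOne{t})$, whereas you form the quotient $\gO{t}/\bigl(\gO{t}\cap\Ad_{\Tcowt{t}{w\colambda}}(\gO{t})\bigr)$ and then re-identify it with that same intersection via the (correct) weight-by-weight splitting.
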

\begin{proof}
	Recall that the smooth locus $\Gr^{\colambda} \subset \overline{ \Gr^{\colambda} }$ is a $\G$-subvariety by \refProp{PropGrCells}. Since the $W$-action on $\A$-invariant point $\Tfixed{\colambda}$ can be made by taking representatives for elements $W$, we can see that all the points in the Weyl orbit are in the smooth locus. Hence, for all $w\in W$ the fixed point $\Tfixed{w{\colambda}}$ is in the smooth locus.

	The tangent space $\GrCellTangent{w{\colambda}}{\colambda}$ by the definition of $\Gr^{\colambda}$ is the subspace of $\GrTangent{w{\colambda}}$ given by the infinitesimal action of $\GO$, i.e. the intersection
	\begin{equation*}
		\gO{t} \cap \GrTangent{w{\colambda}}
	\end{equation*}
taking into account that $\GrTangent{w{\colambda}}$ was identified with a subspace of $\gK$ by the infinitesimal action of a larger group $\GK$.

	Finally, use \refProp{PropGrTangentSpace} to get
	\begin{equation*}
		\gO{t} 
		\cap 
		\GrTangent{w{\colambda}} 
		= 
		\gO{t} 
		\cap 
		\Ad_{\Tcowt{t}{w{\colambda}}} 
		\left( \gOne{t} \right)
	\end{equation*}
\end{proof}

\subsubsection{Tangent spaces of the resolutions of slices}

Now, let us return to $X=\Gr^{ \ucolambda }_{\comu}$. We assume that ${\comu}$ and $\ucolambda$ are such that they satisfy the conditions in \refThm{ThmSymplecticResolutionExistence}, so $X$ is smooth. We want to describe the tangent space of $X$ at a fixed point.

In what follows, consider $X$ as a subscheme of $\Gr^{\times (l+1)}$, not $\Gr^{\times l}$, following the convention in (\ref{EqTrick}). I.e. we will think of points in $X$ not as $l$-tuples $(L_1,\dots,L_l)$, but as an $(l+1)$-tuple $(L_0,L_1,\dots,L_l)$ with condition $L_0 = \UnitG \cdot \GO$. Then the incidence condition defining $\Gr^{ \ucolambda }$ is expressed uniformly
\begin{equation*}
	L_0 
	\xrightarrow{\colambda_1} 
	L_1 
	\xrightarrow{\colambda_2} 
	\dots 
	\xrightarrow{\colambda_{l-1}} 
	L_{l-1} 
	\xrightarrow{\colambda_{l}} 
	L_l.
\end{equation*}

Consider a fixed point $p\in X^\T$. By definition, the $\T$-action on $X$ is induced from the $\T$ action on $\Gr^{\times (l+1)}$, so the inclusion $X \hookrightarrow \Gr^{\times (l+1)}$ is $\T$-equivariant. So we have
\begin{equation*}
	T_p X 
	\subset 
	T_p \Gr^{\times (l+1)}
\end{equation*}
as a $\T$-representation.

Then we have associated sequences of coweights
\begin{align*}
	\sigmap{p} 
	&= 
	\left( 
		\sigmapi{p}{0}, 
		\sigmapi{p}{1}, 
		\dots, 
		\sigmapi{p}{l} 
	\right),
	\\
	\deltap{p} 
	&= 
	\left( 
		\deltapi{p}{1}, 
		\dots, 
		\deltapi{p}{l} 
	\right).
\end{align*}
In what follows we do not vary $p$, so we drop $p$ in the notation:
\begin{align*}
	\sigmai{} 
	&= 
	\left( 
		\sigmai{0}, 
		\sigmai{1}, 
		\dots, 
		\sigmai{l} 
	\right),
	\\
	\deltai{} 
	&= 
	\left( 
		\deltai{1}, 
		\dots, 
		\deltai{l} 
	\right)
\end{align*}
This gives an explicit description
\begin{equation*}
	T_p \Gr^{\times (l+1)} 
	= 
	\GrTangent{\sigmai{0}} 
	\oplus 
	\GrTangent{\sigmai{1}} 
	\oplus 
	\dots 
	\oplus 
	\GrTangent{\sigmai{l}}
\end{equation*}
and we have
\begin{equation*}
	T_p X 
	\subset 
	\GrTangent{\sigmai{0}} 
	\oplus 
	\GrTangent{\sigmai{1}} 
	\oplus 
	\dots 
	\oplus 
	\GrTangent{\sigmai{l}}
\end{equation*}
as a $\T$-representation. This allows us to write $\xi \in T_p X$ as
\begin{equation*}
	\xi 
	= 
	\left( 
		\xi_0, 
		\dots, 
		\xi_l 
	\right).
\end{equation*}

By the definition $X$ is a subscheme of $\Gr^{\times (l+1)}$ by three types of conditions:
\begin{enumerate}
\item 
	\textbf{Left boundary condition:} 
	\begin{equation} \label{LeftCond}
		L_0 = \UnitG \cdot\GO;
	\end{equation}
\item 
	\textbf{Right boundary condition:} 
	\begin{equation} \label{RightCond}
		L_l \in \Gr_{\comu};
	\end{equation}
\item 
	\textbf{Incidence condition:} 
	\begin{equation} \label{IncidCond}
		L_0 
		\xrightarrow{\colambda_1} 
		L_1
		\xrightarrow{\colambda_2} 
		\dots 
		\xrightarrow{\colambda_{l-1}} 
		L_{l-1} 
		\xrightarrow{\colambda_{l}} 
		L_l.
	\end{equation}
\end{enumerate}

Each gives linear relations on a tangent vector at $p$. Taking them all into account, we find $T_p X$.

\begin{notation}
	To write the relations in the simplest form, we will need the following natural projection in ($\T$-weight) graded spaces. Given a direct sum
	\begin{equation*}
		V = \bigoplus_{i\in I} V_i
	\end{equation*}
and a graded subspace $V'$ where some graded components are missed, i.e. given $J\subset I$ set
	\begin{equation*}
		V' = \bigoplus_{i\in J} V_i
	\end{equation*}
	Then we have a natural projection $V\twoheadrightarrow V'$ which we denote as restriction $v \mapsto v\vert_{V'}$.

	In our case $V$ will be one of the spaces $\GrTangent{\colambda}$ are weight graded, i.e. $I$ is the $\T$-weight lattice. Subspaces $V'$ will be intersections $\GrTangent{\colambda} \cap \GrTangent{{\comu}}$ and one can check from the description in \refCor{CorWtSubspaces} that these are of the required type.
\end{notation}

\begin{proposition} \label{PropConditions}
The relations on $\xi_0, \dots, \xi_l$ from the conditions \refEq{LeftCond},\refEq{RightCond} and \refEq{IncidCond} are
\begin{enumerate}
\item
The left boundary condition \refEq{LeftCond} gives
\begin{equation}
\xi_0 = 0;
\end{equation}
\item
The right boundary condition \refEq{RightCond} gives
\begin{equation}
\xi_l \in \GrTangent{{\comu}} \cap \gOne{t};
\end{equation}
\item
The incidence condition gives
\begin{equation}
\xi_i\vert_{T_i} = \xi_{i+1}\vert_{T_i} \quad \text{ for } 0\leq i <l,
\end{equation}
where $T_i = \GrTangent{\sigmai{i}} \cap \GrTangent{\sigmai{i+1}}$.
\end{enumerate}
\end{proposition}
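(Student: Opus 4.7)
The plan is to treat each of the three defining conditions separately, viewing $T_p\Gr^{\times(l+1)} = \GrTangent{\sigmai{0}}\oplus\cdots\oplus\GrTangent{\sigmai{l}}$ as a subspace of $\gK{t}^{\oplus(l+1)}$ via \refProp{PropGrTangentSpace} and \refCor{CorWtSubspaces}, and to analyse each condition weight by weight. The left boundary condition $L_0 = \UnitG\cdot\GO$ forces $\xi_0 = 0$ because $L_0$ is a single point.

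For the right boundary condition, $\Gr_\comu = \GOne\cdot\Tfixed{\comu}$ is a $\GOne$-orbit, so its tangent at $\Tfixed{\comu}$ is the image of the infinitesimal action $\gOne{t}\to T_{\Tfixed{\comu}}\Gr$. Under the identification $T_{\Tfixed{\comu}}\Gr\cong \Ad_{\Tcowt{t}{\comu}}(\gOne{t})$ from \refProp{PropGrTangentSpace}, this map is the projection of $\gOne{t}$ onto $\Ad_{\Tcowt{t}{\comu}}(\gOne{t})$ along $\Ad_{\Tcowt{t}{\comu}}(\gO{t})$. The splitting and $\gOne{t}$ are all weight-graded, so the projection acts weight by weight: on a weight subspace of $\gOne{t}$ it is the identity if that weight also occurs in $\GrTangent{\comu}$, and zero otherwise. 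The image is therefore $\gOne{t}\cap \GrTangent{\comu}$.

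The incidence condition $L_i\xrightarrow{\colambda_{i+1}} L_{i+1}$ is the main point. I would use that the locus $\{(L,L')\colon L\xrightarrow{\colambda_{i+1}} L'\}\subset \Gr\times\Gr$ is a $\overline{\Gr^{\colambda_{i+1}}}$-bundle over $\Gr$ via the first projection, with fiber over $L_i=g\GO$ equal to $g\cdot\overline{\Gr^{\colambda_{i+1}}}$. At the smooth fixed point $(\Tfixed{\sigmai{i}},\Tfixed{\sigmai{i+1}})$ the tangent to this bundle inside $T_{L_i}\Gr\oplus T_{L_{i+1}}\Gr$ is obtained by combining arbitrary diagonal $\gK{t}$-motions with arbitrary vertical motions in $T_{L_{i+1}}(\mathrm{fib})$. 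By \refProp{PropGrCellTangentSpace} the fiber tangent at $\Tfixed{\deltai{i+1}}\in\overline{\Gr^{\colambda_{i+1}}}$ is $\gO{t}\cap \Ad_{\Tcowt{t}{\deltai{i+1}}}(\gOne{t})$, and $\Ad_{\Tcowt{t}{\sigmai{i}}}$-translation transports it to the fiber tangent at $L_{i+1}$. A direct check against \refCor{CorWtSubspaces} then shows that this fiber tangent occupies exactly the weights in $\GrTangent{\sigmai{i+1}}\setminus \GrTangent{\sigmai{i}}$. Consequently on weights in $T_i$ no vertical motion is available, so the components of $\xi_i$ and $\xi_{i+1}$ must agree there, while on complementary weights at most one of $\xi_i,\xi_{i+1}$ is nonzero and that one is free. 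This is precisely the condition $\xi_i\vert_{T_i} = \xi_{i+1}\vert_{T_i}$.

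The main obstacle is the weight bookkeeping in the incidence step: for each root $\wtalpha$ and integer $n$ one must compare the pairings $\langle \wtalpha,\sigmai{i}\rangle$ and $\langle \wtalpha,\sigmai{i+1}\rangle$ to locate the weight $\wtalpha+n\hbar$ among the various $\Ad_{\Tcowt{t}{\cdot}}(\gO{t})$-summands and verify that fiber tangent weights exhaust $\GrTangent{\sigmai{i+1}}\setminus \GrTangent{\sigmai{i}}$, together with the dual check that components of $\xi_i$ at weights not appearing in $\GrTangent{\sigmai{i+1}}$ are cut out without any further constraint being imposed on them by the incidence.
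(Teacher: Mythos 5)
Your proposal is correct and follows essentially the same route as the paper: each condition is differentiated separately, the left and right boundary conditions are handled identically, and the incidence condition reduces to the tangent space of $\overline{\Gr^{\deltai{i+1}}}$ at $\Tfixed{\deltai{i+1}}$ computed in \refProp{PropGrCellTangentSpace}, followed by the same weight-by-weight bookkeeping against \refCor{CorWtSubspaces}. The only (cosmetic) difference is that you obtain the incidence relation by splitting the tangent space of the correspondence into horizontal diagonal motions and vertical fiber motions, whereas the paper differentiates the membership condition directly and expresses the result as $\xi_{i+1}-\xi_i$ lying in $\Ad_{\Tcowt{t}{\sigmai{i}}}\left(\gO{t}\right)+\Ad_{\Tcowt{t}{\sigmai{i+1}}}\left(\gO{t}\right)$, whose complement is exactly $T_i$ --- the two computations are identical.
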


\begin{proof}
The left boundary condition is the easiest; $L_0$ is constant $\Rightarrow \xi_0 = 0$.

In case of the right boundary condition $L_l \in \Gr_{\comu}$ implies that $L_l$ changes only by the left multiplication by $\GOne$. Recall that we used the left multiplication by $\GK$ to identify the tangent space with a subspace of $\gK{t}$. Under this identification, the infinitesimal action of $\GOne$ corresponds to the action of the subspace $\gOne{t}$ that gives the condition on $\xi_l$.

To analyze the incidence condition $L_i \xrightarrow{\colambda_{i+1} } L_{i+1}$ in a neighborhood of $L_i = \Tfixed{\sigmai{i}}$ and $L_{i+1} = \Tfixed{\sigmai{i+1}}$, we look at perturbation of $L_i$, $L_{i+1}$ by $g_i, g_{i+1} \in \GK$:
\begin{align*}
	L_i 
	&= 
	g_i\cdot 
	\Tfixed{\sigmai{i}} 
	\\
	L_{i+1} 
	&= 
	g_{i+1}\cdot 
	\Tfixed{\sigmai{i+1}}
\end{align*}
Then the condition $g_i\cdot \Tfixed{\sigmai{i}} \xrightarrow{\colambda_{i+1}} g_{i+1}\cdot \Tfixed{\sigmai{i+1}}$ is equivalent to saying that 
\begin{equation} \label{ElemForIncCond}
	(g_i \Tcowt{t}{\sigmai{i}})^{-1}g_{i+1}\Tfixed{\sigmai{i+1}} 
	= 
	\Ad_{\Tcowt{t}{-\sigmai{i}}} 
	\left( 
		g^{-1}_i g_{i+1} 
	\right) 
	\Tcowt{t}{-\sigmai{i}} \Tfixed{\sigmai{i+1}} = \Ad_{\Tcowt{t}{-\sigmai{i}}} 
	\left( 
		g^{-1}_i g_{i+1}
	\right) 
	\Tfixed{\deltai{i+1}}
\end{equation} 
is in the closure of the $\GO$-orbit of $\Tfixed{\deltai{i+1}}$. In other words, it is in $\overline{ \Gr^{\deltai{i+1}} }$ for which we know the tangent space at $\Tfixed{\deltai{i+1}}$, and by \refProp{PropGrCellTangentSpace} it is 
\begin{equation*}
	\GrCellTangent
	{\deltai{i+1}}
	{\deltai{i+1}} 
	= 
	\gO{t} \cap \Ad_{\Tcowt{t}{\deltai{i+1}}} 
	\left( \gOne{t} \right).
\end{equation*}
Differentiating the expression \refEq{ElemForIncCond} with respect to $g_i$ and $g_{i+1}$ at the identity, we get the following. 
\begin{equation*}
	\Ad_{\Tcowt{t}{-\sigmai{i}}}
	\left(\xi_{i+1}-\xi_i\right) 
	\in 
	\GrCellTangent
	{\deltai{i+1}}
	{\deltai{i+1}} 
	+ 
	\Ad_{\Tcowt{t}{\deltai{i+1}}} 
	\left( \gO{t} \right),
\end{equation*}
where the first term is the stabilizer of $\Tfixed{\deltai{i+1}}$ from \refProp{PropGrTangentSpace}.

Equivalently,
\begin{equation*}
	\xi_{i+1}-\xi_i \in`
	\left(
		\Ad_{\Tcowt{t}{\sigmai{i}}} 
		\left( \gO{t} \right)
		\cap 
		\Ad_{\Tcowt{t}{\sigmai{i+1}}} 
		\left( \gOne{t} \right)
	\right)
	+ 
	\Ad_{\Tcowt{t}{\sigmai{i+1}}} 
	\left( \gO{t} \right)
\end{equation*}
Note that the second term is contains exactly all weight subspaces of $\Ad_{\Tcowt{t}{\sigmai{i}}}$ which are not in $\Ad_{\Tcowt{t}{\sigmai{i+1}}}$. This means that
\begin{equation*}
	\xi_{i+1}-\xi_i 
	\in
	\Ad_{\Tcowt{t}{\sigmai{i}}} 
	\left( \gO{t} \right) 
	+
	\Ad_{\Tcowt{t}{\sigmai{i+1}}} 
	\left( \gO{t} \right).
\end{equation*}
Saying that this difference belongs to this space is equivalent to saying that the restriction to the complement space $T_i = \GrTangent{\sigmai{i}} \cap \GrTangent{\sigmai{i+1}}$ is zero:
\begin{equation*}
	\left( 
		\xi_{i+1}-\xi_i 
	\right)
	\vert_{T_i} 
	= 0
\end{equation*}
(here we once again used that any weight subspace of $\gK{t}$ either belongs to 
\begin{equation*}
	\Ad_{\Tcowt{t}{\sigmai{i}}} \left( \gO{t} \right) +
	\Ad_{\Tcowt{t}{\sigmai{i+1}}} \left( \gO{t} \right)
\end{equation*} or intersects trivially with it).
\end{proof}

An observation that simplifies the analysis is the following.

\begin{proposition}
The equations in \refProp{PropConditions} restrict to each $\T$-weight subspace independently.
\end{proposition}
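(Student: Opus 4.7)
The plan is to observe that every subspace and every linear map appearing in \refProp{PropConditions} is $\T$-equivariant, so that each defining equation automatically decomposes into independent equations, one per $\T$-weight.

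First I would note that by \refCor{CorWtSubspaces}, each $\GrTangent{\sigmai{i}} \subset \gK{t}$ is a direct sum of $\T$-weight subspaces of $\gK{t}$; the subalgebra $\gOne{t}$ is visibly weight graded; and intersections of $\T$-graded subspaces of $\gK{t}$ -- such as $T_i = \GrTangent{\sigmai{i}} \cap \GrTangent{\sigmai{i+1}}$ and $\GrTangent{{\comu}} \cap \gOne{t}$ -- remain $\T$-graded. Consequently the ambient space $\bigoplus_{i=0}^{l} \GrTangent{\sigmai{i}}$ decomposes, $\T$-weight by $\T$-weight, into a direct sum of its weight components.

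Next, the restriction map $\xi_i \mapsto \xi_i\vert_{T_i}$ is, by construction, the projection onto the graded summand $T_i$ along a graded complement, so it is $\T$-equivariant and preserves the weight grading. The three families of conditions of \refProp{PropConditions}, namely $\xi_0 = 0$, $\xi_l \in \GrTangent{{\comu}} \cap \gOne{t}$, and $\xi_i\vert_{T_i} = \xi_{i+1}\vert_{T_i}$ for $0 \leq i < l$, are therefore $\T$-equivariant linear conditions on $\bigoplus_i \GrTangent{\sigmai{i}}$. Any $\T$-equivariant linear equation between vectors in a $\T$-graded vector space splits as the collection of the analogous equations on its individual weight components, so $T_p X$ is cut out one $\T$-weight at a time.

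I do not anticipate any real obstacle here, as the statement is essentially a bookkeeping observation. The only point worth being slightly careful about is that the $T_i$ are genuinely $\T$-weight graded subspaces realized inside the common ambient space $\gK{t}$, rather than merely abstract subquotients of the $\GrTangent{\sigmai{i}}$; but this is immediate from \refProp{PropGrTangentSpace}, which identifies each $\GrTangent{\sigmai{i}}$ with the concrete graded subspace $\Ad_{\Tcowt{t}{\sigmai{i}}}(\gOne{t}) \subset \gK{t}$.
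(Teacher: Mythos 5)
Your proof is correct and is essentially the paper's argument: both rest on the observation that the conditions are $\T$-equivariant linear conditions on a $\T$-weight-graded space, hence decompose weight by weight. The only cosmetic difference is that you verify the gradedness of each subspace and projection directly at the linearized level, whereas the paper deduces it from the $\T$-equivariance of the original conditions \refEq{LeftCond}--\refEq{IncidCond}.
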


\begin{proof}
This follows from the $\T$-equivariance of conditions \refEq{LeftCond}, \refEq{RightCond} and \refEq{IncidCond} which implies that their infinitesimal forms in \refProp{PropConditions} respect the $\T$-weight grading.
\end{proof}

This allows us to analyze the tangent space $T_p X$ by restricting to $\T$-weight subspaces of $\gK{t}$ (only these weights are present in $T_p \Gr^{\times(l+1)}$ and hence in $T_p X$). However, it is more convenient to restrict to $\A$-weight subspaces, i.e. consider $\T$-weights which differ by a multiple of $\hbar$ simultaneously. 

More explicitly, we're interested in intersections
\begin{equation*}
T_p X \cap \hK{t} \text{ and } T_p X \cap \grootK{\wtalpha}{t}.
\end{equation*}

\begin{proposition}
$T_p X \cap \hK{t} = 0.$
\end{proposition}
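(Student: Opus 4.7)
The plan is to combine the three local equations from \refProp{PropConditions} with the observation that the Cartan weight part of $\GrTangent{\colambda}$ is independent of $\colambda$.

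First I would note that for any cocharacter $\colambda$ the $\A$-weight-zero part of $\GrTangent{\colambda}$ is canonically isomorphic to $\hOne{t}$. Indeed, $\Ad_{\Tcowt{t}{\colambda}}$ acts trivially on $\mathfrak{h}$, so $\Ad_{\Tcowt{t}{\colambda}}\left(\gO{t}\right) \cap \hK{t} = \hO{t}$, and the identification in \refProp{PropGrTangentSpace} then realizes the Cartan part of $\GrTangent{\colambda}$ as $\hK{t}/\hO{t} \cong \hOne{t}$, independently of $\colambda$. In particular $\hK{t} \cap \GrTangent{\sigmai{i}} = \hK{t} \cap \GrTangent{\sigmai{i+1}}$ already lies inside $T_i = \GrTangent{\sigmai{i}} \cap \GrTangent{\sigmai{i+1}}$, so the restriction projection $\xi \mapsto \xi\vert_{T_i}$ from \refProp{PropConditions} acts as the identity on Cartan vectors.

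Next I would take $\xi = (\xi_0,\dots,\xi_l) \in T_p X$ with each $\xi_i \in \hK{t}$. The left boundary condition of \refProp{PropConditions} gives $\xi_0 = 0$, while by the previous paragraph the incidence condition $\xi_i\vert_{T_i} = \xi_{i+1}\vert_{T_i}$ collapses to the equality $\xi_i = \xi_{i+1}$ in $\hK{t}$ for each $0 \leq i < l$. A trivial induction on $i$ then yields $\xi_i = 0$ for every $i$, which is the desired statement.

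The argument is short and I do not anticipate any substantive obstacle: the only genuine content is the $\colambda$-independence of the Cartan part of $\GrTangent{\colambda}$, which is immediate from the formula in \refProp{PropGrTangentSpace}. Observe also that the right boundary condition \refEq{RightCond} plays no role, so in fact the stronger statement that the Cartan part of $T_p \Gr^{\ucolambda}$ vanishes at $p$ is proved by the same argument.
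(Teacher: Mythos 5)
Your proof is correct and follows essentially the same route as the paper's: both rest on the observation that the Cartan part of $\GrTangent{\colambda}$ is $\hOne{t}$ independently of $\colambda$, so the incidence conditions become genuine equalities of Cartan components and the left boundary condition forces everything to vanish. Your closing remark that the right boundary condition is never used (so the Cartan part of $T_p \Gr^{\ucolambda}$ already vanishes) is a valid, if minor, strengthening that the paper leaves implicit.
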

\begin{proof}
Since $\Ad_{\Tfixed{\colambda}}$ for any cocharacter ${\colambda}$ acts trivially on $\hK{t}$, we have that
\begin{align*}
\GrTangent{\colambda} \cap \hK{t} &= \Ad_{\Tfixed{\colambda}}\left( \gOne{t} \right) \cap \hK{t} \\
&= \Ad_{\Tfixed{\colambda}}\left( \gOne{t} \cap \hK{t} \right) \\
&= \gOne{t} \cap \hK{t} = \hOne{t}.
\end{align*}
Note that this does not depend on ${\colambda}$. This implies that if we restrict a tangent vector $\xi = (\xi_0, \dots, \xi_l) \in T_p X$ to $T_p X \cap \hK{t}$ we get
\begin{equation*}
(\xi_0\vert_{\hOne{t}}, \dots, \xi_l\vert_{\hOne{t}})
\end{equation*}
This gives that the incidence condition restricted to $T_p X \cap \hK{t}$ is simple:
\begin{equation*}
\xi_i\vert_{\hOne{t}} = \xi_{i+1}\vert_{\hOne{t}}.
\end{equation*}
Combined with the left boundary condition $\xi_0 = 0$ we have
\begin{equation*}
\xi_i\vert_{\hOne{t}} = 0 \text{ for all } i.
\end{equation*}
This is equivalent to $T_p X \cap \hK{t} = 0$.
\end{proof}

\medskip

Denote $\GrTangent{\colambda}\cap \grootK{\wtalpha}{t}$ as $\GrTangent{\colambda}^{\wtalpha} $ and similarly $T_p X \cap \grootK{\wtalpha}{t}$ as $T^{\wtalpha}_p X$.

\begin{corollary} \label{CorRootDecomposition}
$T_p X = \bigoplus\limits_{\wtalpha \text{ root}} T^{\wtalpha}_p X$.
\end{corollary}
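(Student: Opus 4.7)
The plan is a two-line argument that combines the previous proposition with the $\A$-weight decomposition of $T_p\Gr^{\times(l+1)}$. First, I would observe that $T_p X$ is an $\A$-subrepresentation of $T_p\Gr^{\times(l+1)}=\bigoplus_i \GrTangent{\sigmai{i}}$, so it splits as a direct sum of its $\A$-weight subspaces. Next, I would invoke \refCor{CorWtSubspaces} applied to each $\GrTangent{\sigmai{i}}$: the only $\A$-weights that occur in $\gK{t}$ (and hence in $T_p\Gr^{\times(l+1)}$) are $0$, corresponding to the Cartan part $\hK{t}$, and the roots $\wtalpha$ of $\mathfrak{g}$, corresponding to the loop root subspaces $\grootK{\wtalpha}{t}$.

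Consequently, the $\A$-weight decomposition of $T_p X$ reads
\begin{equation*}
	T_p X
	=
	\left( T_p X \cap \hK{t} \right)
	\oplus
	\bigoplus_{\wtalpha \text{ root}}
	\left( T_p X \cap \grootK{\wtalpha}{t} \right)
	=
	\left( T_p X \cap \hK{t} \right)
	\oplus
	\bigoplus_{\wtalpha \text{ root}}
	T^{\wtalpha}_p X.
\end{equation*}
The preceding proposition gives $T_p X \cap \hK{t} = 0$, which eliminates the Cartan summand and yields the claimed decomposition.

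There is no real obstacle here: the content of the corollary is essentially bookkeeping once the vanishing of the Cartan part is in hand. The only thing to verify carefully is that no nonzero $\A$-weight of $T_p \Gr^{\times(l+1)}$ coincides with a weight from a different root space, but this is immediate from the standard root decomposition of $\mathfrak{g}$ (the roots are pairwise distinct and distinct from $0$), which is unaffected by passing to loops since the $\LoopGm$-factor contributes only to the $\hbar$-weight, not to the $\A$-weight.
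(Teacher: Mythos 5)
Your proposal is correct and follows essentially the same route the paper intends: the corollary is meant to be read off from the preceding two propositions (the weightwise independence of the defining equations, giving the $\A$-weight decomposition of $T_pX$ inside $\bigoplus_i \GrTangent{\sigmai{i}}$, together with the vanishing $T_pX\cap\hK{t}=0$ which kills the only non-root weight, namely the Cartan part). The one small notational caveat — that $T_pX\cap\hK{t}$ and $T^{\wtalpha}_pX$ really mean intersections with the corresponding direct sums over the $l+1$ factors — is the same abuse of notation the paper itself uses, so nothing is missing.
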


\begin{figure}
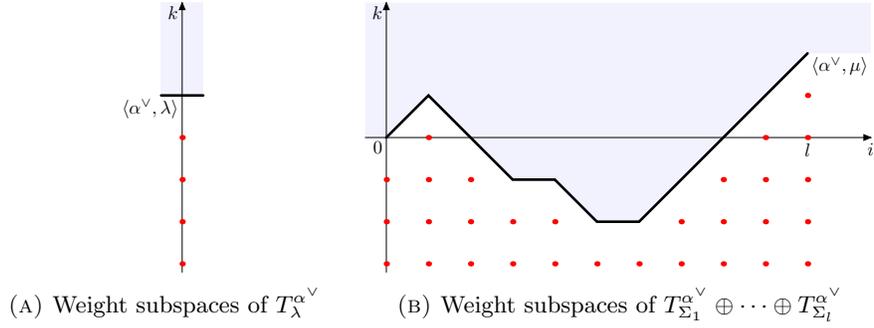

	\centering
	\begin{subfigure}{0.35\textwidth}
		\centering
		\includegraphics[scale = 0.7]{Figures/AffineGrassmannianPicture-2.mps}
		\caption{Weight subspaces of $\GrTangent{\colambda}^{\wtalpha}$ }\label{FigGrTangent}
	\end{subfigure}
	~
	\begin{subfigure}{0.6\textwidth}
		\centering
		\includegraphics[scale = 0.7]{Figures/AffineGrassmannianPicture-3.mps}
		\caption{Weight subspaces of $\GrTangent{\sigmai{1}}^{\wtalpha}\oplus \dots \oplus \GrTangent{\sigmai{l}}^{\wtalpha}$}\label{FigGrTangent2}
	\end{subfigure}
\caption{Tangent spaces}\label{Fig2}
\end{figure}

Our goal is to describe $T^{\wtalpha}_p X$.

For this purpose, we introduce a convenient graphical representation. Let us start with $\GrTangent{\colambda}$. Its $\T$-weight subspaces are $1$-dimensional with weights $\wtalpha+k\hbar$, $k\in \mathbb{Z}$. Then we can represent non-zero weight subspaces of $\GrTangent{\colambda}$ as dots on the $k$-axis. By \refCor{CorWtSubspaces} we find that these are all integer points strictly below $\left\langle \wtalpha, {\colambda} \right\rangle$. This is shown on \refFig{FigGrTangent}.

Similarly, we can represent $\GrTangent{\sigmai{0}}^{\wtalpha}\oplus \dots \oplus \GrTangent{\sigmai{l}}^{\wtalpha}$. If we plot a non-trivial weight subspace of $\GrTangent{\sigmai{l}}^{\wtalpha}$ with weight $\wtalpha+k\hbar$ as a point with coordinates $(i,k)$ in the plane, then we have all points with integer coordinates strictly below the graph $\Gamma^{\wtalpha}_p$ piecewise linear connecting points $\left\langle \wtalpha, \sigmai{i} \right\rangle$ (and $0 \leq i \leq l$), as shown on \refFig{FigGrTangent2}. From \refProp{PropSliceFixedLocus} a sequence $\sigmai{}$ corresponding to a fixed point in $X$ has $\sigmai{0} = 0$ and $\sigmai{l} = \comu$, so the graph starts at $(0,0)$ and ends at $(l,\left\langle \wtalpha, \comu \right\rangle)$. Moreover, if all $\colambda_i$ are minuscule, then all increments in this path are $\pm 1$ or $0$ since $\sigmai{i+1}-\sigmai{i} = \deltai{i+1}$ is a Weyl reflection of $\colambda_{i+1}$.


\begin{figure}
	\centering
	\begin{subfigure}{0.3\textwidth}
		\centering
		\includegraphics{Figures/AffineGrassmannianPicture-4.mps}
		\caption{Left Condition \\[\baselineskip]}\label{FigLeftCond}
	\end{subfigure}
	~
	\begin{subfigure}{0.6\textwidth}
		\centering
		\begin{subfigure}{0.3\textwidth}
			\centering
			\includegraphics{Figures/AffineGrassmannianPicture-5.mps}
		\end{subfigure}
		~
		\begin{subfigure}{0.3\textwidth}
			\centering
			\includegraphics{Figures/AffineGrassmannianPicture-6.mps}
		\end{subfigure}
		\caption{Right Condition changes $\GrTangent{\sigmai{l}}^{\wtalpha}$ if $\left\langle \wtalpha, \comu \right\rangle > 0$ and doesn't change if $\left\langle \wtalpha, \comu \right\rangle \leq 0$.}\label{FigRightCond}
	\end{subfigure}	
	\caption{Left and Right Conditions}	
\end{figure}

In this pictorial language, one can represent the conditions \refEq{LeftCond}-\refEq{IncidCond}.

First, both the Left Condition and the Right Condition say that restrictions to certain $\T$-weight subspaces of $\GrTangent{\sigmai{0}}^{\wtalpha}$ or $\GrTangent{\sigmai{l}}^{\wtalpha}$ must be trivial, i.e. we just have to exclude these weight subspaces from the picture. We will denote these by crosses in our picture. The \refFig{FigLeftCond} shows how the Left Condition changes the leftmost column, corresponding to $\GrTangent{\sigmai{0}}^{\wtalpha} = \gOne{t}$. On the \refFig{FigRightCond} we show two possible cases that can appear in the rightmost column, corresponding to $\GrTangent{\sigmai{l}}^{\wtalpha} = \GrTangent{\comu}^{\wtalpha}$.


The Incidence Condition says that restrictions to $\T$-weight subspaces of $\GrTangent{\sigmai{i}}^{\wtalpha}$ and $\GrTangent{\sigmai{i+1}}^{\wtalpha}$ are equal if the weight subspace belongs to their intersection (i.e., is non-trivial in both of them). We will show this by connecting the corresponding dots with straight lines. This is shown on \refFig{FigIncidCond} in all three cases which can occur in the case of minuscule $\colambda_i$'s.

If the dots are connected by a line, they give one tangent vector since these components must be equal. If the left or right end of a chain of lines end with a cross of Left or Right boundary Condition, then the restrictions to each dot on this line must be zero. These lines are "irrelevant", they do not give a tangent vector. To make the picture clearer we will denote these ones by dashed lines and will not put dots on them. Typical pictures we get are shown on \refFig{FigTangentWeights}. The connected components of the red (non-dashed) lines give $1$-dimensional subspaces of $T^{\wtalpha}_p X$ of weight $\wtalpha + k\hbar$ where $k$ is the height of this line in the plot.

We note that this also gives an explicit basis in $T^{\wtalpha}_p X$. If a basis vector $e_{\wtalpha} \in \grootsub{\wtalpha}$ is fixed, then every segment at height $k$ in the constructed plot gives a basis vector that is $e_{\wtalpha} t^k$ for every $\GrTangent{\sigmai{i}}^{\wtalpha}$ if the vertical line corresponding to $i$ intersects with the segment (possibly at endpoints) and is zero for other $\GrTangent{\sigmai{i}}^{\wtalpha}$.

Note that the plots for $\wtalpha$ and $-\wtalpha$ are reflections of each other, we can use it to draw a plot for a pair of roots simultaneously. Under the graph we draw the same picture for $\wtalpha$ as we used to, above it we draw a reflected picture for $-\wtalpha$. For visual convenience, we draw it in blue. A typical picture is shown on \refFig{FigDoubleTangentWeights}.

\begin{figure}
	\centering
	\begin{subfigure}{0.3\textwidth}
		\centering
		\includegraphics[scale =0.9]{Figures/AffineGrassmannianPicture-7.mps}
	\end{subfigure}
	~
	\begin{subfigure}{0.3\textwidth}
		\centering
		\includegraphics[scale =0.9]{Figures/AffineGrassmannianPicture-8.mps}
	\end{subfigure}
	~
	\begin{subfigure}{0.3\textwidth}
		\centering
		\includegraphics[scale =0.9]{Figures/AffineGrassmannianPicture-9.mps}
	\end{subfigure}
	\caption{Incidence conditions on $\GrTangent{\sigmai{i}}^{\wtalpha}$ and $\GrTangent{\sigmai{i+1}}^{\wtalpha}$.} \label{FigIncidCond}
\end{figure}

\begin{figure}
	\centering
	\includegraphics[scale =0.8]{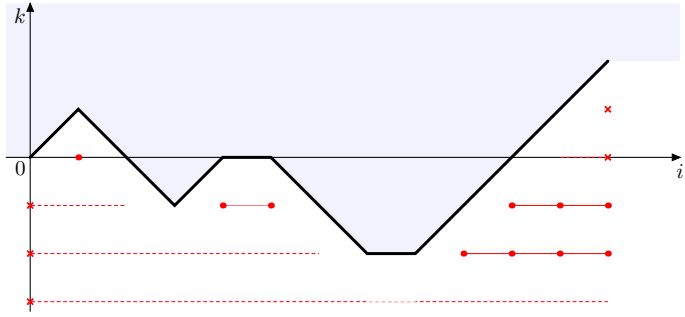}
	\caption{A typical picture of $T^{\wtalpha}_p X$ at a fixed point $p$.} \label{FigTangentWeights}
\end{figure}

\begin{figure}
	\centering
	\includegraphics{Figures/AffineGrassmannianPicture-12.mps}
	\caption{A typical picture of $T^{\wtalpha}_p X \oplus T^{-\wtalpha}_p X$ at a fixed point $p$.} \label{FigDoubleTangentWeights}
\end{figure}

\subsubsection{Classical examples}

Let us show how this works in classical examples listed before and coincides with well-known weights of fixed-point tangent spaces.

\begin{example}
Recall that the resolution of the $\DuVal{n}$-singularity is $\Gr^{\left( \coomega,\dots,\coomega \right)}_{(n-1)\coomega} \to \Gr^{(n+1)\coomega}_{(n-1)\coomega}$, where $\G = \PSL{2}$ and $\coomega$ is the fundamental coweight. There are $n+1$ fixed points $p_0, \dots, p_n$:
\begin{align*}
\deltap{p_0} &= \left(-\coomega,\coomega,\coomega,\dots,\coomega\right),\\
\deltap{p_1} &= \left(\coomega,-\coomega,\coomega,\dots,\coomega\right),\\
&\dots \\
\deltap{p_n} &= \left(\coomega,\coomega,\coomega,\dots,-\coomega\right).
\end{align*} 

The tangent weights at the point $p_i$ are $\wtalpha+(i-1)\hbar$ and $-\left( \wtalpha+i\hbar \right)$. An example of such computation is shown in \refFig{FigDuVal}.

This coincides with computation via toric geometry (the singularity is a toric variety, and hence admits a toric resolution).
\end{example}

\begin{figure}
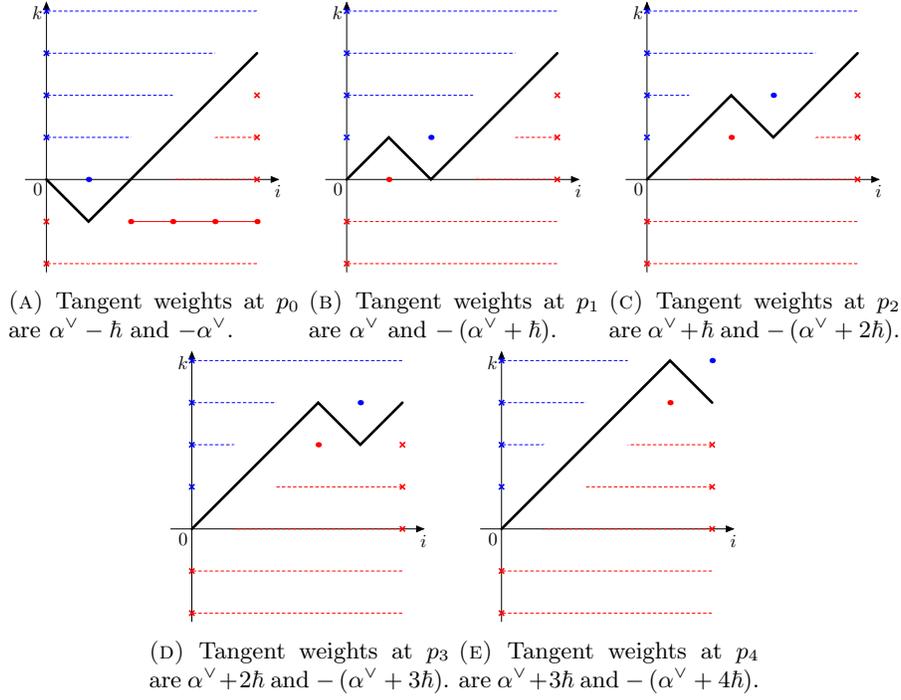

	\centering
	\begin{subfigure}{0.31\textwidth}
		\centering
		\includegraphics[scale = 0.7]{Figures/AffineGrassmannianPicture-13.mps}
		\caption{Tangent weights at $p_0$ are $\wtalpha-\hbar$ and $-\wtalpha$.}
	\end{subfigure}
	~
	\begin{subfigure}{0.31\textwidth}
		\centering
		\includegraphics[scale = 0.7]{Figures/AffineGrassmannianPicture-14.mps}
		\caption{Tangent weights at $p_1$ are $\wtalpha$ and $-\left( \wtalpha+\hbar \right)$.}
	\end{subfigure}
	~
	\begin{subfigure}{0.31\textwidth}
		\centering
		\includegraphics[scale = 0.7]{Figures/AffineGrassmannianPicture-15.mps}
		\caption{Tangent weights at $p_2$ are $\wtalpha+\hbar$ and $-\left( \wtalpha+ 2\hbar \right)$.}
	\end{subfigure}
	
	\begin{subfigure}{0.32\textwidth}
		\centering
		\includegraphics[scale = 0.7]{Figures/AffineGrassmannianPicture-16.mps}
		\caption{Tangent weights at $p_3$ are $\wtalpha+2\hbar$ and $-\left( \wtalpha+ 3\hbar \right)$.}
	\end{subfigure}
	~
	\begin{subfigure}{0.32\textwidth}
		\centering
		\includegraphics[scale = 0.7]{Figures/AffineGrassmannianPicture-17.mps}
		\caption{Tangent weights at $p_4$ are $\wtalpha+3\hbar$ and $-\left( \wtalpha+ 4\hbar \right)$.}
	\end{subfigure}
	\caption{Tangent spaces at the $\T$-fixed points of the resolution of type $\DuVal{4}$} \label{FigDuVal}
\end{figure}

\begin{example}
Let $\colambda = \conu + \iota\conu$ for a minuscule $\conu$. Then $\Gr^{\left( \conu, \iota\conu\right)}_0 \cong T^* \G/\PP^-_\conu$ as $\T$-varieties, where $\PP^-_\conu$ is a maximal parabolic corresponding to $-\conu$. I.e. the roots $\wtalpha$ of $\PP^-_\conu$ are exactly such that $\left\langle \conu, \wtalpha \right\rangle \leq 0$. Recall that the fixed points of $T^* \G/\PP^-_\conu$ are of the form $w\PP^-_\conu$ and are classified by left cosets $w W_\PP$ in the Weyl group $w\in W$ (here $W_\PP$ in the Weyl group of the Levi factor of $\PP^-_\conu$). Then the tangent weights to the zero section $\G/\PP^-_\conu$ at $w \PP^-_\conu$ are roots $\wtalpha$ such that $\left\langle w \conu, \wtalpha \right\rangle > 0$ and the weights of the cotangent fiber over this point are $-\wtalpha-\hbar$ for the same $\wtalpha$. Under identification of the fixed loci
\begin{align*}
\left(T^* \G/\PP^-_\conu \right)^\T &\xrightarrow{\sim} \left(\Gr^{\left( \conu, \iota\conu\right)}_0 \right)^\T\\
w\PP &\mapsto p_{w\PP},
\end{align*}
where $\sigmap{p_{w\PP}} = \left(0,w\conu,0\right)$, one gets the same answer from the \refFig{FigTGP}. Here, we used that $-w\conu\in W \cdot \iota \conu$ to get that $p_{w\PP} \in \Gr^{\left( \conu, \iota\conu\right)}_0$.
\end{example}

\begin{figure}
	\centering
	\begin{subfigure}{0.4\textwidth}
		\centering
		\includegraphics{Figures/AffineGrassmannianPicture-18.mps}
		\caption{At point with $\sigmai{1} = w\conu$ there're no weights of form $\pm\wtalpha+\hbar\mathbb{Z}$ for such roots $\wtalpha$ that $\left\langle w\conu, \wtalpha \right\rangle = 0$.}
	\end{subfigure}
	~
	\begin{subfigure}{0.48\textwidth}
		\centering
		\includegraphics{Figures/AffineGrassmannianPicture-19.mps}
		\caption{At point with $\sigmai{1} = w\conu$ there's a pair of weights with weights $\wtalpha$ and $-(\wtalpha+\hbar)$ for each root $\wtalpha$ such that $\left\langle w\conu, \wtalpha \right\rangle = 1$.}
	\end{subfigure}
	\caption{Tangent spaces at the $\T$-fixed points of $T^*\G/\PP^-_\conu$} \label{FigTGP}
\end{figure}

\begin{example}
Let $\G = \PSL{n}$ and $\coomega_1$ be the highest weight of the defining representation of $\SL{n}$ (hence, a minuscule coweight of $\PSL{n}$). Then for an $n$-tuple $\underline{\colambda} = \left( \coomega_1, \dots, \coomega_1 \right)$, $\Gr^{\underline{\colambda}}_0 \cong T^* \G/\B$ as $\T$-varieties.

Denote the elements of the Weyl orbit of $\omega_1$ as $\left\lbrace e_i \vert 1\leq i \leq n \right\rbrace$ with $e_1 = \omega_1$ and $e_i-e_{i+1}$ being simple roots. Then the $\T$-fixed points of $T^* \G/\B $ are $w\B$ for $w\in W$. In our case, this is a root system of type $A$, so $W=S_n$, $w$ is a permutation. Moreover, $w\in W$ acts on $\lbrace e_i\rbrace$ by applying permutation to the index: $e_i\mapsto e_{w(i)}$. This allows us to identify the fixed loci by following assignment
\begin{align*}
	\left(
		T^* \G/\B 
	\right)^\T 
	&\xrightarrow{\sim} 
	\left(
		\Gr^{\underline{\colambda}}_0 
	\right)^\T
	\\
	w\B 
	&\mapsto 
	p_w,
\end{align*}
where
\begin{equation*}
	\deltap{p_w} 
	= 
	\left(
		e_{w_0 w^{-1}(1)},
		e_{w_0 w^{-1}(2)},
		\dots,
		e_{w_0 w^{-1}(l)}
	\right)
\end{equation*} 
and $w_0$ is the longest element in the Weyl group (explicitly $w_0(i)=n-i$ in our case). We use the well-known fact that $\sum\limits^n_{i=1} e_i = 0$ to get the sum of all components to be zero, which means that the point in the image is in $\Gr^{\underline{\colambda}}_0$, not only in $\Gr^{\underline{\colambda}}$. The tangent weights at $w\B$ are $w\wtalpha$ and $-w\wtalpha-\hbar$ for all negative roots $\wtalpha$.

The \refFig{FigTGB} shows the tangent weights at a fixed point $p_w$
of $\Gr^{\underline{\colambda}}_0$ corresponding to a fixed point $w\B$, $w\in W$ in $T^* \G/\B$. If $\wtalpha = e_i-e_j$ and $w_0 w^{-1}(i) < w_0 w^{-1}(j)$ (equivalently, $w_0 w^{-1} \wtalpha $ is positive), then the weights of the form $\pm\wtalpha+\hbar\mathbb{Z}$ are $\wtalpha$ and $-(\wtalpha+\hbar)$. From this we confirm that the weights are the same as described before, if one takes into account that
\begin{enumerate}
\item
All roots are of form $e_i-e_j$, $i\neq j$,
\item
$e_i-e_j$ is negative iff $i>j$,
\item
$\wtalpha$ is negative iff $w_0\wtalpha$ is positive.
\end{enumerate}
\end{example}

\begin{figure}
\centering
\includegraphics{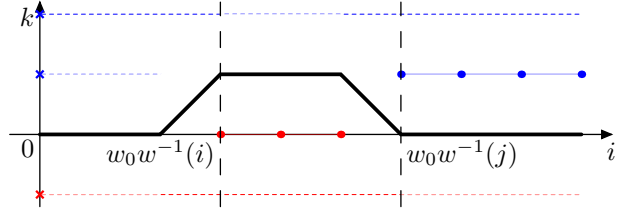}
\caption{Tangent spaces at the $\T$-fixed points of $T^*\G/\B$}
\label{FigTGB}
\end{figure}

\subsubsection{Combinatorial descripltion of weight multiplicities}

As previously, let $X=\Gr^{ \ucolambda }_{\comu}$ and$ {\comu}$ and $\ucolambda$ be such that they satisfy conditions in \refThm{ThmSymplecticResolutionExistence}, so $X$ is smooth. We want to describe weight multiplicities in the tangent space to $p \in X^\T$ with a corresponding sequence of coweights
\begin{equation*}
\sigmai{} = \left( \sigmai{0}, \sigmai{1}, \dots, \sigmai{l} \right)
\end{equation*}
where we assume usual conditions $\sigmai{0} = 0$ and $\sigmai{l} = \comu$ and do not write $p$ explicitly in the notation for $\sigmai{}$.

As previously, for each root $\wtalpha$ we use a graph $\Gamma^{\wtalpha}_p$ that piecewise linearly connects points $\left( i, \left\langle \wtalpha, \sigmai{i} \right\rangle \right)$ in the $i,k$-plane. We are interested in the intersection of this graph with the line $k=n+\dfrac{1}{2}$, $n\in\mathbb{Z}$. Let $N_-\left(\wtalpha,n+\dfrac{1}{2}\right)$ be the number of such crossings where $\Gamma^{\wtalpha}_p$ decreases as $i$ decreases. Similarly, $N_+\left(\wtalpha,n+\dfrac{1}{2}\right)$ is the number of crossings where $\Gamma^{\wtalpha}_p$ goes down. The \refFig{FigLineIntersection} shows how these intersections may look like. Blue dots contribute to $N_-\left(\wtalpha,n+\dfrac{1}{2}\right)$ and green squares contribute to $N_+\left(\wtalpha,n+\dfrac{1}{2}\right)$.

\begin{figure}
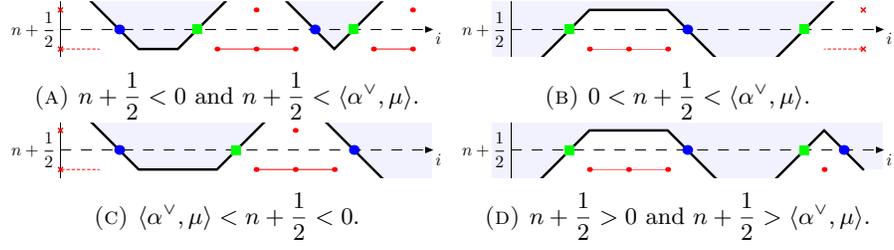

	\begin{subfigure}{0.47\textwidth}
		\centering
		\includegraphics[scale=0.65]{Figures/AffineGrassmannianPicture-21.mps}
		\caption{$n+\dfrac{1}{2}<0$ and $n+\dfrac{1}{2}<\left\langle \wtalpha, \comu \right\rangle$.}\label{FigLineIntersection1}
	\end{subfigure}
        ~
	\begin{subfigure}{0.47\textwidth}
		\centering
		\includegraphics[scale=0.65]{Figures/AffineGrassmannianPicture-22.mps}
		\caption{$0 < n+\dfrac{1}{2}<\left\langle \wtalpha, \comu \right\rangle$.}\label{FigLineIntersection2}
	\end{subfigure}

	\begin{subfigure}{0.47\textwidth}
		\centering
		\includegraphics[scale=0.65]{Figures/AffineGrassmannianPicture-23.mps}
		\caption{$\left\langle \wtalpha, \comu \right\rangle < n+\dfrac{1}{2} < 0$.}\label{FigLineIntersection3}
	\end{subfigure}
        ~
	\begin{subfigure}{0.47\textwidth}
		\centering
		\includegraphics[scale=0.65]{Figures/AffineGrassmannianPicture-24.mps}
		\caption{$n+\dfrac{1}{2}>0$ and $n+\dfrac{1}{2}>\left\langle \wtalpha, \comu \right\rangle$.}\label{FigLineIntersection4}
	\end{subfigure}
	\caption{Typical pictures for four possible cases of $k=n+\dfrac{1}{2}$ intersecting with $\Gamma^{\wtalpha}_p$.}\label{FigLineIntersection}
\end{figure}

\begin{proposition} \label{PropMultiplisities}
Let $\wtalpha$ be a root, and $n\in\mathbb{Z}$. Then the multiplicity of weight $\wtalpha+n\hbar$ in $T_p X$ is \begin{enumerate}
\item
$N_+\left(\wtalpha,n+\dfrac{1}{2}\right)-1$ if $0<n+\dfrac{1}{2}<\left\langle \wtalpha, \comu \right\rangle$,
\item
$N_+\left(\wtalpha,n+\dfrac{1}{2}\right)$ otherwise.
\end{enumerate}
\end{proposition}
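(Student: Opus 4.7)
The plan is to extract weight multiplicities directly from the graphical description of $T_p X$ developed earlier in this section and then translate counts of maximal above-intervals of $\Gamma^{\wtalpha}_p$ into the crossing count $N_+$.

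By \refCor{CorRootDecomposition} only $T^{\wtalpha}_p X$ contributes weights of the form $\wtalpha + n\hbar$, so the computation reduces to a single root $\wtalpha$. From the preceding pictorial analysis, the weight-$(\wtalpha + n\hbar)$ component is spanned by one vector per undashed maximal ``red segment'' at height $n$. Here a segment is a maximal interval of consecutive columns $i$ in which $\langle\wtalpha, \sigmai{i}\rangle > n$, equivalently a maximal interval of abscissae where $\Gamma^{\wtalpha}_p$ lies above the horizontal line $k = n + \tfrac{1}{2}$. I will write $M_+(\wtalpha, n+\tfrac{1}{2})$ for the number of such intervals.

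Next I would track the dashings produced by the two boundary conditions in \refProp{PropConditions}. The left condition $\xi_0 = 0$ kills the leftmost segment whenever it touches $i = 0$, which, because $\sigmai{0} = 0$, happens exactly when $n + \tfrac{1}{2} < 0$. The right condition $\xi_l \in \GrTangent{\comu} \cap \gOne{t}$ crosses out the column-$l$ dot at precisely the heights $0 \leq n < \langle\wtalpha, \comu\rangle$, hence dashes the rightmost segment exactly when $0 < n + \tfrac{1}{2} < \langle\wtalpha, \comu\rangle$. These two ranges are disjoint (since $n + \tfrac{1}{2}$ is a half-integer, it cannot simultaneously be negative and positive), so the weight multiplicity equals $M_+ - D$ for some boundary-killed indicator $D \in \{0,1\}$.

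Finally, I would invoke the identity
\begin{equation*}
M_+(\wtalpha, n+\tfrac{1}{2}) = N_+(\wtalpha, n+\tfrac{1}{2}) + A,
\end{equation*}
where $A = 1$ if $\Gamma^{\wtalpha}_p$ starts above the line (equivalently $n + \tfrac{1}{2} < 0$) and $A = 0$ otherwise; indeed, every maximal above-interval either begins at $i = 0$ already above the line or opens at an upward crossing. A short case check shows $A = D$ outside the exceptional range $0 < n + \tfrac{1}{2} < \langle\wtalpha, \comu\rangle$, so the multiplicity is $M_+ - D = N_+ + A - D = N_+$ there, while inside the exceptional range $A = 0$ and $D = 1$, giving $N_+ - 1$. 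The main obstacle is entirely bookkeeping — the geometric content is already encoded in \refProp{PropConditions} and the graphical description, so once $A$, $D$, and $M_+$ are correctly tied to $N_+$, the two formulas of the proposition follow by inspection.
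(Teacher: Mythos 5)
Your proposal is correct and follows essentially the same route as the paper: read off the multiplicity from the segments of the graphical description, account for the left and right boundary conditions, and match the surviving segments with the upward crossings counted by $N_+\left(\wtalpha,n+\tfrac{1}{2}\right)$. The paper's own proof compresses your bookkeeping with $M_+$, $A$, and $D$ into the single observation that each $N_+$-crossing is the left end of a segment surviving the left condition, with the right condition removing exactly one segment precisely when $0<n+\tfrac{1}{2}<\left\langle \wtalpha, \comu \right\rangle$.
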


\begin{proof}
Every point which is counted by $N_+\left(\wtalpha,n+\dfrac{1}{2}\right)$ gives rise to the left end of a segment corresponding to a non-zero tangent vector with tangent weight if we do not take into account the Right condition. And this condition vanishes the rightmost vector if and only if
\begin{equation*}
0\leq n< \left\langle \wtalpha, \comu \right\rangle \Longleftrightarrow 0<n+\dfrac{1}{2}<\left\langle \wtalpha, \comu \right\rangle.
\end{equation*}
This gives $-1$ in this case, as stated in the proposition.
\end{proof}

If one wants to use $N_-\left(\wtalpha,n+\dfrac{1}{2}\right)$ in place of $N_+\left(\wtalpha,n+\dfrac{1}{2}\right)$, there is an easy relation between them.

\begin{proposition}
The numbers $N_+\left(\wtalpha,n+\dfrac{1}{2}\right)$ and $N_-\left(\wtalpha,n+\dfrac{1}{2}\right)$ are related in the following way
\begin{enumerate}
\item
$N_+\left(\wtalpha,n+\dfrac{1}{2}\right)=N_-\left(\wtalpha,n+\dfrac{1}{2}\right)+1$ if $0<n+\dfrac{1}{2}<\left\langle \wtalpha, \comu \right\rangle$,
\item
$N_+\left(\wtalpha,n+\dfrac{1}{2}\right)=N_-\left(\wtalpha,n+\dfrac{1}{2}\right)-1$ if $\left\langle \wtalpha, \comu \right\rangle < n+\dfrac{1}{2} < 0$,
\item
$N_+\left(\wtalpha,n+\dfrac{1}{2}\right)=N_-\left(\wtalpha,n+\dfrac{1}{2}\right)$ otherwise.
\end{enumerate}
\end{proposition}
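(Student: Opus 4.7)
The statement is a purely combinatorial fact about the piecewise-linear path $\Gamma^\wtalpha_p$ in the plane, which begins at $(0,0)$ and ends at $(l,\langle \wtalpha, \comu \rangle)$. The key observation is that, since $n+\tfrac12 \notin \mathbb{Z}$ and all vertices of $\Gamma^\wtalpha_p$ have integer coordinates, the path meets the horizontal line $k = n+\tfrac12$ transversally at finitely many points, and the quantity $N_+ - N_-$ is exactly the signed (net) count of crossings of this line.

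The plan is to identify $N_+ - N_-$ with a topological invariant of the path endpoints. First, I would note that for a piecewise-linear continuous function $f \colon [0,l] \to \mathbb{R}$ with $f(0) = 0$ and $f(l) = \langle \wtalpha, \comu \rangle$, the signed count of transversal crossings of the horizontal level $k = n + \tfrac12$ equals
\begin{equation*}
    \operatorname{sgn}\bigl(f(l) - (n+\tfrac12)\bigr) - \operatorname{sgn}\bigl(f(0) - (n+\tfrac12)\bigr),
\end{equation*}
divided by $2$ (or, equivalently, is $+1$ if the path starts below and ends above, $-1$ if it starts above and ends below, and $0$ if the endpoints are on the same side of the line). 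This is the discrete intermediate-value principle; it is proved by induction on the number of linear segments, or equivalently by the fact that each transversal crossing flips the side of the line that the path is currently on.

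With this in hand, the three cases of the proposition correspond exactly to the three possible relative positions of $n+\tfrac12$ with respect to $0$ and $\langle \wtalpha, \comu \rangle$. If $0 < n+\tfrac12 < \langle \wtalpha, \comu \rangle$, the path begins below the line and ends above, contributing a net of $+1$ to $N_+ - N_-$. If $\langle \wtalpha, \comu \rangle < n+\tfrac12 < 0$, the path begins above and ends below, contributing $-1$. Otherwise, both endpoints lie strictly on the same side of the line, giving $N_+ - N_- = 0$.

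There is no real obstacle here; the only care needed is to verify the sign conventions for $N_+$ and $N_-$ agree with the definitions (that is, $N_+$ counts the crossings where the path moves from below to above as $i$ increases) and to confirm that the half-integer offset $n + \tfrac12$ makes every crossing transversal and hence well-defined, independently of whether an intermediate vertex $(i, \langle \wtalpha, \sigmai{i} \rangle)$ lies on the line.
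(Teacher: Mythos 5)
Your argument is correct and is exactly the argument the paper has in mind: its entire proof reads ``this is a consequence of elementary topology,'' and your signed-crossing/intermediate-value computation, together with the observation that the half-integer level $n+\tfrac12$ never passes through a vertex of $\Gamma^{\wtalpha}_p$ so every crossing is transversal, is precisely that topology made explicit. Your reading of the sign convention (that $N_+$ counts crossings from below to above as $i$ increases) is the one forced by the statement itself, so no further work is needed.
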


\begin{proof}
This is a consequence of elementary topology.
\end{proof}

These propositions allow us to get the following similar description in terms of $N_-\left(\wtalpha,n+\dfrac{1}{2}\right)$.

\begin{corollary} \label{CorMultiplicities}
Let $\wtalpha$ be a root, and $n\in\mathbb{Z}$. Then the multiplicity of weight $\wtalpha+n\hbar$ in $T_p X$ is
\begin{enumerate}
\item
$N_-\left(\wtalpha,n+\dfrac{1}{2}\right)-1$ if $\left\langle \wtalpha, \comu \right\rangle < n+\dfrac{1}{2} < 0$,
\item
$N_-\left(\wtalpha,n+\dfrac{1}{2}\right)$ otherwise.
\end{enumerate}
\end{corollary}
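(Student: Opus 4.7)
The plan is simply to combine \refProp{PropMultiplisities} with the preceding proposition relating $N_+$ and $N_-$, performing a case-by-case substitution based on the position of $n + \tfrac{1}{2}$ relative to $0$ and $\left\langle \wtalpha, \comu \right\rangle$.

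I would treat three cases. First, when $0 < n + \tfrac{1}{2} < \left\langle \wtalpha, \comu \right\rangle$, \refProp{PropMultiplisities} gives the multiplicity as $N_+\left(\wtalpha, n + \tfrac{1}{2}\right) - 1$, while the relation gives $N_+\left(\wtalpha, n + \tfrac{1}{2}\right) = N_-\left(\wtalpha, n + \tfrac{1}{2}\right) + 1$. Substituting, the multiplicity equals $N_-\left(\wtalpha, n + \tfrac{1}{2}\right)$, landing in the "otherwise" branch of the corollary, as required. Second, when $\left\langle \wtalpha, \comu \right\rangle < n + \tfrac{1}{2} < 0$, \refProp{PropMultiplisities} gives the multiplicity as $N_+\left(\wtalpha, n + \tfrac{1}{2}\right)$, while the relation gives $N_+\left(\wtalpha, n + \tfrac{1}{2}\right) = N_-\left(\wtalpha, n + \tfrac{1}{2}\right) - 1$. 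Substituting yields $N_-\left(\wtalpha, n + \tfrac{1}{2}\right) - 1$, matching the first branch of the corollary. Third, in all remaining cases the relation gives $N_+ = N_-$ and the multiplicity from \refProp{PropMultiplisities} is $N_+$, so both formulations agree.

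Since both ingredients are already established, there is no genuine obstacle; the only thing to double-check is that the case partitions in \refProp{PropMultiplisities} and in the $N_+$-vs-$N_-$ proposition are compatible, i.e.\ that the interval $0 < n + \tfrac{1}{2} < \left\langle \wtalpha, \comu \right\rangle$ on which $N_+$ receives the $-1$ correction is exactly the one on which $N_+ = N_- + 1$, and similarly for the reflected interval, so that the "off-by-one" terms cancel asymmetrically in the way stated. This is immediate from inspecting the definitions, since both corrections arise from the same boundary effect at the right end $i = l$ where the Right Condition \refEq{RightCond} suppresses weight spaces of $\GrTangent{\comu}^{\wtalpha}$ lying on the segment between $0$ and $\left\langle \wtalpha, \comu \right\rangle$.
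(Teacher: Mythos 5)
Your substitution argument is correct and is exactly the paper's (unwritten) ``straightforward case-by-case consideration'': the three cases check out and the off-by-one terms combine as you describe. The only slight inaccuracy is in your closing remark---the $\pm 1$ in the $N_+$ versus $N_-$ relation is a purely topological fact about the endpoints of the graph $\Gamma^{\wtalpha}_p$ (it runs from height $0$ to height $\left\langle \wtalpha, \comu \right\rangle$), whereas the $-1$ in \refProp{PropMultiplisities} comes from the Right Condition; they are distinct effects that happen to be governed by the same case partition, and in any event this does not affect the validity of the substitution.
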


\begin{proof}
Straightforward case-by-case consideration.
\end{proof}

\begin{remark}
We made the shift of $n$ by $\dfrac{1}{2}$ to make it manifest that the multiplicities of $\wtalpha + n\hbar$ and $-\wtalpha - (n+1)\hbar$ are the same. Indeed, 
\begin{equation*}
-(n+\dfrac{1}{2}) = -(n+1)+\dfrac{1}{2}
\end{equation*}
and $\Gamma^{-\wtalpha}_p$ is a reflection of $\Gamma^{\wtalpha}_p$, so in these cases we're looking at mirror pictures of intersections. The roles of "going upwards" and "going downwards" are swapped, so
\begin{align*}
N_+\left(\wtalpha,n+\dfrac{1}{2}\right)&=N_-\left(-\wtalpha,-(n+1)+\dfrac{1}{2}\right) \\
N_-\left(\wtalpha,n+\dfrac{1}{2}\right)&=N_+\left(-\wtalpha,-(n+1)+\dfrac{1}{2}\right)
\end{align*}
and the conditions
\begin{equation*}
0<n+\dfrac{1}{2}<\left\langle \wtalpha, \comu \right\rangle
\Longleftrightarrow
\left\langle -\wtalpha, \comu \right\rangle < -(n+1)+\dfrac{1}{2} < 0
\end{equation*}
are equivalent.

This is expected due to the presence of the symplectic structure of the weight $\hbar$, which gives a non-degenerate pairing between the spaces of these weights.

Moreover, one can keep track which tangent vectors have a non-zero pairing by the symplectic form. It is non-zero if the end of the segment corresponding to one vector is the beginning of a segment for another vector. It is not hard to compute the explicit number (depending on normalization of the symplectic form), but since we will not need it later we don't compute it here.
\end{remark}

Finally, \refThm{ThmLatticeMultiplicities} follows from \refCor{CorRootDecomposition}, \refProp{PropMultiplisities}, and \refCor{CorMultiplicities}. It describes the multiplicities in a way which does not refer to the graph of pairings and might be convenient for a reader who wants to know only about weight multiplicities, not how the tangent vectors behave.

The gap between the statement of \refThm{ThmLatticeMultiplicities} and the preceding statements is closed by the following two notes.

\begin{itemize}
\item
The intersections of graph $\Gamma^{\wtalpha}_p$ with the line $k=n+\dfrac{1}{2}$ are in bijection with the intersection of the hyperplane $\left\langle \bullet, \wtalpha \right\rangle - \left( n + \dfrac{1}{2} \right) = 0$ with the piecewise linear path $P_p$ corresponding to the fixed point $p$ (i.e. connecting the points in the coweight lattice $0 = \sigmai{0}, \sigmai{1},\dots, \sigmai{l-1}, \sigmai{l} = \comu$ );
\item
To avoid the second case in both \refProp{PropMultiplisities} and \refCor{CorMultiplicities} we can take the number $N_0\left(\wtalpha,n+\dfrac{1}{2}\right)$ of intersections with the path going in the direction of $k=0$. In the language of lattice paths it will be all intersections with hyperplanes where the path goes to the halfspace containing the origin (i.e. "returns" back to the origin halfspace, since it starts at the origin).
\end{itemize}

So, finally, we proved \refThm{ThmLatticeMultiplicities}.

\subsection{\texorpdfstring{$\T$}{T}-equivariant line bundles}

Let us first recall the facts about line bundles over $\Gr$. The reader can find these results in \cite{KNR}.

\begin{theorem}
	\leavevmode
	\begin{enumerate}
		\item
			There is an abelian group isomorphism
			\begin{equation*}
				\Pic\Gr \simeq \mathbb{Z}
			\end{equation*}
		\item
			$\Gr$ is ind-projective with respect to one of the generators $\OOO{1}\in \Pic\Gr$, i.e. there is a stratification of $\Gr$:
			\begin{equation*}
				\Gr_1 \subset \dots \Gr_i \subset \dots \subset \Gr, \quad i \in \mathbb{N}
			\end{equation*}
			such that all $\Gr_i$ are projective with respect to $\OOO{1}$ and $\bigcup\limits_i \Gr_i = \Gr$.
\end{enumerate}
\end{theorem}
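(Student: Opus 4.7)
The plan is to combine the Schubert-type stratification of \refProp{PropGrCells} with the central-extension arguments of \cite{KNR}. The starting observation is that $\Gr$ is the increasing union of the finite-dimensional projective subvarieties $\overline{\Gr^{\colambda}}$ indexed by dominant coweights, each of which carries an affine cell decomposition by \refCor{CorTCellSuructure}. This reduces both statements to analyzing finite-dimensional truncations.

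For part (2), I would set
\begin{equation*}
    \Gr_i
    =
    \bigcup_{\langle 2\wtrho,\, {\colambda} \rangle \leq i}
    \overline{\Gr^{\colambda}}.
\end{equation*}
By \refProp{PropGrCells} this is a finite union of closed finite-dimensional subvarieties, and together they exhaust $\Gr$. The line bundle $\OOO{1}$ is constructed, following \cite{KNR}, as the pullback of the hyperplane bundle under an equivariant closed embedding of $\Gr_i$ into a projective space $\ProjSpace\left( V \right)$, where $V$ is a finite-dimensional truncation of an integrable level-one $\GLangDual$-module. Since this embedding lands in a finite-dimensional projective space, $\OOO{1}\vert_{\Gr_i}$ is very ample and each $\Gr_i$ is projective.

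For part (1), I would use the Schubert stratification to compute each $\Pic \overline{\Gr^{\colambda}}$ as the quotient of the free abelian group on closures of codimension-one Schubert cells modulo relations coming from higher-codimension cells. Taking the inverse limit $\Pic \Gr = \varprojlim \Pic \Gr_i$ along the restriction maps, one tracks how adding new $\overline{\Gr^{\colambda}}$ to the filtration affects the Picard groups: on each connected component of $\Gr$ only a single codimension-one Schubert divisor class survives in the limit, and it is represented by the restriction of $\OOO{1}$. Non-divisibility of this class can then be checked by restricting to any $\PLine \subset \Gr$ coming from a simple affine $\SL{2}$-subgroup, where $\OOO{1}$ pulls back to the hyperplane bundle and hence cannot be a nontrivial multiple.

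The main obstacle is the non-divisibility statement for the generator: proving that the rank-one group that appears has no torsion and that $\OOO{1}$ itself, rather than $\OOO{k}$ for some $k>1$, is the generator. This is equivalent to identifying the level-one central extension of $\GK$ and computing $H^{2}(\gK{t},\mathbb{C})$, which is precisely the content of \cite{KNR}. Once that input is invoked, both statements of the theorem follow from formal consequences of the Schubert stratification as above.
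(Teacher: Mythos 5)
The paper offers no argument of its own for this theorem: its entire proof is the citation to \cite{KNR}. Your outline of the reduction --- exhaust $\Gr$ by finite unions of orbit closures, embed these into a projectivized integrable level-one module to obtain ind-projectivity and the ample generator $\OOO{1}$, and test non-divisibility on a $\PLine$ inside $\Gr$ --- has the right overall shape and is essentially how \cite{KNR} proceeds, and since you ultimately defer the genuinely hard points to that reference, your proposal is no weaker than what the paper records.

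However, two of the intermediate steps you supply are wrong or unjustified as written. First, the level-one integrable representation in question is a module for the affine Kac--Moody group $\KM{\G}$, i.e.\ the central extension of $\GK\rtimes\LoopGm$ that the paper introduces in the appendix, not for the Langlands dual group $\GLangDual$; the dual group enters this story only through geometric Satake, not through the Picard group. Second, presenting $\Pic\overline{\Gr^{\colambda}}$ as the free abelian group on codimension-one Schubert strata modulo relations from higher codimension is the computation of the divisor class group (equivalently of $A_{d-1}$) furnished by an affine stratification; the Schubert varieties $\overline{\Gr^{\colambda}}$ are in general singular and not locally factorial, so $\Pic$ is a priori only a subgroup of that group, and the claim that exactly one divisor class survives in the inverse limit is precisely the nontrivial content one must prove. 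In \cite{KNR} this is done by analyzing homogeneous line bundles on the Kac--Moody flag variety and their restrictions to Schubert varieties, not by a stratification count. There is also a mild circularity in declaring each $\overline{\Gr^{\colambda}}$ projective before the embedding into a projective space has been produced; properness of these orbit closures should be derived from that embedding (or from the $\GO$-orbit structure), not assumed. As a pointer to \cite{KNR} the proposal is acceptable, but the scaffolding you add would need these repairs to stand on its own.
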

\begin{proof}
See \cite{KNR} for proofs.
\end{proof}

One approach to $\OOO{1}$ comes from the theory of Kac-Moody groups.

Let us first assume that $\G$ is simply-connected. Then there is a non-trivial central extension

\begin{equation*}
\begin{tikzcd}
    1 \arrow[r] & \Gm \arrow[r] & \KM{\G} \arrow[r] & \GK \rtimes \LoopGm  \arrow[r] & 1
\end{tikzcd}
\end{equation*}

where $\KM{\G}$ is the Kac-Moody group for $\G$. This is a non-trivial $\Gm$-principal bundle. The associated line bundle over $\GK$ trivializes on right $\GO$-orbits and gives rise to the line bundle $\OOO{1}$ on $\Gr$.

This $\Gm$-bundle trivializes over orbits of the right $\GO$-action on $\GK$. This gives a principle $\Gm$-bundle on the quotient, $\Gr$. The associated line bundle is $\OOO{1}$.

If $\G$ is not simply-connected, $\Gr$ has several isomorphic connected components, each isomorphic to the affine Grassmannian of the associated simply-connected group\cites{MVi1,MVi2}. This gives an independent $\OOO{1}$ on each component. When we work with the resolutions of slices, each flag does not leave the connected component since the resolutions are connected. Thus, we can ignore the fact that on different components of $\Gr$ the power of $\OOO{1}$ can be chosen differently.

\medskip

A representation-theoretic description generally gives only a power of $\OOO{1}$, but naturally comes with a $\GK \rtimes \LoopGm$-equivariant structure.

Let $V$ be a nontrivial finite-dimensional complex representation of $\G$. Then by extending the scalars $V_\K = V\otimes_{\mathbb{C}} \K$ we get a representation of $\GK$. Moreover, loop scaling $\LoopGm$ (i.e. scaling $t$) extends this to the action of $\GK \rtimes \LoopGm$.

Consider $V_\OO = V\otimes_{\mathbb{C}} \OO$. This is a $\GO$-invariant subspace of $V_\K$. Then for $g \in \GK$ the subspace $g V_\OO$ depends only on the image of $g$ in $\Gr$. This gives a $\GK \rtimes \LoopGm$-equivariant vector bundle $\mathcal{V}$ of infinite rank over $\Gr$.

Informally, we would like to consider the determinant line bundle of $\mathcal{V}$. This is ill-defined because of the infinite rank. Then one can correct the definition by "subtracting" an infinite part of $\mathcal{V}$ by "subtracting" a constant bundle, let us say the constant vector bundle $\mathcal{V}_0$ with fibers $V_\OO$. Unfortunately, $\mathcal{V}_0$ is not a subbundle of $\mathcal{V}$. However, one can make sense of the virtual bundle $\mathcal{V} - \mathcal{V}_0$ by
\begin{equation*}
	\mathcal{V} - \mathcal{V}_0
	=
	\mathcal{V}/\mathcal{V}\cap\mathcal{V}_0 - \mathcal{V}_0/\mathcal{V}\cap\mathcal{V}_0,
\end{equation*}
since both quotient on RHS are coherent sheaves.

Applying the determinant to this object, we get a line bundle
\begin{equation*}
	\LL_V
	=
	\dfrac
	{
		\det 
		\left(
			\mathcal{V}
			/
			\mathcal{V}
			\cap
			\mathcal{V}_0
		\right)
	}
	{
		\det 
		\left(
			\mathcal{V}_0
			/
			\mathcal{V}
			\cap
			\mathcal{V}_0
		\right)
	}
\end{equation*}

To compare $\LL_V$ with $\OOO{1}$, we need the notion of a Dynkin index\cite{Dy}. The representation $V$ gives a scalar product on $\mathfrak{g}$ by $\Tr_V \left( \xi \eta \right) $ for $\xi, \eta \in \mathfrak{g}$. Restricting to the real Cartan $\mathfrak{a}_\mathbb{R} = \CocharLattice{\A}\otimes_\mathbb{Z} \mathbb{R} \subset \mathfrak{a}$ (here we identify a cocharacter with its derivative at $\UnitG$) we get a Weyl-invariant positive-definite scalar product $\left( \bullet, \bullet \right)_V$. For a simple $\mathfrak{g}$ there is only one such scalar product up to a multiple. Thus, we can write
\begin{equation*}
	\left(
		\bullet,
		\bullet
	\right)_V
	=
	d_V
	\CorootScalar
	{
		\bullet
	}
	{
		\bullet
	}
\end{equation*}
for $d_V \in \mathbb{R},$ $d_V>0$. With normalization of $\CorootScalar{\bullet}{\bullet}$ we use (i.e. the length squared for the shortest coroot is $2$) the number $d_V$ is an integer. The number $d_V$ is called the Dynkin index of $V$. For example, if $V$ is the defining representation of $\SL{n}$, then $d_V = 1$.

\begin{proposition} \label{PropODynkinIndex}
	$\LL_V = \OOO{d_V}$.
\end{proposition}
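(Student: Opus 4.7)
The plan is to exploit the fact that $\Pic \Gr \cong \mathbb{Z}$ is generated by $\OOO{1}$ to reduce the proposition to identifying a single integer. Writing $\LL_V \cong \OOO{n_V}$, it suffices to compute $c_1(\LL_V)$ in $H^2(\Gr)$ and show it equals $d_V \cdot c_1(\OOO{1})$. I would do this in the equivariant setting: the restriction map $H^2_\T(\Gr) \to H^2_\T(\Gr^\T) = \bigoplus_\colambda H^2_\T(\Tfixed{\colambda})$ is injective on the degree-two part by the standard ind-cell / GKM argument, so it is enough to match the $\T$-weights of the two line bundles at every cocharacter $\colambda$.

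To compute the $\T$-weight of $\LL_V$ at $\Tfixed{\colambda}$, I would decompose $V = \bigoplus_\wtmu V_\wtmu$ into $\A$-weight spaces. The fiber of $\mathcal{V}$ at $\Tfixed{\colambda}$ equals $t^\colambda V_\OO = \bigoplus_\wtmu V_\wtmu \otimes t^{\langle\wtmu,\colambda\rangle}\OO$, and comparison with $\mathcal{V}_0|_{\Tfixed{\colambda}} = \bigoplus_\wtmu V_\wtmu \otimes \OO$ proceeds weight-by-weight: for each $\wtmu$, the virtual piece $V_\wtmu \otimes (t^{\langle\wtmu,\colambda\rangle}\OO \ominus \OO)$ contributes a signed sum of the characters $\wtmu + k\hbar$ with $k$ running between $0$ and $\langle\wtmu,\colambda\rangle$. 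Summing over $\wtmu$ and splitting the result into its $\A$-part and its $\hbar$-part, the total collapses into the two trace invariants $\mathrm{Tr}_V(\colambda) = \sum_\wtmu \dim V_\wtmu \langle\wtmu,\colambda\rangle$ and $\mathrm{Tr}_V(\colambda^2) = \sum_\wtmu \dim V_\wtmu \langle\wtmu,\colambda\rangle^2$. The first vanishes because any element of a simple Lie algebra acts with trace zero on any finite-dimensional representation, and the second equals $(\colambda,\colambda)_V = d_V \CorootScalar{\colambda}{\colambda}$ by the very definition of the Dynkin index.

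Comparing with the weight of $\OOO{1}$ at $\Tfixed{\colambda}$, obtained as the one-step case of \refProp{PropLWeights} and equal to $\CorootScalar{\colambda}{\bullet} + \tfrac{\hbar}{2}\CorootScalar{\colambda}{\colambda}$, the preceding computation shows $c_1^\T(\LL_V)|_{\Tfixed{\colambda}} = d_V \cdot c_1^\T(\OOO{1})|_{\Tfixed{\colambda}}$ for every cocharacter $\colambda$. By the fixed-point injectivity quoted above this promotes to the equality $c_1(\LL_V) = d_V \cdot c_1(\OOO{1})$ in $H^2(\Gr, \mathbb{Z})$, and the injection $\Pic \Gr \hookrightarrow H^2(\Gr, \mathbb{Z})$ (torsion-free of rank one) then upgrades this class-level identity to the desired isomorphism $\LL_V \cong \OOO{d_V}$.

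The main technical obstacle is sign bookkeeping: both the construction of $\LL_V$ as a ratio of determinants and the normalization of $\OOO{1}$ as the positive generator of $\Pic \Gr$ are pinned down only up to inversion, so ensuring that the final coefficient is $+d_V$ rather than $-d_V$ requires either tracing through one explicit test case (for instance, the defining representation of $\SL{n}$ restricted to $\overline{\Gr^{\coomega_1}} \cong \ProjSpace^{n-1}$, where both $\LL_V$ and $\OOO{1}$ can be seen by hand to restrict to $\OOO{1}$ on $\ProjSpace^{n-1}$), or invoking the Kac-Moody description of $\OOO{1}$ as the generator of the basic central extension normalized so that the first fundamental representation has level one. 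A secondary (and purely bookkeeping) subtlety is making the virtual bundle $\mathcal{V} - \mathcal{V}_0$ well-defined on the ind-scheme $\Gr$ by working on successive finite-type strata where $\mathcal{V}\cap\mathcal{V}_0$ has finite codimension in both factors, after which the computation above goes through verbatim on each stratum.
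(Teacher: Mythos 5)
The paper does not actually prove this proposition; it is quoted directly from \cite{KNR}, so any self-contained argument is necessarily a different route. Your central computation --- that the $\T$-weight of $\LL_V$ at $\Tfixed{\colambda}$ equals $\left(\colambda,\bullet\right)_V + \tfrac{\hbar}{2}\left(\colambda,\colambda\right)_V$ --- is correct and is exactly the paper's \refProp{PropLineWeights}, proved there by the same weight-by-weight comparison of $\mathcal{V}/\mathcal{V}\cap\mathcal{V}_0$ with $\mathcal{V}_0/\mathcal{V}\cap\mathcal{V}_0$ and the same collapse to the traces $\Tr_V(\colambda\,\bullet)$ and $\Tr_V(\colambda\colambda)$ via the Dynkin index. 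The reduction to identifying a single integer via $\Pic\Gr\cong\mathbb{Z}$ is also fine.

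The gap is in the comparison step. You take the fixed-point weights of $\OOO{1}$ from \refProp{PropLWeights}, but in this paper that formula is a consequence of \refCor{CorOWeights}, which is itself derived from the very proposition you are proving: the $\T$-equivariant structure on $\OOO{1}$ is \emph{defined} as the $d_V$-th root of that of $\LL_V$, using $\LL_V=\OOO{d_V}$ as input. As written, your argument is therefore circular. What is needed is an independent normalization of $\OOO{1}$ --- for instance as the line bundle of degree one on the $\T$-invariant $\PLine$ generating $H_2(\Gr,\mathbb{Z})$ (the $\SL{2}$-orbit attached to the affine simple root), or equivalently as the level-one bundle of the basic central extension --- followed by a verification that $\LL_V$ has degree $d_V$ on that same curve. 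Your weight formula does make the latter accessible (the degree on a $\T$-invariant curve is the difference of the fixed-point weights divided by the tangent weight), but it is precisely this normalization computation that carries the content of the KNR theorem, and neither of your proposed workarounds supplies it: the defining representation of $\SL{n}$ only settles type $A$, and appealing to the ``level one'' characterization essentially restates the result to be proved.
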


\begin{proof}
	The proof can be found in \cite{KNR}.
\end{proof}

We will see the traces of this fact in the computation of the weights at $\T$-fixed points.

\begin{proposition} \label{PropLineWeights}
	For the line bundle $\LL_V$, the $\T$-weight of the fiber at a $\T$-fixed point $\Tfixed{\colambda}$ is
	\begin{equation*}
		\left(
			\colambda,
			\bullet
		\right)_V
		+
		\dfrac{\hbar}{2}
		\left(
			\colambda,
			\colambda
		\right)_V
	\end{equation*}
\end{proposition}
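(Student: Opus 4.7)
The plan is to compute the weight directly by identifying the fiber, decomposing into $\A$-weight subspaces, and doing the explicit arithmetic. The fiber of $\mathcal{V}$ at $\Tfixed{\colambda}$ is the subspace $t^\colambda V_\OO\subset V_\K$, and the $\T$-action on it is the restriction of the $\T$-action on $V_\K$. Decompose $V=\bigoplus_\wtmu V_\wtmu$ into $\A$-weight subspaces with $n_\wtmu=\dim V_\wtmu$; then a weight vector of the form $v\,t^k$ with $v\in V_\wtmu$ has $\T$-weight $\wtmu+k\hbar$. In weight $\wtmu$ the space $(V_\OO)^\wtmu$ is spanned by $v\,t^k$ with $k\geq 0$, the space $(t^\colambda V_\OO)^\wtmu$ by $v\,t^k$ with $k\geq \langle\wtmu,\colambda\rangle$, and their intersection by $v\,t^k$ with $k\geq \max(0,\langle\wtmu,\colambda\rangle)$. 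This determines bases of $(\mathcal{V}/\mathcal{V}\cap\mathcal{V}_0)_{\Tfixed{\colambda}}$ and $(\mathcal{V}_0/\mathcal{V}\cap\mathcal{V}_0)_{\Tfixed{\colambda}}$ in each weight.

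Write $m_\wtmu=\langle\wtmu,\colambda\rangle$. When $m_\wtmu>0$ only the numerator of $\LL_V$ contributes in weight $\wtmu$, with total $\T$-weight
\begin{equation*}
m_\wtmu n_\wtmu \wtmu+n_\wtmu\hbar\sum_{k=0}^{m_\wtmu-1}k=m_\wtmu n_\wtmu\wtmu+\frac{m_\wtmu(m_\wtmu-1)}{2}n_\wtmu\hbar.
\end{equation*}
When $m_\wtmu<0$ only the denominator contributes; subtracting its weight and using $\sum_{k=m_\wtmu}^{-1}k=-m_\wtmu(m_\wtmu-1)/2$ yields the same expression. Summing over all $\wtmu$,
\begin{equation*}
\text{wt}(\LL_V)|_{\Tfixed{\colambda}}
=\sum_\wtmu n_\wtmu m_\wtmu\wtmu
+\frac{\hbar}{2}\sum_\wtmu n_\wtmu m_\wtmu^2
-\frac{\hbar}{2}\sum_\wtmu n_\wtmu m_\wtmu.
\end{equation*}

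Finally I would identify each term. The last sum vanishes: $\sum_\wtmu n_\wtmu\wtmu=0$ as a functional on $\mathfrak{a}$, because every element of $\mathfrak{h}$ is a sum of brackets in the simple Lie algebra $\mathfrak{g}$ and hence acts with trace zero in any finite-dimensional representation. The middle sum equals $\sum_\wtmu n_\wtmu\wtmu(\colambda)^2=\Tr_V(\colambda\,\colambda)=(\colambda,\colambda)_V$ by the definition of the trace form. For the first sum, viewed as a linear functional on $\mathfrak{a}$ it sends $\xi$ to $\sum_\wtmu n_\wtmu\wtmu(\colambda)\wtmu(\xi)=\Tr_V(\colambda\,\xi)=(\colambda,\xi)_V$, so it equals $(\colambda,\bullet)_V$. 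Combining these three pieces gives the stated formula.

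There is no real obstacle here; the proof is a direct weight bookkeeping. The only mildly delicate point is checking that the two sign cases $m_\wtmu\gtrless 0$ combine into the single closed expression $m_\wtmu n_\wtmu\wtmu+\tfrac{m_\wtmu(m_\wtmu-1)}{2}n_\wtmu\hbar$, which is a short arithmetic check, and noting that the linear-in-$\hbar$ correction $-\tfrac{\hbar}{2}\sum n_\wtmu m_\wtmu$ disappears precisely by the vanishing of the trace of Cartan elements on $V$.
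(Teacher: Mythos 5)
Your proof follows the same route as the paper's: decompose $V_\K$ into $\A$-weight spaces, list the $\hbar$-degrees occurring in the two quotients $\mathcal{V}/\mathcal{V}\cap\mathcal{V}_0$ and $\mathcal{V}_0/\mathcal{V}\cap\mathcal{V}_0$ at $\Tfixed{\colambda}$, sum the weights, and convert the three resulting sums into $\Tr_V(\colambda\,\bullet)$, $\Tr_V(\colambda\,\colambda)$ and $\Tr_V(\colambda)$, killing the last one because $\mathfrak{g}=[\mathfrak{g},\mathfrak{g}]$. The trace identities and the vanishing argument are exactly the paper's. Your degree ranges are shifted by one relative to the paper's ($0\le k\le m_{\wtmu}-1$ versus $1\le k\le m_{\wtmu}$), but the discrepancy is $\hbar\sum_{\wtmu} n_{\wtmu} m_{\wtmu}=\hbar\Tr_V(\colambda)=0$, so it is invisible in the final answer.

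There is one step you should repair. From your own identification of the fiber of $\mathcal{V}$ at $\Tfixed{\colambda}$ as $t^{\colambda}V_\OO$, spanned in weight $\wtmu$ by $v\,t^k$ with $k\ge\langle\wtmu,\colambda\rangle$, it follows that for $m_{\wtmu}=\langle\wtmu,\colambda\rangle>0$ one has $\mathcal{V}\subset\mathcal{V}_0$ in that weight, so the \emph{numerator} $\mathcal{V}/\mathcal{V}\cap\mathcal{V}_0$ vanishes and it is the \emph{denominator} $\mathcal{V}_0/\mathcal{V}\cap\mathcal{V}_0$ that is spanned by $k=0,\dots,m_{\wtmu}-1$; symmetrically, for $m_{\wtmu}<0$ it is the numerator that is spanned by $k=m_{\wtmu},\dots,-1$. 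You attribute each range to the opposite quotient, which contradicts the sentence immediately before it, and carrying your subspace description through consistently would flip the global sign of the result to $-(\colambda,\bullet)_V-\tfrac{\hbar}{2}(\colambda,\colambda)_V$. The paper's proof makes the same case-assignment you do (numerator nonzero when $\langle\wtmu,\colambda\rangle>0$) without writing out the subspaces explicitly, so your final formula agrees with the proposition as stated; but you should either swap the two labels and absorb the resulting sign into the convention defining $\LL_V$ (i.e.\ which of $\mathcal{V}-\mathcal{V}_0$ or $\mathcal{V}_0-\mathcal{V}$ is the intended virtual bundle), or adjust the fiber identification, so that the two halves of your argument are compatible. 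Everything else — combining the two cases into the single expression $m_{\wtmu} n_{\wtmu}\wtmu+\tfrac{m_{\wtmu}(m_{\wtmu}-1)}{2}n_{\wtmu}\hbar$ and the identification of the three sums with $(\colambda,\bullet)_V$, $(\colambda,\colambda)_V$ and $\Tr_V(\colambda)=0$ — is correct.
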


\begin{proof}
	For the determinant line bundle, the weight of the fiber is the sum of weights of $\T$-eigenspaces counted with multiplicity.
	
	Let us denote the set of all the weights of $V$ as $W_V$. For a $\wtmu \in W_V$ we also denote by $m_{\wtmu}$ the dimention of the subspace in $V$ of weight $\wtmu$.
	
	Consider the fiber of $\mathcal{V} / \mathcal{V}\cap\mathcal{V}_0$ at $\Tfixed{\colambda}$. By \refLem{LemWtShift} we find that the weights of the $\T$ -eigenspaces are of the form $\wtmu + k\hbar$ with an integer $k$ and a weight $\wtmu \in W_V$ such that $0 < k \leq \left\langle \wtmu, \colambda \right\rangle$. All multiplicities are $1$. The sum of all weights is
	\begin{equation*}
		\sum\limits
		_
		{
			\begin{smallmatrix}
				\wtmu \in W_V
				\\
				\left\langle 
					\wtmu
					,
					\colambda
				\right\rangle 
				> 0
			\end{smallmatrix}
		}
		m_{\wtmu}
		\sum\limits
		_{k = 1}
		^
		{
			\left\langle 
				\wtmu
				,
				\colambda
			\right\rangle
		}
		\wtmu + k\hbar
		=
		\sum\limits
		_
		{
			\begin{smallmatrix}
				\wtmu \in W_V
				\\
				\left\langle 
					\wtmu
					,
					\colambda
				\right\rangle 
				> 0
			\end{smallmatrix}
		}
		m_{\wtmu}
		\left\langle 
			\wtmu
			,
			\colambda
		\right\rangle
		\left[
			\wtmu
			+ 
			\dfrac
			{
				\hbar
			}
			{
				2
			}
			\big(
				\left\langle 
					\wtmu
					,
					\colambda
				\right\rangle
				+1
			\big)
		\right].
	\end{equation*}
	
	Similarly, the fiber of $\mathcal{V}_0 / \mathcal{V}\cap\mathcal{V}_0$ has $\T$ -eigenspaces of multiplicity $1$ and the weights are of the form $\wtmu + k\hbar$ for an integer $k$ and a weight $\wtmu \in W_V$ such that $\left\langle \wtmu, \colambda \right\rangle < k \leq 0$. The sum is
	
	\begin{equation*}
		\sum\limits
		_
		{
			\begin{smallmatrix}
				\wtmu \in W_V
				\\
				\left\langle 
					\wtmu
					,
					\colambda
				\right\rangle 
				< 0
			\end{smallmatrix}
		}
		m_{\wtmu}
		\sum\limits
		_
		{
			k 
			=
			\left\langle 
				\wtmu
				,
				\colambda
			\right\rangle
			+1
		}
		^
		{
			0
		}
		\wtmu + k\hbar
		=
		\sum\limits
		_
		{
			\begin{smallmatrix}
				\wtmu \in W_V
				\\
				\left\langle 
					\wtmu
					,
					\colambda
				\right\rangle 
				< 0
			\end{smallmatrix}
		}
		m_{\wtmu}
		\left\langle 
			-\wtmu
			,
			\colambda
		\right\rangle
		\left[
			\wtmu
			+ 
			\dfrac
			{
				\hbar
			}
			{
				2
			}
			\big(
				\left\langle 
					\wtmu
					,
					\colambda
				\right\rangle
				+1
			\big)
		\right]
	\end{equation*}
	
	Subtracting these contributions by extending the summation to include $\wtmu \in W_V$ with $\left\langle \wtmu, \colambda \right\rangle = 0$, we find the weight of $\LL_V$
	\begin{equation} \label{EqLWeightComputation}
		\sum\limits_{\wtmu \in W_V}
		m_{\wtmu}
		\left\langle 
			\wtmu
			,
			\colambda
		\right\rangle
		\wtmu
		+
		\dfrac{\hbar}{2}
		\sum\limits_{\wtmu \in W_V}
		m_{\wtmu}
		\left\langle 
			\wtmu
			,
			\colambda
		\right\rangle^2
		+
		\dfrac{\hbar}{2}
		\sum\limits_{\wtmu \in W_V}
		\left\langle 
			m_{\wtmu}
			\wtmu
			,
			\colambda
		\right\rangle
	\end{equation}
	
	First, let us write $\comu$ in the first term of \refEq{EqLWeightComputation} as $\left\langle \wtmu, \bullet \right\rangle$
	\begin{equation*}
		\sum\limits_{\wtmu \in W_V}
		m_{\wtmu}
		\left\langle 
			\wtmu
			,
			\colambda
		\right\rangle
		\wtmu
		=
		\sum\limits_{\wtmu \in W_V}
		m_{\wtmu}
		\left\langle 
			\wtmu
			,
			\colambda
		\right\rangle
		\left\langle 
			\wtmu
			,
			\bullet
		\right\rangle
	\end{equation*}
	Then, note that $\left\langle \wtmu, \colambda \right\rangle$ is the eigenvalue of $\colambda$ acting on the eigenspace of weight $\comu$. This allows us to rewrite \refEq{EqLWeightComputation} in terms of traces:
	\begin{equation*}
		\Tr_V
		\left(
			\colambda
			\bullet
		\right)
		+
		\dfrac{\hbar}{2}
		\Tr_V
		\left(
			\colambda
			\colambda
		\right)
		+
		\dfrac{\hbar}{2}
		\Tr_V
		\left(
			\colambda
		\right)
	\end{equation*}
	
	The last term vanishes, since $\mathfrak{g}$ is simple, that is, $\mathfrak{g} = [\mathfrak{g},\mathfrak{g}]$ and every element can be written as a sum of commutators. Rewriting the result using the scalar product, we get
	\begin{equation*}
		\left(
			\colambda,
			\bullet
		\right)_V
		+
		\dfrac{\hbar}{2}
		\left(
			\colambda,
			\colambda
		\right)_V
	\end{equation*}
	as desired.
\end{proof}

By \refProp{PropODynkinIndex} we can define the equivariant structure on $\OOO{1}$ as on $d_V$th root of $\LL_V$. In general, this is an equivariant structure with respect to a connected cover of $\T$. One can make it an honest $\T$-equivariant structure on each connected component of $\Gr$ by tensoring with a certain weight, but we work with the structure we defined because it has a simple formula for the weights at the fixed points.

\begin{corollary} \label{CorOWeights}
	For the line bundle $\OOO{1}$, the $\T$-weight of the fiber at a $\T$-fixed point $\Tfixed{\colambda}$ is
	\begin{equation*}
		\CorootScalar{\colambda}{\bullet}
		+ 
		\dfrac{\hbar}{2}
		\CorootScalar{\colambda}{\colambda}.
	\end{equation*}
\end{corollary}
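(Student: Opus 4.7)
The plan is to extract the statement directly from the two preceding results, \refProp{PropLineWeights} and \refProp{PropODynkinIndex}, by choosing a representation $V$ and dividing by the Dynkin index.

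First I would fix any nontrivial finite-dimensional representation $V$ of $\G$, so that \refProp{PropODynkinIndex} gives $\LL_V \cong \OOO{d_V}$ as line bundles on $\Gr$, where $d_V$ is the Dynkin index defined by $(\bullet,\bullet)_V = d_V\, \CorootScalar{\bullet}{\bullet}$ on $\mathfrak{a}_\mathbb{R}$. Moreover, the equivariant structure on $\OOO{1}$ is \emph{defined} to be the $d_V$th root of the natural $\T$-equivariant structure on $\LL_V$ (on each connected component), so the identification $\LL_V \cong \OOO{d_V}$ is an identification of $\T$-equivariant line bundles.

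Next I would apply \refProp{PropLineWeights} at the fixed point $\Tfixed{\colambda}$ to read off the $\T$-weight of the fiber of $\LL_V$:
\begin{equation*}
	\left(\colambda, \bullet\right)_V
	+
	\dfrac{\hbar}{2}
	\left(\colambda, \colambda\right)_V.
\end{equation*}
Substituting $(\bullet,\bullet)_V = d_V \CorootScalar{\bullet}{\bullet}$, this becomes
\begin{equation*}
	d_V
	\left[
		\CorootScalar{\colambda}{\bullet}
		+
		\dfrac{\hbar}{2}
		\CorootScalar{\colambda}{\colambda}
	\right].
\end{equation*}

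Finally, since $\LL_V \cong \OOO{d_V} = \OOO{1}^{\otimes d_V}$ as $\T$-equivariant line bundles and the $\T$-weight of a tensor power is $d_V$ times the $\T$-weight of $\OOO{1}$, dividing by $d_V$ yields the claimed weight $\CorootScalar{\colambda}{\bullet} + \tfrac{\hbar}{2}\CorootScalar{\colambda}{\colambda}$ for $\OOO{1}$ at $\Tfixed{\colambda}$. There is no real obstacle here since both ingredients are already in place; the only subtlety worth mentioning explicitly is that the equivariant lift of $\OOO{1}$ is the one compatible with the lift of $\LL_V$, which is exactly how it was defined above, so the division by $d_V$ is legitimate on the level of $\T$-weights.
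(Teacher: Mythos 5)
Your argument is correct and is essentially the paper's own proof: the paper likewise deduces the statement immediately from \refProp{PropLineWeights} together with the fact that $\OOO{1}$ carries the equivariant structure of a $d_V$th root of $\LL_V$, so the weight is obtained by dividing by the Dynkin index. You have simply spelled out the substitution $(\bullet,\bullet)_V = d_V\,\CorootScalar{\bullet}{\bullet}$ more explicitly, which is fine.
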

\begin{proof}
	This follows immediately from $\OOO{1}$ being $d_V$-th root of $\LL$ as an equivariant line bundle, and \refProp{PropLineWeights}. 
\end{proof}

In particular, \refCor{CorOWeights} shows that the equivariant structure does not depend on the choice of $V$: changing the equivariant structure would give a constant summand to all fiber weights.

The line bundle $\OOO{1}$ gives a collection of line bundles on $\Gr^{ \ucolambda }_{\comu}$ via coordinate-wise pullbacks:

\begin{equation*}
	\LBundle{i} = \pi_i^* \OOO{1},
\end{equation*}
where
\begin{align*}
	\pi_i
	\colon 
	\Gr^{ \ucolambda }_{\comu} 
	&\to 
	\Gr, 
	\quad 
	0 \leq i \leq l 
	\\
	(L_0,L_1,\dots,L_l)
	&\mapsto 
	L_i.
\end{align*}

Then using \refCor{CorOWeights} and the definition of $\LBundle{i}$ we prove \refProp{PropLWeights}.

\medskip

Given a weight $\wtchi$ we denote the corresponding trivial line bundle as $\ConstWt{\wtchi}$ (i.e. the line bundle where $\T$ scales the fiber by $\wtchi$). The check here means that $\wtchi$ a weight; it has nothing to do with taking the dual line bundle.

\begin{proposition}
    Line bundles $\LBundle{0}$ and $\LBundle{l}$ are trivial. More precisely,
    \begin{align*}
        \LBundle{0} 
        &=
        \ConstWt{0},
        \\
	\LBundle{l} 
	&=
	\ConstWt{\CorootScalar{\comu}{\bullet} + \dfrac{\hbar}{2}\CorootScalar{\comu}{\comu}}.
    \end{align*}
\end{proposition}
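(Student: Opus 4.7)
The plan is to reduce each statement to an identification of the $\T$-equivariant Picard group of a suitable ``contractible'' subspace of $\Gr$ with the character lattice of $\T$, and then to read off the weight using \refCor{CorOWeights}.

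For $\LBundle{0}$: by construction of the convolution $\Gr^{\ucolambda} \subset \Gr^{\times(l+1)}$ we built in the left boundary condition $L_0 = \UnitG \cdot \GO = \Tfixed{0}$. Consequently $\pi_0 \colon X \to \Gr$ is the constant map onto the point $\Tfixed{0}$, so $\LBundle{0} = \pi_0^* \OOO{1}$ is the $\T$-equivariantly trivial line bundle whose fiber weight equals the $\T$-weight of $\OOO{1}$ at $\Tfixed{0}$. By \refCor{CorOWeights} this weight equals $\CorootScalar{0}{\bullet} + \tfrac{\hbar}{2}\CorootScalar{0}{0} = 0$, giving $\LBundle{0} = \ConstWt{0}$.

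For $\LBundle{l}$: since $X = m_{\ucolambda}^{-1}\bigl(\Gr^{\colambda}_{\comu}\bigr)$, the image of $\pi_l$ lies in the slice $\Gr_{\comu} = \GOne \cdot \Tfixed{\comu}$. The key observation is that $\Gr_{\comu}$ is an ind-affine $\T$-variety on which the $\LoopGm$-factor of $\T$ acts with strictly negative weights after translation to $\Tfixed{\comu}$ (indeed $\Gr_{\comu} \cong \GOne$ as a $\LoopGm$-scheme via the orbit map, and $\gOne = t^{-1}\mathfrak{g}[t^{-1}]$ has all negative $\LoopGm$-weights, matching the contraction statement of \refProp{PropSlicesProperties} for each $\Gr^{\colambda}_{\comu}$). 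Hence $\Gr_{\comu}$ is $\T$-equivariantly contracted to the single $\T$-fixed point $\Tfixed{\comu}$, which identifies $\EqPic{\T}{\Gr_{\comu}}$ with $\EqPic{\T}{\Tfixed{\comu}}$, i.e.\ with the character lattice of $\T$, via the fiber at $\Tfixed{\comu}$. Applied to $\resZ{\OOO{1}}{\Gr_{\comu}}$ and using \refCor{CorOWeights}, this yields
\begin{equation*}
	\resZ{\OOO{1}}{\Gr_{\comu}}
	\;=\;
	\ConstWt{\CorootScalar{\comu}{\bullet} + \tfrac{\hbar}{2}\CorootScalar{\comu}{\comu}},
\end{equation*}
and pulling back by $\pi_l$ gives $\LBundle{l} = \ConstWt{\CorootScalar{\comu}{\bullet} + \tfrac{\hbar}{2}\CorootScalar{\comu}{\comu}}$.

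The one step that requires care, rather than calculation, is the identification $\EqPic{\T}{\Gr_{\comu}} \simeq X^*(\T)$ on an infinite-dimensional ind-scheme. The cleanest way to justify it is to write $\Gr_{\comu}$ as a directed union of $\T$-invariant finite-dimensional affine subvarieties (e.g.\ the $\Gr^{\colambda'}_{\comu}$ for $\colambda'$ ranging over dominant cocharacters with $\comu \leq \colambda'$), on each of which the $\LoopGm$-contraction of \refProp{PropSlicesProperties} applies; a $\T$-equivariant line bundle on a $\T$-variety with a $\LoopGm$-contraction to a single fixed point is forced to be the constant bundle with the corresponding fiber weight by standard equivariant formality/Bia\l{}ynicki-Birula arguments, and these trivializations are canonical hence patch across the directed system. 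This is the main technical point; everything else is bookkeeping of weights via \refCor{CorOWeights}.
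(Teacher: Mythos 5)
Your proof is correct and follows essentially the same route as the paper's: $\LBundle{0}$ and $\LBundle{l}$ are pulled back from a point and from $\Gr_{\comu}$ respectively, the underlying bundles are trivial, and the equivariant structure is read off from the fiber weight at a fixed point via \refCor{CorOWeights}; the paper simply asserts triviality of $\Pic(\Gr_{\comu})$ where you justify it through the $\LoopGm$-contraction and the ind-structure. One parenthetical is inaccurate --- the orbit map $\GOne \to \Gr_{\comu}$ is not an isomorphism in general, since the stabilizer $\GOne \cap \Ad_{\Tcowt{t}{\comu}}\left(\GO\right)$ of $\Tfixed{\comu}$ is nontrivial for dominant $\comu$ --- but this is not load-bearing, as the contraction you actually need is the one in \refProp{PropSlicesProperties}, which you also cite.
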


\begin{proof}
	 The bundles $\LBundle{0}$ and $\LBundle{l}$ are pulled back from a point and $\Gr_{\comu}$ respectively. Both have a trivial Picard group, so the bundles are trivial. Now the question is what the equvariant structure is, i.e. how the fiber is scaled. This can be done by computing the weight at any fixed point via \refCor{PropLWeights}.
\end{proof}

It is also convenient to introduce the following line bundles.
\begin{equation} \label{EqEBundleDefinition}
	\EBundle{i} = \LBundle{i}/\LBundle{i-1} \text{ for } 1 \leq i \leq l.
\end{equation}
Since $\ChernL{\T}{i}$ generate $\EqCoHlgyk{\T}{X}{2}$, then $\ChernE{\T}{i}$ generate it as well.

\subsubsection{Homology of resolutions}

The tight connection between the geometry of the affine Grassmannian of a reductive group $\G$ and the representation theory of the Langlands dual group $\GLangDual$ is explicit in the geometric Satake correspondence. Here we present a quick overview; the reader can find more details in \cites{G,MVi1,MVi2}.

The affine Grassmannian $\Gr$ has a natural (Whitney) stratification $\mathcal{S}$. This allows one to define $\DerCat_{\mathcal{S}} \left( \Gr, \mathbb{C} \right)$ the bounded derived category of $\mathcal{S}$-constructable $\mathbb{C}$-sheaves with respect to this stratification. There is a full subcategory of perverse sheaves $\Perv_{\mathcal{S}} \left(\Gr, \mathbb{C} \right)$. This category is abelian and even is a heart of certain t-structure, see \cite{BBD}. In what follows we do not change the stratification $\mathcal{S}$, so we omit it in the notation: $\Perv \left(\Gr, \mathbb{C} \right)$.

The perverse sheaves were introduced by P. Deligne to give a sheaf-theoretic approach to the intersection cohomology; see the change form \cite{GM1} to \cite{GM2}. For a closure of strata $S\in \mathcal{S}$ we can assign a perverse sheaf $\IC \left( \overline{S} \right)$ supported on $\overline{S}$ called the intersection cohomology sheaf. One can think of this sheaf as a correction of the constant sheaf on $\overline{S}$ in the lower strata to preserve features such as Poincar\'e duality in the singular case. If $\overline{S}$ is smooth, then $\IC \left( \overline{S} \right)$ is just the constant sheaf, possibly up to a cohomological shift. 

For the affine Grassmannian, the distinguished perverse sheaves are $\IC\left( \overline{\Gr^{\colambda}} \right)$ which we denote later as $\IC_{\colambda}$. These are simple objects in $\Perv\left(\Gr, \mathbb{C} \right)$.

There is a standard sheaf cohomology functor
\begin{align*}
	\Homology^*
	\colon
	\Perv\left(\Gr, \mathbb{C} \right)
	&\to
	\Vect_{\mathbb{C}}
	\\
	\mathcal{F}
	&\mapsto
	\bigoplus_{i}
	\CoHlgyk{\Gr, \mathcal{F}}{i}
\end{align*}
to the abelian category $\Vect_{\mathbb{C}}$ of finite dimensional complex vector spaces.

The sheaf cohomology of the intersection cohomology sheaf
\begin{equation*}
	\IH^* \left( \overline{S} \right)
	:=
	\CoHlgy{ \IC{\overline{S}} }
\end{equation*}
give the dual space to intersection homology $\IH_* \left( \overline{S} \right)$ of $\overline{S}$.

In one of the simplest versions, the geometric Satake correspondence states that the category of complex perverse sheaves on $\Gr$ is equivalent
\begin{equation*}
	\Perv\left(\Gr, \mathbb{C} \right)
	\xrightarrow{\sim}
	\Rep\left(\GLangDual,\mathbb{C}\right)
\end{equation*}
to the abelian category of finite dimensional complex representations of the Langlands dual group $\GLangDual$. This equivalence moreover
\begin{itemize}
\item
	respects the fiber functors, that is, the triangle
	\begin{equation*}
		\begin{tikzcd}
			\Perv\left(\Gr, \mathbb{C} \right)
			\arrow[rr,"\sim"]
			\arrow[rd,swap,"\Homology^*"]
			&
			&
			\Rep\left(\GLangDual,\mathbb{C}\right)
			\arrow[ld,"F"]
			\\
			&
			\Vect_\mathbb{C}
			&
		\end{tikzcd}
	\end{equation*}
	is commutative. The functor $F$ here is the forgetful functor, which assigns the underlying vector space of the representation.
\item
	respects the tensor structure of these categories. On $\Rep\left(\GLangDual,\mathbb{C}\right)$ the tensor product is the usual tensor product $\otimes$ of representations. The tensor structure $*$ on $\Perv\left(\Gr, \mathbb{C} \right)$ is harder to describe and is called the convolution. It is defined via natural operations with the convolution Grassmannian.
\end{itemize}

These properties immediately imply the isomorphism
\begin{equation*}
	\CoHlgy{\Gr, \mathcal{F}_1 * \mathcal{F}_2}
	\simeq
	\CoHlgy{\Gr, \mathcal{F}_1 }
	\otimes
	\CoHlgy{\Gr, \mathcal{F}_2 }
\end{equation*}
as $\mathbb{C}$-vector spaces.

Under the geometric Satake equivalence, simple objects must go to simple objects. As we said above, simple objects are intersection cohomology sheaves $\IC_{\colambda}$ of closed cells parameterized by dominant coweight $\colambda$. In $\Rep\left(\GLangDual,\mathbb{C}\right)$ the simple objects are irreducible representations $V_{\colambda}$ which are parameterized by the dominant highest weight $\colambda$. A coweight of $\G$ is a weight of $\GLangDual$, so these parametrizations naturally identify. The equivalence says that these
\begin{equation*}
	\IC_{\colambda}
	\leftrightarrow
	V_{\colambda}
\end{equation*}
correspond to each other.

By the construction of the convolution product we have the intersection cohomology sheaf of the convolution Grassmannian $\Gr^{\left( \colambda_1, \dots, \colambda_l \right)}$ pushes forward along the map
\begin{equation*}
	\Gr^
	{
		\left( 
			\colambda_1, 
			\dots, 
			\colambda_l 
		\right)
	}
	\to
	\Gr
\end{equation*} 
to the convolution $\IC_{\colambda_1} * \dots * \IC_{\colambda_l}$. Since the sheaf cohomology is the pushforward to the point, we get
\begin{align*}
	\IH^*
	\left(
		\Gr^
		{
			\left( 
				\colambda_1, 
				\dots, 
				\colambda_l 
			\right)
		}
	\right)
	&\simeq
	\CoHlgy{\Gr, \IC_{\colambda_1} * \dots * \IC_{\colambda_l}}
	\\
	&\simeq
	\CoHlgy{\Gr, \IC_{\colambda_1}}
	\otimes
	\dots
	\otimes
	\CoHlgy{\Gr, \IC_{\colambda_l}}
	\\
	&=
	\IH^*
	\left(
		\Gr^{\colambda_1}
	\right)
	\otimes
	\dots
	\otimes
	\IH^*
	\left(
		\Gr^{\colambda_l}
	\right)
	\\
	&\simeq
	V_{\colambda_1}
	\otimes
	\dots
	\otimes
	V_{\colambda_l}
\end{align*}

If all $\colambda_i$ are minuscule, then the intersection cohomology is the same as ordinary cohomology, so
\begin{equation*}
	\CoHlgy
	{
		\Gr^
		{
			\left( 
				\colambda_1, 
				\dots, 
				\colambda_l 
			\right)
		}
	}
	\simeq
	V_{\colambda_1}
	\otimes
	\dots
	\otimes
	V_{\colambda_l}.
\end{equation*}

If we restrict to the intersection cohomology sheaf of 
\begin{equation*}    
    \Gr^{\left( \colambda_1, \dots, \colambda_l \right)}_{\comu} \subset \Gr^{\left( \colambda_1, \dots, \colambda_l \right)},
\end{equation*}
then we get a weight subspace
\begin{equation*}
	\CoHlgy
	{
		\Gr^
		{
			\left( 
				\colambda_1, 
				\dots, 
				\colambda_l 
			\right)
		}
		_{\comu}
	}
	\simeq
	V_{\colambda_1}
	\otimes
	\dots
	\otimes
	V_{\colambda_l}
	\left[
		\comu
	\right]
\end{equation*}
by the construction of the geometric Satake.

Functoriality gives compatibility of these isomorphisms
\begin{equation} \label{EqSatakeCompatibility}
	\begin{tikzcd}
		\CoHlgy
		{
			\Gr^
			{
				\left( 
					\colambda_1, 
					\dots, 
					\colambda_l 
				\right)
			}
		} 
		\arrow[d,"i^*"] \arrow[r,"\sim"] 
		& 
		V_{\colambda_1}\otimes \dots \otimes V_{\colambda_l} \arrow[d,"\pi"]
		\\
		\CoHlgy
		{
			\Gr^
			{
				\left( 
					\colambda_1, 
					\dots, 
					\colambda_l 
				\right)
			}
			_{\comu}
		} 
		\arrow[r,"\sim"] 
		& 
		V_{\colambda_1}\otimes \dots \otimes V_{\colambda_l} [\comu]
	\end{tikzcd}
\end{equation}
where $i^*$ is the pullback by the inclusion
\begin{equation*}
	i
	\colon
	\Gr^{\ucolambda}_{\comu}
	\to
	\Gr^{\ucolambda}
\end{equation*}
and $\pi$ is the natural projection to a direct summand.

Now, let us say a couple of words on the relation of these constructions to the equivariant cohomology. The spaces $\Gr^{\ucolambda}_{\comu}$ admit a $\T$-invariant cell structure, namely the Bia\l{}ynicki-Birula decomposition. Then $\Gr^{\ucolambda}_{\comu}$ are $\T$-equivariantly formal, which implies two things.
\begin{itemize}
\item
	The equivariant cohomology $\EqCoHlgy{\T}{\Gr^{\ucolambda}_{\comu}, \mathbb{C}}$ is a free module over $\EqCoHlgy{\T}{\pt, \mathbb{C}}$.
\item
	Ordinary cohomology can be obtained from equivariant cohomology by "killing" equivariant variables:
	\begin{equation*}
		\CoHlgy
		{
			\Gr^
			{
				\left( 
					\colambda_1, 
					\dots, 
					\colambda_l 
				\right)
			}
			_{\comu},
			\mathbb{C}
		} 
		\simeq
		\mathbb{C}
		\otimes_{\EqCoHlgy{\T}{\pt, \mathbb{C}}}
		\EqCoHlgy{\T}
		{
			\Gr^
			{
				\left( 
					\colambda_1, 
					\dots, 
					\colambda_l 
				\right)
			}
			_{\comu},
			\mathbb{C}
		}
	\end{equation*}
	is an isomorphism of graded algebras.
\end{itemize}

This implies that
\begin{equation*}
	\EqCoHlgy{\T}
	{
		\Gr^
		{
			\left( 
				\colambda_1, 
				\dots, 
				\colambda_l 
			\right)
		}
		_{\comu},
		\mathbb{C}
	}
	\simeq
	\EqCoHlgy{\T}{\pt, \mathbb{C}}
	\otimes_{\mathbb{C}}
	\CoHlgy
	{
		\Gr^
		{
			\left( 
				\colambda_1, 
				\dots, 
				\colambda_l 
			\right)
		}
		_{\comu},
		\mathbb{C}
	}
\end{equation*} 
as $\EqCoHlgy{\T}{\pt, \mathbb{C}}$-modules (not as algebras!). By applying the geometric Satake,
\begin{equation*}
	\EqCoHlgy{\T}
	{
		\Gr^
		{
			\left( 
				\colambda_1, 
				\dots, 
				\colambda_l 
			\right)
		}
		_{\comu},
		\mathbb{C}
	}
	\simeq
	\EqCoHlgy{\T}{\pt, \mathbb{C}}
	\otimes_{\mathbb{C}}
	V_{\colambda_1}
	\otimes
	\dots
	\otimes
	V_{\colambda_l}
	\left[
		\comu
	\right].
\end{equation*}

We will see later a similar statement on the localized level, but we derive it only from the combinatorics of the $\T$-fixed points, without use of the geometric Satake.

There an analogue of the geometric Satake for the equivariant cohomology $\EqCoHlgy{\T}{\Gr^{\ucolambda}_{\comu}}$, given by V. Ginzburg and S. Riche\cite{GR}. However, we do not use it here.

\subsubsection{Generation of the second cohomology}

One can ask if the collection $\left\lbrace \LBundle{i} \right\rbrace$ has "enough" line bundles. That is, do they form a basis in $\Pic \left( X \right) \otimes_\mathbb{Z} \mathbb{Q}$. For symplectic resolutions, this question is equivalent to asking if the first Chern classes $\ChernL{}{L}$ generate the second rational cohomology $\CoHlgyk{X, \mathbb{Q}}{2}$, as the following proposition shows.

\begin{proposition}
    For a symplectic resolution $X$ the first Chern class is the map
    \begin{equation*}
        \Pic(X) \xrightarrow{c_1} \CoHlgyk{X,\mathbb{Z}}{2}
    \end{equation*}
    is an isomorphism.
\end{proposition}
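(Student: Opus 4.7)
The plan is to apply the exponential sheaf sequence
\begin{equation*}
    0 \to \mathbb{Z} \to \mathcal{O}_X \to \mathcal{O}_X^* \to 0
\end{equation*}
and extract the long exact cohomology sequence
\begin{equation*}
    \CoHlgyk{X,\mathcal{O}_X}{1} \to \Pic(X) \xrightarrow{c_1} \CoHlgyk{X,\mathbb{Z}}{2} \to \CoHlgyk{X,\mathcal{O}_X}{2}.
\end{equation*}
This immediately reduces the claim to the vanishing $\CoHlgyk{X,\mathcal{O}_X}{i} = 0$ for $i = 1,2$.

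The key input is that $X$ carries a holomorphic symplectic form $\omega$. Its top exterior power trivializes the canonical bundle, so $K_X \simeq \mathcal{O}_X$. I would then apply the Grauert--Riemenschneider vanishing theorem to the proper birational map $\pi \colon X \to X_0$ from the definition of a symplectic resolution: this yields $R^i \pi_* \mathcal{O}_X = R^i \pi_* K_X = 0$ for $i>0$. Combined with $\pi_* \mathcal{O}_X = \mathcal{O}_{X_0}$ (since $X_0$ is normal and $\pi$ is birational), one gets $R\pi_* \mathcal{O}_X \simeq \mathcal{O}_{X_0}$, i.e.\ $X_0$ has rational singularities (in fact symplectic singularities in the sense of Beauville).

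Finally I would invoke the assumption from \refSec{SecEnvelopes} that $X_0$ is affine, so coherent sheaf cohomology on $X_0$ is concentrated in degree zero, $\CoHlgyk{X_0, \mathcal{O}_{X_0}}{i} = 0$ for $i > 0$. The Leray spectral sequence for $\pi$ then collapses and produces
\begin{equation*}
    \CoHlgyk{X, \mathcal{O}_X}{i} \simeq \CoHlgyk{X_0, \mathcal{O}_{X_0}}{i} = 0 \quad \text{for } i > 0,
\end{equation*}
which closes the loop with the exponential sequence and gives the claim.

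I do not expect any serious obstacle here: the only nontrivial ingredient is Grauert--Riemenschneider, whose hypotheses (smoothness of $X$, properness and birationality of $\pi$, and triviality of $K_X$ coming from the symplectic form) are all built into the data of a symplectic resolution. The delicate point to get right is the triviality of $K_X$ as an actual isomorphism of line bundles (not merely up to numerical equivalence), so that Grauert--Riemenschneider can be applied to $\mathcal{O}_X$ directly; this uses the symplectic form globally rather than just pointwise.
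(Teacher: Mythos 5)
Your proof is correct and follows essentially the same route as the paper: the exponential sequence reduces everything to $\CoHlgyk{X,\mathcal{O}_X}{i}=0$ for $i>0$, which the paper obtains (citing Kaledin) exactly as you do, from the triviality of $K_X$ via the symplectic form, Grauert--Riemenschneider applied to the proper birational map $\pi\colon X\to X_0$, and the affineness of $X_0$. You merely spell out the Leray argument that the paper delegates to the citation.
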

\begin{proof}
    The first Chern class map
    \begin{equation*}
        \Pic(X) = \CoHlgyk{X,\OO^\times }{1} \xrightarrow{c_1} \CoHlgyk{X,\mathbb{Z}}{2}
    \end{equation*}
    comes from the long exact sequence for the exponential short exact sequence of sheaves
    \begin{equation*}
        \begin{tikzcd}
            0 \arrow[r] & \mathbb{Z} \arrow[r] & \OO \arrow[r] & \OO^\times \arrow[r] & 0.
        \end{tikzcd}
    \end{equation*}

    Since we have vanishing $\CoHlgyk{X,\OO}{i} = 0$ for $i>0$ (see, e.g., Kaledin\cites{K2}, Lemma 2.1), the first Chern class gives an isomorphism. The vanishing comes from $\OO$ being the canonical sheaf and the vanishing theorem of Grauert and Riemenschneider.
\end{proof}

\begin{proposition}
    The rational second cohomology $\CoHlgyk{X, \mathbb{Q}}{2}$ is generated as a vector space by $\LBundle{i}$, $0 < i < l$.
\end{proposition}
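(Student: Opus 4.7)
My plan is to invoke the previous proposition to reduce the statement to generating $\Pic(X)\otimes\mathbb{Q}$, and then to exhibit the $\LBundle{i}$ as a spanning set by transporting generators from the ambient convolution variety $\Gr^{\ucolambda}$.

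By the previous proposition, $c_1\colon \Pic(X)\otimes\mathbb{Q}\to\CoHlgyk{X,\mathbb{Q}}{2}$ is an isomorphism, so the claim is equivalent to showing that $\LBundle{i}$, $0<i<l$, generate $\Pic(X)\otimes\mathbb{Q}$. Since each $\colambda_i$ is a fundamental (hence minuscule) coweight, the factor $\Gr^{\colambda_i}\simeq \G/\PP^-_{\colambda_i}$ is a smooth partial flag variety with maximal parabolic, so $\Pic(\Gr^{\colambda_i})\otimes\mathbb{Q}\simeq \mathbb{Q}$, generated by $\OOO{1}$ restricted to $\Gr^{\colambda_i}$. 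The convolution variety $\Gr^{\ucolambda}$ is an iterated tower of $\Gr^{\colambda_i}$-bundles over $\Gr^{(\colambda_1,\dots,\colambda_{i-1})}$, so iterating the projective-bundle / Leray--Hirsch formula, $\Pic(\Gr^{\ucolambda})\otimes\mathbb{Q}$ is generated by the pullbacks $\LBundle{1},\dots,\LBundle{l}$ (with $\LBundle{0}$ trivial since $\pi_0$ is constant).

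To pass from $\Gr^{\ucolambda}$ to $X$, I would use the $\LoopGm$-contraction of $X$ onto the compact central fiber $m_{\ucolambda}^{-1}(\Tfixed{\comu})\subset \Gr^{\ucolambda}$ provided by \refProp{PropSlicesProperties}(3), which yields a homotopy equivalence so that $\Pic(X)\otimes\mathbb{Q}$ is determined by data on this central fiber. The restriction $\Pic(\Gr^{\ucolambda})\otimes\mathbb{Q}\to \Pic(X)\otimes\mathbb{Q}$ is then surjective, and the spanning statement transfers. Among the generators, $\LBundle{l}|_X$ becomes trivial (being pulled back along $\pi_l$ from $\Gr_\comu$, which has trivial Picard group as shown in the previous subsection), so the list of nontrivial generators reduces to $\LBundle{i}$ for $0<i<l$.

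The main obstacle is rigorously establishing the surjectivity of the above restriction map. The cleanest alternative that sidesteps this is a direct localization argument: by \refProp{PropLWeights}, the $\T$-equivariant first Chern class of $\LBundle{i}$ restricts at $p\in X^{\T}$ to $\CorootScalar{\sigmapi{p}{i}}{-}+\tfrac{\hbar}{2}\CorootScalar{\sigmapi{p}{i}}{\sigmapi{p}{i}}$. From this formula one checks that the $l-1$ classes $c_1(\LBundle{i})$, $0<i<l$, are linearly independent modulo equivariant constants, since the coweight sequences $(\sigmapi{p}{1},\dots,\sigmapi{p}{l-1})$ vary non-trivially with $p$. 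Combining this with the matching upper bound $\dim \CoHlgyk{X,\mathbb{Q}}{2}\leq l-1$, obtained by Bia\l{}ynicki--Birula decomposition of the central fiber with cell dimensions read off from the path picture of \refThm{ThmLatticeMultiplicities}, then yields the desired generation statement.
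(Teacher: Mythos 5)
Your first route reproduces the paper's opening move (the iterated fibration $\Gr^{\ucolambda}\to\Gr^{(\colambda_1,\dots,\colambda_{l-1})}$ with fiber a minuscule flag variety, giving that $\ChernL{}{1},\dots,\ChernL{}{l}$ span $\CoHlgyk{\Gr^{\ucolambda},\mathbb{Q}}{2}$), but it stalls exactly where you say it does: the surjectivity of the restriction $\CoHlgyk{\Gr^{\ucolambda},\mathbb{Q}}{2}\to\CoHlgyk{X,\mathbb{Q}}{2}$ is the whole content of the step from the convolution variety to the slice, and the $\LoopGm$-contraction onto the central fiber does not supply it --- it only identifies $\CoHlgy{X}$ with the cohomology of a closed subvariety of $\Gr^{\ucolambda}$, and restriction to a closed subvariety is not surjective in general. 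The paper closes this gap with the geometric Satake compatibility \refEq{EqSatakeCompatibility}: under the equivalence, $i^*\colon\CoHlgy{\Gr^{\ucolambda}}\to\CoHlgy{\Gr^{\ucolambda}_{\comu}}$ becomes the projection $V_{\colambda_1}\otimes\cdots\otimes V_{\colambda_l}\to V_{\colambda_1}\otimes\cdots\otimes V_{\colambda_l}[\comu]$ onto a direct summand, which is manifestly surjective. Some input of this kind (or an equivalent surjectivity statement) is indispensable; without it the argument is incomplete.

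Your fallback localization argument does not repair this, because its first claim is false: the classes $\ChernL{}{i}$, $0<i<l$, are \emph{not} linearly independent in general, even modulo equivariant constants. The paper records this explicitly right after the proposition --- e.g.\ when $\CorootScalar{\colambda-\comu}{\colambda_i}=0$ the bundles $\LBundle{i}$ and $\LBundle{i-1}$ differ by a weight, and in type $D_n$ there are further relations --- so $\dim\CoHlgyk{X,\mathbb{Q}}{2}$ can be strictly smaller than $l-1$. Consequently the strategy ``show the $l-1$ classes are independent and match an upper bound $\dim\CoHlgyk{X,\mathbb{Q}}{2}\le l-1$'' cannot work: when the classes are dependent, equality of dimensions fails and the dimension count no longer forces the classes to span. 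The heuristic that the restrictions $\CorootScalar{\sigmapi{p}{i}}{\bullet}+\tfrac{\hbar}{2}\CorootScalar{\sigmapi{p}{i}}{\sigmapi{p}{i}}$ ``vary non-trivially with $p$'' establishes at most that the classes are not all constant, not that they are independent, and the proposed Bia\l{}ynicki--Birula bound on $\dim\CoHlgyk{X,\mathbb{Q}}{2}$ is itself a nontrivial combinatorial count that you have not carried out. You need to either import the Satake surjectivity as the paper does, or find a genuinely different proof of surjectivity of $i^*$ in degree $2$.
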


\begin{proof}
    The space $\Gr^{\colambda_1, \dots \colambda_l}$ is a bundle over $\Gr^{\colambda_1, \dots \colambda_{l-1}}$ with a fiber $\Gr^{\colambda_l}$. Cell $\Gr^{\colambda_l}$ is the flag variety for a maximal parabolic ($\colambda_l$ is minuscule), so its second cohomology is one-dimensional and is generated by the first Chern class of any ample line bundle. The restriction of $\LBundle{l}$ is ample (and even can be identified with one of the equivariant bundles), so it generates the second cohomology of the fiber. Both $\Gr^{\colambda_1, \dots \colambda_{l-1}}$ and $\Gr^{\colambda_l}$ are connected, so by the spectral sequence of fibration, the second cohomology is generated by the second cohomology of $\Gr^{\colambda_1, \dots \colambda_{l-1}}$ and $\ChernL{}{l}$. Applying induction on $l$ and using $\Gr^{\emptyset} = \pt$ as the base case, we find that the rational second cohomologies are generated by $\Chern{}{i}$, $1\leq i \leq l$.
				
    By the compatibility \refEq{EqSatakeCompatibility} we know that the restriction 
    \begin{equation*}
	\CoHlgy{\Gr^{\ucolambda},\mathbb{Q}}
	\xrightarrow{i^*}
	\CoHlgy{\Gr^{\ucolambda}_{\comu},\mathbb{Q}}
    \end{equation*}
    is a surjection. Since $\CoHlgyk{\Gr^{\ucolambda}, \mathbb{Q}}{2}$ is generated as a linear space by $\ChernL{}{i}$, $0 < i \leq l$ and the functoriality of the Chern classes, we find that $\CoHlgy{\Gr^{\ucolambda}_{\comu},\mathbb{Q}}$ is generated by the same line bundles $\LBundle{i}$ (or more pedantically, by their restrictions, which have the same name by slightly abusing the notation). Finally, $\LBundle{l}$ is trivial, so we can drop $i=l$ and take only $0 < i < l$  to generate the second cohomology.
\end{proof}

This proposition, the isomorphism $\Pic(X) = \CoHlgyk{X,\OO^\times }{1} \xrightarrow{c_1}[\sim] \CoHlgyk{X,\mathbb{Z}}{2}$, and $\T$-formality of $X$ proves \refProp{PropSecondCohomologyGeneration}

\begin{remark}
	The generation of $\Pic \left( X \right) \otimes_\mathbb{Z} \mathbb{Q}$ by "tautological" bundles $\LBundle{i}$ is an analogue of Kirwan surjectivity for the Nakajima quiver varieties.
\end{remark}

Line bundles $\LBundle{i}$ are not independent. Let us give a couple of examples of how this dependence might look.

\begin{example}
	If $\colambda = \comu$ the space is a point and all the bundles are trivial. As a more interesting generalization, one can say that if $\comu$ is dominant, $\colambda_i$ is a minuscule fundamental coweight and $\CorootScalar{\colambda - \comu}{\colambda_i} = 0$, then $\LBundle{i}$ and $\LBundle{i-1}$ differ by a weight.
\end{example}

\begin{example}
	Let $\G$ be of type $D_n$. Let $\coomega_{n-1}$ and $\coomega_n$ be the highest weights of the half-spin representations, and let $\coalpha_{n-1}$, $\coalpha_n$ be the corresponding coroots. Then if $\colambda = n_1 \coomega_{n-1} + n_2 \coomega_n$, $\comu = \colambda - \coomega_{n-1} - \coomega_n$ we see that the product of the line bundles corresponding to components with $\coomega_n$ is trivial. A similar statement holds for $\coomega_{n-1}$.
\end{example}

\subsection{Behavior on walls in \texorpdfstring{$\A$}{A}}

For each point $a \in \mathfrak{a}_{\mathbb{Q}}$ one can define the fixed point locus as $X^{a} = X^{na}$, where $n\in\mathbb{Z}$ is any integer such that $na \in \CocharLattice{\A} $. Then, for a generic $a$ the fixed locus is $X^{\A}$. The locus where $X^{a}$ is larger is a union of hyperplanes called walls. Let us recall how the positions of walls are related to the tangent weights of the fixed locus.

Let us first prove the following auxiliary statement.

\begin{lemma}\label{LemInvSubvarietyHasFixedPoints}
Let $\T$ be a torus and $\pi\colon X\to X_0$ be a $\T$-equivariant proper morphism to an affine variety. Assume, moreover, $X_0$ has the unique $\T$-fixed point $x\in X_0$ and there is a $\T$-cocharacter $\colambda \colon \Gm \to \T$ contracting $X_0$ to $x$. Then for any $\T$-invariant closed subvariety $Y\subset X$ has a $\T$-fixed point.
\end{lemma}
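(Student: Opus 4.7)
The plan is to exploit the hypotheses to reduce everything to a proper $\T$-variety, and then invoke the Borel fixed point theorem for tori.

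First I would show that the unique fixed point $x$ lies in $\pi(Y)$. Since $\pi$ is proper and $Y\subset X$ is closed, the image $\pi(Y)\subset X_0$ is closed. It is also $\T$-invariant since $\pi$ is $\T$-equivariant. Picking any $y\in Y$ and setting $y_0=\pi(y)$, the cocharacter $\colambda$ contracts $y_0$ to $x$; the orbit $\colambda(\Gm)\cdot y_0$ lies in $\pi(Y)$, and so does its closure. Hence $x\in\pi(Y)$, i.e.\ $\pi^{-1}(x)\cap Y\neq\emptyset$.

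Next I would note that $\pi^{-1}(x)\cap Y$ is a proper $\T$-variety. The fiber $\pi^{-1}(x)$ is a proper scheme because $\pi$ is proper, and it is $\T$-invariant because $\{x\}$ is $\T$-fixed. The subvariety $\pi^{-1}(x)\cap Y$ is then a closed, $\T$-invariant subscheme of a proper scheme, hence itself proper.

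Finally, I would apply the Borel fixed point theorem to the torus $\T$ acting on the non-empty complete variety $\pi^{-1}(x)\cap Y$, producing a $\T$-fixed point, which is the desired fixed point in $Y$. The main obstacle is essentially verifying that $x\in\pi(Y)$, since without it the fiber would be empty; fortunately, this follows from one line using the contraction hypothesis together with closedness of $\pi(Y)$. Everything else is formal.
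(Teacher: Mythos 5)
Your proof is correct and follows essentially the same route as the paper's: show $x\in\pi(Y)$ using the contraction by $\colambda$ together with closedness and $\T$-invariance of $\pi(Y)$, observe that $Y\cap\pi^{-1}(x)$ is a non-empty proper $\T$-invariant variety, and conclude by Borel's fixed point theorem. No gaps.
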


\begin{proof}
The image $\pi \left( Y \right)$ is closed $\T$-invariant. By the action of $\colambda$ at any point, we get $x \in \pi \left( Y \right)$. So $Y$ intersects non-trivially with $\pi^{-1} \left( x \right)$. Then $Y \cap \pi^{-1} \left( x \right)$ is non-empty and proper, invariant $\T$. Since $\T$ is a torus, there is a $\T$-fixed point in $Y \cap \pi^{-1} \left( x \right)$ (by a theorem of Borel\cite{B}), and hence in $Y$.
\end{proof}

\begin{remark}
	We could not apply the theorem from \cite{B} immediately to $Y$ since $X$ is not proper, it only has a proper map to $X_0$. However, extra conditions on $X_0$ allowed us to reduce our case to the well-known result.
\end{remark}

Recall that we have such a proper map
\begin{equation*}
	m_{ \ucolambda } 
	\colon
	\Gr^{\ucolambda}_{\comu}
	\to
	\Gr^{\colambda}_{\comu}
\end{equation*}
to a $\T$-equivariant affine space. By \refProp{PropSlicesProperties} $\Gr^{\colambda}_{\comu}$ is contracted to the unique fixed point by $\LoopGm \subset \T$. So the statement holds for $X=\Gr^{\ucolambda}_{\comu}$. Now we can apply the lemma for the fixed locus $X^a$, for some $a \in \mathfrak{a}_\mathbb{Q}$. It is easy to see that it is a closed subspace. Moreover, it is $\T$-invariant because the $\T$-action commutes with anything in $\A \subset \T$, in particular, with (any power of) $a$.

Then there is an immediate corollary of \refLem{LemInvSubvarietyHasFixedPoints} and computation of tangent weights.

\begin{proposition} \label{PropAWallsPositions}
	Given $a \in \mathfrak{a}_\mathbb{Q}$, the fixed locus $X^a$ is greater than $X^\A$ only if there is a (non-affine) root $\wtchi$ such that $a \in \ker \wtchi$.
\end{proposition}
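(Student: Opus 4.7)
The plan is to prove this by contrapositive‐style reasoning: if $X^a \supsetneq X^{\A}$, then the tangent space at some $\T$-fixed point in $X^a$ must contain an $a$-invariant direction, and the classification of tangent weights from \refThm{ThmLatticeMultiplicities} will force a root to vanish on $a$.

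First I would pick an irreducible component $Y$ of the closed subscheme $X^a \subset X$ that is not contained in $X^{\A}$ (such a component exists by the hypothesis $X^a \supsetneq X^{\A}$). Since $\T$ is connected, it preserves each irreducible component individually, so $Y$ is a closed $\T$-invariant subvariety. Next I would invoke \refLem{LemInvSubvarietyHasFixedPoints} applied to the proper $\T$-equivariant morphism $m_{\ucolambda}\colon X = \Gr^{\ucolambda}_{\comu} \to \Gr^{\colambda}_{\comu}$; the affine target has a unique $\T$-fixed point $\Tfixed{\comu}$ contracted by $\LoopGm$ by \refProp{PropSlicesProperties}, so the hypotheses of the lemma are satisfied. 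This produces a $\T$-fixed point $p \in Y$.

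Now $p \in X^{\T} = X^{\A}$, but by construction $Y \not\subset X^{\A}$, so the irreducible variety $Y$ strictly contains the point $p$. Hence $\dim_p Y > 0$ and the Zariski tangent space $T_p Y$ is nonzero. Since $Y \subset X^a$, we obtain the inclusion $T_p Y \subset T_p X^a = (T_p X)^a$, the subspace on which the infinitesimal action of $a$ vanishes. In particular $(T_p X)^a \neq 0$, so there exists a nonzero $\A$-weight vector in $T_p X$ whose $\A$-weight pairs to zero with $a$.

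Finally I would invoke \refThm{ThmLatticeMultiplicities}: every $\T$-weight of $T_p X$ is of the form $\wtalpha + n\hbar$ for a (non-affine) root $\wtalpha$ and some $n \in \mathbb{Z}$. Restriction to $\A$ kills $\hbar$, so the $\A$-weights appearing in $T_p X$ are precisely the roots that occur. Therefore some root $\wtchi$ satisfies $\langle \wtchi, a \rangle = 0$, i.e.\ $a \in \ker \wtchi$, which is the desired conclusion. There is no real obstacle here beyond correctly citing the lemma — the only subtle point is recognizing that we need to extract a $\T$-fixed point inside the "extra" locus (via a well-chosen irreducible component of $X^a$) in order to transfer the problem into a statement about $\A$-weights of a tangent space, where the combinatorial classification of weights is available.
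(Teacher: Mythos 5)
Your proof is correct and follows essentially the same route the paper intends: it applies \refLem{LemInvSubvarietyHasFixedPoints} to a $\T$-invariant closed piece of $X^a$ to produce a $\T$-fixed point, and then uses the tangent-weight classification of \refThm{ThmLatticeMultiplicities} to force a root to vanish on $a$. The paper states this only as an ``immediate corollary''; your write-up supplies the same argument with the details (choice of component, positivity of $\dim_p Y$, the inclusion $T_pY\subset (T_pX)^a$) made explicit.
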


For the rest of this section, all objects (such as $\ker \wtchi$) are associated to the torus $\A$, not $\T$.

We want to show that the components of $X^{\ker \wtchi}$ are the resolution of slices of type $A_1$. Let us fix $X = \Gr^{\ucolambda}_{\comu}$ and $X_0 = \Gr^{\colambda}_{\comu}$ for the fixed $\G$ of any type, and in what follows we write $\Gr$ explicitly only meaning the affine Grassmannian of $A_1$ type.

So far we always assumed that in the sequence $\ucolambda = \left( \colambda_1, \dots, \colambda_l \right)$ all $\lambda_i$ are non-zero. In this section it is convenient to drop this assumption for the slices of $A_1$-type. This doesn't give anything new: the condition
\begin{equation*}
	L_{i-1} \xrightarrow{0} L_i
\end{equation*}
simply means
\begin{equation*}
	L_{i-1} = L_i.
\end{equation*}
So, if all $\colambda_i = 0$ are dropped, an isomorphic slice is obtained.

For the $A_1$ type, the connected group of the adjoint type is $\PSL{2}$. We denote the fundamental coweight as $\coomega$ and the positive root as $\coalpha = 2\coomega$.

Let $p\in X^{\A}$ and $\wtchi$ be a root of $\G$. Then we consider
\begin{equation*}
	Z = \Gr^{\left( k_1 \coomega, \dots k_l \coomega \right)}_{m \coomega},
\end{equation*}
where 
\begin{align*}
	m &=
	\left\langle
		\comu,
		\wtchi
	\right\rangle,
	\\
	k_i &=
	\begin{cases}
		0 &\text{ if }
		\left\langle
			\sigmapi{p}{i-1}
			-
			\sigmapi{p}{i},
			\wtchi
		\right\rangle
		=
		0,
		\\
		1 &\text{ otherwise.}
	\end{cases}
\end{align*}

In $Z$ there is a unique point $p'$ satisfying
\begin{equation*}
	\sigmapi{p'}{i} 
	= 
	\left\langle
		\sigmapi{p}{i},
		\wtchi
	\right\rangle
	\coomega
\end{equation*}
for all $i$. It is easy to check that it is indeed in $Z$.

Then we can define an inclusion map
\begin{equation*}
	i
	\colon
	Z 
	\hookrightarrow
	X
\end{equation*}
by sending
\begin{equation*}
	\left(
		g_1 \Tcowt{t}{\sigmapi{p'}{1}} \PSLtwoO,
		\dots
		g_l \Tcowt{t}{\sigmapi{p'}{l}} \PSLtwoO
	\right)
\end{equation*}
to
\begin{equation*}
	\left(
		\iota_{\wtchi} 
		\left(
			g_1
		\right)
		\Tcowt{t}{\sigmapi{p}{1}} \GO,
		\dots
		\iota_{\wtchi} 
		\left(
			g_l
		\right)
		\Tcowt{t}{\sigmapi{p}{1}} \GO
	\right),
\end{equation*}
where $g_1,\dots,g_l \in \SLF{2}{\mathbb{C}}$. We use the same name $\iota_{\wtchi}$ for a base change 
\begin{equation*}
	\SL{2}\left(\K\right) \to \GK
\end{equation*} 
of the root subgroup inclusion $\iota_{\wtchi} \colon \SL{2} \to \G$. Here we used the fact that for any coweight $\conu$ any point in $\Gr$ can be presented in a form $g \Tcowt{t}{\conu} \PSLtwoO$ for some $g \in \SLF{2}{\mathbb{C}}$.

Then we find that the image of $Z$ is an $\A$-invariant subvariety containing $p$. The cocharacters in $\ker \wtchi$ act trivially on the $\SL{2}$-subgroup corresponding to $\wtchi$, so $i(Z) \subset X^{\ker \wtchi}$. Since the multiplicity formulas for the weights of form $\pm \wtchi + n\hbar$ are the same for $Z$ and $X$, $Z$ is an open subspace of $X^{\ker \wtchi}$. Moreover, $Z$ is proper over $X_0$ as one can see from the following diagram
\begin{equation*}
	\begin{tikzcd}
		Z \arrow[d] \arrow[r,hook,"i"] & X \arrow[d]
		\\
		\Gr^{\sum k_i \coomega}_{m\coomega} \arrow[r,hook] & X_0
	\end{tikzcd}
\end{equation*}
where the bottom inclusion is defined similarly to $i$. From this we see that $Z$ is closed, and hence a union of components of $X^{\ker \wtchi}$.

$Z$ has a proper birational morphism to a normal irreducible scheme $\Gr^{\sum k_i \coomega}_{m\coomega}$, so $Z$ is connected. Thus, $Z$ is a component of $X^{\ker \wtalpha}$.

From \refLem{LemInvSubvarietyHasFixedPoints} every component of $X^{\ker \wtchi}$ is of this form.

Keeping track of the $\T$-action on components of $X^{\ker \wtchi}$ we can summarize our consideration in the following theorem.

\begin{theorem} \label{ThmWallSubspaces}
	If $\wtalpha$ is a root, then any connected component $Z\subset X^{\ker \wtchi}$ is isomorphic to a resolution of a $\PSL{2}$-slice:
	\begin{equation*}
		\Gr^{\left( \coomega, \dots, \coomega \right)}_{m\coomega}
		\xrightarrow{\simeq}
		Z
	\end{equation*}
	Moreover, this morphism is $\T$-equivariant if $\A\subset \T$ acts on $\Gr^{\left( \coomega, \dots, \coomega \right)}_{m\coomega}$ via character $\wtalpha$.
\end{theorem}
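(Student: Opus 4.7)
The plan is to build the isomorphism by hand: for a fixed component $Z\subset X^{\ker\wtchi}$, I will pick any $\A$-fixed point $p\in Z$ (one exists by \refLem{LemInvSubvarietyHasFixedPoints} applied to $Z$, since $Z$ is $\T$-invariant, closed, and $X$ is proper over the affine $X_0$), extract from $p$ the combinatorial data $m$ and $(k_1,\dots,k_l)$ as in the construction preceding the theorem, and use the root homomorphism $\iota_\wtchi\colon \SL{2}\to\G$ extended to $\K$-points to produce a map
\begin{equation*}
    i\colon Y := \Gr^{(k_1\coomega,\dots,k_l\coomega)}_{m\coomega} \to X.
\end{equation*}
After dropping the slots with $k_i=0$ (which contribute nothing to $Y$), the target is the resolution of a $\PSL{2}$-slice advertised in the statement.

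First I would verify the basic properties of $i$: it is well-defined (the incidence conditions $L_{j-1}\xrightarrow{k_j\coomega}L_j$ are preserved by $\iota_\wtchi$ because the $\sigmapi{p}{j}$ are chosen compatibly), it sends $p'$ to $p$, and it is $\T$-equivariant when $\A$ acts on $Y$ through the character $\wtchi$ (by the identity $a\cdot\iota_\wtchi(g)=\iota_\wtchi(\wtchi(a)\cdot g)$ for $a\in\A$, where $\wtchi(a)$ is interpreted via the diagonal torus of $\SL{2}$) and the loop-scaling $\LoopGm$ acts compatibly since $\iota_\wtchi$ is defined over $\mathbb{C}$. In particular $i(Y)\subset X^{\ker\wtchi}$ because $\iota_\wtchi$ factors through $\ker\wtchi\subset\A$ trivially on the $\SL{2}$ factor.

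Next, I would show $i(Y)$ is a union of components of $X^{\ker\wtchi}$ by a local comparison at $\A$-fixed points: by \refThm{ThmLatticeMultiplicities} the weights of $T_p X$ of the form $\pm\wtchi+n\hbar$ are counted by intersections of the lattice path $P_p$ with hyperplanes $\langle\bullet,\wtchi\rangle=n+\tfrac12$, and the same count with $p'$ replaced instead of $p$ in $Y$ produces precisely $T_{p'}Y$ (the other roots vanish on $\ker\wtchi$ and are normal to the fixed locus). Consequently $di$ identifies $T_{p'}Y$ with $T_p X^{\ker\wtchi}$ at every fixed point, so $i$ is an open embedding into $X^{\ker\wtchi}$ near each fixed point; spreading along the $\LoopGm$-contraction to $X_0$ then forces $i(Y)$ to be open in the component $Z$. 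Properness of $Y\to X_0$ (via the obvious factorization through $\Gr^{\sum k_i\coomega}_{m\coomega}\hookrightarrow X_0$) implies $i(Y)$ is also closed in $X$, so it is a union of connected components of $X^{\ker\wtchi}$. Since $Y$ is connected (as it is itself a symplectic resolution, hence irreducible — or more elementarily, proper and birational over the irreducible normal scheme $\Gr^{\sum k_i\coomega}_{m\coomega}$), the image is a single component, and it contains $p$, hence equals $Z$.

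Finally I would argue $i\colon Y\to Z$ is an isomorphism, not merely a bijective morphism: the map between smooth varieties is an open immersion by the tangent-space matching above, so it is an isomorphism onto its image. The main obstacle I anticipate is the tangent-space matching at non-fixed points — the combinatorial multiplicity formula applies only at fixed points, and one must propagate the result to all of $Z$; this is handled by noting that the locus where $di$ fails to be an isomorphism is closed and $\T$-invariant, hence contains an $\A$-fixed point if nonempty, contradicting the fixed-point check. Everything else is bookkeeping: tracking how the $\LoopGm$-weight $\hbar$ on $Y$ matches the $\LoopGm$-weight on $X$ (it does, tautologically, since $\iota_\wtchi$ commutes with loop rotation), and confirming that every component of $X^{\ker\wtchi}$ arises this way by running the construction starting from a fixed point in each component.
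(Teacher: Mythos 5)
Your proposal is correct and follows essentially the same route as the paper: the same root-subgroup inclusion $\iota_{\wtchi}$ producing the map from $\Gr^{(k_1\coomega,\dots,k_l\coomega)}_{m\coomega}$, the same comparison of weight multiplicities of the form $\pm\wtchi+n\hbar$ at fixed points to get openness in $X^{\ker\wtchi}$, properness over $X_0$ for closedness, and connectedness via the proper birational map to the normal irreducible $\Gr^{\sum k_i\coomega}_{m\coomega}$ (your extra step of propagating the tangent-space identification off the fixed locus via a closed $\T$-invariant degeneracy locus is a welcome tightening of a point the paper passes over quickly). One parenthetical is stated backwards: roots other than multiples of $\wtalpha$ do \emph{not} vanish on $\ker\wtchi$, which is precisely why those weight spaces are normal to the fixed locus rather than tangent to it; the conclusion you draw from it is nonetheless the right one.
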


\begin{theorem} \label{ThmWallsForFixedPoints}
	Let $\wtchi$ be a nontrivial character of $\A$ and $p,q \in X^\A$. Then $p$ and $q$ are in the same component of $X^{\ker \wtchi}$ if and only if the following conditions are satisfied
	\begin{enumerate}
	\item
		$\wtchi$ is a multiple of a root $\wtalpha$,
	
	\item
		For all $i$
		\begin{equation} \label{EqDifferenceAMultAlpha}
			\sigmapi{p}{i} 
			- 
			\sigmapi{q}{i}
			\in 
			\mathbb{Z} \: \coalpha
		\end{equation}
		for the coroot $\coalpha$ corresponding to root $\wtalpha$ and $\wtomega$.
	\end{enumerate}
\end{theorem}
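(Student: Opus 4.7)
The plan is to deduce both directions of the equivalence directly from \refThm{ThmWallSubspaces}, which identifies every positive-dimensional connected component of $X^{\ker\wtchi}$ with an $A_1$-type slice resolution $\Gr^{(\coomega,\dots,\coomega)}_{m\coomega}$ via an explicit $\iota_\wtchi$-twisted embedding $i_p$ based at any chosen $\T$-fixed point $p$ in the component. The combinatorial content of the theorem then reduces to tracking what the differences $\sigmapi{q'}{j}-\sigmapi{p'}{j}$ in the $A_1$-slice become after pushing into $X$.

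For the direction $(\Rightarrow)$, suppose $p$ and $q$ lie in a common component $Z\subset X^{\ker\wtchi}$. The case $p=q$ is vacuous, so assume $p\neq q$, whence $\dim Z>0$. By \refProp{PropAWallsPositions}, $\wtchi$ is a rational multiple of some root $\wtalpha$, giving condition~1. Let $i_p\colon \widetilde Z\xrightarrow{\sim} Z$ be the isomorphism from \refThm{ThmWallSubspaces} with $\widetilde Z=\Gr^{(\coomega,\dots,\coomega)}_{m\coomega}$, and let $p',q'\in\widetilde Z^\T$ be the preimages of $p,q$. Since both $\sigmap{p'}$ and $\sigmap{q'}$ start at $0$ and end at $m\coomega$, their increment differences telescope to zero; combined with $\deltapi{\bullet}{j}\in\{\pm\coomega,0\}$ at each index, this forces $\deltapi{q'}{j}-\deltapi{p'}{j}\in\{0,\pm\coalpha\}$, and hence $\sigmapi{q'}{j}-\sigmapi{p'}{j}\in\mathbb{Z}\coalpha$. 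Pushing through $i_p$, whose action on cocharacter differences is induced by $\iota_\wtchi$ sending the $\PSL{2}$-coroot to the coroot $\coalpha\in\CocharLattice{\A}$, yields $\sigmapi{q}{j}-\sigmapi{p}{j}\in\mathbb{Z}\coalpha$, which is condition~2.

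For the direction $(\Leftarrow)$, assume conditions~1 and~2 and run the construction of \refThm{ThmWallSubspaces} centred at $p$ to obtain $i_p\colon\widetilde Z_p\hookrightarrow X$ onto the component of $X^{\ker\wtchi}$ through $p$. I then want to produce $q'\in\widetilde Z_p^\T$ with $i_p(q')=q$. Condition~2 determines a candidate sequence $\sigmap{q'}$ of elements of $\coomega\mathbb{Z}$; the nontrivial check is the adjacency condition $\deltapi{q'}{j}\in\{\pm\coomega,0\}$ required by \refProp{PropSliceFixedLocus}. Writing $\deltapi{q}{j}=\deltapi{p}{j}+n_j\coalpha$ with $n_j\in\mathbb{Z}$ by condition~2 and pairing with $\wtalpha$ gives $\langle\deltapi{q}{j},\wtalpha\rangle=\langle\deltapi{p}{j},\wtalpha\rangle+2n_j$; since $\colambda_j$ is minuscule, both pairings lie in $\{-1,0,+1\}$, forcing $|n_j|\leq 1$, and a short case analysis confirms $\deltapi{q'}{j}\in\{\pm\coomega,0\}$ in each case. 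Hence $q'$ is a genuine fixed point of $\widetilde Z_p$ and $q=i_p(q')$ lies in the same component as $p$.

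The principal obstacle is this adjacency check in the reverse direction, where the minusculity of $\colambda_j$ is what prevents larger shifts $|n_j|\geq 2$; once it is settled, both implications reduce to a direct unpacking of the explicit $\iota_\wtchi$-construction from the proof of \refThm{ThmWallSubspaces}.
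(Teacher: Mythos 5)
Your proof is correct, and your reverse direction is essentially the paper's: the paper only asserts that "by construction of the isomorphism \ldots one can find a point which maps to" $q$, and your adjacency check — writing $\deltapi{q}{j}=\deltapi{p}{j}+n_j\coalpha$, pairing with $\wtalpha$ to get $\langle\deltapi{q}{j},\wtalpha\rangle=\langle\deltapi{p}{j},\wtalpha\rangle+2n_j$ with both pairings in $\{-1,0,1\}$ by minusculity — is precisely the content that makes that assertion true. The forward direction is where you genuinely diverge. The paper does not re-enter the $A_1$-identification there; it instead notes that on a connected component of $X^{\ker\wtchi}$ the $\A$-weights of the fibers of the equivariant line bundles $\LBundle{i}$ at $p$ and at $q$ can differ only by a multiple of $\wtalpha$, so \refProp{PropLWeights} gives $\sigmapi{p}{i}-\sigmapi{q}{i}\in\mathbb{Q}\,\coalpha$, and integrality of the coweight lattice upgrades $\mathbb{Q}$ to $\mathbb{Z}$. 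Your route through \refThm{ThmWallSubspaces} makes both implications run through the same embedding $i_p$, but it leans on the full classification of components; the paper's line-bundle argument is softer and independent of that classification. One local repair is needed in your forward direction: the claim $\deltapi{q'}{j}-\deltapi{p'}{j}\in\{0,\pm\coalpha\}$ does not follow from "the increment differences telescope to zero" together with $\deltapi{\bullet}{j}\in\{0,\pm\coomega\}$ — the increment sequences $(\coomega,0)$ and $(0,\coomega)$ telescope to the same endpoint yet differ by $\pm\coomega$ at each index. The correct reason is that $p'$ and $q'$ are fixed points of the \emph{same} convolution slice $\Gr^{\left(k_1\coomega,\dots,k_l\coomega\right)}_{m\coomega}$, so at each index $j$ both increments are extremal weights of the same $V(k_j\coomega)$: both vanish when $k_j=0$ and both equal $\pm\coomega$ when $k_j=1$, whence their difference lies in $\{0,\pm\coalpha\}$. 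With that one sentence substituted, the argument is complete.
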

\begin{proof}
	Let $p,q$ be in the same connected component of $X^{\ker \wtalpha}$. The condition of $\wtchi$ being equal to a root follows from \refProp{PropAWallsPositions}.
	
	Since $p,q$ in a connected component of $X^{\ker \wtalpha}$, for any equivariant line bundle $\LL$ the $\A$-weights of fibers over $p$ and $q$ must be equal modulo $\wtalpha$. By \refCor{PropLWeights} we have for all $i$, $0<i<l$,
	\begin{equation*}
		\CorootScalar{\sigmapi{p}{i}}{\bullet}
		=
		\CorootScalar{\sigmapi{q}{i}}{\bullet}
		\mod \wtalpha,
	\end{equation*}
	which implies
	\begin{equation*}
		\sigmapi{p}{i}
		-
		\sigmapi{q}{i}
		\in
		\wtalpha \mathbb{Q}.
	\end{equation*}
	Now one uses that $\sigmapi{p}{i} - \sigmapi{q}{i}$ is in the coroot lattice to derive
	\begin{equation*}
		\sigmapi{p}{i}
		-
		\sigmapi{q}{i}
		\in
		\wtalpha \mathbb{Z}.
	\end{equation*}
	
	If $\wtchi$ is a multiple of a root $\wtalpha$, and $p,q\in X^{\A}$ satisfy \refEq{EqDifferenceAMultAlpha}. Then by construction of the isomorphism of the component $Z \subset X^{\ker \wtalpha}$ containing $p$, a point can be found that corresponds to $i$. Thus $q$ is in $Z$.
\end{proof}

This immediately implies the following statement we make in the main text.

\begin{corollary} \label{CorWallIntersection}
	Let $\wtchi_1$ and $\wtchi_2$ be $\mathbb{Q}$-linearly independent characters of $\A$. If $p,q \in X^{\A}$ are in the same components in both $X^{\ker \wtchi_1}$ and $X^{\ker \wtchi_2}$, then $p = q$.
\end{corollary}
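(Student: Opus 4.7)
The plan is to apply \refThm{ThmWallsForFixedPoints} independently to the two walls $\ker \wtchi_1$ and $\ker \wtchi_2$, and then use the linear independence hypothesis to intersect the resulting constraints on the associated sequences of coweights.

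First I would invoke \refThm{ThmWallsForFixedPoints} for the assumption that $p$ and $q$ lie in the same component of $X^{\ker \wtchi_j}$ for $j = 1,2$. This yields, for each $j$, a root $\wtalpha_j$ such that $\wtchi_j$ is a (nonzero) rational multiple of $\wtalpha_j$, together with the integrality condition
\begin{equation*}
	\sigmapi{p}{i} - \sigmapi{q}{i} \in \mathbb{Z}\:\coalpha_j
	\quad \text{for all } 0 \leq i \leq l,
\end{equation*}
where $\coalpha_j$ is the coroot associated to $\wtalpha_j$.

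Next I would observe that, because $\wtchi_1$ and $\wtchi_2$ are $\mathbb{Q}$-linearly independent characters and each is a nonzero rational multiple of $\wtalpha_j$, the roots $\wtalpha_1$ and $\wtalpha_2$ are themselves $\mathbb{Q}$-linearly independent. Since the correspondence between roots and coroots (via the Weyl-invariant form $\CorootScalar{\bullet}{\bullet}$) is linear and nondegenerate, the coroots $\coalpha_1$ and $\coalpha_2$ are $\mathbb{Q}$-linearly independent in $\CocharLattice{\A}\otimes_{\mathbb{Z}}\mathbb{Q}$. In particular
\begin{equation*}
	\mathbb{Z}\:\coalpha_1 \cap \mathbb{Z}\:\coalpha_2 = \{0\},
\end{equation*}
so combining the two containments forces $\sigmapi{p}{i} = \sigmapi{q}{i}$ for every $i$. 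This equality of the entire sequences $\sigmap{p} = \sigmap{q}$ identifies the fixed points, giving $p = q$ by the description of $X^{\T}$ in \refProp{PropSliceFixedLocus}.

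No step here is a serious obstacle: the work has already been done in \refThm{ThmWallsForFixedPoints}, and what remains is a short linear-algebra observation. The only minor point to be careful about is passing from ``$\wtchi_j$ a multiple of a root'' to ``the associated coroots are linearly independent over $\mathbb{Q}$'', but this follows at once from the nondegeneracy of the pairing between the coweight and weight spaces.
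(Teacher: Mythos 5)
Your proof is correct and is essentially the paper's argument: the paper simply states that \refCor{CorWallIntersection} follows immediately from \refThm{ThmWallsForFixedPoints}, and your write-up supplies exactly the intended linear-algebra step (linearly independent roots give linearly independent coroots, since each coroot is a nonzero multiple of its root under the invariant form, so the two lattices $\mathbb{Z}\,\coalpha_1$ and $\mathbb{Z}\,\coalpha_2$ meet only in $0$). No gaps.
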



\bibliography{Stab}

@article {AB,
    AUTHOR = {Atiyah, M. F. and Bott, R.},
     TITLE = {The moment map and equivariant cohomology},
   JOURNAL = {Topology},
  FJOURNAL = {Topology. An International Journal of Mathematics},
    VOLUME = {23},
      YEAR = {1984},
    NUMBER = {1},
     PAGES = {1--28},
      ISSN = {0040-9383},
   MRCLASS = {58F05 (55N91 57R20 57R22)},
  MRNUMBER = {721448},
MRREVIEWER = {G. J. Heckman},
       DOI = {10.1016/0040-9383(84)90021-1},
       URL = {https://doi.org/10.1016/0040-9383(84)90021-1},
}

@incollection {BBD,
    AUTHOR = {Beilinson, A. A. and Bernstein, J. and Deligne, P.},
     TITLE = {Faisceaux pervers},
 BOOKTITLE = {Analysis and topology on singular spaces, {I} ({L}uminy,
              1981)},
    SERIES = {Ast\'{e}risque},
    VOLUME = {100},
     PAGES = {5--171},
 PUBLISHER = {Soc. Math. France, Paris},
      YEAR = {1982},
   MRCLASS = {32C38},
  MRNUMBER = {751966},
MRREVIEWER = {Zoghman Mebkhout},
}

@misc{BD,
  title={Quantization of {H}itchin’s integrable system and {H}ecke eigensheaves},
  author={Beilinson, Alexander and Drinfeld, Vladimir},
  year={1991}
}

@article {BK,
    AUTHOR = {Bezrukavnikov, R. and Kaledin, D.},
     TITLE = {Mc{K}ay equivalence for symplectic resolutions of quotient
              singularities},
   JOURNAL = {Tr. Mat. Inst. Steklova},
  FJOURNAL = {Trudy Matematicheskogo Instituta Imeni V. A. Steklova.
              Rossi\u{\i}skaya Akademiya Nauk},
    VOLUME = {246},
      YEAR = {2004},
    NUMBER = {Algebr. Geom. Metody, Svyazi i Prilozh.},
     PAGES = {20--42},
      ISSN = {0371-9685},
   MRCLASS = {14B05 (14F05)},
  MRNUMBER = {2101282},
MRREVIEWER = {Vasile Br\^{\i}nz\u{a}nescu},
}

@book {B,
    AUTHOR = {Borel, Armand},
     TITLE = {Linear algebraic groups},
    SERIES = {Graduate Texts in Mathematics},
    VOLUME = {126},
   EDITION = {Second},
 PUBLISHER = {Springer-Verlag, New York},
      YEAR = {1991},
     PAGES = {xii+288},
      ISBN = {0-387-97370-2},
   MRCLASS = {20-01 (20Gxx)},
  MRNUMBER = {1102012},
MRREVIEWER = {F. D. Veldkamp},
       DOI = {10.1007/978-1-4612-0941-6},
       URL = {https://doi.org/10.1007/978-1-4612-0941-6},
}

@article{B2,
author = {Michel Brion},
title = {{Poincaré duality and equivariant (co)homology.}},
volume = {48},
journal = {Michigan Mathematical Journal},
number = {1},
publisher = {University of Michigan, Department of Mathematics},
pages = {77 -- 92},
year = {2000},
doi = {10.1307/mmj/1030132709},
URL = {https://doi.org/10.1307/mmj/1030132709}
}

@article{BF1,
    year = 2010,
    month = {apr},  
    publisher = {Duke University Press},
    volume = {152},
    number = {2},
    author = {Alexander Braverman and Michael Finkelberg},
    title = {Pursuing the double affine Grassmannian, I: Transversal slices via instantons on Ak-singularities},
journal = {Duke Mathematical Journal}
}

@article{BFN1,
  title={Towards a mathematical definition of {C}oulomb branches of {$3$}-dimensional {$\mathcal{N}= 4$} gauge theories, II},
  author={Braverman, Alexander and Finkelberg, Michael and Nakajima, Hiraku},
  journal={arXiv preprint arXiv:1601.03586},
  year={2016}
}

@article{BFN2,
  title={Coulomb branches of {$3 d $} {$\mathcal{N}= 4$} quiver gauge theories and slices in the affine {G}rassmannian},
  author={Braverman, Alexander and Finkelberg, Michael and Nakajima, Hiraku},
  journal={Advances in Theoretical and Mathematical Physics},
  volume={23},
  number={1},
  pages={75--166},
  year={2019},
  publisher={International Press of Boston}
}

@book{BLu,
author = {Bernstein, Joseph and Lunts, Valery},
address = {Berlin, Heidelberg},
copyright = {Springer-Verlag Berlin Heidelberg 1994},
isbn = {3540580719},
issn = {0075-8434},
publisher = {Springer},
series = {Lecture Notes in Mathematics},
title = {Equivariant sheaves and functors},
volume = {1578},
year = {1994},
}

@book {CG,
    AUTHOR = {Chriss, Neil and Ginzburg, Victor},
     TITLE = {Representation theory and complex geometry},
    SERIES = {Modern Birkh\"{a}user Classics},
      NOTE = {Reprint of the 1997 edition},
 PUBLISHER = {Birkh\"{a}user Boston, Ltd., Boston, MA},
      YEAR = {2010},
     PAGES = {x+495},
      ISBN = {978-0-8176-4937-1},
   MRCLASS = {22E47 (17B35 20C08 20G05)},
  MRNUMBER = {2838836},
       DOI = {10.1007/978-0-8176-4938-8},
       URL = {https://doi.org/10.1007/978-0-8176-4938-8},
}

@article {CK,
    AUTHOR = {Cautis, Sabin and Kamnitzer, Joel},
     TITLE = {Knot homology via derived categories of coherent sheaves. {I}.
              {T}he {${\germ{sl}}(2)$}-case},
   JOURNAL = {Duke Math. J.},
  FJOURNAL = {Duke Mathematical Journal},
    VOLUME = {142},
      YEAR = {2008},
    NUMBER = {3},
     PAGES = {511--588},
      ISSN = {0012-7094},
   MRCLASS = {57M27 (14F05 14J32 18E30)},
  MRNUMBER = {2411561},
MRREVIEWER = {Richard P. Thomas},
       DOI = {10.1215/00127094-2008-012},
       URL = {https://doi.org/10.1215/00127094-2008-012},
}

@article{Da,
      title={Quantum Differential Equation for Slices of the Affine {G}rassmannian}, 
      author={Ivan Danilenko},
      year={2022},
      journal={arXiv preprint arXiv:2210.17061},
      primaryClass={math.AG}
}

@inproceedings {Dr1,
    AUTHOR = {Drinfeld, V. G.},
     TITLE = {Quantum groups},
 BOOKTITLE = {Proceedings of the {I}nternational {C}ongress of
              {M}athematicians, {V}ol. 1, 2 ({B}erkeley, {C}alif., 1986)},
     PAGES = {798--820},
 PUBLISHER = {Amer. Math. Soc., Providence, RI},
      YEAR = {1987},
   MRCLASS = {17B50 (16A24 17B65 57T05 58F07 82A05 82A15)},
  MRNUMBER = {934283},
}

@article {Dr2,
    AUTHOR = {Drinfeld, V. G.},
     TITLE = {On {P}oisson homogeneous spaces of {P}oisson-{L}ie groups},
   JOURNAL = {Teoret. Mat. Fiz.},
  FJOURNAL = {Teoreticheskaya i Matematicheskaya Fizika},
    VOLUME = {95},
      YEAR = {1993},
    NUMBER = {2},
     PAGES = {226--227},
      ISSN = {0564-6162},
   MRCLASS = {58F05 (16W30)},
  MRNUMBER = {1243249},
MRREVIEWER = {K. C. H. Mackenzie},
       DOI = {10.1007/BF01017137},
       URL = {https://doi.org/10.1007/BF01017137},
}

@article {Dy,
    AUTHOR = {Dynkin, E. B.},
     TITLE = {Semisimple subalgebras of semisimple {L}ie algebras},
   JOURNAL = {Mat. Sbornik N.S.},
    VOLUME = {30(72)},
      YEAR = {1952},
     PAGES = {349--462 (3 plates)},
   MRCLASS = {09.1X},
  MRNUMBER = {0047629},
MRREVIEWER = {I. Kaplansky},
}

@book{FH,
abstract = {The primary goal of these lectures is to introduce a beginner to the finite­ dimensional representations of Lie groups and Lie algebras. Since this goal is shared by quite a few other books, we should explain in this Preface how our approach differs, although the potential reader can probably see this better by a quick browse through the book. Representation theory is simple to define: it is the study of the ways in which a given group may act on vector spaces. It is almost certainly unique, however, among such clearly delineated subjects, in the breadth of its interest to mathematicians. This is not surprising: group actions are ubiquitous in 20th century mathematics, and where the object on which a group acts is not a vector space, we have learned to replace it by one that is {e. g. , a cohomology group, tangent space, etc. }. As a consequence, many mathematicians other than specialists in the field {or even those who think they might want to be} come in contact with the subject in various ways. It is for such people that this text is designed. To put it another way, we intend this as a book for beginners to learn from and not as a reference. This idea essentially determines the choice of material covered here. As simple as is the definition of representation theory given above, it fragments considerably when we try to get more specific.},
author = {Fulton, William and Harris, Joe},
address = {New York, NY},
copyright = {Springer Science+Business Media, Inc. 2004},
edition = {1},
isbn = {9780387974958},
issn = {0072-5285},
keywords = {Mathematical Physics and Mathematics ; Mathematics ; Mathematics and Statistics ; Representations of groups ; Topological Groups ; Topological Groups, Lie Groups},
publisher = {Springer},
series = {Readings in Mathematics},
title = {Representation theory: a first course},
volume = {129},
year = {1991},
}

@article {G,
    AUTHOR = {Ginzburg, V. A.},
     TITLE = {Sheaves on a loop group, and {L}anglands duality},
   JOURNAL = {Funktsional. Anal. i Prilozhen.},
  FJOURNAL = {Akademiya Nauk SSSR. Funktsional\cprime ny\u{\i} Analiz i ego
              Prilozheniya},
    VOLUME = {24},
      YEAR = {1990},
    NUMBER = {4},
     PAGES = {76--77},
      ISSN = {0374-1990},
   MRCLASS = {22E67 (18F20)},
  MRNUMBER = {1092806},
MRREVIEWER = {Helmut Boseck},
       DOI = {10.1007/BF01077338},
       URL = {https://doi.org/10.1007/BF01077338},
}

@article {GKM,
    AUTHOR = {Goresky, Mark and Kottwitz, Robert and MacPherson, Robert},
     TITLE = {Equivariant cohomology, {K}oszul duality, and the localization
              theorem},
   JOURNAL = {Invent. Math.},
  FJOURNAL = {Inventiones Mathematicae},
    VOLUME = {131},
      YEAR = {1998},
    NUMBER = {1},
     PAGES = {25--83},
      ISSN = {0020-9910},
   MRCLASS = {55N91 (14F25 14F32 16E99 18G10 55N33)},
  MRNUMBER = {1489894},
MRREVIEWER = {Roy Joshua},
       DOI = {10.1007/s002220050197},
       URL = {https://doi.org/10.1007/s002220050197},
}

@article {GM1,
    AUTHOR = {Goresky, Mark and MacPherson, Robert},
     TITLE = {Intersection homology theory},
   JOURNAL = {Topology},
  FJOURNAL = {Topology. An International Journal of Mathematics},
    VOLUME = {19},
      YEAR = {1980},
    NUMBER = {2},
     PAGES = {135--162},
      ISSN = {0040-9383},
   MRCLASS = {57N65 (55N45)},
  MRNUMBER = {572580},
MRREVIEWER = {Laurence R. Taylor},
       DOI = {10.1016/0040-9383(80)90003-8},
       URL = {https://doi.org/10.1016/0040-9383(80)90003-8},
}

@article {GM2,
    AUTHOR = {Goresky, Mark and MacPherson, Robert},
     TITLE = {Intersection homology. {II}},
   JOURNAL = {Invent. Math.},
  FJOURNAL = {Inventiones Mathematicae},
    VOLUME = {72},
      YEAR = {1983},
    NUMBER = {1},
     PAGES = {77--129},
      ISSN = {0020-9910},
   MRCLASS = {57N80 (14F10 32C37 55N30)},
  MRNUMBER = {696691},
MRREVIEWER = {Laurence R. Taylor},
       DOI = {10.1007/BF01389130},
       URL = {https://doi.org/10.1007/BF01389130},
}

@article {GR,
    AUTHOR = {Ginzburg, Victor and Riche, Simon},
     TITLE = {Differential operators on {$G/U$} and the affine
              {G}rassmannian},
   JOURNAL = {J. Inst. Math. Jussieu},
  FJOURNAL = {Journal of the Institute of Mathematics of Jussieu. JIMJ.
              Journal de l'Institut de Math\'{e}matiques de Jussieu},
    VOLUME = {14},
      YEAR = {2015},
    NUMBER = {3},
     PAGES = {493--575},
      ISSN = {1474-7480},
   MRCLASS = {14F43 (14F05 14L35 17B10)},
  MRNUMBER = {3352528},
MRREVIEWER = {V. A. Golubeva},
       DOI = {10.1017/S1474748014000085},
       URL = {https://doi.org/10.1017/S1474748014000085},
}

@article{I,
author = {Iversen, Birger},
issn = {0020-9910},
journal = {Inventiones mathematicae},
keywords = {Algebraic cycle ; Algebraic group ; Algebraic surface ; Dimension of an algebraic variety ; Discrete mathematics ; Fixed-point property ; Function field of an algebraic variety ; Mathematics ; Pure mathematics ; Real algebraic geometry ; Singular point of an algebraic variety},
language = {eng},
number = {3},
pages = {229-236},
publisher = {Springer Science and Business Media LLC},
title = {A fixed point formula for action of tori on algebraic varieties},
volume = {16},
year = {1972},
}

@article {IS,
    AUTHOR = {Intriligator, K. and Seiberg, N.},
     TITLE = {Mirror symmetry in three-dimensional gauge theories},
   JOURNAL = {Phys. Lett. B},
  FJOURNAL = {Physics Letters. B. Particle Physics, Nuclear Physics and
              Cosmology},
    VOLUME = {387},
      YEAR = {1996},
    NUMBER = {3},
     PAGES = {513--519},
      ISSN = {0370-2693},
   MRCLASS = {81T30 (81T17)},
  MRNUMBER = {1413696},
       DOI = {10.1016/0370-2693(96)01088-X},
       URL = {https://doi.org/10.1016/0370-2693(96)01088-X},
}

@incollection {K1,
    AUTHOR = {Kaledin, D.},
     TITLE = {Geometry and topology of symplectic resolutions},
 BOOKTITLE = {Algebraic geometry---{S}eattle 2005. {P}art 2},
    SERIES = {Proc. Sympos. Pure Math.},
    VOLUME = {80},
     PAGES = {595--628},
 PUBLISHER = {Amer. Math. Soc., Providence, RI},
      YEAR = {2009},
   MRCLASS = {14E15},
  MRNUMBER = {2483948},
MRREVIEWER = {Baohua Fu},
       DOI = {10.1090/pspum/080.2/2483948},
       URL = {https://doi.org/10.1090/pspum/080.2/2483948},
}

@article{K2,
  title={Symplectic resolutions: deformations and birational maps},
  author={Kaledin, Dmitry},
  journal={arXiv preprint},
  year={2000}
}

@article {KNR,
    AUTHOR = {Kumar, Shrawan and Narasimhan, M. S. and Ramanathan, A.},
     TITLE = {Infinite {G}rassmannians and moduli spaces of {$G$}-bundles},
   JOURNAL = {Math. Ann.},
  FJOURNAL = {Mathematische Annalen},
    VOLUME = {300},
      YEAR = {1994},
    NUMBER = {1},
     PAGES = {41--75},
      ISSN = {0025-5831},
   MRCLASS = {14D20 (14H60 14K25 17B67 22E67 81T40)},
  MRNUMBER = {1289830},
MRREVIEWER = {Emma Previato},
       DOI = {10.1007/BF01450475},
       URL = {https://doi.org/10.1007/BF01450475},
}

@article {KWWY,
    AUTHOR = {Kamnitzer, Joel and Webster, Ben and Weekes, Alex and Yacobi,
              Oded},
     TITLE = {Yangians and quantizations of slices in the affine
              {G}rassmannian},
   JOURNAL = {Algebra Number Theory},
  FJOURNAL = {Algebra \& Number Theory},
    VOLUME = {8},
      YEAR = {2014},
    NUMBER = {4},
     PAGES = {857--893},
      ISSN = {1937-0652},
   MRCLASS = {17B37 (14D24 14M15 20G15 53D55)},
  MRNUMBER = {3248988},
MRREVIEWER = {Christian Ohn},
       DOI = {10.2140/ant.2014.8.857},
       URL = {https://doi.org/10.2140/ant.2014.8.857},
}

@article {MOk,
    AUTHOR = {Maulik, Davesh and Okounkov, Andrei},
     TITLE = {Quantum groups and quantum cohomology},
   JOURNAL = {Ast\'{e}risque},
  FJOURNAL = {Ast\'{e}risque},
    NUMBER = {408},
      YEAR = {2019},
     PAGES = {ix+209},
      ISSN = {0303-1179},
      ISBN = {978-2-85629-900-5},
   MRCLASS = {14D20 (14F05 16G20 17B37)},
  MRNUMBER = {3951025},
}

@article {MVi1,
    AUTHOR = {Mirkovi\'{c}, I. and Vilonen, K.},
     TITLE = {Geometric {L}anglands duality and representations of algebraic
              groups over commutative rings},
   JOURNAL = {Ann. of Math. (2)},
  FJOURNAL = {Annals of Mathematics. Second Series},
    VOLUME = {166},
      YEAR = {2007},
    NUMBER = {1},
     PAGES = {95--143},
      ISSN = {0003-486X},
   MRCLASS = {22E55 (11R39 20G05)},
  MRNUMBER = {2342692},
MRREVIEWER = {Peter Fiebig},
       DOI = {10.4007/annals.2007.166.95},
       URL = {https://doi.org/10.4007/annals.2007.166.95},
}

@article {MVi2,
    AUTHOR = {Mirkovi\'{c}, I. and Vilonen, Kari},
     TITLE = {Erratum for ``{G}eometric {L}anglands duality and
              representations of algebraic groups over commutative rings"},
   JOURNAL = {Ann. of Math. (2)},
  FJOURNAL = {Annals of Mathematics. Second Series},
    VOLUME = {188},
      YEAR = {2018},
    NUMBER = {3},
     PAGES = {1017--1018},
      ISSN = {0003-486X},
   MRCLASS = {22E55 (11R39 20G05)},
  MRNUMBER = {3866890},
       DOI = {10.4007/annals.2018.188.3.6},
       URL = {https://doi.org/10.4007/annals.2018.188.3.6},
}

@article {MVy,
    AUTHOR = {Mirkovi\'{c}, Ivan and Vybornov, Maxim},
     TITLE = {On quiver varieties and affine {G}rassmannians of type {$A$}},
   JOURNAL = {C. R. Math. Acad. Sci. Paris},
  FJOURNAL = {Comptes Rendus Math\'{e}matique. Acad\'{e}mie des Sciences. Paris},
    VOLUME = {336},
      YEAR = {2003},
    NUMBER = {3},
     PAGES = {207--212},
      ISSN = {1631-073X},
   MRCLASS = {14L30 (14M15 16G20)},
  MRNUMBER = {1968260},
       DOI = {10.1016/S1631-073X(03)00022-0},
       URL = {https://doi.org/10.1016/S1631-073X(03)00022-0},
}

@article {N2,
    AUTHOR = {Nakajima, Hiraku},
     TITLE = {Heisenberg algebra and {H}ilbert schemes of points on
              projective surfaces},
   JOURNAL = {Ann. of Math. (2)},
  FJOURNAL = {Annals of Mathematics. Second Series},
    VOLUME = {145},
      YEAR = {1997},
    NUMBER = {2},
     PAGES = {379--388},
      ISSN = {0003-486X},
   MRCLASS = {14C05 (17B67)},
  MRNUMBER = {1441880},
MRREVIEWER = {Daniel Huybrechts},
       DOI = {10.2307/2951818},
       URL = {https://doi.org/10.2307/2951818},
}

@article {N3,
    AUTHOR = {Nakajima, Hiraku},
     TITLE = {Instantons on {ALE} spaces, quiver varieties, and
              {K}ac-{M}oody algebras},
   JOURNAL = {Duke Math. J.},
  FJOURNAL = {Duke Mathematical Journal},
    VOLUME = {76},
      YEAR = {1994},
    NUMBER = {2},
     PAGES = {365--416},
      ISSN = {0012-7094},
   MRCLASS = {53C25 (17B67 58D27 58E15)},
  MRNUMBER = {1302318},
MRREVIEWER = {Andrew Dancer},
       DOI = {10.1215/S0012-7094-94-07613-8},
       URL = {https://doi.org/10.1215/S0012-7094-94-07613-8},
}

@article {N4,
    AUTHOR = {Nakajima, Hiraku},
     TITLE = {Quiver varieties and {K}ac-{M}oody algebras},
   JOURNAL = {Duke Math. J.},
  FJOURNAL = {Duke Mathematical Journal},
    VOLUME = {91},
      YEAR = {1998},
    NUMBER = {3},
     PAGES = {515--560},
      ISSN = {0012-7094},
   MRCLASS = {17B67 (14D25 16G20 17B35 53C25 58F05)},
  MRNUMBER = {1604167},
MRREVIEWER = {Michael M. Kapranov},
       DOI = {10.1215/S0012-7094-98-09120-7},
       URL = {https://doi.org/10.1215/S0012-7094-98-09120-7},
}

@incollection {SW,
    AUTHOR = {Seiberg, N. and Witten, E.},
     TITLE = {Gauge dynamics and compactification to three dimensions},
 BOOKTITLE = {The mathematical beauty of physics ({S}aclay, 1996)},
    SERIES = {Adv. Ser. Math. Phys.},
    VOLUME = {24},
     PAGES = {333--366},
 PUBLISHER = {World Sci. Publ., River Edge, NJ},
      YEAR = {1997},
   MRCLASS = {81T60 (58D27 81T13)},
  MRNUMBER = {1490862},
MRREVIEWER = {Daniel N. Kabat},
}

@phdthesis{Su,
	title={Stable Basis and Quantum Cohomology of Cotangent Bundles of Flag Varieties},
	author={Su, Changjian},
	school={Columbia University},
	year={2017}
}

@phdthesis{V,
	title={Equivariant Quantum Cohomology and the Geometric {S}atake Equivalence},
	author={Michael Viscardi},
	school={Massachusetts Institute of Technology},
	year={2016}
}

@book{H,
author = {Hsiang, Wu Yi},
address = {Berlin, Heidelberg},
copyright = {Springer-Verlag Berlin Heidelberg 1975},
isbn = {3642660541},
issn = {0071-1136},
keywords = {Homology theory ; Mathematical Physics and Mathematics ; Mathematics ; Mathematics and Statistics ; Topological transformation groups ; Topology},
publisher = {Springer},
title = {Cohomology theory of topological transformation groups},
volume = {85},
year = {1975},
}

\end{document}